\theoremstyle{plain}
\newtheorem{theorem}{Theorem}[section]
\newtheorem{theorem*}{Theorem}
\newtheorem{proposition}[theorem]{Proposition}
\newtheorem{lemma}[theorem]{Lemma}
\newtheorem{corollary}[theorem]{Corollary}
\newtheorem{conjecture}[theorem]{Conjecture}
\theoremstyle{remark}
\newtheorem{remark}[theorem]{Remark}
\theoremstyle{definition}
\newtheorem{definition}[theorem]{Definition}
\newtheorem{hypothesis}[theorem]{Hypothesis}
\newtheorem*{acknowledgments}{Acknowledgements}
\font\russ=wncyr10  1
\def\sha{\hbox{\russ\char88}}
\DeclareMathOperator{\Ext}{Ext}
\DeclareMathOperator{\Gal}{Gal}
\DeclareMathOperator{\Hom}{Hom}
\DeclareMathOperator{\Spec}{Spec}
\DeclareMathOperator{\N}{N}
\DeclareMathOperator{\coker}{coker}
\newcommand{\bA}{\mathbb{A}}
\newcommand{\TT}{\mathbb{T}}
\newcommand{\QQ}{\mathbb{Q}}
\newcommand{\cE}{\mathcal{E}}
\newcommand{\cF}{\mathcal{F}}
\newcommand{\cL}{\mathcal{L}}
\newcommand{\cO}{\mathcal{O}}
\newcommand{\fq}{\mathfrak{q}}
\newcommand{\fp}{\mathfrak{p}}
\newcommand{\fz}{\mathfrak{z}}
\newcommand{\CC}{\mathbb{C}}
\newcommand{\FF}{\mathbb{F}}
\newcommand{\GG}{\mathbb{G}}
\newcommand{\RR}{\mathbb{R}}
\newcommand{\ZZ}{\mathbb{Z}}
\newcommand{\id}{\mathrm{id}}
\newcommand{\ord}{\mathrm{ord}}
\newcommand{\rgamma}{\mathbf{R}\Gamma}
\newcommand{\rhom}{\mathbf{R}\Hom}
\newcommand{\lotimes}{\otimes^{\mathbf{L}}}
\newcommand{\barfp}{\overline{\mathfrak{p}}}
\begin{document}

\title[]{On the Tamagawa number conjecture for modular forms twisted by anticyclotomic Hecke characters}

\author{Takamichi Sano}

\begin{abstract}
Let $f \in S_{2r}(\Gamma_0(N))$ be a normalized newform of weight $2r$ which is good at $p$. Let $K$ be an imaginary quadratic field of class number one in which every prime divisor of $pN$ splits. Let $\chi$ be an anticyclotomic Hecke character of $K$ which is crystalline at the primes above $p$ and such that $L(f,\chi,r)\neq 0$. 
We prove that the Tamagawa number conjecture for the critical value $L(f,\chi,r)$ follows from the Iwasawa main conjecture for the Bertolini-Darmon-Prasanna $p$-adic $L$-function. 
\end{abstract}

\address{Osaka Metropolitan University,
Department of Mathematics,
3-3-138 Sugimoto\\Sumiyoshi-ku\\Osaka\\558-8585,
Japan}
\email{tsano@omu.ac.jp}

\maketitle

\tableofcontents

\section{Introduction}

\subsection{Background}

Understanding the special values of $L$-functions is a fundamental problem in number theory. The Tamagawa number conjecture of Bloch-Kato \cite{BK}, generalized by Kato \cite{katokodai}, \cite{katolecture}, Fontaine and Perrin-Riou \cite{fontaineL}, \cite{FP} and Burns-Flach \cite{BFetnc}, is the most general and sophisticated conjecture on the special values of $L$-functions. This conjecture includes the Birch and Swinnerton-Dyer conjecture as a special case, which still remains largely open, and is considered extremely difficult.

The first significant contribution to the (equivariant) Tamagawa number conjecture was given by Burns-Greither \cite{BG} and Huber-Kings \cite{HK}. They proved the conjecture for Tate motives associated with abelian fields. An important aspect of their result is that they showed that Iwasawa theory can be effectively used to tackle the conjecture. Their method belongs to what is known as ``descent theory". It would be worth noting that the basic idea behind such a method can already be seen in \cite[\S 6]{BK}.

More recently, their method was generalized by Burns, Kurihara and the present author in \cite{bks2} to general base number fields. They developed descent theory and gave a strategy for proving the (equivariant) Tamagawa number conjecture for Tate motives over general number fields. An analogue of this work for elliptic curves was later given in \cite{bks4}. Moreover, Kataoka and the present author \cite{ks} studied a generalization of \cite{bks4}, and gave a general strategy for proving the Tamagawa number conjecture for a general motive. The key to this work is to consider (higher rank) Euler systems for a general motive, whose existence was predicted by a conjecture formulated by Burns, Sakamoto and the present author \cite{bss2}. 

The underlying philosophy behind these works is that {\it one should consider Iwasawa theory of Euler systems in order to approach the Tamagawa number conjecture}. On the other hand, in Iwasawa theory, $p$-adic $L$-functions are intensively studied, and a variety of new constructions have been found. However, it seems that the relation between $p$-adic $L$-functions and the Tamagawa number conjecture has not been thoroughly investigated. In particular, an application of the Bertolini-Darmon-Prasanna (BDP) $p$-adic $L$-function (constructed in \cite{BDP}, \cite{brakocevic}, \cite{CH}) to the Tamagawa number conjecture has hardly been studied so far. (However, we remark that it has been applied to the Birch and Swinnerton-Dyer conjecture, most notably in the work of Jetchev-Skinner-Wan \cite{JSW}.)

In this article, we make a first attempt to apply the BDP $p$-adic $L$-function to the Tamagawa number conjecture for modular forms twisted by anticyclotomic Hecke characters. 

\subsection{Main results}

We set some notation. 
Let $f = \sum_{n=1}^\infty a_n q^n \in S_{2r}(\Gamma_0(N))$ be a normalized newform of weight $2r\geq 2$ and level $N$. Let $p$ be an odd prime number such that $p\nmid N$. Let $K$ be an imaginary quadratic field of class number one with odd discriminant $-D_K <-3$. We assume that every prime divisor of $N$ splits in $K$ (Heegner hypothesis). We also assume that $p$ splits in $K$: we write $(p)=\fp\barfp$. 
Let $\overline \QQ$ be the algebraic closure of $\QQ$ in $\CC$ and fix an embedding $\iota_p: \overline \QQ \hookrightarrow \CC_p$ which induces $K_\fp \xrightarrow{\sim} \QQ_p$. Let $F$ be a finite extension of $\QQ_p$ which contains $\iota_p(a_n)$ for all $n\geq 1$. Let $V_{f,F}$ be the $p$-adic Galois representation of $G_\QQ:=\Gal(\overline \QQ/\QQ)$ attached to $f$ with coefficients in $F$. 
Let $K_\infty/K$ be the anticyclotomic $\ZZ_p$-extension and set $\Gamma:=\Gal(K_\infty/K)$. 
Let $\chi : \Gamma \to \cO_F^\times$ be a character which is crystalline at both $\fp$ and $\barfp$ and corresponds to an anticyclotomic Hecke character $\chi_\bA: \bA_K^\times/K^\times \to \CC^\times$ of infinity type $(j,-j)$. 
By a slight abuse of notation, we denote $\chi_\bA$ also by $\chi$.

Let $M_f$ be the motive attached to $f$. We consider the critical motive $M:=M_f(r)\otimes \chi$ defined over $K$, whose $p$-adic realization is the $G_K$-representation $V_{f,F}(r) \otimes \chi$. The $L$-function attached to $M$ is $L(M,s)=L(f,\chi^{-1},s+r)$. We consider the Tamagawa number conjecture of Bloch-Kato \cite{BK} for the pair $(M,\cO_F)$ (in the sense of Burns-Flach \cite[Conj. 4(iv)]{BFetnc}), which determines the leading term of $L(M,s)$ at $s=0$ (i.e., the leading term of $L(f,\chi^{-1},s)$ at $s=r$) up to a unit in $\cO_F^\times$. (See Conjecture \ref{TNC} for the precise statement.)

The main result of this article is the following. 

\begin{theorem}[= Theorem \ref{thm:main}]\label{main}
Assume $L(f,\chi^{-1},r)\neq 0$. 
Then the Tamagawa number conjecture for the pair $(M_f(r)\otimes \chi,\cO_F)$ is implied by the Iwasawa main conjecture for the Bertolini-Darmon-Prasanna $p$-adic $L$-function for $f$.
\end{theorem}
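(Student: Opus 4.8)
\emph{Strategy.} The proof is a descent argument in Iwasawa theory, in the spirit of Burns--Greither \cite{BG} and Huber--Kings \cite{HK} and in the systematic form developed in \cite{bks4} and \cite{ks}. Put $V = V_{f,F}(r)\otimes\chi$, fix a $G_K$-stable $\cO_F$-lattice $T \subset V$, write $\Lambda = \cO_F[[\Gamma]]$ for the anticyclotomic Iwasawa algebra, and let $\mathbb{T}$ be the canonical $\Lambda$-adic deformation of $T$ along $K_\infty/K$. The first task is to record the explicit shape that the Tamagawa number conjecture for $(M_f(r)\otimes\chi,\cO_F)$ takes under the hypothesis $L(f,\chi^{-1},r)\neq 0$: the motive then has analytic rank zero, the Bloch--Kato Selmer groups $H^1_f(K,V)$ and $H^1_f(K,V^{*}(1))$ vanish, and the conjecture becomes an exact formula equating, up to $\cO_F^\times$, the $p$-part of $L(f,\chi^{-1},r)/c^+(M)$ with a product of orders of finite cohomology groups and local Tamagawa factors. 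It is this formula that must be extracted from the main conjecture.

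\emph{Step 1: a zeta element, and the reformulated main conjecture.} By the explicit reciprocity law of Bertolini--Darmon--Prasanna \cite{BDP} and Castella--Hsieh \cite{CH}, the BDP $p$-adic $L$-function $\mathcal{L}_p^{\mathrm{BDP}}(f)$, twisted by $\chi$ so as to lie in $\Lambda$, is the image under a Perrin-Riou-type big logarithm at $\fp$ of a zeta element $z_\infty \in H^1_{\mathrm{Iw}}(K,\mathbb{T})$ built from generalized Heegner cycles. Granting the main conjecture $\mathrm{char}_\Lambda(X_{\mathrm{BDP}}) = (\mathcal{L}_p^{\mathrm{BDP}}(f))$, with $X_{\mathrm{BDP}}$ the Pontryagin dual of the BDP Selmer group (relaxed at $\fp$, strict at $\barfp$), I would combine it with Poitou--Tate global duality and the known injectivity and cokernel of the big logarithm at $\fp$ to recast it as a statement purely about $z_\infty$: that $H^1_{\mathrm{Iw}}(K,\mathbb{T})$ is free of rank one, that $z_\infty$ generates it up to the characteristic ideal of the dual Selmer module, and that the discrepancy is accounted for exactly by the local terms at $p$. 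This exhibits the situation as an instance of the descent formalism of \cite{ks}: the anticyclotomic (generalized Heegner) Euler system, together with the main conjecture, yields a module of zeta elements of the predicted size.

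\emph{Step 2: descent.} One then specialises along the augmentation $\Lambda \twoheadrightarrow \cO_F$. A control theorem, as in \cite{bks4} and \cite{ks}, identifies the specialisations of $H^{*}_{\mathrm{Iw}}(K,\mathbb{T})$ with $H^{*}(K,T)$ up to explicit error terms, built from $H^0$ and $H^1$ of $\Gamma$ acting on the relevant global and local cohomology; the non-vanishing hypothesis forces the pertinent $\Gamma$-invariants and the global $H^0$'s to vanish, so the descent is clean and no trivial-zero phenomenon intervenes. The image of $z_\infty$ in $H^1(K,T)$ is a nonzero class whose position relative to the Bloch--Kato local conditions is controlled, through the reciprocity law of Step 1 and the interpolation property of $\mathcal{L}_p^{\mathrm{BDP}}(f)$, by $L(f,\chi^{-1},r)$ divided by a BDP period $\Omega_p$. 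Feeding this into the reformulated main conjecture produces an equality, up to $\cO_F^\times$, between the $p$-adic valuation of $L(f,\chi^{-1},r)/\Omega_p$ --- corrected by an explicit product of local interpolation factors at $p$ --- and the orders of the finite Selmer and cohomology groups appearing in the Tamagawa number conjecture.

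\emph{Step 3: period comparison, the main obstacle.} It remains to reconcile the two sides: one must show that $\Omega_p$, together with the $p$-adic interpolation factors, differs from the Deligne period $c^+(M_f(r)\otimes\chi)$ times the predicted product of local Tamagawa numbers by a unit of $\cO_F^\times$ --- that is, that these quantities have the same $p$-adic valuation. This numerology is the crux of the argument. I would attack it by factoring the Deligne period of the CM-twisted motive: using that $K$ has class number one, discriminant $-D_K<-3$, and satisfies the Heegner hypothesis, $c^+(M_f(r)\otimes\chi)$ over $K$ decomposes into the ($p$-stabilised) Eichler--Shimura period of $f$ and a power of a CM period $\Omega_K$ of $K$, while $\Omega_K$ is $p$-adically compatible, via Katz's theory of $p$-adic CM periods, with the period $\Omega_p$ entering the BDP interpolation formula. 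One is then reduced to verifying that the remaining local contributions --- the Euler factor of $f$ at $\fp$, powers of $p$ and of $\sqrt{D_K}$ coming from $\Gamma$-factors and conductors, the Tamagawa numbers at the bad primes, and the exceptional factor at $p$ produced by the $p$-stabilisation (where the hypothesis that $p$ splits in $K$ is used) --- cancel exactly. Only equality up to $\cO_F^\times$ is claimed, which dispenses with pinning down transcendental or rational constants; but the precise powers of $p$ must still be tracked on both sides, and assembling this with Steps 1 and 2 completes the proof.
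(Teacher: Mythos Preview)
Your descent strategy is in the right spirit, but there is a structural error and a missing key idea. The error: $H^1_{\mathrm{Iw}}(K,\mathbb{T})$ is not of rank one. Under $L(f,\chi^{-1},r)\neq 0$ one has $\dim_F H^1(G_{K,S},V)=2$ (the localisation at $\fp$ is an isomorphism onto the two-dimensional $H^1(K_\fp,V)$), so $H^1(G_{K,S},\TT)$ has generic $\Lambda$-rank two, and the paper works throughout with the determinant ${\det}_\Lambda^{-1}(\rgamma(G_{K,S},\TT))$---a rank-two object---rather than with a rank-one zeta element. The missing idea: the paper does not use Heegner cycles or the explicit reciprocity law. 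Instead, it uses Kato's \emph{local epsilon element} $\varepsilon_\fp \in \Lambda^{\rm ur}\otimes_\Lambda{\det}_\Lambda(\rgamma(K_\fp,\TT))$ (a theorem of Loeffler--Venjakob--Zerbes, Nakamura, and Rodrigues~Jacinto in this crystalline setting) as a Coleman map at the level of determinants. The triangle defining the Selmer complex gives ${\det}_\Lambda^{-1}(\rgamma(G_{K,S},\TT))\otimes{\det}_\Lambda(\rgamma(K_\fp,\TT))\simeq{\det}_\Lambda^{-1}(\widetilde\rgamma_\fp(K,\TT))$ up to bad Euler factors; the main conjecture furnishes a $\Lambda^{\rm ur}$-basis of the right-hand side, $\varepsilon_\fp$ a $\Lambda^{\rm ur}$-basis of the second tensor factor, and $\fz_S$ is defined as the quotient. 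A point you gloss over is that both ingredients live only over $\Lambda^{\rm ur}$; one must verify that their Frobenius twists cancel so that $\fz_S$ descends to $\Lambda$, which requires computing $\varphi_p(L_\fp^{\rm BDP})=\overline\tau_p^{\,2}\cdot L_\fp^{\rm BDP}$.

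The local epsilon element is also what makes your Step~3 go through. By its defining interpolation property, the specialisation of $\varepsilon_\fp$ at $\chi$ is the composite of $\exp^\ast_\fp$, the Euler automorphism $(1-\varphi)(1-p^{-1}\varphi^{-1})^{-1}$ on $D_{\rm cris}(V)$, the de Rham isomorphism $\varepsilon_{\rm dR}$, and multiplication by $-\Gamma(j-r+1)\Gamma(j+r)$. Hence the $\Gamma$-factor and the $p$-adic CM period $\Omega_p^{4j}$ in the BDP interpolation formula are produced \emph{automatically} by $\varepsilon_\fp$, and the comparison collapses to two short lemmas: $\varepsilon_{\rm dR}$ applied to the natural de Rham basis gives $\Omega_p^{4j}$, and the Deligne period of $M$ equals $\pm\Omega_\infty^{4j}(2\pi)^{1-2j}\sqrt{D_K}^{2j-1}$ (via ${\bigwedge}^2 M$ being a Hecke motive, using class number one). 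Without $\varepsilon_\fp$ you have no mechanism that \emph{produces} the $\Gamma$-factor or identifies which $p$-adic period enters, and your Step~3 remains a heuristic matching rather than a computation.
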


\begin{remark}\label{jrem}
The condition $L(f,\chi^{-1},r)\neq 0$ implies that the sign of the functional equation is $+1$, which is equivalent to $j\geq r$ or $j\leq -r$ (see \cite[Rem. on p.569]{CH}). 
\end{remark}

\begin{remark}\label{rem CHK}
The ``order of vanishing" part of the Tamagawa number conjecture
$$ L(f,\chi^{-1},r) \neq 0 \Rightarrow H^1_f(K, V_{f,F}(r)\otimes \chi^{-1})=0$$
(cf. \cite[Conj. 4(ii)]{BFetnc}) is proved by Castella-Hsieh \cite[Thm. A]{CH} (resp. Kobayashi \cite{kobayashi1}, \cite{kobayashi2}) when $f$ is ordinary (resp. supersingular) at $p$. 
\end{remark}

In the case of elliptic curves (i.e., when $r=1$ and $f$ corresponds to an elliptic curve $E$ over $\QQ$), the Iwasawa main conjecture is proved in \cite[Thm. B]{BCK} and \cite[Cor. 7.2]{CHKLL} under mild hypotheses. Thus we obtain the following unconditional result on the Tamagawa number conjecture. 

\begin{corollary}[= Corollary \ref{cor:main}]
Suppose $r=1$ and $f$ corresponds to an elliptic curve $E$. Assume the following:
\begin{itemize}
\item $L(f,\chi^{-1},1)\neq 0$;
\item the representation $\rho:G_\QQ\to {\rm Aut}(E[p])$ is surjective;
\item $\rho$ is ramified at every $\ell | N$;
\item 
$N$ is square-free;
\item $p$ is non-anomalous.
\end{itemize}
Then the Tamagawa number conjecture for $(h^1(E/K)(1)\otimes \chi, \cO_F)$ is true. 
\end{corollary}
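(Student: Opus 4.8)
The plan is to deduce the corollary directly from Theorem~\ref{main} together with known cases of the underlying Iwasawa main conjecture, so there are essentially two steps. First I would specialize the setup to $r=1$ with $f$ the newform attached to the elliptic curve $E$: under the standard identification the motive $M_f$ is $h^1(E/\QQ)$, hence the critical motive $M_f(1)\otimes\chi$ over $K$ is $h^1(E/K)(1)\otimes\chi$, and the leading-term statement of Conjecture~\ref{TNC} for the pair $(M_f(1)\otimes\chi,\cO_F)$ is precisely what the corollary asserts, while the complementary order-of-vanishing statement is already available from Remark~\ref{rem CHK} (Castella--Hsieh in the ordinary case, Kobayashi in the supersingular case). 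Since the hypothesis $L(f,\chi^{-1},1)\neq 0$ is exactly the nonvanishing required by Theorem~\ref{main}, the only thing left to verify in order to apply that theorem is the Iwasawa main conjecture for the Bertolini--Darmon--Prasanna $p$-adic $L$-function attached to $f$.

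Second, I would invoke \cite[Thm.~B]{BCK} and \cite[Cor.~7.2]{CHKLL} to supply this main conjecture. The point is that the five running hypotheses of the corollary---surjectivity of $\rho\colon G_\QQ\to\Aut(E[p])$, ramification of $\rho$ at every $\ell\mid N$, square-freeness of $N$, non-anomaly of $p$, and $L(f,\chi^{-1},1)\neq 0$---are (within the paper's standing assumptions that $K$ has class number one and odd discriminant $-D_K<-3$ and that $pN$ splits in $K$) exactly the conditions under which one divisibility of characteristic ideals is provided by the Heegner-point anticyclotomic Euler system and the reverse divisibility by the $p$-adic $L$-function side, yielding the full equality. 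One also notes that $L(f,\chi^{-1},1)\neq 0$ together with Remark~\ref{jrem} pins down the sign of the functional equation and the admissible range of the infinity type, which is what guarantees that the BDP $p$-adic $L$-function is nonzero and makes the descent in Theorem~\ref{main} effective. Granting the equality, Theorem~\ref{main} applies verbatim and gives the Tamagawa number conjecture for $(h^1(E/K)(1)\otimes\chi,\cO_F)$.

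The step that requires the most care---though it is a compatibility check rather than a new difficulty---is ensuring that the formulation of the ``Iwasawa main conjecture for the BDP $p$-adic $L$-function'' used inside the proof of Theorem~\ref{main} is literally the equality proved in \cite[Thm.~B]{BCK} and \cite[Cor.~7.2]{CHKLL}: the same Iwasawa module, the same $p$-adic $L$-function up to a unit in the Iwasawa algebra, and in particular the same choice of integral periods and lattices, so that specializing at $\chi$ recovers the leading term of $L(f,\chi^{-1},s)$ at $s=1$ up to a genuine unit of $\cO_F^\times$ and not merely up to a bounded denominator. Since \cite{CH}, \cite{BCK}, \cite{CHKLL} and the descent machinery used here are all organized around the same explicit reciprocity law relating Heegner classes to the BDP $p$-adic $L$-function, this matching holds, and the corollary follows.
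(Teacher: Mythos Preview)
Your proposal is correct and follows essentially the same approach as the paper: the paper's proof is a single sentence invoking \cite[Thm.~B]{BCK} (ordinary) and \cite[Cor.~7.2]{CHKLL} (supersingular) to establish Conjecture~\ref{IMC} under the stated hypotheses, and then Theorem~\ref{thm:main} yields the conclusion. Your additional remarks on the order-of-vanishing input from Remark~\ref{rem CHK} and on the compatibility of IMC formulations are reasonable elaborations but not new ingredients---the former is already absorbed into the proof of Theorem~\ref{thm:main}, and the latter is taken for granted in the paper.
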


\begin{remark}\label{rem number one}
Although we assume that $K$ has class number one in this article, we do not think this is essential. The main reason why we assume this is that the motive attached to $\chi$ is simply described in this case. By using the idea of this article, it would be possible to prove Theorem \ref{main} without the class number one assumption. We remark that the class number one assumption is often made in the works on the Tamagawa number conjecture for Hecke characters (such as \cite{kings}, \cite{tsuji}, \cite{bars}). 
\end{remark}

\subsection{Idea of the proof}\label{sec idea}
We shall sketch an idea of the proof of Theorem \ref{main}. 

Fix a finite set $S$ of places of $K$ containing the infinite place and the primes dividing $pN$. Let $G_{K,S}$ be the Galois group of the maximal extension of $K$ unramified outside $S$. Let $T_f$ be a stable lattice of the $p$-adic Galois representation attached to $f$. Let $\Lambda$ be the anticyclotomic Iwasawa algebra and set $\Lambda^{\rm ur}:=\widehat \ZZ_p^{\rm ur}\widehat \otimes \Lambda$, where $\widehat \ZZ_p^{\rm ur}$ denotes the completion of the ring of integers of the maximal unramified extension of $\QQ_p$. We consider the deformation $\TT:=\Lambda \otimes T_f(r)$.

The key idea is to construct an element
\begin{equation}\label{zs}
\fz_S \in {\det}_\Lambda^{-1}(\rgamma(G_{K,S},\TT))
\end{equation}
which ``interpolates" special values $L(f,\chi^{-1},r)$. 
In the construction, we use the Bertolini-Darmon-Prasanna $p$-adic $L$-function 
$$L_\fp^{\rm BDP} \in \Lambda^{\rm ur}$$
and the ``local epsilon element"
$$\varepsilon_\fp \in \Lambda^{\rm ur}\otimes_\Lambda {\det}_\Lambda(\rgamma(K_\fp,\TT)),$$
whose existence is predicted by Kato's local epsilon conjecture \cite{katolecture2}. (In our case, this conjecture is proved by Loeffler-Venjakob-Zerbes \cite{LVZ}, Nakamura \cite{nakamura} and Rodrigues~Jacinto \cite{RJ}.) By the general philosophy of Fukaya-Kato in \cite[\S 4]{FK}, local epsilon elements should play the role of ``Coleman maps". Roughly speaking, we define $\fz_S$ to be the preimage of $L_\fp^{\rm BDP}$ under the ``Coleman map" $\varepsilon_\fp$. A delicate point is to check that $\fz_S$ has coefficients in $\Lambda$, and to do this we need to study the behavior of $L_\fp^{\rm BDP}$ under the Frobenius action (see Lemma \ref{phiBDP}). The Iwasawa main conjecture ensures that $\fz_S$ is a $\Lambda$-basis. See \S \ref{sec construction} for the precise construction of $\fz_S$. It would be worth noting that $\fz_S$ is essentially an Euler system of rank two: see \S \ref{subsec euler} below. 

The proof of Theorem \ref{main} is then reduced to checking that $\fz_S$ has the desired interpolation property predicted by the Tamagawa number conjecture. (This is the content of  Theorem \ref{thm:interpolation}.) To do this we use the interpolation property of $L_\fp^{\rm BDP}$:
$$\chi(L_\fp^{\rm BDP}) =\Omega_p^{4j}\cdot \Gamma(j-r+1) \Gamma(j+r)\cdot  (1-a_p \chi^{-1}(\barfp) p^{-r} + \chi^{-2}(\barfp) p^{-1})^2 
 \frac{L(f,\chi^{-1},r)}{ \Omega_\infty^{4j} (2\pi)^{1-2j}\sqrt{D_K}^{2j-1} }.$$
Here $\Omega_\infty \in \CC^\times$ and $\Omega_p \in (\widehat \ZZ_p^{\rm ur})^\times$ denote complex and $p$-adic CM periods respectively. We check that the $p$-adic period $\Omega_p$ and the $\Gamma$-factor $\Gamma(j-r+1) \Gamma(j+r)$ arise in the interpolation property of $\varepsilon_\fp$. We also check that $\Omega_\infty^{4j} (2\pi)^{1-2j}\sqrt{D_K}^{2j-1} $ is the period of the motive $M_f(r)\otimes \chi$ in the sense of Deligne \cite{deligne}. Such comparison of periods and calculations will be made in \S \ref{sec compare}. 
(Here we use the simplifying assumption that $K$ has class number one.)

We remark that, in a forthcoming article \cite{sanopadic}, we generalize the method used in the present work. More precisely, we consider a $p$-adic $L$-function for a general motive and give a general strategy for proving the Tamagawa number conjecture using the $p$-adic $L$-function.

\subsection{Related topics}

We shall discuss some related topics.

\subsubsection{The Tamagawa number conjecture in analytic rank one}
It is natural to also consider Hecke characters $\chi$ such that $\ord_{s=r}L(f,\chi^{-1},s)=1$. (This happens when $-r < j <r$.) In fact, Bertolini-Darmon-Prasanna proved a ``$p$-adic Gross-Zagier formula" which relates $\chi(L_\fp^{\rm BDP})$ for such $\chi$ with generalized Heegner cycles (see \cite[Thm. 5.13]{BDP}). Combining this result with a generalization of the Gross-Zagier formula (as in \cite{zhang}, \cite{CST}, \cite{YZZ}, \cite{LS}), it would be possible to prove that the element (\ref{zs}) interpolates $L'(f,\chi^{-1},r)$, which would lead to a proof of the Tamagawa number conjecture in analytic rank one (under the Iwasawa main conjecture). We do this in Appendix \ref{sec app} in the special case when $\chi$ is the trivial character and $f$ corresponds to an elliptic curve. In particular, we give another proof of a result essentially obtained by Jetchev-Skinner-Wan \cite{JSW} on the Birch and Swinnerton-Dyer conjecture in analytic rank one (see Theorem \ref{thman1} and Remark \ref{rem JSW}). We also give a similar result when $p$ is inert in $K$ (see Theorem \ref{thmheeg}). 

It would also be interesting to compare our method with the recent work by Castella \cite{castellaTNC}, where the Bertolini-Darmon-Prasanna $p$-adic $L$-function is applied to the Tamagawa number conjecture for CM elliptic curves in analytic rank one. 

\subsubsection{Euler systems of rank two}\label{subsec euler}
The element $\fz_S$ we construct in (\ref{zs}) is essentially an Euler system of rank two. In fact, by \cite[Thm. 2.18]{sbA} (see also \cite[Lem. 3.11(ii)]{ks}), there is a canonical map
\begin{equation}\label{Theta}
\Theta: {\det}_\Lambda^{-1}(\rgamma(G_{K,S},\TT)) \to {\bigcap}_\Lambda^2 H^1(G_{K,S},\TT),
\end{equation}
where for $r\geq 0$ we write $\bigcap_\Lambda^r$ for the $r$-th ``exterior power bidual" introduced in \cite{sbA}. Using the element (\ref{zs}), we define 
\begin{equation}\label{zinfty}
z_{K_\infty} := \Theta(\fz_S) \in {\bigcap}_\Lambda^2 H^1(G_{K,S},\TT).
\end{equation}
We conjecture that this is (at least up to a certain normalization) the ``$K_\infty$-component" of the Euler system whose existence is predicted by a general conjecture \cite[Conj. 2.6]{ks} (see also \cite[\S 4]{bss2}). Main results of the present work can be regarded as a partial solution to this conjecture. 

We remark that we can unconditionally construct $z_{K_\infty}$ as an element of $Q(\Lambda)\otimes_\Lambda {\bigwedge}_\Lambda^2 H^1(G_{K,S},\TT)$ ($Q(\Lambda) $ denotes the quotient field of $\Lambda$), and the Iwasawa main conjecture for $L_\fp^{\rm BDP}$ is equivalent to the equality
\begin{equation}\label{IMC without L}
{\rm char}_\Lambda\left({\bigcap}_\Lambda^2 H^1(G_{K,S},\TT) / \Lambda\cdot z_{K_\infty} \right) = {\rm char}_\Lambda(H^2(G_{K,S},\TT)).
\end{equation}
(See \cite[Prop. 3.10]{ks}.)

We also remark that our construction of $z_{K_\infty}$ is different from the construction of the ``$\Lambda$-adic Heegner element" $z_{\infty}^{\rm Hg}$ in \cite[\S 5.2.3]{ks}, since $z_{K_\infty}$ is constructed by using the Bertolini-Darmon-Prasanna $p$-adic $L$-function, while $z_\infty^{\rm Hg}$ by Heegner points (in the elliptic curve case). (In the former case we assume $p$ splits, while in the latter case $p$ does not necessarily split but it must be ordinary.) Note also that the construction of $z_\infty^{\rm Hg}$ in \cite{ks} is non-canonical. In \S \ref{sec ks}, we improve the construction of $z_\infty^{\rm Hg}$ by using a local epsilon element (see Remark \ref{rem heeg element}). 

\subsubsection{$p$-adic Birch and Swinnerton-Dyer conjectures}

In \cite{AC}, Agboola-Castella formulated a $p$-adic analogue of the Birch and Swinnerton-Dyer conjecture for the Bertolini-Darmon-Prasanna $p$-adic $L$-function. On the other hand, the present author formulated a conjecture on derivatives of (higher rank) Euler systems for motives in \cite[Conj. 5.5]{sanoderived}. In particular, if we specialize it to the rank two Euler system $z_{K_\infty}$ in (\ref{zinfty}), we can formulate an analogue of the $p$-adic Birch and Swinnerton-Dyer conjecture concerning derivatives of $z_{K_\infty}$ (as in \cite[Prop. 5.15]{sanoderived}). In a forthcoming work \cite{sanopadic}, we prove that this conjecture is equivalent to the conjecture of Agboola-Castella. We also prove that, in the ordinary case, it is equivalent to the $p$-adic Birch and Swinnerton-Dyer conjecture for Heegner points formulated by Bertolini-Darmon \cite{BD}. By these results, we can view both the Agboola-Castella and Bertolini-Darmon conjectures as special cases of the general conjecture for motives.

\subsection{General notation}

For a commutative ring $R$ and an $R$-module $X$, we set
$$X^\ast:=\Hom_R(X,R).$$
If $X$ is a free $R$-module of rank one and $x \in X$ is a basis, then the dual basis 
$$x^\ast \in X^\ast$$
is the $R$-linear map satisfying $x^\ast( x) =1$. 
If $X$ is a $\ZZ_p$-module, then its Pontryagin dual is denoted by
$$X^\vee:=\Hom_{\ZZ_p}(X,\QQ_p/\ZZ_p).$$

Let ${\rm det}_R$ denote the determinant functor of Knudsen-Mumford \cite{KM}. This functor associates to a perfect complex $C$ of $R$-modules a graded invertible $R$-module ${\rm det}_R(C)$. We often regard ${\rm det}_R(C)$ as an invertible $R$-module by forgetting the grading part. In particular, taking an element of ${\rm det}_R(C)$ makes sense. Note that, due to the sign issue, we need to regard ${\det}_R(-)$ as graded invertible modules when we use an isomorphism between ${\det}_R(C)\otimes_R {\det}_R(D)$ and $ {\det}_R(D)\otimes_R {\det}_R(C)$ for two perfect complexes $C$ and $D$. 

In this article, we use ${\det}_R$ when $R$ is a regular local ring (e.g., any field, any discrete valuation ring, $\ZZ_p[[T]]$, etc.). In this case, every bounded complex of finitely generated $R$-modules is perfect. In particular, we can define ${\rm det}_R(X)$ for any finitely generated $R$-module $X$ by identifying $X$ with its projective resolution. For basic properties of ${\det}_R$, see \cite[\S 1.3]{sanoderived} for example. 

For any field $L$, its absolute Galois group is denoted by $G_L$.

Let $\overline \QQ$ be the algebraic closure of $\QQ$ in $\CC$. Any number field $K$ (i.e., a finite extension of $\QQ$) is regarded as a subfield of $ \CC$. The ring of integers of $K$ is denoted by $\cO_K$. 


For a number field $K$ and a finite place $v$ of $K$, we fix a place of $\overline \QQ$ lying above $v$. We regard $G_{K_v} $ as the decomposition subgroup of $v$ in $G_K$. The maximal unramified extension of $K_v$ is denoted by $K_v^{\rm ur}$. The inertia subgroup of $v$ in $G_K$ is defined by $I_v:=G_{K_v^{\rm ur}}$. The arithmetic Frobenius of $v$ is denoted by ${\rm Fr}_v \in \Gal(K_v^{\rm ur}/K_v)=G_{K_v}/I_v$.  


We use the standard notation of (continuous) Galois cohomology. For the definitions of $H^1_f$ and $H^1_{\rm ur}$, see \cite[\S 1.3]{R} for example. We set $H^1_{/f}:=H^1/H^1_f$ and $H^1_{/{\rm ur}}:=H^1/H^1_{\rm ur}$. $H^0_f$ is understood to be $H^0$. For a finite place $v$ and a $G_{K_v}$-module $X$, the unramified cohomology complex is defined by 
$$\rgamma_{\rm ur}(K_v, X):=\rgamma(K_v^{\rm ur}/K_v, X^{I_v}) = [ X^{I_v}\xrightarrow{1-{\rm Fr}_v^{-1}} X^{I_v}].$$
Let ${\rm Inf}_{K_v^{\rm ur}/K_v}: \rgamma_{\rm ur}(K_v,X) \to \rgamma(K_v,X)$ be the inflation morphism and set 
$$\rgamma_{/{\rm ur}}(K_v,X):= {\rm Cone}\left(\rgamma_{\rm ur}(K_v,X)\xrightarrow{-{\rm Inf}_{K_v^{\rm ur}/K_v}} \rgamma(K_v,X)\right).$$

Let $S$ be a finite set of places of $K$ containing all infinite and $p$-adic places of $K$. Let $S_f\subset S$ be the subset of finite places. Let $K_S/K$ be the maximal extension unramified outside $S$ and set $G_{K,S}:=\Gal(K_S/K)$. Let $V$ be a finite dimensional $\QQ_p$-vector space endowed with a continuous linear action of $G_{K,S}$. We frequently use the Poitou-Tate exact sequence
\begin{multline*}
0\to H^1_f(K,V)\to H^1(G_{K,S},V)\to \bigoplus_{v\in S_f}H^1_{/f}(K_v,V)\\
\to H^1_f(K,V^\ast(1))^\ast \to H^2(G_{K,S},V)\to \bigoplus_{v\in S_f}H^0(K_v,V^\ast(1))^\ast.
\end{multline*}
(See \cite[Prop. II.2.2.1]{FP} for example.) 

\section{Statement of the main results}

The aim of this section is to state the main results precisely (Theorem \ref{thm:main} and Corollary \ref{cor:main}). In \S \ref{sec TNC}, we review the formulation of the Tamagawa number conjecture in the setting as in Introduction. In \S \ref{sec IMC}, after reviewing the Bertolini-Darmon-Prasanna $p$-adic $L$-function and the Iwasawa main conjecture, we state the main results. 

Throughout this article, let $p$ be an odd prime number. Let $f = \sum_{n=1}^\infty a_n q^n \in S_{2r}(\Gamma_0(N))$ be a normalized newform of weight $2r\geq 2$ and level $N$. We assume $p\nmid N$. Let $\cF$ be a number field which contains all $a_n$. Let $\lambda$ be the prime of $\cF$ lying above $p$ corresponding to the fixed embedding $\iota_p :\overline \QQ\hookrightarrow \overline \QQ_p$. We set $F:=\cF_\lambda$. Let $V_{f,F}$ be the $\lambda$-adic Galois representation of $G_\QQ$ attached to $f$ with coefficients in $F$. 

Let $K$ be an imaginary quadratic field of class number one with odd discriminant $-D_K <-3$\footnote{This means $D_K \in \{7,11,19,43,67,163\}$.}. We assume that every prime divisor of $pN$ splits in $K$. We write $(p)=\fp\barfp$ in $K$ so that $\fp$ corresponds to $\iota_p$. 


Let $\chi : \bA_K^\times/K^\times \to \CC^\times$ be an anticyclotomic Hecke character of infinity type $(j,-j)$ which takes values in $\cF$. (See \S \ref{sec ex hecke} for the definition of Hecke characters.) By Remark \ref{jrem}, we are interested in the case $j\geq r$ or $j\leq -r$. We may assume $j\geq r$, since the other case is treated in the same way by considering $\overline \chi$ instead of $\chi$. The $p$-adic avatar of $\chi$ is a character $G_K\to \cO_F^\times$, which is also denoted by $\chi$. 

To simplify the notation, we set
$$V:=V_{f,F}(r)\otimes \chi^{-1}.$$
Let $S$ be a finite set of places of $K$ which contains the infinite place, the $p$-adic primes, and the primes at which $V$ ramify. Let $G_{K,S}$ be the Galois group of the maximal Galois extension of $K$ unramified outside $S$. Let $V^\ast(1):=\Hom_F(V,F(1)) \simeq V_{f,F}(r)\otimes \chi$ be the Kummer dual of $V$. 

In this article, we regard a motive as a collection of realizations and comparison isomorphisms (see Appendix \ref{sec motive} for details). 
Let $M_f$ and $M(\chi)$ be the motives attached to $f$ and $\chi$ respectively (see \S\S \ref{sec ex modular} and \ref{sec ex hecke}). We regard $M_f$ as a motive defined over $K$ (see Remark \ref{base extension}). We consider the motive
$$M:=M_f(r)\otimes M(\chi),$$
which is defined over $K$ of rank two with coefficients in $\cF$. The weight of $M$ is $-1$ and so $M$ is critical. Note that its $\lambda$-adic realization is $V^\ast(1)$, and the $L$-function is $L(M,s)=L(f,\chi^{-1},s+r)$.

\subsection{The Tamagawa number conjecture}\label{sec TNC}

In this subsection, we review the statement of the Tamagawa number conjecture \cite[Conj. 4(iv)]{BFetnc} for the pair $(M,\cO_F)$ in analytic rank zero. (This is a special case of Conjecture \ref{TNC general0}.) 


\begin{lemma}\label{lem:fp}
Let $H^1_f(K_v, V) \subset H^1(K_v,V)$ denote the Bloch-Kato local condition for $v \in \{\fp, \barfp\}$. Then we have
$$H^1_f(K_\fp, V)=0 \text{ and } H^1_f(K_{\barfp}, V) =H^1(K_{\barfp}, V).$$
\end{lemma}

\begin{proof}
Since the Hodge-Tate weights\footnote{Our convention is that the Hodge-Tate weight of $\QQ_p(1)$ is $+1$.} of $V_{f,F}$ are $0$ and $1-2r$, we see that the Hodge-Tate weights of $V=V_{f,F}(r)\otimes \chi^{-1}$ at $\fp$ (resp. $\barfp$) are $r-j$ and $1-r-j$ (resp. $r+j$ and $1-r+j$). Since $j\geq r$, the claim follows from \cite[Thm. 4.1(ii)]{BK}. 
\end{proof}

\begin{corollary}\label{cor:fp}
Assume the Bloch-Kato Selmer groups $H^1_f(K,V)$ and $H^1_f(K,V^\ast(1))$ vanish. Then we have $H^2(G_{K,S},V)=0$ and the localization map at $\fp$ induces an isomorphism:
$${\rm loc}_\fp: H^1(G_{K,S}, V)\xrightarrow{\sim} H^1(K_\fp, V). $$
In particular, we have $\dim_F(H^1(G_{K,S},V))=2$. 
\end{corollary}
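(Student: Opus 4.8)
The plan is to run the Poitou--Tate exact sequence for $V$ over $G_{K,S}$ and feed in the local computation of Lemma \ref{lem:fp} together with the hypothesis that both Bloch--Kato Selmer groups vanish. First I would record the vanishing of $H^0$: since $V$ is a twist of the (irreducible) Galois representation $V_{f,F}$ by a character, $V^{G_{K,S}}=0$, hence $H^0(G_{K,S},V)=0$, and likewise $H^0(G_{K,S},V^\ast(1))=0$ and $H^0(K_v,V^\ast(1))=0$ for the finite places $v\in S_f$ (here one checks that the relevant Hodge--Tate/Frobenius eigenvalue cannot be trivial, again using $j\geq r$ so that the weights listed in the proof of Lemma \ref{lem:fp} are nonzero). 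Consequently the final term $\bigoplus_{v\in S_f}H^0(K_v,V^\ast(1))^\ast$ in the displayed Poitou--Tate sequence vanishes, and by the assumption $H^1_f(K,V^\ast(1))=0$ the sequence forces $H^2(G_{K,S},V)=0$.

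Next I would identify $H^1(G_{K,S},V)$ with $H^1_f$ of the \emph{relaxed-at-$\fp$, strict-at-$\barfp$} Selmer group. By Lemma \ref{lem:fp} we have $H^1_{/f}(K_{\barfp},V)=0$ and $H^1_{/f}(K_\fp,V)=H^1(K_\fp,V)$, so in the Poitou--Tate sequence the term $\bigoplus_{v\in S_f}H^1_{/f}(K_v,V)$ reduces to $H^1(K_\fp,V)$ (the archimedean and prime-to-$p$ places contribute nothing since $p$ is odd and $V$ is unramified there after possibly enlarging $S$ — more precisely $H^1_{/f}(K_v,V)=0$ at places of good reduction). The sequence then reads
\[
0\to H^1_f(K,V)\to H^1(G_{K,S},V)\xrightarrow{\ \mathrm{loc}_\fp\ } H^1(K_\fp,V)\to H^1_f(K,V^\ast(1))^\ast\to 0,
\]
and the two outer terms vanish by hypothesis, giving the isomorphism $\mathrm{loc}_\fp\colon H^1(G_{K,S},V)\xrightarrow{\sim}H^1(K_\fp,V)$.

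Finally I would compute $\dim_F H^1(K_\fp,V)$ using the local Euler characteristic formula: $\dim H^1(K_\fp,V)=\dim H^0(K_\fp,V)+\dim H^2(K_\fp,V)+[K_\fp:\QQ_p]\dim V$. Here $[K_\fp:\QQ_p]=1$ and $\dim V=2$; moreover $H^0(K_\fp,V)=0$ (the Hodge--Tate weights $r-j,1-r-j$ at $\fp$ are nonzero, so $V$ has no $G_{K_\fp}$-invariants), and $H^2(K_\fp,V)\simeq H^0(K_\fp,V^\ast(1))^\ast=0$ by local duality together with the same weight/eigenvalue argument applied to $V^\ast(1)$. Hence $\dim_F H^1(K_\fp,V)=2$, and transporting along the isomorphism gives $\dim_F H^1(G_{K,S},V)=2$.

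The only genuinely delicate point is the bookkeeping of which local terms in the Poitou--Tate sequence actually vanish — in particular confirming $H^1_{/f}(K_v,V)=0$ at the prime-to-$p$ finite places (good reduction, so the unramified and Bloch--Kato conditions coincide) and confirming the various $H^0$-vanishings from the explicit Hodge--Tate weights; none of this is hard given Lemma \ref{lem:fp} and the standing hypothesis $j\geq r$, but it is where all the input is consumed.
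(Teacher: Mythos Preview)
Your proof is correct and matches the paper's: both feed Lemma~\ref{lem:fp} and the fact $H^1_{/f}(K_v,V)=0$ for finite $v\nmid p$ into the Poitou--Tate sequence, then read off the isomorphism and the vanishing of $H^2$ from the Selmer hypotheses (the paper leaves the dimension count implicit, while you spell it out via the local Euler characteristic). One small correction: your justification for $H^1_{/f}(K_v,V)=0$ at the primes $v\mid N$ in $S$ via ``good reduction'' and ``possibly enlarging $S$'' is misplaced --- these are bad primes and enlarging $S$ only adds terms --- but the vanishing still holds because it is local-Tate-dual to the $H^0(K_v,V^\ast(1))=0$ you already recorded, and that vanishing comes from purity (the motive has weight $-1$, so Frobenius eigenvalues on $(V^\ast(1))^{I_v}$ have absolute value $(\N v)^{-1/2}\neq 1$) rather than from Hodge--Tate weights.
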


\begin{proof}
We have the Poitou-Tate exact sequence
$$0\to H^1_f(K,V)\to H^1(G_{K,S},V)\to \bigoplus_{v\in S}H^1_{/f}(K_v,V)\to H^1_f(K,V^\ast(1))^\ast \to H^2(G_{K,S},V)\to 0.$$
By Lemma \ref{lem:fp} and the fact that $H^1_{/f}(K_v,V)=0$ for $v \nmid p$, we have 
$$\bigoplus_{v\in S}H^1_{/f}(K_v,V) = H^1(K_\fp, V).$$
The claim follows immediately from this. 
\end{proof}


Note that the Betti realization $H_B(M)$ and the de Rham realization $H_{\rm dR}(M)$ of $M$ are $\cF$-vector spaces of dimension four. 
We have the comparison isomorphisms:
$$ \CC\otimes_\QQ H_B(M)\xrightarrow{\sim} \CC\otimes_\QQ H_{\rm dR}(M),$$
$$F\otimes_\cF H_B(M)^+ \simeq V^\ast(1),$$
$$F\otimes_\cF H_{\rm dR}(M) \simeq D_{{\rm dR},\fp}(V^\ast(1))\oplus D_{{\rm dR},\barfp}(V^\ast(1)).$$
Here we set $D_{{\rm dR},v}(-):=H^0(K_v, B_{\rm dR}\otimes_{\QQ_p}-)$ for $v\in \{\fp, \barfp\}$. 
The tangent space of $M$ is defined by 
$$t(M):=H_{\rm dR}(M)/{\rm Fil}^0 H_{\rm dR}(M).$$
Since Hodge-Tate weights of $V^\ast(1)$ at $\fp$ (resp. $\barfp$) are positive (resp. non-positive), we have
$$F\otimes_\cF t(M) \simeq D_{{\rm dR},\fp}(V^\ast(1)).$$
Recall that $M$ is critical, i.e., the period map is an isomorphism:
$$\alpha_M: \RR\otimes_\QQ H_{B}(M)^+ \xrightarrow{\sim} \RR\otimes_\QQ t(M). $$
Take $\cF$-bases $\gamma \in \bigwedge_\cF^2 H_B(M)^+$ and $\delta \in \bigwedge_\cF^2 t(M)$. Let
\begin{equation}\label{period C}
\alpha_{M,\CC}: \CC \otimes_\cF {\bigwedge}_\cF^2 H_B(M)^+ \xrightarrow{\sim} \CC\otimes_\cF {\bigwedge}_\cF^2 t(M)
\end{equation}
be the isomorphism induced by $\alpha_M$. We define the period 
\begin{equation}\label{complex period}
\Omega_{\gamma,\delta} \in \CC^\times
\end{equation}
with respect to $\gamma$ and $\delta$ by
$$\alpha_{M,\CC}(\gamma)=\Omega_{\gamma,\delta}\cdot \delta.$$
Deligne's conjecture \cite{deligne} for the motive $M$ (see Conjecture \ref{deligne0}) states that
$$\frac{L(f,\chi^{-1},r)}{\Omega_{\gamma,\delta}} \in \cF,$$
which is known to be true (essentially due to Shimura \cite{shimura}, see \cite[Thm. 5.5]{BDP} and Lemma \ref{compare cm1} below). 

We now state the Tamagawa number conjecture for the pair $(M,\cO_F)$. 
We fix a stable $\cO_F$-lattice $T\subset V$ and set $T^\ast(1):=\Hom_{\cO_F}(T,\cO_F(1))$. Take an $\cF$-basis $\gamma \in \bigwedge_\cF^2 H_B(M)^+$ so that its image under the comparison isomorphism
$$F\otimes_\cF {\bigwedge}_\cF^2 H_B(M)^+\simeq {\bigwedge}_F^2 V^\ast(1)$$
is an $\cO_F$-basis of ${\bigwedge}_{\cO_F}^2 T^\ast(1)$. If we assume $H^1_f(K,V)=H^1_f(K,V^\ast(1))=0$, then by Corollary \ref{cor:fp} we have a canonical identification
$${\det}_F^{-1}(\rgamma(G_{K,S},V)) = {\bigwedge}_F^2 H^1(G_{K,S},V)$$
and the localization isomorphism
$${\rm loc}_\fp : {\bigwedge}_F^2 H^1(G_{K,S},V) \xrightarrow{\sim} {\bigwedge}_F^2 H^1(K_\fp,V).$$
Also, the Bloch-Kato dual exponential map induces an isomorphism
$$\exp_\fp^\ast: {\bigwedge}_F^2 H^1(K_\fp,V) \xrightarrow{\sim} {\bigwedge}_F^2 D_{{\rm dR},\fp}(V) .$$
Via the isomorphism $F\otimes_\cF t(M) \simeq D_{{\rm dR},\fp}(V^\ast(1))$, we regard the $\cF$-basis $\delta \in {\bigwedge}_\cF^2 t(M)$ as an $F$-basis of ${\bigwedge}_F^2 D_{{\rm dR},\fp}(V^\ast(1))$. Let $\delta^\ast \in {\bigwedge}_F^2 D_{{\rm dR},\fp}(V^\ast(1))^\ast \simeq {\bigwedge}_F^2 D_{{\rm dR},\fp}(V)$ denote the dual basis. 

\begin{conjecture}[The Tamagawa number conjecture for $(M,\cO_F)$ in analytic rank zero]\label{TNC}
Assume $L(f,\chi^{-1},r)\neq 0$. Then we have $H^1_f(K,V)=H^1_f(K,V^\ast(1))=0$, and there is an $\cO_F$-basis
$$\fz_\gamma \in {\det}_{\cO_F}^{-1}(\rgamma(G_{K,S},T))$$
such that the composition map
$${\det}_F^{-1}(\rgamma(G_{K,S},V)) = {\bigwedge}_F^2 H^1(G_{K,S},V) \stackrel{{\rm loc}_\fp}{\simeq} {\bigwedge}_F^2 H^1(K_\fp,V)\stackrel{\exp^\ast_\fp}{\simeq}{\bigwedge}_F^2 D_{{\rm dR},\fp}(V)$$
sends $\fz_\gamma$ to
$$\frac{L_S(f,\chi^{-1},r)}{\Omega_{\gamma,\delta}}\cdot \delta^\ast.$$
Here $L_S(f,\chi^{-1},s)$ denotes the $L$-series obtained by removing the Euler factors at $v\in S$ from $L(f,\chi^{-1},s)$. 
\end{conjecture}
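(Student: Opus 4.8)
The plan is to deduce Conjecture~\ref{TNC} from the Iwasawa main conjecture for $L_\fp^{\rm BDP}$ (this is Theorem~\ref{main}) by an Iwasawa descent argument in the style of Burns--Greither \cite{BG} and \cite{bks2,bks4,ks}: the Bertolini--Darmon--Prasanna $p$-adic $L$-function supplies the analytic input, and Kato's local epsilon element \cite{katolecture2} plays the role of a Coleman map, following the philosophy of Fukaya--Kato \cite[\S 4]{FK}. First I would work over the anticyclotomic Iwasawa algebra $\Lambda$: fixing a $G_\QQ$-stable $\cO_F$-lattice $T_f\subset V_{f,F}$ and putting $\TT:=\Lambda\otimes_{\cO_F}T_f(r)$ (with $G_K$ acting on $\Lambda$ through the tautological character), I would invoke the local epsilon conjecture, known here by \cite{LVZ,nakamura,RJ}, to obtain a $\Lambda^{\rm ur}$-basis
\[ \varepsilon_\fp\in\Lambda^{\rm ur}\otimes_\Lambda{\det}_\Lambda(\rgamma(K_\fp,\TT)). \]
Because $\rgamma(K_\fp,\TT)$ is concentrated in degree one over $Q(\Lambda)$ and the global-to-local map at $\fp$ becomes an isomorphism after inverting $L_\fp^{\rm BDP}$, the element $\varepsilon_\fp$ trivialises ${\det}_\Lambda^{-1}(\rgamma(G_{K,S},\TT))$ over $\Lambda^{\rm ur}[1/L_\fp^{\rm BDP}]$, and I would define $\fz_S$ as in \eqref{zs} to be the element carried to $L_\fp^{\rm BDP}$ under this trivialisation.

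The first delicate point is that $\fz_S$ must in fact have coefficients in $\Lambda$, not merely in $\Lambda^{\rm ur}$. I would prove this by confronting the semilinear Frobenius action on $L_\fp^{\rm BDP}$ (Lemma~\ref{phiBDP}) with the Frobenius-semilinearity built into $\varepsilon_\fp$ by the local epsilon conjecture; the two cancel, forcing the $\widehat{\ZZ}_p^{\rm ur}$-coefficients to descend. With $\fz_S\in{\det}_\Lambda^{-1}(\rgamma(G_{K,S},\TT))$ in hand, the Iwasawa main conjecture for $L_\fp^{\rm BDP}$ translates, via the Poitou--Tate analysis of $\rgamma(G_{K,S},\TT)$, into the statement that $\fz_S$ is a $\Lambda$-basis of ${\det}_\Lambda^{-1}(\rgamma(G_{K,S},\TT))$, equivalently that ${\rm char}_\Lambda(H^2(G_{K,S},\TT))$ is cut out by $\fz_S$ (this is the form \eqref{IMC without L} of the main conjecture for the rank-two class $z_{K_\infty}$ of \eqref{zinfty}).

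Next I would descend from $\Lambda$ to $\cO_F$. Let $\chi^{-1}:\Lambda\to\cO_F$ be the homomorphism induced by the $p$-adic avatar of $\chi^{-1}$, so $\TT\otimes_{\Lambda,\chi^{-1}}\cO_F\cong T$. Since $L(f,\chi^{-1},r)\neq 0$, Remark~\ref{rem CHK} gives $H^1_f(K,V)=H^1_f(K,V^\ast(1))=0$, hence by Corollary~\ref{cor:fp} also $H^2(G_{K,S},V)=0$ and the identification ${\det}_F^{-1}(\rgamma(G_{K,S},V))={\bigwedge}_F^2 H^1(G_{K,S},V)$. The base-change property of the determinant functor then produces an $\cO_F$-basis $\fz_\gamma:=\fz_S\otimes_{\Lambda,\chi^{-1}}1$ of ${\det}_{\cO_F}^{-1}(\rgamma(G_{K,S},T))$, and it remains to check the interpolation property $\exp_\fp^\ast({\rm loc}_\fp\,\fz_\gamma)=\frac{L_S(f,\chi^{-1},r)}{\Omega_{\gamma,\delta}}\,\delta^\ast$. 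I would obtain this by specialising the defining relation ``$\varepsilon_\fp$ sends $\fz_S$ to $L_\fp^{\rm BDP}$'' at $\chi^{-1}$: the interpolation formula for $\varepsilon_\fp$ expresses $\exp_\fp^\ast({\rm loc}_\fp\,\fz_\gamma)$ as $\chi(L_\fp^{\rm BDP})$ divided by an explicit product of the $p$-adic CM period $\Omega_p$, the $\Gamma$-factor $\Gamma(j-r+1)\Gamma(j+r)$ and a local Euler-type factor, times $\delta^\ast$; and the BDP interpolation formula recalled in \S\ref{sec idea} expresses $\chi(L_\fp^{\rm BDP})$ in terms of $L(f,\chi^{-1},r)$, the complex period $\Omega_\infty^{4j}(2\pi)^{1-2j}\sqrt{D_K}^{2j-1}$, the same $\Gamma$-factor and the same local factor. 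Matching the two, the $p$-adic period, the $\Gamma$-factors and the Euler factors cancel, leaving $L(f,\chi^{-1},r)$ against $\delta^\ast$ divided by the complex period; a separate computation --- using the class-number-one hypothesis to describe $M=M_f(r)\otimes M(\chi)$ explicitly and invoking Shimura's evaluation \cite{shimura,deligne} (cf.\ \cite[Thm.~5.5]{BDP}) --- identifies $\Omega_\infty^{4j}(2\pi)^{1-2j}\sqrt{D_K}^{2j-1}$ with Deligne's period $\Omega_{\gamma,\delta}$ for the lattice-compatible bases $\gamma,\delta$, the discrepancy between $L$ and $L_S$ being absorbed by the bad Euler factors already encoded in $\rgamma(G_{K,S},-)$.

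I expect the main obstacle to be this last step: the term-by-term comparison of the archimedean and $p$-adic periods, the $\Gamma$-factors, the Tate twists and the (modified) Euler factors on the two sides --- that is, reconciling the interpolation property of the local epsilon element with that of $L_\fp^{\rm BDP}$ and with Deligne's period --- together with the verification that $\fz_S$ descends to $\Lambda$-coefficients. By contrast, the descent formalism (base change of determinant functors, Poitou--Tate duality) and the reformulation of the main conjecture as the assertion that $\fz_S$ is a $\Lambda$-basis are essentially formal once the element \eqref{zs} has been constructed.
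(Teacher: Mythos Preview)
Your proposal is correct and follows essentially the same route as the paper: construct $\fz_S$ over $\Lambda^{\rm ur}$ from $L_\fp^{\rm BDP}$ via the local epsilon element, descend to $\Lambda$ by matching the Frobenius semilinearity of $\varepsilon_\fp$ against Lemma~\ref{phiBDP}, specialise at $\chi$ to obtain an $\cO_F$-basis, and then verify the interpolation formula by comparing the specialisation of $\varepsilon_\fp$ (Proposition~\ref{epsilon twist}) with the BDP interpolation~\eqref{BDPinterpolation} and with Deligne's period (Lemmas~\ref{compare cm padic} and~\ref{compare cm1}). The only presentational difference is that the paper packages the construction of $\fz_S$ through the Selmer complex $\widetilde\rgamma_\fp(K,\TT)$ and the canonical isomorphism~\eqref{can det} together with the explicit Euler-factor bases of Lemma~\ref{lem euler}, rather than phrasing it as a ``trivialisation of ${\det}_\Lambda^{-1}(\rgamma(G_{K,S},\TT))$ after inverting $L_\fp^{\rm BDP}$''; your description is morally the same but slightly less precise about how the bad-prime Euler factors enter, and the specialisation map should be at $\chi$ (so that $\TT\otimes_{\Lambda,\chi}\cO_F\simeq T$) rather than at $\chi^{-1}$.
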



\begin{remark}
The validity of Conjecture \ref{TNC} is independent of the choices of $S$, $T$, $\gamma$ and $\delta$. 
\end{remark}

\subsection{The Iwasawa main conjecture}\label{sec IMC}

We review the formulation of the Iwasawa main conjecture for the Bertolini-Darmon-Prasanna (BDP) $p$-adic $L$-function and state the main results of this article. 

Let $F_f$ be the minimal extension of $\QQ_p$ which contains the Fourier coefficients of $f$ via $\iota_p: \overline \QQ \hookrightarrow \overline \QQ_p$. Let $\cO_f:=\cO_{F_f}$ be the ring of integers of $F_f$. Let $V_f$ be the $p$-adic Galois representation attached to $f$ with coefficients in $F_f$. Fix a stable $\cO_f$-lattice $T_f \subset V_f$. 

Let $K_\infty/K$ be the anticyclotomic $\ZZ_p$-extension and set $\Gamma:=\Gal(K_\infty/K)$. 
Let $\widehat \ZZ_p^{\rm ur}$ be the completion of the ring of integers of the maximal unramified extension $\QQ_p^{\rm ur}$ of $\QQ_p$. We set
$$\Lambda:=\cO_{f}[[\Gamma]] \text{ and }\Lambda^{\rm ur}:= \widehat \ZZ_p^{\rm ur}\widehat \otimes \Lambda.$$


The BDP $p$-adic $L$-function for $f$
$$L_\fp^{\rm BDP} \in \Lambda^{\rm ur}$$
satisfies the following interpolation property: for a Hecke character of infinity type $(j,-j)$ with $j\geq r$ such that its $p$-adic avatar $\chi$ factors through $\Gamma$ and is crystalline at both $\fp$ and $\barfp$, we have
\begin{equation}\label{BDPinterpolation}
\chi(L_\fp^{\rm BDP}) = \Omega_p^{4j}\cdot \Gamma(j-r+1) \Gamma(j+r)\cdot  (1-a_p \chi^{-1}(\barfp) p^{-r} + \chi^{-2}(\barfp) p^{-1})^2 \frac{L(f,\chi^{-1},r)}{ \Omega_\infty^{4j} (2\pi)^{1-2j}\sqrt{D_K}^{2j-1} }
\end{equation}
Here $\Omega_\infty \in \CC^\times$ and $\Omega_p \in (\widehat \ZZ_p^{\rm ur})^\times$ denote CM periods, whose definitions are given in \S \ref{sec CM} below. (Note that $\Omega_K$ in \cite[\S 2.5]{CH} satisfies $\Omega_\infty= 2\pi i \cdot \Omega_K$. Our $L_\fp^{\rm BDP}$ is the involution of $\mathscr{L}_\fp(f,{\bf 1})^2$, where $\mathscr{L}_\fp(f,{\bf 1})$ is as in \cite[Thm. 2.1.3]{castellaTNC}.) 
See \cite{BDP}, \cite{brakocevic}, \cite{CH} for the construction of $L_\fp^{\rm BDP}$. 

Let 
$$\TT:= \Lambda \otimes_{\cO_f}T_f(r)$$
be the deformation of $T_f(r)$, on which $G_K$ acts by
$$\sigma \cdot (x\otimes y) := \overline \sigma^{-1} x\otimes \sigma y \quad (\sigma \in G_K, \ x\in \Lambda, \ y\in T_f(r)),$$
where $\overline \sigma \in \Gamma$ denotes the image of $\sigma$. 
We consider the following Selmer complex:
\begin{equation}\label{def selmer}
\widetilde \rgamma_\fp(K,\TT):= {\rm Cone}\left(\rgamma(G_{K,S}, \TT) \to \rgamma(K_\fp, \TT) \oplus \bigoplus_{v\in S, v\nmid p}\rgamma_{/{\rm ur}}(K_v,\TT)\right)[-1].
\end{equation}
Here we set
$$\rgamma_{/{\rm ur}}(K_v,\TT):= {\rm Cone}\left(\rgamma_{\rm ur}(K_v,\TT)\xrightarrow{-{\rm Inf}_{K_v^{\rm ur}/K_v}} \rgamma(K_v,\TT)\right).$$
Note that $\widetilde \rgamma_\fp(K,\TT)$ coincides with the Selmer complex defined in \cite[\S 6.1]{nekovar} for the local condition
$$U_v^+(\TT):= \begin{cases}
0 &\text{if $v=\fp$,}\\
\rgamma(K_{\barfp},\TT) &\text{if $v=\barfp$,}\\
\rgamma_{\rm ur}(K_v,\TT) &\text{if $v\nmid p$.}
\end{cases}$$

For a commutative ring $R$, let $Q(R)$ be the total quotient ring of $R$. The Iwasawa main conjecture for the BDP $p$-adic $L$-function is stated as follows. 

\begin{conjecture}[The Iwasawa main conjecture]\label{IMC}
The complex $Q(\Lambda) \lotimes_{\Lambda} \widetilde \rgamma_\fp(K,\TT)$ is acyclic and there is a $\Lambda^{\rm ur}$-basis
$$\fz_\fp \in \Lambda^{\rm ur} \otimes_{\Lambda} {\det}_{\Lambda}^{-1}(\widetilde \rgamma_\fp(K,\TT))$$
such that
$$\pi(\fz_\fp)= L_\fp^{\rm BDP},$$
where $\pi$ denotes the canonical isomorphism
$$\pi: Q(\Lambda^{\rm ur})\otimes_{\Lambda} {\det}_{\Lambda}^{-1}(\widetilde \rgamma_\fp(K,\TT)) \xrightarrow{\sim} Q(\Lambda^{\rm ur}).$$
\end{conjecture}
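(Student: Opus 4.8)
The plan is to reduce Conjecture \ref{IMC}, via the equivalence \cite[Prop. 3.10]{ks} recorded in (\ref{IMC without L}), to proving that equality of characteristic ideals over $\Lambda$, and then to establish it by feeding in geometry. Although $z_{K_\infty}$ of (\ref{zinfty}) has been built analytically out of $L_\fp^{\rm BDP}$ and the local epsilon element $\varepsilon_\fp$, it should coincide --- up to the explicit normalisation by CM periods and $\Gamma$-factors worked out in \S \ref{sec compare} --- with the $\Lambda$-adic class assembled from a norm-compatible family of generalised Heegner cycles on the relevant Kuga--Sato varieties over the ring class fields of $K$ (cf.\ \cite{BDP} and its higher-weight and twisted refinements; compare the $\Lambda$-adic Heegner element $z_\infty^{\rm Hg}$ of \cite[\S 5.2.3]{ks} in the weight-two case). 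The Heegner hypothesis on $N$ furnishes these cycles, and their norm relations along $K_\infty/K$ assemble them into a single $\Lambda$-adic object. The bridge between the analytic and geometric sides is the explicit reciprocity law of Bertolini--Darmon--Prasanna: it computes the image of this $\Lambda$-adic Heegner class under the Coleman / Perrin-Riou map --- which, following the Fukaya--Kato philosophy \cite[\S 4]{FK}, is exactly the map furnished by $\varepsilon_\fp$ in the present split, crystalline-at-$\fp$ situation --- and identifies it with $L_\fp^{\rm BDP}$. Granting this, $z_{K_\infty}$ equals, up to a $\Lambda$-unit, the $\Lambda$-adic Heegner class, so Conjecture \ref{IMC} becomes the question of how sharply that class detects the dual Selmer group.

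First I would dispose of the acyclicity assertion. The complex $\widetilde\rgamma_\fp(K,\TT)$ has Euler characteristic zero --- the local term $\rgamma(K_\fp,\TT)$ at the split prime cancelling the contribution of the unique archimedean place of $K$ --- so $Q(\Lambda)\lotimes_\Lambda\widetilde\rgamma_\fp(K,\TT)$ is acyclic as soon as its $H^1$ is $\Lambda$-torsion. Specialising at an arithmetic character $\chi$ that factors through $\Gamma$, this $H^1$ becomes the Selmer group for the $\fp$-strict, $\barfp$-relaxed local conditions, which by Lemma \ref{lem:fp} agree there with the Bloch--Kato conditions, so it is $H^1_f(K,V_{f,F}(r)\otimes\chi^{-1})$; choosing $\chi$ with $L(f,\chi^{-1},r)\neq 0$ (possible by the non-vanishing of anticyclotomic central $L$-values, which via (\ref{BDPinterpolation}) also shows $L_\fp^{\rm BDP}\neq 0$), this group vanishes by Remark \ref{rem CHK}, and hence the $\Lambda$-adic module is torsion by semicontinuity of rank. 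The easy divisibility in (\ref{IMC without L}) --- that the characteristic ideal of $H^2(G_{K,S},\TT)$ divides the $\Lambda$-index of $z_{K_\infty}$ --- I would then extract from the Euler-system/Kolyvagin machinery for generalised Heegner cycles (in the style of Howard, Castella--Hsieh and Fouquet): the base Heegner class extends to an anticyclotomic Euler system, a Kolyvagin-system argument bounds the dual Selmer group, and Poitou--Tate duality converts this into the divisibility, the local error terms away from $p$ and at $\barfp$ being controlled because $p$ splits and the ramified primes contribute only torsion.

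The reverse divisibility is the hard part, and it will not come from the Euler system: a genuinely independent input is needed. One may (i) run an Eisenstein-congruence argument in the style of Skinner--Urban and Wan for the relevant anticyclotomic Selmer group; or (ii) pass to the two-variable (cyclotomic $\times$ anticyclotomic) or Hida-theoretic picture available when $p$ splits and deduce it from the cyclotomic main conjecture via an explicit reciprocity law in the $\barfp$-direction; or (iii), in the weight-two case, invoke \cite[Thm. B]{BCK} and \cite[Cor. 7.2]{CHKLL}, where Conjecture \ref{IMC} is moreover equivalent to Perrin-Riou's Heegner point main conjecture (cf.\ \cite{BD}), the general-weight case requiring an extension of those methods. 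Along the way one checks the routine but essential compatibilities: that the local conditions defining $\widetilde\rgamma_\fp(K,\TT)$ are precisely those under which the Heegner classes and the Euler-system bound are naturally formulated --- this is where the crystallinity of the Heegner class at $\fp$, and hence $j\geq r$ and Lemma \ref{lem:fp}, enter --- and that the ring $\Lambda^{\rm ur}$ and the $p$-adic period $\Omega_p$ enter consistently through the explicit reciprocity law, in tandem with the Frobenius behaviour of $L_\fp^{\rm BDP}$ recorded in Lemma \ref{phiBDP}. The main obstacle is exactly this reverse divisibility: the Euler system only ever gives an upper bound on the Selmer group, and producing the matching lower bound is the deep heart of any main conjecture of this type --- which is why Conjecture \ref{IMC} is at present unconditional only for $r=1$, and why Theorem \ref{main} is stated conditionally on it.
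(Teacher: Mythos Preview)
The statement you are attempting to prove is Conjecture \ref{IMC}, the Iwasawa main conjecture for the Bertolini--Darmon--Prasanna $p$-adic $L$-function. The paper does \emph{not} prove this statement: it is formulated as a conjecture and then used as the standing hypothesis for the main result, Theorem \ref{thm:main}. There is therefore no proof in the paper against which to compare your proposal.

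What you have written is not a proof but an outline of a research programme for attacking the conjecture, and you essentially concede this in your final sentence. Two specific points are worth flagging. First, the element $z_{K_\infty}$ of (\ref{zinfty}) is constructed in the paper from the basis $\fz_\fp$ whose very existence is the content of Conjecture \ref{IMC}; to use it as the starting point of an argument \emph{for} the conjecture you must instead work with the unconditional construction over $Q(\Lambda)$ mentioned just after (\ref{IMC without L}), and you do not make this distinction explicit. Second, the identification of $z_{K_\infty}$ with a $\Lambda$-adic generalised Heegner class is itself only conjectural in the generality of the paper (this is stated immediately after (\ref{zinfty})), so it cannot be invoked as a known bridge. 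The ``reverse divisibility'' you defer to Skinner--Urban/Wan style methods or to \cite{BCK}, \cite{CHKLL} is indeed the deep step, and those references establish it only under the additional hypotheses listed in Corollary \ref{cor:main}; in general weight it remains open, which is exactly why Theorem \ref{main} is conditional.
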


\begin{remark}
We set $\bA:= \TT^\vee(1)$ and 
$$H^1_{\barfp}(K,\bA) := \ker \left(H^1(G_{K,S},\bA) \to H^1(K_{\barfp},\bA)\oplus \bigoplus_{v\in S, v\nmid p}H^1_{/{\rm ur}}(K_v,\bA)\right).$$
(Note that $H^1_{/{\rm ur}}(K_v,\bA)=H^1(K_v,\bA)$ if $v\nmid p$ by \cite[Lem. B.3.3]{R}.) 
Let $\iota: \Lambda^{\rm ur}\to \Lambda^{\rm ur}$ be the involution induced by $\Gamma\to \Gamma; \ \gamma \mapsto \gamma^{-1}$. 
The usual formulation of the Iwasawa main conjecture is the following (see \cite[Conj. 4.1]{sanoderived}): {\it $H^1_{\barfp}(K,\bA)^\vee$ is $\Lambda$-torsion and }
$$\Lambda^{\rm ur}\cdot {\rm char}_\Lambda(H^1_{\barfp}(K,\bA)^\vee) = \Lambda^{\rm ur}\cdot \iota(L_\fp^{\rm BDP}).$$
This is equivalent to Conjecture \ref{IMC} by \cite[Prop. 4.5]{sanoderived}.
\end{remark}

The following is the main result of this article. 

\begin{theorem}\label{thm:main}
Let $\chi$ be an anticyclotomic Hecke character of $K$ such that its $p$-adic avatar factors through $\Gamma$ and is crystalline at both $\fp$ and $\barfp$. Assume $L(f,\chi^{-1},r)\neq 0$. 
Then Conjecture \ref{IMC} implies Conjecture \ref{TNC}. 
\end{theorem}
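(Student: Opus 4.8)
The plan is to pass from the Iwasawa-theoretic statement (Conjecture \ref{IMC}) to the Bloch--Kato statement (Conjecture \ref{TNC}) by a descent argument, using the local epsilon element $\varepsilon_\fp$ as the bridge between $L_\fp^{\rm BDP}$ and the de Rham side of the Tamagawa number conjecture. First I would use the hypothesis $L(f,\chi^{-1},r)\neq 0$ together with Remark \ref{rem CHK} (Castella--Hsieh, Kobayashi) to deduce that $H^1_f(K,V)=H^1_f(K,V^\ast(1))=0$, so that Corollary \ref{cor:fp} applies: $H^2(G_{K,S},V)=0$, $\dim_F H^1(G_{K,S},V)=2$, and $\mathrm{loc}_\fp$ is an isomorphism. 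This identifies ${\det}_F^{-1}(\rgamma(G_{K,S},V))$ with ${\bigwedge}_F^2 H^1(K_\fp,V)$ and hence, via $\exp^\ast_\fp$, with ${\bigwedge}_F^2 D_{{\rm dR},\fp}(V)$. The target is to produce the $\cO_F$-basis $\fz_\gamma$ of ${\det}_{\cO_F}^{-1}(\rgamma(G_{K,S},T))$ with the prescribed image $\frac{L_S(f,\chi^{-1},r)}{\Omega_{\gamma,\delta}}\cdot\delta^\ast$.

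The core construction is the element $\fz_S \in {\det}_\Lambda^{-1}(\rgamma(G_{K,S},\TT))$ of \eqref{zs}, built (as sketched in \S\ref{sec idea}) as the preimage of $L_\fp^{\rm BDP}$ under the ``Coleman map'' furnished by $\varepsilon_\fp$; here one must verify, using the Frobenius behaviour of $L_\fp^{\rm BDP}$ (Lemma \ref{phiBDP}), that $\fz_S$ actually has $\Lambda$-coefficients, and Conjecture \ref{IMC} is exactly what guarantees that $\fz_S$ is a $\Lambda$-basis of ${\det}_\Lambda^{-1}(\rgamma(G_{K,S},\TT))$ — one relates $\rgamma(G_{K,S},\TT)$ to the Selmer complex $\widetilde\rgamma_\fp(K,\TT)$ and to $\rgamma(K_\fp,\TT)$ by the defining exact triangle of \eqref{def selmer}, so that the statement $\pi(\fz_\fp)=L_\fp^{\rm BDP}$ translates into $\fz_S$ being a basis. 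Then I would specialize: applying $\Lambda\to\cO_F$ via the $p$-adic avatar $\chi^{-1}$ (note $\TT\otim_\Lambda\cO_F \cong T$ up to twist — more precisely $\TT\otimes_{\Lambda,\chi^{-1}}\cO_F$ recovers a lattice in $V=V_{f,F}(r)\otimes\chi^{-1}$), one obtains $\fz_\gamma := \fz_S\otimes 1 \in {\det}_{\cO_F}^{-1}(\rgamma(G_{K,S},T))$, which is an $\cO_F$-basis because base change of a determinant basis is a basis. By construction $\fz_\gamma$ maps, under $\mathrm{loc}_\fp$ composed with the specialization of $\varepsilon_\fp$, to $\chi^{-1}(L_\fp^{\rm BDP})$ times the relevant de Rham basis.

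The remaining, and genuinely substantial, step is the \emph{comparison of periods and $\Gamma$-factors}: one must check that feeding the interpolation formula \eqref{BDPinterpolation} through the specialized local epsilon element $\varepsilon_\fp$ produces exactly $\frac{L_S(f,\chi^{-1},r)}{\Omega_{\gamma,\delta}}\cdot\delta^\ast$ and not some other multiple. Concretely: (i) the $p$-adic period $\Omega_p^{4j}$ and the archimedean factor $\Gamma(j-r+1)\Gamma(j+r)$ appearing in \eqref{BDPinterpolation} must be shown to be precisely the factors introduced by the interpolation property of $\varepsilon_\fp$ (Kato's local epsilon conjecture, proved in our case by \cite{LVZ}, \cite{nakamura}, \cite{RJ}), so that these cancel; (ii) the archimedean denominator $\Omega_\infty^{4j}(2\pi)^{1-2j}\sqrt{D_K}^{2j-1}$ must be identified with Deligne's period $\Omega_{\gamma,\delta}$ of the motive $M_f(r)\otimes M(\chi)$ — this is where the class-number-one hypothesis is used, to pin down the motive $M(\chi)$ explicitly and reduce the period computation to a CM-periods calculation in the style of Shimura (cf. Lemma \ref{compare cm1}); (iii) the Euler factor $(1-a_p\chi^{-1}(\barfp)p^{-r}+\chi^{-2}(\barfp)p^{-1})^2$ must account for the difference between $L$ and $L_S$ together with any local correction terms at $\barfp$ and at the bad primes coming from passing between $\rgamma(G_{K,S},\TT)$ and the Selmer complex. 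I expect step (iii), namely tracking all the local Euler-factor bookkeeping at $\barfp$ and at $v\mid N$ through the descent exact triangles, together with the period identification in (ii), to be the main obstacle; the abstract descent formalism (via ${\det}$ and base change) and the input from the local epsilon conjecture are comparatively formal once set up.
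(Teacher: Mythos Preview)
Your proposal is correct and follows essentially the same route as the paper: construct $\fz_S$ from $\fz_\fp$ and $\varepsilon_\fp$ via the exact triangle defining $\widetilde\rgamma_\fp$, check $\Lambda$-integrality via the Frobenius compatibility (Proposition~\ref{phiepsilon} and Lemma~\ref{phiBDP}), specialize at $\chi$, and then match the interpolation formula \eqref{BDPinterpolation} against the explicit description of $\varepsilon_\fp^\chi$ (Proposition~\ref{epsilon twist}) together with the period identifications (Lemmas~\ref{compare cm padic} and~\ref{compare cm1}). One small correction: because the Galois action on $\TT$ is defined with an inverse ($\sigma\cdot(x\otimes y)=\overline\sigma^{-1}x\otimes\sigma y$), the correct specialization is $\TT\otimes_{\Lambda,\chi}\cO_F\simeq T$ and the relevant value is $\chi(L_\fp^{\rm BDP})$, not $\chi^{-1}$; also note that in step~(iii) one of the two copies of the Euler factor at~$\barfp$ in \eqref{BDPinterpolation} cancels against the $(1-\varphi)(1-p^{-1}\varphi^{-1})^{-1}$ coming from $\varepsilon_\fp$ (using $\chi(\fp)=\chi^{-1}(\barfp)$), while the remaining copy together with the $\fp$-factor and Lemma~\ref{lem euler} at $v\mid N$ accounts exactly for $L\to L_S$.
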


\begin{remark}
As noted in Remark \ref{rem CHK}, we have $H^1_f(K,V)=H^1_f(K,V^\ast(1))=0$ if $L(f,\chi^{-1},r)\neq 0$. (Note that, since $\chi$ is anticyclotomic, we have $\chi^{-1}(v)=\chi(\overline v)$ for any finite place $v$ and so $L(f,\chi^{-1},r)\neq 0$ is equivalent to $L(f,\chi,r)\neq 0$.) Hence it is sufficient to prove the last claim of Conjecture \ref{TNC}. 
\end{remark}

\begin{corollary}\label{cor:main}
Let $\chi$ be as in Theorem \ref{thm:main}. 
Suppose $r=1$ and $f$ corresponds to an elliptic curve $E$. Assume the following:
\begin{itemize}
\item $L(f,\chi^{-1},1)\neq 0$;
\item the representation $\rho:G_\QQ\to {\rm Aut}(E[p])$ is surjective;
\item $\rho$ is ramified at every $\ell | N$;
\item $N$ is square-free;
\item $p$ is non-anomalous.
\end{itemize}
Then the Tamagawa number conjecture for $(h^1(E/K)(1)\otimes M(\chi), \cO_F)$ is true. 
\end{corollary}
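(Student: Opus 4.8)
The plan is to deduce the corollary directly from Theorem \ref{thm:main} together with the known cases of the anticyclotomic Iwasawa main conjecture for $L_\fp^{\rm BDP}$. The first step is a harmless identification of motives: for $r=1$ the newform $f$ has weight $2$, so the motive $M_f$, which is pure of weight $2r-1=1$, is $h^1(E)$; hence $h^1(E/K)(1)\otimes M(\chi)$ is exactly the critical motive $M=M_f(r)\otimes M(\chi)$ considered above, and the assertion of the corollary is precisely Conjecture \ref{TNC} for this $M$. Since $L(f,\chi^{-1},1)\neq 0$ is assumed, Theorem \ref{thm:main} reduces everything to verifying Conjecture \ref{IMC} in the present setting.

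The second step is to invoke the proven cases of that main conjecture. Under the listed hypotheses --- $\rho$ surjective, $\rho$ ramified at every $\ell\mid N$, $N$ squarefree, and $p$ non-anomalous --- the results of \cite[Thm. B]{BCK} and \cite[Cor. 7.2]{CHKLL} establish the anticyclotomic main conjecture for the BDP $p$-adic $L$-function. What remains is the (purely formal) translation between the way that conjecture is phrased in those references --- an equality of characteristic ideals involving a ``BDP Selmer group'' --- and the determinant-module formulation adopted in Conjecture \ref{IMC}. This is exactly the content of the remark following Conjecture \ref{IMC}: via \cite[Prop. 4.5]{sanoderived} one identifies $H^1_{\barfp}(K,\bA)^\vee$ (with $\bA=\TT^\vee(1)$) as the module whose characteristic ideal is computed in \cite{BCK} and \cite{CHKLL}, and one must check that the local conditions agree on the nose --- no condition at $\fp$, full local cohomology at $\barfp$, and unramified away from $p$ --- and that the divisibilities proved in those works combine to the equality demanded by Conjecture \ref{IMC}.

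The third step is to confirm that the numerical hypotheses of the corollary genuinely imply (and not merely resemble) the hypotheses of \cite{BCK} and \cite{CHKLL}: surjectivity of $\rho$ supplies the ``big image'' input, in particular residual absolute irreducibility; squarefreeness of $N$ together with ramification of $\rho$ at the bad primes excludes the Eisenstein-type and other exceptional situations ruled out there; and non-anomalousness of $p$ guarantees that the Euler factor $(1-a_p\chi^{-1}(\barfp)p^{-1}+\chi^{-2}(\barfp)p^{-1})^2$ occurring in the interpolation formula (\ref{BDPinterpolation}) specializes to a $p$-adic unit, so that descending from the integral main conjecture to the integral statement of Conjecture \ref{TNC} through Theorem \ref{thm:main} loses no power of $p$. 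With these verifications in place, Theorem \ref{thm:main} gives the conclusion with no further argument.

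I expect the main obstacle to be precisely this cross-referencing of normalizations and hypotheses across the several sources: matching the exact shape of $L_\fp^{\rm BDP}$ used in \cite{BCK} and \cite{CHKLL} (square versus non-square, the involution, and the choices of complex and $p$-adic periods) with the normalization fixed in \S\ref{sec IMC}, and verifying that the ramification and $p$-distinguishedness conditions imposed in those papers really follow from the hypotheses listed in the corollary. No genuinely new ideas are needed, but care is required to make the chain of reductions exact.
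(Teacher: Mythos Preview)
Your approach is correct and matches the paper's: the corollary follows immediately from Theorem~\ref{thm:main} once one cites \cite[Thm.~B]{BCK} (ordinary case) and \cite[Cor.~7.2]{CHKLL} (supersingular case), which establish Conjecture~\ref{IMC} under the listed hypotheses. One minor correction: non-anomalousness enters as a hypothesis of those cited main-conjecture results, not as a condition needed to control Euler factors in the descent---Theorem~\ref{thm:main} already delivers the full integral statement once Conjecture~\ref{IMC} is known.
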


\begin{proof}
This follows from \cite[Thm. B]{BCK} (ordinary) and \cite[Cor. 7.2]{CHKLL} (supersingular), where Conjecture \ref{IMC} is proved under the stated assumptions. 
\end{proof}

\section{Proof}

In this section, we give a proof of Theorem \ref{thm:main}. 

As explained in \S \ref{sec idea}, the key idea is to construct a $\Lambda$-basis
$$\fz_S \in {\det}_\Lambda^{-1}(\rgamma(G_{K,S},\TT))$$
which is related with the value $L(f,\chi^{-1},r)$. We first give a review on the local epsilon element in \S \ref{sec local epsilon}, which is necessary in the construction of $\fz_S$. In \S \ref{sec construction}, we construct $\fz_S$ by using $\fz_\fp$ in the Iwasawa main conjecture (Conjecture \ref{IMC}) and the local epsilon element. (Note that in this construction we do not need to assume $K$ has class number one.) Theorem \ref{thm:main} is then reduced to the interpolation property of $\fz_S$, which is Theorem \ref{thm:interpolation}. We prove Theorem \ref{thm:interpolation} by comparing various periods. We give preliminaries on CM periods and choice of bases of Betti/de Rham cohomology in \S\S \ref{sec CM} and \ref{sec choice}. We complete the proof of Theorem \ref{thm:interpolation} (and hence Theorem \ref{thm:main}) in \S \ref{sec compare}.

\subsection{The local epsilon element}\label{sec local epsilon}

We first take a natural basis of ${\bigwedge}_\Lambda^2 \TT= \Lambda \otimes_{\cO_f} {\bigwedge}_{\cO_f}^2 T_f(r)$ in the following way. Since we regard $\overline \QQ \subset \CC$, we have a canonical $p^n$-th root of unity $\zeta_{p^n}:=e^{2\pi i/p^n} \in \overline\QQ$. The collection of these gives a $\ZZ_p$-basis $\xi := (\iota_p(\zeta_{p^n}))_n \in H^0(\overline \QQ_p ,\ZZ_p(1))$. Since we have a canonical isomorphism ${\bigwedge}_{\cO_f}^2 T_f(r)\simeq \cO_f (1)$, we can regard $\xi$ as a $\Lambda$-basis of $\bigwedge_{\Lambda}^2 \TT$, which we denote by $\gamma_\TT$.

Kato's local epsilon conjecture (see \cite{katolecture2} or \cite[Conj. 3.4.3]{FK}) predicts the existence of a $\Lambda^{\rm ur}$-basis 
$$\varepsilon_{\Lambda,\xi}(\TT) \in \Lambda^{\rm ur} \otimes_\Lambda {\det}_\Lambda(\rgamma(K_\fp,\TT)) \otimes_\Lambda {\bigwedge}_\Lambda^2 \TT$$
(``local epsilon element", see Remark \ref{rem isom basis} below) satisfying certain interpolation properties. In our setting, this conjecture is known, thanks to work of Loeffler-Venjakob-Zerbes \cite{LVZ}, Nakamura \cite{nakamura} and Rodrigues~Jacinto \cite{RJ}. In fact, since $f$ is good at $p$, $\TT=\Lambda\otimes_{\cO_f}T_f(r)$ is a deformation of a crystalline representation of $G_{\QQ_p}$, which is treated in \cite{LVZ}. (Note that $K_\fp\simeq \QQ_p$.) Alternatively, since $T_f(r)$ is a representation of $G_{\QQ_p}$ of rank two, the validity of the local epsilon conjecture is covered by \cite{nakamura}, \cite{RJ}. 

\begin{remark}\label{rem isom basis}
In \cite{FK}, a ``local epsilon isomorphism"
$$\varepsilon_{\Lambda,\xi}(\TT):{\rm Det}_{\Lambda^{\rm ur}}(0) \xrightarrow{\sim} \Lambda^{\rm ur} \otimes_\Lambda ( {\rm Det}_\Lambda(\rgamma(K_\fp,\TT)) \cdot {\rm Det}_\Lambda (\TT) )$$
is considered, where ${\rm Det}_\Lambda$ denotes the determinant functor as in \cite[\S 1.2]{FK}. In our commutative setting, we replace ${\rm Det}_\Lambda$ with the determinant module ${\rm det}_\Lambda$ and identify the isomorphism $\varepsilon_{\Lambda,\xi}(\TT)$ with the image of $1 \in \Lambda^{\rm ur}= {\det}_{\Lambda^{\rm ur}}(0)$, which is a basis of $\Lambda^{\rm ur} \otimes_\Lambda ( {\det}_\Lambda(\rgamma(K_\fp,\TT)) \otimes_\Lambda {\det}_\Lambda (\TT) )$. 
\end{remark}

Define
\begin{equation}\label{def local epsilon}
\varepsilon_\fp \in \Lambda^{\rm ur}\otimes_\Lambda {\det}_\Lambda(\rgamma(K_\fp,\TT))
\end{equation}
to be the $\Lambda^{\rm ur}$-basis such that 
$$\varepsilon_\fp\otimes\gamma_\TT = \varepsilon_{\Lambda,\xi}(\TT).$$

\begin{proposition}\label{phiepsilon}
Let $\varphi_p$ be the arithmetic Frobenius acting on $\widehat \ZZ_p^{\rm ur}$ (and hence on $\Lambda^{\rm ur}$). Let $\chi_{\rm cyc}: G_{\QQ_p}\to \ZZ_p^\times$ be the cyclotomic character. Let $\QQ_p^{\rm ab}$ be the maximal abelian extension of $\QQ_p$ and $\tau_p\in G_{\QQ_p}^{\rm ab}:=\Gal(\QQ_p^{\rm ab}/\QQ_p)$ the unique lift of the arithmetic Frobenius such that $\chi_{\rm cyc}(\tau_p)=1$. Let $\overline \tau_p \in \Gamma$ be the image of $\tau_p$. Then we have
$$\varphi_p(\varepsilon_\fp)= \overline \tau_p^2 \cdot \varepsilon_\fp.$$
\end{proposition}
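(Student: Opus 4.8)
The plan is to reduce the statement to the known behavior of the local $\varepsilon$-isomorphism under the Frobenius $\varphi_p$ acting on the coefficients $\widehat{\ZZ}_p^{\mathrm{ur}}$, and then track how this translates into a twist by $\overline{\tau}_p^2$ under our normalizations. First I would recall the general ``change of Frobenius'' compatibility satisfied by Kato's local $\varepsilon$-isomorphism (see \cite{katolecture2}, \cite[\S 3.4]{FK}, and the explicit computations in \cite{LVZ}, \cite{nakamura}, \cite{RJ}): for a de Rham representation $\TT$ of $G_{\QQ_p}$ over $\Lambda$, the element $\varepsilon_{\Lambda,\xi}(\TT) \in \Lambda^{\mathrm{ur}}\otimes_\Lambda\bigl({\det}_\Lambda(\rgamma(K_\fp,\TT))\otimes_\Lambda\bigwedge_\Lambda^2\TT\bigr)$ satisfies
$$\varphi_p\bigl(\varepsilon_{\Lambda,\xi}(\TT)\bigr) = \bigl(\text{Frobenius-comparison factor}\bigr)\cdot \varepsilon_{\Lambda,\xi}(\TT),$$
where the factor is governed by the action of $\varphi_p$ on $D_{\mathrm{cris}}$ of the representation, which for our $\TT$ is $\Lambda$-linearly twisted. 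Concretely, $\varphi_p$ acts trivially on $\rgamma(K_\fp,\TT)$ and on $T_f(r)$ (these are defined over $\ZZ_p$, not $\widehat{\ZZ}_p^{\mathrm{ur}}$), so the only source of a discrepancy is the $\Lambda$-semilinear structure coming from the deformation variable: the arithmetic Frobenius on $\QQ_p^{\mathrm{ur}}$ corresponds, under local class field theory normalized so that $\chi_{\mathrm{cyc}}(\tau_p)=1$, precisely to the group-like element $\overline{\tau}_p \in \Gamma \subset \Lambda^\times$.

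The key computation is then the following. The representation $\TT = \Lambda\otimes_{\cO_f}T_f(r)$ carries the twisted Galois action $\sigma\cdot(x\otimes y)=\overline{\sigma}^{-1}x\otimes\sigma y$; its restriction to $G_{K_\fp}=G_{\QQ_p}$ is, as a $\Lambda[G_{\QQ_p}]$-module, the ``cyclotomic-type'' deformation of $T_f(r)$ along the character $G_{\QQ_p}\twoheadrightarrow\Gamma$, $\sigma\mapsto\overline{\sigma}^{-1}$. Under the dictionary between the unramified coefficient-Frobenius $\varphi_p$ and the arithmetic Frobenius $\tau_p$ on the Galois side, the $\varepsilon$-isomorphism of such a deformation picks up exactly the value of the deformation character at $\tau_p$ to the power equal to the rank. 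Here the rank of $T_f(r)$ is $2$, and the deformation character sends $\tau_p$ to $\overline{\tau}_p^{-1}$; dualizing/inverting as required by the conventions in the definition of $\varepsilon_{\Lambda,\xi}$ (the $\varepsilon$-factor of an unramified twist contributes via the geometric, not arithmetic, Frobenius), this becomes multiplication by $\overline{\tau}_p^{2}$. Since $\gamma_\TT$ (the basis of $\bigwedge_\Lambda^2\TT$ coming from $\xi\in H^0(\overline{\QQ}_p,\ZZ_p(1))$) is fixed by $\varphi_p$ — it is defined over $\ZZ_p$, via the canonical isomorphism $\bigwedge^2_{\cO_f}T_f(r)\simeq\cO_f(1)$ and $\xi = (\iota_p(\zeta_{p^n}))_n$ — the relation $\varepsilon_\fp\otimes\gamma_\TT=\varepsilon_{\Lambda,\xi}(\TT)$ transports the identity for $\varepsilon_{\Lambda,\xi}(\TT)$ verbatim to $\varphi_p(\varepsilon_\fp)=\overline{\tau}_p^2\cdot\varepsilon_\fp$.

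The main obstacle, and the step requiring care, is pinning down the precise power of $\overline{\tau}_p$ — that is, verifying that the exponent is $+2$ rather than $\pm 1$, $-2$, or $0$. This requires being precise about three conventions simultaneously: (i) the normalization of the reciprocity map of local class field theory used in Kato's $\varepsilon$-conjecture (arithmetic vs.\ geometric Frobenius), which fixes whether $\tau_p$ corresponds to $\varphi_p$ or $\varphi_p^{-1}$; (ii) the direction of the twist in the $G_K$-action on $\TT$ (the $\overline{\sigma}^{-1}$, not $\overline{\sigma}$), which is what makes a $\chi$ of $\Gamma$ appear as $\chi^{-1}$ in the interpolation formula \eqref{BDPinterpolation}; and (iii) the fact that the $\varepsilon$-factor of a rank-$n$ unramified twist by an unramified character $\eta$ contributes $\eta(\mathrm{Fr})^{n}$ with $\mathrm{Fr}$ the \emph{geometric} Frobenius. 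I would handle this by specializing $\TT$ at a crystalline character $\chi$ of $\Gamma$ of infinity type $(j,-j)$ and comparing with the explicit interpolation property of $\varepsilon_{\Lambda,\xi}(\TT)$ proved in \cite{LVZ} (or \cite{nakamura}, \cite{RJ}): each such specialization fixes the exponent of $\overline{\tau}_p$ as the ``slope'' of the de Rham/crystalline comparison, and since this must hold for all $j$ the global exponent is forced. Matching the $p$-adic period $\Omega_p^{4j}$ appearing in \eqref{BDPinterpolation} — which carries the $\varphi_p$-semilinearity on the $\widehat{\ZZ}_p^{\mathrm{ur}}$ side — against the $\overline{\tau}_p$-twist is exactly the consistency check that nails down $+2$; this is the same period bookkeeping that will be needed in \S\ref{sec compare}, so it is natural to isolate it here.
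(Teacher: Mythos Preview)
Your proposal is essentially correct and invokes the same Frobenius-compatibility property of Kato's local $\varepsilon$-isomorphism as the paper does, but the paper's argument is considerably more direct. The paper's proof is a one-line application of \cite[Conj.~3.4.3(iv)]{FK}: that property asserts a $\varphi_p$-equivariance of $\varepsilon_{\Lambda,\xi}(\TT)$ in which the coefficient-Frobenius is compensated by the Galois action of $\tau_p$ on the $\bigwedge_\Lambda^2\TT$ factor. Since $\tau_p$ acts on $\bigwedge_\Lambda^2\TT$ by $\overline\tau_p^{-2}$ (each factor of $\TT$ contributes $\overline\tau_p^{-1}$ from the deformation twist $\sigma\mapsto\overline\sigma^{-1}$, and $\chi_{\rm cyc}(\tau_p)=1$ kills the contribution from $\bigwedge_{\cO_f}^2 T_f(r)\simeq\cO_f(1)$), the identity $\varepsilon_\fp\otimes\gamma_\TT=\varepsilon_{\Lambda,\xi}(\TT)$ immediately yields $\varphi_p(\varepsilon_\fp)=\overline\tau_p^{2}\cdot\varepsilon_\fp$.

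Your detour through ``deformation character to the power of the rank'', your remark that $\gamma_\TT$ is $\varphi_p$-fixed (true but beside the point: the relevant action on $\gamma_\TT$ for property~(iv) is the \emph{Galois} action of $\tau_p$, which is nontrivial and equal to $\overline\tau_p^{-2}$), and above all the proposal to pin down the exponent by specializing at crystalline characters and matching against $\Omega_p^{4j}$, are all unnecessary. The exponent $+2$ is determined at once by the direct computation of the $\tau_p$-action on $\bigwedge_\Lambda^2\TT$; no consistency check against the period computations of \S\ref{sec compare} is needed here.
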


\begin{proof}
This follows from the property in \cite[Conj. 3.4.3(iv)]{FK} by noting that $\tau_p$ acts on ${\bigwedge}_\Lambda^2 \TT$ via multiplication by $\overline \tau_p^{-2} \in \Gamma$. 
\end{proof}

Recall $V:=V_f(r)\otimes_{F_f}F(\chi^{-1})$. 
Let $\widehat \QQ_p^{\rm ur}$ be the completion of the maximal unramified extension of $\QQ_p$ and set 
$$\widetilde F:=\widehat \QQ_p^{\rm ur}\otimes_{\QQ_p}F.$$ 
We set $D_{\rm dR}(V)=D_{{\rm dR},\fp}(V):=H^0(K_\fp, B_{\rm dR}\otimes_{\QQ_p} V)$. We recall the definition of the isomorphism
\begin{equation}\label{dr isom}
\varepsilon_{\rm dR}=\varepsilon_{F,\xi,{\rm dR}}(V): \widetilde F \otimes_F {\bigwedge}_F^2 D_{{\rm dR}}(V)\xrightarrow{\sim} \widetilde F \otimes_F {\bigwedge}_F^2 V
\end{equation}
constructed in \cite[\S 3.3.4]{FK}. 
Note that Hodge-Tate weights of $V$ (as a $G_{K_\fp}$-representation) are $r-j$ and $1-r-j$. Hence we have
$$m:=\sum_{i\in \ZZ}i \dim_F({\rm gr}^i D_{{\rm dR}}(V))= (-r+j)+(-1+r+j) =2j-1.$$
Let
$${\rm can}: (B_{\rm dR}\otimes_{\QQ_p} F) \otimes_F {\bigwedge}_F^2 D_{{\rm dR}}(V) \xrightarrow{\sim} (B_{\rm dR}\otimes_{\QQ_p}F) \otimes_F {\bigwedge}_F^2 V$$
be the canonical isomorphism. We define 
$$\varepsilon_{\rm dR}:= t_\xi^{-m} \cdot {\rm can},$$
where $t_\xi \in B_{\rm dR}^+$ denotes the uniformizer corresponding to $\xi$. (Since $V$ is crystalline, the linearized action of the Weil group on $D_{\rm pst}(V)$ is unramified (see \cite[Prop. 2.3.2]{LVZ}), and hence the epsilon constant $\varepsilon_F(D_{\rm pst}(V),\psi)$ (as in \cite[\S 3.3.4]{FK}) is $1$ by \cite[(4) in \S 3.2.2]{FK}.) One can check that $\varepsilon_{\rm dR}$ has coefficients in $\widetilde F$ (see \cite[Prop. 3.3.5]{FK}).  

\begin{proposition}\label{epsilon twist}
Let
$$\varepsilon_\fp^\chi \in \widetilde F \otimes_{F} {\det}_F(\rgamma(K_\fp,V))= \Hom_{ F}\left( {\bigwedge}_F^2H^1(K_\fp,V), \widetilde F\right).$$
be the image of $\varepsilon_\fp$ under the $\chi$-twisting map:
$$\Lambda^{\rm ur}\otimes_\Lambda {\det}_\Lambda(\rgamma(K_\fp,\TT)) \xrightarrow{a\mapsto a\otimes 1} \Lambda^{\rm ur}\otimes_\Lambda {\det}_\Lambda(\rgamma(K_\fp,\TT))\otimes_{\Lambda,\chi}F \simeq \widetilde F \otimes_{F} {\det}_F(\rgamma(K_\fp,V)).
$$
(The last isomorphism follows from \cite[Prop. 1.6.5(3)]{FK}. Note that $V:=V_f(r)\otimes_{F_f}F(\chi^{-1})\simeq \TT\otimes_{\Lambda,\chi} F$.)
Then $\varepsilon_\fp^\chi$ coincides with the composition of the following maps:
\begin{itemize}
\item[(a)] the map induced by the Bloch-Kato dual exponential map
$$\exp^\ast_\fp: {\bigwedge}_F^2 H^1(K_\fp,V)\xrightarrow{\sim} {\bigwedge}_F^2 D_{{\rm dR}}(V) = {\bigwedge}_F^2 D_{\rm cris}(V),$$
\item[(b)] the automorphism $(1-\varphi)(1-p^{-1}\varphi^{-1})^{-1}$ on ${\bigwedge}_F^2 D_{\rm cris}(V)$ (which coincides with multiplication by $(1-a_p \chi(\fp) p^{-r} + \chi^{2}(\fp)p^{-1})(1-a_p \chi^{-1}(\fp) p^{-r} + \chi^{-2}(\fp)p^{-1})^{-1}$),
\item[(c)] the isomorphism
$$\varepsilon_{\rm dR}: \widetilde F \otimes_F {\bigwedge}_F^2 D_{{\rm dR}}(V)\xrightarrow{\sim} \widetilde F \otimes_F {\bigwedge}_F^2 V \simeq \widetilde F,$$
where the last isomorphism is determined by the basis $\gamma_T\in {\bigwedge}_F^2 V$ obtained from the fixed basis $\gamma_\TT\in {\bigwedge}_\Lambda^2 \TT$ via $\TT \otimes_{\Lambda,\chi}F\simeq V$, 
\item[(d)] multiplication by $-\Gamma(j-r+1)\Gamma(j+r)$.
\end{itemize}
\end{proposition}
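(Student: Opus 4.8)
The plan is to deduce the proposition from the de Rham descent property of the $\Lambda$-adic local $\varepsilon$-isomorphism. Since $T_f(r)$ is crystalline at $p$ and $K_\fp\simeq\QQ_p$, Kato's local $\varepsilon$-conjecture for $\TT$ holds (\cite{LVZ}, or \cite{nakamura}, \cite{RJ}), so $\varepsilon_{\Lambda,\xi}(\TT)$ satisfies all the properties listed in \cite[Conj. 3.4.3]{FK}. First I would use the compatibility of $\varepsilon_{\Lambda,\xi}$ with change of coefficient ring (together with \cite[Prop. 1.6.5(3)]{FK} and $\TT\otimes_{\Lambda,\chi}F\simeq V$) to identify the $\chi$-specialisation of $\varepsilon_{\Lambda,\xi}(\TT)$ with the $\varepsilon$-isomorphism $\varepsilon_{F,\xi}(V)$ of \cite{FK} attached to the crystalline $G_{\QQ_p}$-representation $V$. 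Unwinding the definition \eqref{def local epsilon} of $\varepsilon_\fp$, the construction of $\gamma_\TT$ out of $\xi$, and the acyclicity of $\rgamma(K_\fp,V)$ outside degree $1$ (established below), this exhibits $\varepsilon_\fp^\chi$ as $\varepsilon_{F,\xi}(V)$ read through the identification $\widetilde F\otimes_F\big({\det}_F\rgamma(K_\fp,V)\otimes_F{\bigwedge}_F^2V\big)=\Hom_F\!\big({\bigwedge}_F^2H^1(K_\fp,V),\widetilde F\big)$ furnished by the basis $\gamma_T$. Everything then reduces to writing out $\varepsilon_{F,\xi}(V)$ by the de Rham descent formula.

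Next I would record the local cohomology of $V$ at $\fp$. The Hodge--Tate weights of $V$ are $r-j$ and $1-r-j$, both $\le 0$, so ${\rm Fil}^0D_{{\rm dR}}(V)=D_{{\rm dR}}(V)$, the tangent space $t(V)=D_{{\rm dR}}(V)/{\rm Fil}^0$ vanishes, and $H^1_f(K_\fp,V)=0$ by Lemma \ref{lem:fp}. Dually, $V^\ast(1)$ has Hodge--Tate weights $1-r+j\ge 1$ and $r+j\ge 2$, so $H^0(K_\fp,V^\ast(1))=0$ and hence $H^2(K_\fp,V)=0$; together with $H^0(K_\fp,V)=0$ (which holds in our situation, e.g. by Corollary \ref{cor:fp}) and the local Euler characteristic formula, this gives $\dim_FH^1(K_\fp,V)=2$, the equality $H^1(K_\fp,V)=H^1_{/f}(K_\fp,V)$, and an isomorphism $\exp^\ast_\fp:H^1(K_\fp,V)\xrightarrow{\sim}D_{{\rm dR}}(V)=D_{{\rm cris}}(V)$. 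In particular $\rgamma(K_\fp,V)$ is represented by $H^1(K_\fp,V)$ placed in degree $1$, and all maps in (a)--(d) are isomorphisms of one-dimensional $\widetilde F$-spaces, as the statement requires.

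I would then apply the de Rham descent formula in \cite[Conj. 3.4.3]{FK} to $\varepsilon_{F,\xi}(V)$. Since $V$ is crystalline the local constant $\varepsilon_F(D_{{\rm pst}}(V),\psi)$ equals $1$, as recalled in the construction of $\varepsilon_{{\rm dR}}$ around \eqref{dr isom} (using \cite[Prop. 2.3.2]{LVZ} and \cite[(4) in \S 3.2.2]{FK}), so the formula presents $\varepsilon_{F,\xi}(V)$ as the composite of: the Bloch--Kato comparison of $\rgamma(K_\fp,V)$ with $D_{{\rm dR}}(V)\oplus t(V)$, which collapses here --- since $t(V)=0$ and $H^0=H^2=0$ --- to $\exp^\ast_\fp$, giving (a); the Frobenius correction $({\det}_F(1-\varphi\mid D_{{\rm cris}}(V)))\cdot({\det}_F(1-p^{-1}\varphi^{-1}\mid D_{{\rm cris}}(V)))^{-1}$ on ${\bigwedge}_F^2D_{{\rm cris}}(V)$, giving (b); the de Rham $\varepsilon$-isomorphism $\varepsilon_{F,\xi,{\rm dR}}(V)$, which in the crystalline case is exactly the map $t_\xi^{-m}\cdot{\rm can}$ of \eqref{dr isom} with $m=2j-1$, read against $\gamma_T$, giving (c); and the $\Gamma$-factor $\Gamma^\ast(V)$, giving (d). It remains to evaluate the two numerical inputs. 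An elementary computation of the $\varphi$-eigenvalues on $D_{{\rm cris}}(V)$ --- namely $\chi(\fp)\alpha p^{-r}$ and $\chi(\fp)\beta p^{-r}$, where $\alpha,\beta$ are the eigenvalues on $D_{{\rm cris}}(V_f)$ (so $\alpha+\beta=a_p$ and $\alpha\beta=p^{2r-1}$) and $\chi(\fp)$ is the crystalline Frobenius of $F(\chi^{-1})$ at $\fp$ by local--global compatibility --- identifies the scalar in (b) with $(1-a_p\chi(\fp)p^{-r}+\chi^2(\fp)p^{-1})(1-a_p\chi^{-1}(\fp)p^{-r}+\chi^{-2}(\fp)p^{-1})^{-1}$; and, since the filtration on $D_{{\rm dR}}(V)$ jumps at $j-r\ge 0$ and $j+r-1\ge 1$, the $\Gamma$-factor evaluates to $(-1)^m\,\Gamma(j-r+1)\,\Gamma(j+r)=-\Gamma(j-r+1)\Gamma(j+r)$, as $m=2j-1$ is odd. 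Assembling the four identifications proves the proposition.

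The main obstacle is bookkeeping with the Fukaya--Kato normalisations rather than any conceptual point. One must verify that the $p$-power compatible system $\xi$ normalising $\varepsilon_{\Lambda,\xi}(\TT)$, the de Rham isomorphism $\varepsilon_{F,\xi,{\rm dR}}(V)$ of \eqref{dr isom}, and the basis $\gamma_\TT$ is one and the same throughout, so that no spurious unit creeps in; that the ``de Rham part'' of the descent formula genuinely collapses to $\exp^\ast_\fp$ alone, which is precisely where $t(V)=0$ enters (in general it mixes $\exp$ and $\exp^\ast$); and, most delicately, that all signs match --- both the sign $(-1)^m$ coming from the $\Gamma$-factor and the implicit signs in the graded-determinant identifications (the ``sign issue'' flagged in the general notation). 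The cohomological input of the second paragraph is, by contrast, routine once the Hodge--Tate weights are in hand.
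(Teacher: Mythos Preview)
Your proposal is correct and follows essentially the same approach as the paper: use the base-change compatibility \cite[Conj.~3.4.3(ii)]{FK} to identify the $\chi$-specialisation with $\varepsilon_{F,\xi}(V)$, then unwind the de~Rham formula \cite[Conj.~3.4.3(v)]{FK} to read off (a)--(d). The paper's own proof is terser---it simply cites \cite[\S 3.3.2, \S 3.3.6]{FK} for the identifications $\theta_F(V)=$\,(a)$\circ$(b) and $\Gamma_F(V)=-\Gamma(j-r+1)\Gamma(j+r)$---whereas you spell out why the comparison collapses to $\exp^\ast_\fp$ alone (via $t(V)=0$) and compute the Frobenius and $\Gamma$-factors explicitly; this is the same argument with more detail.

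One minor correction: your appeal to Corollary~\ref{cor:fp} for $H^0(K_\fp,V)=0$ is misplaced, since that corollary concerns global cohomology under a Selmer-vanishing hypothesis. The local vanishing you need follows instead from $D_{\rm cris}(V)^{\varphi=1}=0$, which holds because the Frobenius eigenvalues $\chi(\fp)\alpha p^{-r},\ \chi(\fp)\beta p^{-r}$ have complex absolute value $\neq 1$ (Ramanujan--Petersson plus the archimedean size of $\chi(\fp)$). This is routine and does not affect the argument.
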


\begin{proof}
This follows from the properties in \cite[Conj. 3.4.3(ii) and (v)]{FK} and the definition of $\varepsilon_{F,\xi}(V)$ in \cite[\S 3.3]{FK}. Note that the composition of the maps in (a) and (b) is essentially $\theta_F(V)$ in \cite[\S 3.3.2]{FK}. Note that $\Gamma_F(V)$ in \cite[\S 3.3.6]{FK} is equal to $-\Gamma(j-r+1)\Gamma(j+r)$. 
\end{proof}

\subsection{Construction of a basis}\label{sec construction}

In the following, we assume the Iwasawa main conjecture (Conjecture \ref{IMC}).

By the definition of $\widetilde \rgamma_\fp(K,\TT)$ (see (\ref{def selmer})), we have a canonical isomorphism
\begin{equation}\label{can det}
{\det}_\Lambda^{-1}( \rgamma(G_{K,S}, \TT) ) \otimes_\Lambda {\det}_\Lambda( \rgamma(K_\fp, \TT)  ) \otimes_\Lambda \bigotimes_{v\in S, v\nmid p}{\det}_\Lambda(\rgamma_{/{\rm ur}}(K_v,\TT)) \simeq {\det}_\Lambda^{-1}(  \widetilde \rgamma_\fp(K,\TT))    .
\end{equation}

The following is well-known. 

\begin{lemma}\label{lem euler}
Let $v\in S$ be a finite place such that $v\nmid p $. Then ${\det}_\Lambda(\rgamma_{/{\rm ur}}(K_v,\TT))$ has a canonical $\Lambda$-basis whose image under the map
$${\det}_\Lambda(\rgamma_{/{\rm ur}}(K_v,\TT)) \xrightarrow{a \mapsto a\otimes 1} {\det}_\Lambda(\rgamma_{/{\rm ur}}(K_v,\TT)) \otimes_{\Lambda,\chi} F \simeq {\det}_F(\rgamma_{/{\rm ur}}(K_v,V)) \simeq F$$
is the Euler factor $\det(1-{\rm Fr}_v^{-1}\mid V^\ast(1)^{I_v})^{-1}$, where ${\rm Fr}_v$ denotes the arithmetic Frobenius and $I_v \subset G_{K_v}$ is the inertia subgroup. (The last isomorphism is due to the fact that $\rgamma_{/{\rm ur}}(K_v,V)$ is acyclic.)
\end{lemma}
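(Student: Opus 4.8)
The plan is to render the complex $\rgamma_{/{\rm ur}}(K_v,\TT)$ completely explicit for $v\nmid p$ and then read off its determinant. Since $v\nmid p$, wild inertia acts on $\TT$ through a finite quotient of order prime to $p$ and $I_v$ has $p$-cohomological dimension $\leq 1$, so $\rgamma(I_v,\TT)$ is concentrated in degrees $0$ and $1$, with $H^0=\TT^{I_v}$ and $H^1\cong\TT_{I_v}(-1)$ as $G_{K_v}/I_v$-modules. Feeding this together with $\rgamma_{\rm ur}(K_v,\TT)=[\TT^{I_v}\xrightarrow{1-{\rm Fr}_v^{-1}}\TT^{I_v}]$ into the mapping-fibre description of $\rgamma(K_v,\TT)$ over $G_{K_v}/I_v\cong\widehat{\ZZ}$, the octahedral axiom produces a canonical isomorphism in the derived category
\[
\rgamma_{/{\rm ur}}(K_v,\TT)\ \simeq\ \bigl[\TT_{I_v}(-1)\xrightarrow{\,1-{\rm Fr}_v^{-1}\,}\TT_{I_v}(-1)\bigr]
\]
with the two terms in degrees $1$ and $2$; equivalently, local Tate duality identifies $\rgamma_{/{\rm ur}}(K_v,\TT)$ with $\rhom_\Lambda(\rgamma_{\rm ur}(K_v,\TT^\ast(1)),\Lambda)[-2]$ (where $\TT^\ast(1)=\rhom_\Lambda(\TT,\Lambda)(1)$), since the unramified local conditions on $\TT$ and on $\TT^\ast(1)$ are exact orthogonal complements under the local pairing. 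In particular $\rgamma_{/{\rm ur}}(K_v,\TT)$ is a perfect complex of $\Lambda$-modules.

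Next I would note that $1-{\rm Fr}_v^{-1}$ is injective on $\TT_{I_v}(-1)$ with $\Lambda$-torsion cokernel, its determinant being a nonzero element of the domain $\Lambda$ (if the image $\overline{\rm Fr}_v\in\Gamma$ of ${\rm Fr}_v$ is nontrivial this is clear, $\Gamma$ being torsion-free; in the degenerate case $\overline{\rm Fr}_v=1$ one uses the Weil bounds for the Frobenius eigenvalues of $f$ to rule out the eigenvalue $1$). Hence $Q(\Lambda)\lotimes_\Lambda\rgamma_{/{\rm ur}}(K_v,\TT)$ is acyclic and its acyclicity gives a canonical trivialization $Q(\Lambda)\otimes_\Lambda{\det}_\Lambda(\rgamma_{/{\rm ur}}(K_v,\TT))\xrightarrow{\sim}Q(\Lambda)$, under which the invertible (hence free, $\Lambda$ being local) module ${\det}_\Lambda(\rgamma_{/{\rm ur}}(K_v,\TT))$ lands onto the fractional ideal $\Lambda\cdot\det(1-{\rm Fr}_v^{-1}\mid(\TT^\ast(1))^{I_v})^{-1}$. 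I would define the canonical $\Lambda$-basis to be the preimage of this generator. The underlying input is the Knudsen--Mumford formalism: for a generically acyclic two-term perfect complex $[P\xrightarrow{\psi}P]$, the determinant carries a distinguished basis whose image under the acyclicity trivialization is $(\det\psi)^{\pm 1}$, the sign fixed by the cohomological degrees.

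Finally I would verify compatibility with $\chi$-specialization: the formation of $\rgamma_{/{\rm ur}}(K_v,-)$, of ${\det}_\Lambda$, and of the acyclicity trivialization all commute with $-\otimes_{\Lambda,\chi}F$ (base change of perfect complexes over the regular ring $\Lambda$; that $(-)^{I_v}$ commutes with this flat base change uses that $I_v$ acts on the lattice $T_f$ through a finite quotient). Since $\TT\otimes_{\Lambda,\chi}F=V_f(r)\otimes\chi^{-1}=V$, we get $\TT^\ast(1)\otimes_{\Lambda,\chi}F=V^\ast(1)$, so the canonical basis specializes under $\chi$ to $\det(1-{\rm Fr}_v^{-1}\mid V^\ast(1)^{I_v})^{-1}$ — exactly the Euler factor at $v$ of $L(M,s)=L(f,\chi^{-1},s+r)$ at $s=0$, and, as $\rgamma_{/{\rm ur}}(K_v,V)$ is acyclic, the image of the basis under ${\det}_F(\rgamma_{/{\rm ur}}(K_v,V))\simeq F$. \emph{The main difficulty} is not any single step — each is routine — but pinning down the Tate twist in $H^1(I_v,\TT)\cong\TT_{I_v}(-1)$, the shift in the duality isomorphism, and the Knudsen--Mumford sign conventions precisely enough that the canonical basis specializes to the Euler factor with the normalization written in the statement rather than its dual; this is the kind of bookkeeping carried out carefully in \cite[\S 2]{FK}, which one may cite.
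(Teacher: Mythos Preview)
Your argument is correct and rests on the same underlying mechanism as the paper's, namely the tautological basis $t^\ast\otimes t$ of ${\det}_\Lambda^{-1}(P)\otimes_\Lambda{\det}_\Lambda(P)$ for a two-term complex $[P\xrightarrow{\psi}P]$. The difference is where you apply it: you first compute $\rgamma_{/{\rm ur}}(K_v,\TT)\simeq[\TT_{I_v}(-1)\xrightarrow{1-{\rm Fr}_v^{-1}}\TT_{I_v}(-1)]$ via the octahedral axiom and then invoke the acyclicity trivialization over $Q(\Lambda)$, which requires your auxiliary check that $1-{\rm Fr}_v^{-1}$ is injective. The paper instead works directly with $\rgamma_{\rm ur}(K_v,\TT)=[\TT^{I_v}\xrightarrow{1-{\rm Fr}_v^{-1}}\TT^{I_v}]$, takes the tautological basis of ${\det}_\Lambda^{-1}(\rgamma_{\rm ur})$, and transports it to ${\det}_\Lambda(\rgamma_{/{\rm ur}})$ via the duality isomorphism ${\det}_\Lambda^{-1}(\rgamma_{\rm ur}(K_v,\TT))\simeq{\det}_\Lambda(\rgamma_{/{\rm ur}}(K_v,\TT))$ (using the self-duality $T_f(r)^\ast(1)\simeq T_f(r)$). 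This is a little more economical since it avoids computing $\rgamma_{/{\rm ur}}$ explicitly and needs no injectivity argument; on the other hand, your explicit model makes the appearance of the Euler factor $\det(1-{\rm Fr}_v^{-1}\mid V^\ast(1)^{I_v})^{-1}$ under specialization more transparent.
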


\begin{proof}
Note that the complex $\rgamma_{\rm ur}(K_v,\TT)$ is represented by
$$\left[ \TT^{I_v} \xrightarrow{1-{\rm Fr}_v^{-1}} \TT^{I_v}\right].$$
Hence we have ${\det}_\Lambda^{-1}(\rgamma_{\rm ur}(K_v,\TT)) = {\det}_\Lambda^{-1}(\TT^{I_v})\otimes_\Lambda {\det}_\Lambda(\TT^{I_v})$ and it has a canonical basis. (In fact, if we take any $\Lambda$-basis $t\in {\det}_\Lambda(\TT^{I_v})$, then the element $t^\ast \otimes t$ is independent of $t$.) By duality, we have a canonical isomorphism 
$${\det}_\Lambda^{-1}(\rgamma_{\rm ur}(K_v,\TT)) \simeq {\det}_\Lambda(\rgamma_{/{\rm ur}}(K_v,\TT)),$$
and we get a canonical basis of ${\det}_\Lambda(\rgamma_{/{\rm ur}}(K_v,\TT))$. (We identify $T_f(r)^\ast(1)$ with $T_f(r)$.) By construction, it has the desired property. 
\end{proof}

By (\ref{can det}) and Lemma \ref{lem euler}, we obtain a canonical isomorphism
$$\left(\Lambda^{\rm ur}\otimes_\Lambda {\det}_\Lambda^{-1}(\rgamma(G_{K,S},\TT)) \right)\otimes_{\Lambda^{\rm ur}} \left(\Lambda^{\rm ur}\otimes_\Lambda {\det}_\Lambda(\rgamma(K_\fp, \TT)) \right)\simeq \Lambda^{\rm ur}\otimes_\Lambda {\det}_\Lambda^{-1}(\widetilde \rgamma_\fp(K,\TT)).$$
Assuming the Iwasawa main conjecture (Conjecture \ref{IMC}), we define a $\Lambda^{\rm ur}$-basis
$$\fz_S \in \Lambda^{\rm ur}\otimes_\Lambda {\det}_\Lambda^{-1}(\rgamma(G_{K,S},\TT)) $$
to be the element such that $\fz_S \otimes \varepsilon_\fp $ corresponds to $\fz_\fp$ under the isomorphism above. 

\begin{proposition}\label{coeff}
$\fz_S$ lies in ${\det}_\Lambda^{-1}(\rgamma(G_{K,S},\TT)) $. 
\end{proposition}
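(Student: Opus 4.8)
The plan is to prove Proposition \ref{coeff} by Frobenius descent. The arithmetic Frobenius $\varphi_p$ acting on $\widehat\ZZ_p^{\rm ur}$ has fixed ring $\ZZ_p$, so $(\Lambda^{\rm ur})^{\varphi_p=1}=\Lambda$; since $\Lambda$ is a regular local domain, the invertible module ${\det}_\Lambda^{-1}(\rgamma(G_{K,S},\TT))$ is free of rank one, and it is recovered as the $\varphi_p$-fixed part of $\Lambda^{\rm ur}\otimes_\Lambda{\det}_\Lambda^{-1}(\rgamma(G_{K,S},\TT))$, where $\varphi_p=\varphi_p\widehat\otimes\id$ acts through the first factor. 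Hence it suffices to show that $\fz_S$ is fixed by $\varphi_p$.

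To compute $\varphi_p(\fz_S)$, I would use that the defining relation ``$\fz_S\otimes\varepsilon_\fp$ corresponds to $\fz_\fp$'' holds under the canonical isomorphism assembled from (\ref{can det}) and the canonical Euler-factor bases of Lemma \ref{lem euler}, all of which are base-changed from $\Lambda$-linear isomorphisms and therefore commute with $\varphi_p$. Thus $\varphi_p(\fz_S)\otimes\varphi_p(\varepsilon_\fp)$ corresponds to $\varphi_p(\fz_\fp)$. By Proposition \ref{phiepsilon} we have $\varphi_p(\varepsilon_\fp)=\overline\tau_p^{\,2}\cdot\varepsilon_\fp$; and since $\pi$ is a $\Lambda$-rational (in particular $\varphi_p$-equivariant) isomorphism with $\pi(\fz_\fp)=L_\fp^{\rm BDP}$, we get $\pi(\varphi_p(\fz_\fp))=\varphi_p(L_\fp^{\rm BDP})$. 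Cancelling the basis $\varepsilon_\fp$ (legitimate, as ${\det}_\Lambda(\rgamma(K_\fp,\TT))$ and the isomorphisms in play are defined over $\Lambda$), one finds that $\varphi_p(\fz_S)=\fz_S$ is equivalent to the identity
$$\varphi_p(L_\fp^{\rm BDP}) = \overline\tau_p^{\,2}\cdot L_\fp^{\rm BDP}$$
in $\Lambda^{\rm ur}$.

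So the proposition reduces to this Frobenius transformation law for the BDP $p$-adic $L$-function, and that is the main obstacle: $L_\fp^{\rm BDP}$ lies in $\Lambda^{\rm ur}\setminus\Lambda$ precisely because of the $p$-adic CM period $\Omega_p\in(\widehat\ZZ_p^{\rm ur})^\times$ in the interpolation formula (\ref{BDPinterpolation}), and one must pin down $\varphi_p(\Omega_p)$. I would establish the identity by evaluating both sides at the Zariski-dense family of crystalline anticyclotomic characters $\chi$ of infinity type $(j,-j)$ with $j\ge r$: the right-hand side gives $\chi(\overline\tau_p)^2\,\chi(L_\fp^{\rm BDP})$, and the task is to check that the $\varphi_p$-semilinearity of $\chi$ on $\Lambda^{\rm ur}$, combined with the transformation of $\Omega_p^{4j}$ under $\varphi_p$, reproduces exactly the factor $\chi(\overline\tau_p)^2$ (the exponent $2$ matching the $\overline\tau_p^{-2}$-action of $\tau_p$ on $\bigwedge_\Lambda^2\TT$ used in Proposition \ref{phiepsilon}). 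This is the content of Lemma \ref{phiBDP}, whose proof rests on the explicit construction of $L_\fp^{\rm BDP}$ (via $p$-adic modular forms evaluated at CM points, together with the $\varphi_p$-behavior of the trivialization of the relevant formal group defining $\Omega_p$). Granting that lemma, the equivalence established above yields $\varphi_p(\fz_S)=\fz_S$, hence $\fz_S\in{\det}_\Lambda^{-1}(\rgamma(G_{K,S},\TT))$.
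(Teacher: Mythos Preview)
Your proof of Proposition \ref{coeff} is correct and follows essentially the same route as the paper: reduce to $\varphi_p(\fz_S)=\fz_S$ by Frobenius descent, then use Proposition \ref{phiepsilon} to reduce to the transformation law $\varphi_p(L_\fp^{\rm BDP})=\overline\tau_p^{\,2}\cdot L_\fp^{\rm BDP}$, which is Lemma \ref{phiBDP}.

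One remark on your sketch of Lemma \ref{phiBDP}: the paper does not argue via interpolation at a Zariski-dense set of crystalline characters. Instead it uses structural input---in the ordinary case, the Coleman-type realization of $L_\fp^{\rm BDP}$ as the image of Heegner classes under a big regulator map (Castella--Hsieh) together with the known $\varphi_p$-equivariance of that map; and in general, the factorization $L_\fp^{\rm BDP}=\cL\cdot q(L_\fp^{\rm Katz})$ with $\cL\in\Lambda$, reducing to the analogous (and classical) identity for the Katz $p$-adic $L$-function coming from elliptic units. Your interpolation idea would ultimately require computing $\varphi_p(\Omega_p)/\Omega_p$ and matching it with $\chi(\overline\tau_p)^{2/4j}$, which is doable but circuitous; the paper's approach bypasses this by inheriting the Frobenius behavior directly from the Coleman map.
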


\begin{proof}
Let $\varphi_p$ be the arithmetic Frobenius acting on the coefficient $\widehat \ZZ_p^{\rm ur}$ of $\Lambda^{\rm ur}$. It is sufficient to show that $\varphi_p(\fz_S)=\fz_S$. By Proposition \ref{phiepsilon}, the claim is reduced to Lemma \ref{phiBDP} below. 
\end{proof}

\begin{lemma}\label{phiBDP}
Let $\varphi_p$ and $\overline \tau_p$ be as in Proposition \ref{phiepsilon}. Then we have
$$\varphi_p(L_\fp^{\rm BDP})= \overline \tau_p^2 \cdot L_\fp^{\rm BDP}.$$
\end{lemma}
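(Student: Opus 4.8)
The plan is to establish the Frobenius equivariance $\varphi_p(L_\fp^{\rm BDP}) = \overline{\tau}_p^2 \cdot L_\fp^{\rm BDP}$ by comparing the action of $\varphi_p$ and of $\overline{\tau}_p^2$ on the two sides through the interpolation property (\ref{BDPinterpolation}), exploiting the density of the crystalline anticyclotomic characters in $\Spec \Lambda^{\rm ur}$. First I would note that both $\varphi_p(L_\fp^{\rm BDP})$ and $\overline{\tau}_p^2 \cdot L_\fp^{\rm BDP}$ are elements of $\Lambda^{\rm ur}$, so it suffices to check that they agree after applying every character $\chi$ of the interpolation range (characters of infinity type $(j,-j)$ with $j \geq r$, factoring through $\Gamma$, crystalline at $\fp$ and $\barfp$); such characters are Zariski-dense, so equality on all of them forces equality in $\Lambda^{\rm ur}$. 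The key point is to track how $\varphi_p$ — the arithmetic Frobenius on $\widehat{\ZZ}_p^{\rm ur}$, extended $\Lambda$-linearly to $\Lambda^{\rm ur}$ — interacts with the evaluation maps $\chi$, and how the factor $\overline{\tau}_p^2$ shifts the character.

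Concretely, I would argue as follows. Extend $\chi$ to $\Lambda^{\rm ur} = \widehat{\ZZ}_p^{\rm ur} \,\widehat\otimes\, \Lambda$ by letting it act as $\iota_p$-identity (i.e.\ the structure map) on the $\widehat{\ZZ}_p^{\rm ur}$-factor and by $\chi$ on $\Lambda$. Then $\chi(\varphi_p(x))$ and $\chi(x)$ differ only through the $\widehat{\ZZ}_p^{\rm ur}$-coefficients: applying $\varphi_p$ before evaluating has the effect of replacing the $p$-adic CM period $\Omega_p$ appearing in (\ref{BDPinterpolation}) by $\varphi_p(\Omega_p)$. The defining transformation property of the $p$-adic CM period under Frobenius (recalled in \S\ref{sec CM}) gives $\varphi_p(\Omega_p) = \overline{\tau}_p^{1/2}\cdot\Omega_p$ up to the appropriate normalization, so that $\varphi_p(\Omega_p^{4j}) = \overline{\tau}_p^{2j} \cdot \Omega_p^{4j}$ — here $\overline{\tau}_p \in \Gamma$ is being evaluated against $\chi$, and the remaining factors $\Gamma(j-r+1)\Gamma(j+r)$, the Euler factor at $\barfp$, and the complex period $\Omega_\infty^{4j}(2\pi)^{1-2j}\sqrt{D_K}^{2j-1}$ together with $L(f,\chi^{-1},r)$ involve no $\widehat{\ZZ}_p^{\rm ur}$-coefficients and are untouched by $\varphi_p$. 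On the other side, $\chi(\overline{\tau}_p^2 \cdot L_\fp^{\rm BDP}) = \chi(\overline{\tau}_p)^2 \cdot \chi(L_\fp^{\rm BDP})$. Now $\chi$ evaluated on $\overline{\tau}_p$ equals the value of the Galois character $\chi$ on the Frobenius lift $\tau_p$, which by the relation between the infinity type and the $p$-adic avatar of $\chi$ (since $\chi_{\rm cyc}(\tau_p) = 1$) produces exactly the factor $\overline{\tau}_p^{2j}$ when squared — i.e.\ $\chi(\overline{\tau}_p^2) = \overline{\tau}_p^{2j}$ matching the shift coming from $\varphi_p(\Omega_p^{4j})$. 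Hence $\chi(\varphi_p(L_\fp^{\rm BDP})) = \chi(\overline{\tau}_p^2 \cdot L_\fp^{\rm BDP})$ for all $\chi$ in the interpolation range, and density finishes the proof.

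The main obstacle I anticipate is pinning down precisely the Frobenius transformation law of the $p$-adic CM period $\Omega_p$ and matching its exponent with the "$2$" in $\overline{\tau}_p^2$; this requires being careful about normalizations — whether $\Omega_p \in (\widehat{\ZZ}_p^{\rm ur})^\times$ satisfies $\varphi_p(\Omega_p) = \overline{\tau}_p^{1/2}\Omega_p$ or some twist thereof, and correspondingly how the infinity type $(j,-j)$ enters the $p$-adic avatar of $\chi$ restricted to the Frobenius at $\fp$. One must also verify that the extension of $\chi$ to $\Lambda^{\rm ur}$ used to interpret (\ref{BDPinterpolation}) is the same extension (sending $\widehat{\ZZ}_p^{\rm ur}$ into $\CC_p$ via the fixed embedding and the identity on that factor) with respect to which $\varphi_p$ acts nontrivially; this compatibility is essentially bookkeeping but it is where a sign or a factor of $2$ could go wrong. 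Apart from this, all the steps are formal: the evaluation $\chi(\overline{\tau}_p) = \chi(\tau_p)$ is the definition of the $p$-adic avatar, and the density argument is standard for the anticyclotomic Iwasawa algebra.
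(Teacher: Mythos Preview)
Your approach is genuinely different from the paper's: the paper argues structurally, using either the Coleman-map construction of $L_\fp^{\rm BDP}$ (ordinary case, via Castella--Hsieh and the Frobenius-equivariance of the Perrin-Riou regulator), or in general the factorization $L_\fp^{\rm BDP}=\cL\cdot q(L_\fp^{\rm Katz})$ with $\cL\in\Lambda$ and $q(\gamma)=\gamma^2$, together with the (elliptic-unit) fact $\varphi_p(L_\fp^{\rm Katz})=\overline\tau_p\cdot L_\fp^{\rm Katz}$. Neither route touches the interpolation formula or the period $\Omega_p$ directly.

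Your interpolation-plus-density idea is in principle viable, but there is a real gap at the decisive step. The identity you need, after unwinding, is
\[
\bigl(\varphi_p(\Omega_p)/\Omega_p\bigr)^{4j}=\chi(\tau_p)^2
\]
for every interpolation character $\chi$ of infinity type $(j,-j)$; equivalently (using $\chi=\psi^{2j}\chi_{\rm cyc}^{-j}$ and $\chi_{\rm cyc}(\tau_p)=1$) the single relation $\varphi_p(\Omega_p)=\psi(\tau_p)\cdot\Omega_p$ in $(\widehat\ZZ_p^{\rm ur})^\times$. You neither prove this nor cite a source; the claim that it is ``recalled in \S\ref{sec CM}'' is incorrect, since that section only defines $\Omega_p$ and gives its $p$-adic integral interpretation, with no Frobenius transformation law stated. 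Moreover, the expression $\varphi_p(\Omega_p)=\overline\tau_p^{1/2}\cdot\Omega_p$ is ill-typed: $\varphi_p(\Omega_p)/\Omega_p$ is a specific unit in $\ZZ_p^\times$ (depending only on $A$ and $\fp$), while $\overline\tau_p$ is a group element of $\Gamma$; the two live in different places and only meet after evaluating at a character $\chi$, so writing ``$\overline\tau_p^{1/2}$'' conflates the universal statement with its specializations. To rescue the argument you would have to establish $\varphi_p(\Omega_p)/\Omega_p=\psi(\tau_p)$ directly from the Lubin--Tate description of $\widehat A$ and the definition of $\tau_p$; this is a nontrivial computation (essentially equivalent to the rank-one local $\varepsilon$-isomorphism for $T_\fp(A)$), and it is precisely the kind of input the paper's structural proofs are designed to avoid.
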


\begin{proof}
When $p$ is ordinary, this follows from the Coleman-type construction of the BDP $p$-adic $L$-function due to Castella-Hsieh \cite[Thm. 5.7]{CH} and the property of the Coleman-Perrin-Riou regulator map in \cite[Prop. 4.9]{LZ}. When $p$ is supersingular and $f$ corresponds to an elliptic curve, the claim follows from \cite[Thm. 6.2]{castellawan}, since $\Xi_d$ in loc. cit., regarded as an element of $\Lambda^{\rm ur}$, satisfies $\varphi_p(\Xi_d)=\overline \tau_p\cdot \Xi_d$. 

In the general case, we can prove the claim by the following argument. Let $L_\fp^{\rm Katz} \in \widehat \ZZ_p^{\rm ur}[[\Gamma]]$ be the anticyclotomic projection of the Katz $p$-adic $L$-function. (By our convention, we let this to be the involution of $\mathscr{L}_\fp^{\rm ac}(K)$ in \cite{castella}.) Since  $L_\fp^{\rm Katz}$ is the image of a system of elliptic units under the Coleman map (see \cite{yager}), we see that
\begin{equation}\label{katz}
\varphi_p(L_\fp^{\rm Katz}) = \overline \tau_p\cdot L_\fp^{\rm Katz}. 
\end{equation}
by \cite[Prop. 4.9]{LZ}. 
On the other hand, by the argument of \cite[Thm. 1.7]{castella} (see also \cite[\S 5.3]{JSW}), we have
\begin{equation}\label{BDP Katz}
L_\fp^{\rm BDP} = \cL \cdot q(L_\fp^{\rm Katz})
\end{equation}
for some $\cL \in \Lambda$, where $q: \Lambda^{\rm ur}\to \Lambda^{\rm ur}$ denotes the map induced by $\gamma \mapsto \gamma^2$ for $\gamma \in \Gamma$. (Note that \cite[Thm. 1.7]{castella} holds even when $p$ is supersingular and the weight of $f$ is greater than $2$. Note also that, since $\Gamma$ is the Galois group of the anticyclotomic $\ZZ_p$-extension, we have $\gamma^2 = \gamma^{1-\rho}$, where $\rho \in \Gal(K/\QQ)$ denotes the complex conjugation.) The claim follows immediately from (\ref{katz}) and (\ref{BDP Katz}). 
\end{proof}

By Proposition \ref{coeff}, we can define an $\cO_F$-basis
$$\fz_S^\chi \in {\det}_{\cO_F}^{-1}(\rgamma(G_{K,S},T))$$
to be the image of $\fz_S$ under the $\chi$-twisting map
$${\det}_\Lambda^{-1}(\rgamma(G_{K,S},\TT)) \xrightarrow{a \mapsto a\otimes 1}{\det}_\Lambda^{-1}(\rgamma(G_{K,S},\TT)) \otimes_{\Lambda,\chi} \cO_F \simeq {\det}_{\cO_F}^{-1}(\rgamma(G_{K,S},T)).$$
(The last isomorphism is due to \cite[Prop. 1.6.5(3)]{FK}.)

Recall that in \S \ref{sec TNC} we took an $\cO_F$-basis $\gamma \in {\bigwedge}_{\cO_F}^2 T^\ast(1)$ and an $F$-basis $\delta^\ast \in {\bigwedge}_F^2 D_{{\rm dR}}(V^\ast(1))^\ast \simeq {\bigwedge}_F^2 D_{{\rm dR}}(V)$. Let $\Omega_{\gamma,\delta} \in \CC^\times$ be the period with respect to these bases (see (\ref{complex period})). 
Theorem \ref{thm:main} is now reduced to the following.

\begin{theorem}\label{thm:interpolation}
The composition map
$$\lambda_\fp: {\det}_F^{-1}(\rgamma(G_{K,S},V)) = {\bigwedge}_F^2 H^1(G_{K,S},V) \stackrel{{\rm loc}_\fp}{\simeq} {\bigwedge}_F^2 H^1(K_\fp,V)\stackrel{\exp^\ast_\fp}{\simeq}{\bigwedge}_F^2 D_{{\rm dR}}(V)$$
sends $\fz_S^\chi$ to
$$\frac{L_S(f,\chi^{-1},r)}{\Omega_{\gamma,\delta}}\cdot \delta^\ast$$
up to a unit in $\cO_F^\times$. 
\end{theorem}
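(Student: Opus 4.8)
The plan is to trace the element $\fz_S^\chi$ through the chain of twisting and comparison maps, using the interpolation property of $\fz_\fp$ (Conjecture \ref{IMC}), the interpolation property of $\varepsilon_\fp$ (Proposition \ref{epsilon twist}), and the BDP interpolation formula (\ref{BDPinterpolation}), and then to identify the resulting product of periods, $\Gamma$-factors, and Euler-type factors with the quantity $L_S(f,\chi^{-1},r)/\Omega_{\gamma,\delta}$ predicted by Conjecture \ref{TNC}. Concretely, first I would $\chi$-specialize the defining relation $\fz_S\otimes\varepsilon_\fp \leftrightarrow \fz_\fp$ across the isomorphism (\ref{can det}) combined with Lemma \ref{lem euler}. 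Applying $-\otimes_{\Lambda,\chi}F$ to this identity, the left-hand factor becomes $\fz_S^\chi \in {\det}_F^{-1}(\rgamma(G_{K,S},V))$, the middle factor becomes $\varepsilon_\fp^\chi$, the local unramified contributions at $v\nmid p$ collapse to the missing Euler factors (turning $L(f,\chi^{-1},r)$ into $L_S(f,\chi^{-1},r)$), and the right-hand side becomes $\chi(\fz_\fp)\cdot(\text{basis of }{\det}_F^{-1}(\widetilde\rgamma_\fp(K,V)))$, where $\chi(\fz_\fp)=\chi(L_\fp^{\mathrm{BDP}})$ up to a unit by Conjecture \ref{IMC}. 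Here I must also invoke Corollary \ref{cor:fp} to know that under $H^1_f(K,V)=H^1_f(K,V^\ast(1))=0$ the Selmer complex $\widetilde\rgamma_\fp(K,V)$ is acyclic with the localization $\mathrm{loc}_\fp$ an isomorphism, so that ${\det}_F^{-1}(\rgamma(G_{K,S},V))={\bigwedge}_F^2 H^1(G_{K,S},V)\xrightarrow{\sim}{\bigwedge}_F^2 H^1(K_\fp,V)$.

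Next I would apply $\lambda_\fp=\exp_\fp^\ast\circ\mathrm{loc}_\fp$ to $\fz_S^\chi$. By the previous step and the acyclicity of $\widetilde\rgamma_\fp(K,V)$, the image is $\varepsilon_\fp^\chi$-paired with $\chi(L_\fp^{\mathrm{BDP}})/(\text{Euler factors at } v\in S_f\setminus\{p\})$ times a unit, so it suffices to understand $\varepsilon_\fp^\chi$ explicitly on ${\bigwedge}_F^2 H^1(K_\fp,V)$. Proposition \ref{epsilon twist} does exactly this: $\varepsilon_\fp^\chi$ is $\exp_\fp^\ast$ followed by the automorphism $(1-\varphi)(1-p^{-1}\varphi^{-1})^{-1}$ on ${\bigwedge}_F^2 D_{\mathrm{cris}}(V)$, then $\varepsilon_{\mathrm{dR}}$ (which carries the $t_\xi^{-m}$ twist with $m=2j-1$), then multiplication by $-\Gamma(j-r+1)\Gamma(j+r)$. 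Comparing with the definition of $\lambda_\fp$, this means $\lambda_\fp(\fz_S^\chi)$ equals $\delta^\ast$ times a scalar which is $\chi(L_\fp^{\mathrm{BDP}})$ divided by: the product of the bad Euler factors (giving $L_S$ from $L$), the factor $(1-a_p\chi(\fp)p^{-r}+\chi^2(\fp)p^{-1})(1-a_p\chi^{-1}(\fp)p^{-r}+\chi^{-2}(\fp)p^{-1})^{-1}$ from step (b), the scalar induced by $\varepsilon_{\mathrm{dR}}$ relative to $\delta$, and $-\Gamma(j-r+1)\Gamma(j+r)$, all up to $\cO_F^\times$. Substituting (\ref{BDPinterpolation}) for $\chi(L_\fp^{\mathrm{BDP}})$, the two $\Gamma$-factors and one copy of the factor $(1-a_p\chi^{-1}(\barfp)p^{-r}+\chi^{-2}(\barfp)p^{-1})$ (note $\chi$ anticyclotomic gives $\chi(\barfp)=\chi^{-1}(\fp)$, so the BDP factor squared matches the step-(b) ratio up to the de Rham normalization) cancel, leaving $\lambda_\fp(\fz_S^\chi)$ proportional to $L_S(f,\chi^{-1},r)/(\Omega_\infty^{4j}(2\pi)^{1-2j}\sqrt{D_K}^{2j-1})$ times $\Omega_p^{4j}$ times the scalar from $\varepsilon_{\mathrm{dR}}$, all times $\delta^\ast$.

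The final and main step is the period comparison: I must show that $\Omega_p^{4j}$ times the $\varepsilon_{\mathrm{dR}}$-scalar (relative to the de Rham basis $\delta$ coming from $t(M)$) divided by $\Omega_\infty^{4j}(2\pi)^{1-2j}\sqrt{D_K}^{2j-1}$ equals $\Omega_{\gamma,\delta}^{-1}$ up to $\cO_F^\times$; equivalently, that $\Omega_\infty^{4j}(2\pi)^{1-2j}\sqrt{D_K}^{2j-1}$ is, up to $\cF^\times$, Deligne's period $\Omega_{\gamma,\delta}$ of $M=M_f(r)\otimes M(\chi)$ for the chosen compatible Betti/de Rham bases, while the $p$-adic period $\Omega_p$ and the $t_\xi$-twist in $\varepsilon_{\mathrm{dR}}$ account exactly for the change of de Rham basis between the one implicit in the epsilon-factor normalization and $\delta$. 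This is where the class-number-one hypothesis on $K$ enters, via the explicit description of $M(\chi)$ as a CM motive and an explicit computation of $H_B$, $H_{\mathrm{dR}}$ and the period map, which I would carry out in the preliminary sections (\S\ref{sec CM}, \S\ref{sec choice}) on CM periods and choice of bases. I expect this period identification — matching the archimedean CM period $\Omega_\infty$ and the $p$-adic CM period $\Omega_p$ against the Deligne period of the twisted motive and against the de Rham side of the local epsilon isomorphism, keeping track of the powers of $2\pi i$, $\sqrt{D_K}$, and the normalization of $\varepsilon_{\mathrm{dR}}$ — to be the main obstacle; the preceding formal manipulations with determinant modules and twisting maps are routine given Propositions \ref{phiepsilon}, \ref{epsilon twist}, Lemmas \ref{lem euler}, \ref{phiBDP} and Corollary \ref{cor:fp}.
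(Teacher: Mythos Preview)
Your proposal is correct and follows essentially the same route as the paper's proof: $\chi$-specialize the defining relation $\fz_S\otimes\varepsilon_\fp\leftrightarrow\fz_\fp$ via (\ref{can det}) and Lemma \ref{lem euler}, unwind $\varepsilon_\fp^\chi$ using Proposition \ref{epsilon twist}, substitute the BDP formula (\ref{BDPinterpolation}) so that the $\Gamma$-factors cancel and the crystalline factors together with ${\rm Eul}$ convert $L$ into $L_S$, and then reduce everything to the two period identifications carried out in Lemmas \ref{compare cm padic} ($\Omega_{p,\gamma_T,\delta^\ast}\sim\Omega_p^{4j}$) and \ref{compare cm1} ($\Omega_{\gamma,\delta}=\pm\Omega_\infty^{4j}(2\pi)^{1-2j}\sqrt{D_K}^{2j-1}$). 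Your anticipation that the period comparison is the main point, and that the class number one hypothesis enters there through the explicit CM description of $M(\chi)$, is exactly how the paper proceeds.
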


The rest of this section is devoted to the proof of Theorem \ref{thm:interpolation}. 

\subsection{CM periods}\label{sec CM}
We review the definitions of CM periods $\Omega_\infty \in \CC^\times$ and $\Omega_p \in (\widehat \ZZ_p^{\rm ur})^\times$. 

Let $A$ be the canonical elliptic curve defined over $K$ with complex multiplication by $\cO_K$ as in \cite[Thm. 0.1]{yang}. (Note that $K$ has class number one and $D_K$ is odd. Also, $A$ descends to an elliptic curve  over $\QQ$.) Fix a global minimal Weierstrass model of $A$ over $\cO_K$ and let $\omega_A \in \Gamma(A,\Omega_{A/K}^1)$ be the corresponding N\'eron differential. We also fix an $\cO_K$-basis $\gamma_A \in H_1(A(\CC),\ZZ)$. We define the complex CM period by
$$\Omega_\infty:=\int_{\gamma_A}\omega_A.$$

Next, we define the $p$-adic CM period. Let $\widehat A$ be the formal group of $A$ over $\cO_{K_\fp}\simeq \ZZ_p$ with respect to the parameter $-x/y$ (as in \cite[p.47]{deshalit}). Let $T_\fp(A)$ be the $\fp$-adic Tate module of $A$. Then we have a canonical isomorphism $H_1(A(\CC),\ZZ)\otimes_{\cO_K}\cO_{K_\fp} \simeq T_\fp(A)$. Also, note that the $p$-adic Tate module $T_p(\widehat A)$ of $\widehat A$ is identified with $T_\fp(A)$ as $G_{K_\fp}$-representations. Thus we can regard $\gamma_A \in H_1(A(\CC),\ZZ)$ as a $\ZZ_p$-basis of $T_p(\widehat A)$, which we denote by $\gamma_{A,p}$. Let
$$\eta_A: \widehat \GG_m \xrightarrow{\sim} \widehat A $$
be the isomorphism of formal groups over $\widehat \ZZ_p^{\rm ur}$ which corresponds (by \cite{tate}) to the isomorphism
$$\ZZ_p(1)=T_p(\widehat \GG_m)\xrightarrow{\sim} T_p(\widehat A); \ \xi \mapsto \gamma_{A,p}.$$
Regarding $\eta_A \in \widehat \ZZ_p^{\rm ur}[[X]]$, we define the $p$-adic CM period by
$$\Omega_p:= \eta_A'(0).$$
In other words, we have
$$\eta_A^\ast (\omega_A) = \Omega_p\frac{dX}{1+X}.$$

For later purpose, we shall give another description of $\Omega_p$. 

\begin{proposition}\label{prop Omega}
Let
$$\Gamma(A,\Omega_{A/K}^1) \times T_p(\widehat A) \to B_{\rm dR}; \ (\omega,\gamma)\mapsto \int_\gamma \omega$$
be the $p$-adic integration constructed by Colmez \cite[Prop. 3.1]{colmez}. Then we have
\begin{equation*}\label{padic period int}
\Omega_p= t_\xi^{-1} \int_{\gamma_{A,p}}\omega_A,
\end{equation*}
where $t_\xi \in B_{\rm dR}^+$ denotes the uniformizer corresponding to $\xi \in \ZZ_p(1)$. 
(Note that the right hand side actually lies in $\widehat \QQ_p^{\rm ur}= H^0(\QQ_p^{\rm ur},B_{\rm dR})$, since $T_p(\widehat A)\simeq \ZZ_p(1)$ as $G_{\QQ_p^{\rm ur}}$-representations and so $\sigma \in G_{\QQ_p^{\rm ur}}$ acts on $\int_{\gamma_{A,p}}\omega_A$ by $\chi_{\rm cyc}(\sigma)$.) 
\end{proposition}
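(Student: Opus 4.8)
The plan is to unwind the definition of Colmez's $p$-adic integration pairing on the formal group $\widehat{A}$ and identify it with the series $\eta_A$. First I would recall that Colmez's pairing $\Gamma(A,\Omega^1_{A/K}) \times T_p(\widehat{A}) \to B_{\rm dR}$ is characterized (by \cite[Prop. 3.1]{colmez}) by $G_{K_\fp}$-equivariance and compatibility with the logarithm of the formal group: for a formal differential $\omega = g(X)\,dX$ on $\widehat{A}$ and a point $\gamma = (\gamma_n)_n \in T_p(\widehat{A})$, the integral $\int_\gamma \omega$ is obtained by integrating $g$ formally to a power series $\ell(X)$ (the formal logarithm attached to $\omega$) and then evaluating the associated $B_{\rm dR}$-valued limit $\lim_n p^n \ell(\gamma_n)$. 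The key input is that the Tate parametrization $\eta_A : \widehat{\GG}_m \xrightarrow{\sim} \widehat{A}$ over $\widehat{\ZZ}_p^{\rm ur}$ sends the canonical generator $\xi \in \ZZ_p(1) = T_p(\widehat{\GG}_m)$ to $\gamma_{A,p}$, by the very definition of $\eta_A$.

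The main step is the change-of-variables computation. By the defining property $\eta_A^\ast(\omega_A) = \Omega_p \frac{dX}{1+X}$, pulling the integral back along $\eta_A$ gives
$$\int_{\gamma_{A,p}} \omega_A = \int_{\xi} \eta_A^\ast(\omega_A) = \Omega_p \int_{\xi} \frac{dX}{1+X},$$
where now the integral on the right is Colmez's pairing for $\widehat{\GG}_m$ applied to the generator $\xi$ of $\ZZ_p(1)$. The formal logarithm attached to $\frac{dX}{1+X}$ is $\log(1+X)$, so $\int_\xi \frac{dX}{1+X}$ is precisely the element of $B_{\rm dR}$ defined as $\lim_n p^n \log(\zeta_{p^n})$ — that is, $t_\xi$, the uniformizer of $B_{\rm dR}^+$ corresponding to $\xi$ (this is the standard normalization: $t = \log[\varepsilon]$ for $\varepsilon = (\zeta_{p^n})_n$). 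Hence $\int_{\gamma_{A,p}} \omega_A = \Omega_p \cdot t_\xi$, which is the claimed identity $\Omega_p = t_\xi^{-1} \int_{\gamma_{A,p}} \omega_A$.

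Finally I would verify the parenthetical Galois-descent remark: since $T_p(\widehat{A}) \simeq \ZZ_p(1)$ as $G_{\QQ_p^{\rm ur}}$-representations (the formal group $\widehat{A}$ acquires a canonical trivialization over $\widehat{\ZZ}_p^{\rm ur}$ via $\eta_A$), an element $\sigma \in G_{\QQ_p^{\rm ur}}$ sends $\gamma_{A,p}$ to $\chi_{\rm cyc}(\sigma)\gamma_{A,p}$, so it scales $\int_{\gamma_{A,p}} \omega_A$ by $\chi_{\rm cyc}(\sigma)$; since $t_\xi$ transforms the same way, the quotient is $G_{\QQ_p^{\rm ur}}$-fixed, i.e. lies in $\widehat{\QQ}_p^{\rm ur} = H^0(\QQ_p^{\rm ur}, B_{\rm dR})$, consistent with $\Omega_p = \eta_A'(0) \in (\widehat{\ZZ}_p^{\rm ur})^\times$. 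The one point requiring care — and the step I expect to be the main obstacle — is matching Colmez's normalization conventions (in particular the sign and the factor of $t$ in the definition of the pairing, and whether $\eta_A$ is built from $\xi$ or from $\xi^{-1}$) with the conventions fixed earlier in this paper; once the normalizations are pinned down the computation is a short formal manipulation.
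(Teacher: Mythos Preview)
Your proposal is correct and follows essentially the same approach as the paper: both reduce the computation to $\widehat{\GG}_m$ via the isomorphism $\eta_A$, using the identity $\eta_A^\ast(\omega_A) = \Omega_p\,\frac{dX}{1+X}$ (equivalently, $\log_{\widehat A}\circ\eta_A = \Omega_p\log_{\widehat{\GG}_m}$) to conclude $\int_{\gamma_{A,p}}\omega_A = \Omega_p\,t_\xi$. The only difference is presentational: you invoke the change-of-variables functoriality of Colmez's pairing abstractly, whereas the paper works explicitly inside $A_{\rm inf}$, choosing the specific lifts $\widetilde\gamma_n := \eta_A([\xi^{p^{-n}}]-1)$ and computing $\lim_n p^n\log_{\widehat A}(\widetilde\gamma_n) = \log_{\widehat A}(\eta_A([\xi]-1))$ directly; this makes the identification $\int_\xi \frac{dX}{1+X} = t_\xi$ transparent from the definition $t_\xi = \log_{\widehat{\GG}_m}([\xi]-1)$ rather than citing it as a known normalization.
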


\begin{proof}
We first recall the construction of the $p$-adic integration. Let $\cO_{\CC_p^\flat}:=\varprojlim \cO_{\CC_p}$ be the ``tilt" of the ring of integers $\cO_{\CC_p} $ of $\CC_p$, where the inverse limit is taken with respect to the $p$-th power map. For $x \in \cO_{\CC_p^\flat}$, we write $x^{(0)} \in \cO_{\CC_p}$ for the $0$-th component of $x$. Let $W(\cO_{\CC_p^\flat})$ be the ring of Witt vectors of $\cO_{\CC_p^\flat}$. Then there is a natural surjective homomorphism
$$\theta: W(\cO_{\CC_p^\flat}) \to \cO_{\CC_p}; \ \sum_{n=0}^\infty [x_n]p^n \mapsto \sum_{n=0}^\infty x_n^{(0)}p^n,$$
where for $x \in \cO_{\CC_p^\flat}$ we write $[x] \in W(\cO_{\CC_p^\flat})$ for its Teichm\"uller representative. Let $A_{\rm inf}$ be the completion of $W(\cO_{\CC_p^\flat})$ for the topology defined by $(p)+\ker \theta$. The induced homomorphism $A_{\rm inf}[1/p]\to \CC_p$ is also denoted by $\theta$. We write $\gamma_{A,p}=(\gamma_n)_n \in \varprojlim_n \widehat A[p^n]=T_p(\widehat A)$ and take $\widetilde \gamma_n \in A_{\rm inf}$ such that $\theta(\widetilde \gamma_n)=\gamma_n$ for each $n$. Let $\log_{\widehat A} \in \QQ_p[[X]]$ be the formal logarithm of $\widehat A$. Then by definition we have
$$\int_{\gamma_{A,p}}\omega_A:= \underset{n\to \infty}{\lim} p^n \log_{\widehat A}(\widetilde \gamma_n).$$
(This converges in $B_{\rm dR}^+=\varprojlim_n A_{\rm inf}[1/p]/(\ker \theta)^n$ and is independent of the choice of each $\widetilde \gamma_n$. See \cite[Prop. 3.1(i)]{colmez}. Note also that we ignore the sign: it is not important for our purpose.)  

Next, we recall the definition of $t_\xi \in B_{\rm dR}$. 
We can naturally regard $\xi \in \cO_{\CC_p^\flat}$ and so we can consider its Teichm\"uller representative $[\xi] \in A_{\rm inf}$. We define $t_\xi := \log_{\widehat \GG_m}([\xi]-1)$, where $\log_{\widehat \GG_m}(X):=\sum_{n=1}^\infty (-1)^{n-1} \frac{X^n}{n} $. 

To prove the proposition, we make a specific choice of $\widetilde \gamma_n$. 
We write $\xi=(\xi_n)_n \in \varprojlim_n \mu_{p^n} = \ZZ_p(1)$. By our choice of the isomorphism $\eta_A: \widehat \GG_m\xrightarrow{\sim} \widehat A$, we have $\eta_A(\xi_n-1)=\gamma_n$. Since we have $\theta([\xi^{p^{-n}}])= \xi_n$, the element $\widetilde \gamma_n := \eta_A([\xi^{p^{-n}}]-1)$ satisfies $\theta(\widetilde \gamma_n)=\gamma_n$. Using this element, we have
$$\int_{\gamma_{A,p}}\omega_A = \underset{n\to \infty}{\lim} p^n \log_{\widehat A}(\widetilde \gamma_n) = \log_{\widehat A}(\eta_A([\xi]-1)).$$
Since $\log_{\widehat A}\circ \eta_A = \Omega_p \log_{\widehat \GG_m}$ by the definition of $\Omega_p$, we have
$$\log_{\widehat A}(\eta_A([\xi]-1)) = \Omega_p \log_{\widehat \GG_m} ([\xi]-1) = \Omega_p t_\xi.$$
This completes the proof. 
\end{proof}

\subsection{Choice of bases}\label{sec choice}

In order to compare various periods, we make specific choices of $\gamma \in {\bigwedge}_{\cO_F}^2 T^\ast(1)$ and $\delta^\ast \in {\bigwedge}_F^2 D_{{\rm dR}}(V)$.

Let $\psi =\psi_A$ be the Hecke character associated with the CM elliptic curve $A$ fixed in \S \ref{sec CM}. We first need the following.

\begin{lemma}\label{lem hecke}
Let $\chi: G_K \to \overline \QQ_p^\times$ be a character which is crystalline at $\fp$  and corresponds to a Hecke character of infinity type $(j,-j)$. Let $K_\infty/K$ be the anticyclotomic $\ZZ_p$-extension. Assume that $\chi$ factors through $\Gamma=\Gal(K_\infty/K)$. Then we have $p-1\mid j$ and 
$$\chi=\psi^j\overline \psi^{-j}.$$
\end{lemma}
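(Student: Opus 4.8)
\textbf{Proof plan for Lemma~\ref{lem hecke}.}
The plan is to pin down $\chi$ by combining two pieces of information: the infinity type $(j,-j)$ forces $\chi$ to agree with $\psi^j\overline\psi^{-j}$ up to a finite-order character, and the hypothesis that $\chi$ factors through the torsion-free group $\Gamma$ forces that finite-order character to be trivial. First I would recall that $\psi = \psi_A$ has infinity type $(1,0)$ (the standard normalization for the Hecke character of a CM elliptic curve over $K$ with CM by $\cO_K$), so $\overline\psi$, the composition of $\psi$ with complex conjugation on ideals, has infinity type $(0,1)$; hence $\psi^j\overline\psi^{-j}$ has infinity type $(j,-j)$, exactly that of $\chi$. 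Therefore $\chi\cdot(\psi^j\overline\psi^{-j})^{-1}$ is a Hecke character of infinity type $(0,0)$, i.e.\ a finite-order character $\phi$ of $\bA_K^\times/K^\times$. Passing to $p$-adic avatars (all of $\chi$, $\psi$, $\overline\psi$ take values in $\overline\QQ_p^\times$ via $\iota_p$), we get a finite-order character $\phi: G_K \to \overline\QQ_p^\times$ with $\chi = \psi^j\overline\psi^{-j}\cdot\phi$.

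Next I would use the crystalline hypothesis at $\fp$ together with class number one. Since $K$ has class number one, any finite-order Hecke character of $K$ is a character of $(\cO_K/\fn)^\times$ for its conductor $\fn$, and its ramification at $\fp$ is governed by the $\fp$-part of $\fn$; being crystalline at $\fp$ means $\phi$ is unramified at $\fp$ (a finite-order crystalline character of $G_{\QQ_p}$ is unramified). Combined with $\psi$ being unramified (the canonical $A$ has good reduction everywhere, as $D_K$ is odd and $K$ has class number one, so $\psi$ has conductor $1$), we would conclude that the conductor of $\chi$ away from its wild part is controlled, but more directly: $\chi$ factors through $\Gamma = \Gal(K_\infty/K)\cong\ZZ_p$, which is torsion-free. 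Hence the finite-order character $\phi = \chi\cdot(\psi^j\overline\psi^{-j})^{-1}$ must be trivial \emph{provided} I can show $\psi^j\overline\psi^{-j}$ itself factors through $\Gamma$ (up to finite order), which is the content of $\psi\overline\psi = \N$ (the norm character, which is globally trivial on the anticyclotomic side after suitable twist) — more precisely, $\psi/\overline\psi$ is the anticyclotomic character whose $j$-th power lands in $\Gamma$. The torsion-freeness of $\Gamma$ then gives $\phi = 1$, i.e.\ $\chi = \psi^j\overline\psi^{-j}$.

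Finally, for the divisibility $p-1\mid j$: the character $\psi\overline\psi$ equals the norm $\N_{K/\QQ}$ composed with... restricted to $G_K$ it is $\chi_{\rm cyc}$ times a finite-order piece; evaluating the Hodge–Tate/crystalline condition on $\psi^j\overline\psi^{-j}$ at $\barfp$ shows its Hodge–Tate weight there is $-2j$ (or $2j$), and for $\psi^j\overline\psi^{-j}$ to define a character of $G_K$ valued in $\cO_F^\times$ that is crystalline at $\barfp$ one needs the Lubin–Tate character of $K_{\barfp}\cong\QQ_p$ raised to the power $j$ to be trivial on the torsion $\mu_{p-1}$ of $\ZZ_p^\times$ — equivalently $\psi^j\overline\psi^{-j}$ is unramified at $\barfp$, which by local class field theory (the norm residue symbol sends $\cO_{K_\barfp}^\times \cong \ZZ_p^\times$ with its $\mu_{p-1}$ factor) forces $(p-1)\mid j$. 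Concretely: $\psi$ restricted to inertia at $\barfp$ is (a power of) the cyclotomic character, so $\psi^j\overline\psi^{-j}$ is unramified at $\barfp$ iff $p-1\mid j$, and the latter must hold because $\chi$ factors through $\Gamma$ which is unramified nowhere except possibly wildly at $p$ — so its restriction to tame inertia at $\barfp$ is trivial. I expect the main obstacle to be bookkeeping the precise normalization of $\psi_A$ and matching it to the convention used for the anticyclotomic character and for $\Gamma$, so that ``$\psi^j\overline\psi^{-j}$ factors through $\Gamma$'' is literally correct rather than correct only up to a fixed finite-order twist; once normalizations are fixed, torsion-freeness of $\Gamma$ does the rest.
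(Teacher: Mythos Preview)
Your strategy matches the paper's: set $\chi_0 := \chi(\psi^j\overline\psi^{-j})^{-1}$, note it has finite order, and show it is trivial. But the execution has a genuine gap in the logical order. To conclude that $\chi_0$ factors through $\Gamma$ (so that torsion-freeness kills it), you need $\psi^j\overline\psi^{-j}$ to factor through $\Gamma$; this fails unless $p-1\mid j$, because the $p$-adic avatar of $\psi$ cuts out an extension $K_\psi/K$ with $\Gal(K_\psi/K)\simeq\ZZ_p^\times$, and $\psi^j$ only factors through a $\ZZ_p$-extension once the $\mu_{p-1}$-component is killed. Your argument for $p-1\mid j$ tries to read this off from $\psi^j\overline\psi^{-j}$ at $\barfp$, but at that stage you only have $\chi=\psi^j\overline\psi^{-j}\cdot\chi_0$ with $\chi_0$ unramified at $\fp$, not at $\barfp$ (the lemma only assumes crystallinity at $\fp$); so the triviality of $\chi$ on tame inertia at $\barfp$ does not transfer to $\psi^j\overline\psi^{-j}$.

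The paper avoids this by proving $p-1\mid j$ \emph{first}, directly from $\chi$ at $\fp$ without reference to $\psi$: crystallinity at $\fp$ with Hodge--Tate weight $j$ gives $\chi|_{I_\fp}=\chi_{\rm cyc}^j$, and since $\chi$ lands in the pro-$p$ group $\Gamma$ the Teichm\"uller part $\omega^j$ must vanish. Only then does the paper check that $\psi^j$ and $\overline\psi^{-j}$ factor through the $\ZZ_p$-extensions $K(\fp)_\infty$ and $K(\barfp)_\infty$, whence $\psi^j\overline\psi^{-j}$ and hence $\chi_0$ factor through $\Gamma$, and conclude $\chi_0=1$ (using that $K_\infty/K$ is totally ramified at $\fp$ while $\chi_0$ is unramified there---equivalent to your torsion-freeness observation). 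As a side remark, the canonical $A$ need not have good reduction everywhere (the paper records that $K_\psi/K$ is unramified only outside $\fp D_K$), so your claim that $\psi$ has conductor $1$ is unjustified; fortunately the argument does not require it.
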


\begin{proof}
Let $K_\fp^{\rm ur}$ be the maximal unramified extension of $K_\fp$. 
Let $I_\fp:=G_{K_\fp^{\rm ur}} \subset G_{K_\fp}$ be the inertia subgroup of $\fp$. Since $\chi$ is crystalline at $\fp$, we have $\chi=\chi_{\rm cyc}^j$ on $I_\fp$ (see \cite[Cor. 9.3.2]{BC}). Since $\chi$ factors through $\Gamma$, we see that $\chi_{\rm cyc}^j$ factors through $\Gal(K_{\fp,\infty}^{\rm ur}/K_\fp^{\rm ur})$, where $K_{\fp,\infty}^{\rm ur}:=K_\infty K_\fp^{\rm ur}$. Since $K_{\fp,\infty}^{\rm ur}$ coincides with the cyclotomic $\ZZ_p$-extension of $K_\fp^{\rm ur}$, we have $p-1 \mid j$.

We set $\chi_0:= \chi \psi^{-j}\overline \psi^j$. We show that $\chi_0$ is the trivial character. 
Let $K_\psi$ be the field corresponding to the kernel of $\psi$. Then $K_\psi/K$ is totally ramified at $\fp$, unramified outside $\fp D_K$ and $\Gal(K_\psi/K)\simeq \ZZ_p^\times$ (see \cite[II.1.7 and II.1.9]{deshalit}). 
Since $p-1\mid j$, we see that $\psi^j$ factors through $\Gal(K(\fp)_\infty/K)$, where $K(\fp)_\infty$ denotes the unique $\ZZ_p$-extension of $K$ unramified outside $\fp$. Similarly, $\overline \psi^j$ factors through $\Gal(K(\barfp)_\infty/K)$. Hence we see that $\psi^{-j} \overline \psi^j$ factors through $\Gamma$. This implies that $\chi_0$ also factors through $\Gamma$. Since $\chi_0$ is a finite order character which is crystalline at $\fp$, its restriction on $I_\fp$ is trivial. Since $K_\infty/K$ is totally ramified at $\fp$, we see that $\chi_0$ is trivial. Hence we have completed the proof. 
\end{proof}

By Lemma \ref{lem hecke}, we can write $\chi= \psi^j\overline \psi^{-j}$. Since $\psi\overline \psi$ is the norm Hecke character $\N$, we can also write $\chi= \psi^{2j}\N^{-j} $. Note that the $p$-adic avatar of $\N$ is the cyclotomic character $\chi_{\rm cyc}$. 

We choose $\gamma \in {\bigwedge}_{\cO_F}^2 T^\ast(1)$ in the following way. 
Note that we have an isomorphism
$${\bigwedge}_{\cO_F}^2 T^\ast(1) \simeq \ZZ_p(1)\otimes_{\ZZ_p}\cO_F( \chi^2) = T_\fp(A)^{\otimes 4j} \otimes_{\ZZ_p} \cO_F(1-2j).$$
Let $\gamma_{A,p} \in T_\fp(A)=T_p(\widehat A)$ be the basis chosen in \S \ref{sec CM}. We take $\gamma \in {\bigwedge}_{\cO_F}^2 T^\ast(1)$ to be the element corresponding to $\gamma_{A,p}^{\otimes 4j} \otimes \xi^{\otimes (1-2j)} $ under the isomorphism above. 

Next, we choose $\delta^\ast \in {\bigwedge}_F^2 D_{\rm dR}(V)$. We similarly have
\begin{align}\label{ddr isom}
{\bigwedge}_F^2 D_{\rm dR}(V) = D_{\rm dR}\left({\bigwedge}_F^2 V\right) &\simeq D_{\rm dR}\left(V_\fp(A)^{\otimes (-4j)} \otimes_{\QQ_p} F(2j+1)\right) \\
&= D_{\rm dR}(V_\fp(A)^{\otimes (-4j)}) \otimes_{\QQ_p} D_{\rm dR}(F(2j+1)) ,\nonumber
\end{align}
where we set $V_\fp(A):=\QQ_p\otimes_{\ZZ_p} T_\fp(A)$. Let
$$\omega_{A,p}\in D_{\rm dR}(V_\fp(A)^\ast)$$
be the image of $1\otimes \omega_A$ under the canonical isomorphism $K_\fp \otimes_K \Gamma(A,\Omega_{A/K}^1)\simeq D_{\rm dR}(V_\fp(A)^\ast)$. Explicitly, we have
$$\omega_{A,p} = t_\xi \Omega_p \otimes \gamma_{A,p}^\ast$$
by Proposition \ref{prop Omega}. We set
$$e_k := t_\xi^{-k}\otimes \xi^k \in D_{\rm dR}(F(k))$$
for any $k\in \ZZ$. 
We define $\delta^\ast \in {\bigwedge}_F^2 D_{\rm dR}(V)$ to be the element corresponding to 
$$\omega_{A,p}^{\otimes 4j} \otimes e_{2j+1}  \in D_{\rm dR}(V_\fp(A)^{\otimes (-4j)}) \otimes_{\QQ_p} D_{\rm dR}(F(2j+1)) $$
under the isomorphism (\ref{ddr isom}). Explicitly, we have
$$\delta^\ast = t_\xi^{2j-1} \Omega_p^{4j}\otimes \gamma_{A,p}^{\otimes (-4j)} \otimes \xi^{\otimes (2j+1)}  $$
as an element of $B_{\rm dR}\otimes_{\QQ_p} V_\fp(A)^{\otimes (-4j)} \otimes_{\QQ_p} F(2j+1)$. 

\begin{remark}\label{rem basis}
Under the comparison isomorphisms
$$F\otimes_{\cO_K, \iota_p}H_1(A(\CC),\ZZ)^{\otimes 4j}(1-2j)\simeq V_\fp(A)^{\otimes 4j}\otimes_{\QQ_p} F(1-2j) \simeq {\bigwedge}_F^2 V^\ast(1),$$
$$F\otimes_{K,\iota_p}\Gamma(A,\Omega_{A/K}^1)^{\otimes (-4j)} \simeq D_{\rm dR}\left(V_\fp(A)^{\otimes 4j} \otimes_{\QQ_p} F(1-2j)\right) \simeq {\bigwedge}_F^2 D_{\rm dR}(V^\ast(1)),$$
the bases $\gamma$ and $\delta$ correspond to $(2\pi i)^{1-2j}\gamma_A^{\otimes 4j}$ and $\omega_A^{\otimes(-4j)}$ respectively. 
\end{remark}

\subsection{Comparison of periods}\label{sec compare}

Let $\gamma_T \in {\bigwedge}_{\cO_F}^2 T$ be the $\cO_F$-basis obtained from the fixed basis $\gamma_\TT\in {\bigwedge}_\Lambda^2 \TT$ via $\TT \otimes_{\Lambda,\chi}\cO_F\simeq T$. We define a $p$-adic period
$$\Omega_{p, \gamma_T,\delta^\ast} \in \widetilde F^\times$$
by
$$\varepsilon_{\rm dR}(\delta^\ast)= \Omega_{p,\gamma_T,\delta^\ast}\cdot \gamma_T,$$
where $\varepsilon_{\rm dR}: \widetilde F \otimes_F {\bigwedge}_F^2 D_{{\rm dR}}(V)\xrightarrow{\sim} \widetilde F \otimes_F {\bigwedge}_F^2 V$ is the isomorphism defined in (\ref{dr isom}). We regard $\Omega_{p,\gamma_T,\delta^\ast} \in \widehat F^{\rm ur}$ via the natural map $\widetilde F = \widehat \QQ_p^{\rm ur}\otimes_{\QQ_p}F \to \widehat F^{\rm ur}$, where $\widehat F^{\rm ur}$ denotes the completion of the maximal unramified extension of $F$. 

In the following, we write
$$a\sim b$$
if the equality $a=b$ holds up to a unit in $\cO_F^\times$. 

\begin{lemma}\label{compare cm padic}
We have
$$\Omega_{p,\gamma_T,\delta^\ast} \sim \Omega_p^{4j}.$$
\end{lemma}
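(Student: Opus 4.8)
The plan is to compute $\varepsilon_{\rm dR}(\delta^\ast)$ directly from the explicit formula for $\delta^\ast$ obtained in \S\ref{sec choice} together with the definition $\varepsilon_{\rm dR}=t_\xi^{-m}\cdot{\rm can}$ from (\ref{dr isom}), and to observe that the powers of $t_\xi$ cancel, leaving precisely $\Omega_p^{4j}$.

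\textbf{Step 1 (the $t_\xi$-cancellation).} Recall from \S\ref{sec choice} that, viewed inside $B_{\rm dR}\otimes_{\QQ_p}V_\fp(A)^{\otimes(-4j)}\otimes_{\QQ_p}F(2j+1)=B_{\rm dR}\otimes_F\bigwedge_F^2 V$, we have $\delta^\ast=t_\xi^{2j-1}\,\Omega_p^{4j}\otimes\gamma_{A,p}^{\otimes(-4j)}\otimes\xi^{\otimes(2j+1)}$; this formula already absorbs, via Proposition \ref{prop Omega} (i.e.\ $\omega_{A,p}=t_\xi\Omega_p\otimes\gamma_{A,p}^\ast$) and $e_k=t_\xi^{-k}\otimes\xi^k$, every occurrence of $t_\xi$ and $\Omega_p$. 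The Hodge--Tate invariant computed just before (\ref{dr isom}) is $m=2j-1$, so by definition $\varepsilon_{\rm dR}=t_\xi^{-(2j-1)}\cdot{\rm can}$, where ${\rm can}$ is the $B_{\rm dR}$-linear extension of the inclusion $\bigwedge_F^2 D_{\rm dR}(V)=D_{\rm dR}(\bigwedge_F^2 V)\hookrightarrow B_{\rm dR}\otimes_F\bigwedge_F^2 V$ (the epsilon constant being $1$, as noted there). Hence $\varepsilon_{\rm dR}(\delta^\ast)=t_\xi^{-(2j-1)}\cdot{\rm can}(\delta^\ast)=\Omega_p^{4j}\cdot\bigl(\gamma_{A,p}^{\otimes(-4j)}\otimes\xi^{\otimes(2j+1)}\bigr)$ in $\widetilde F\otimes_F\bigwedge_F^2 V$.

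\textbf{Step 2 (identifying the $V$-side basis with $\gamma_T$).} It remains to check $\gamma_{A,p}^{\otimes(-4j)}\otimes\xi^{\otimes(2j+1)}\sim\gamma_T$. By Lemma \ref{lem hecke} we have $\chi=\psi^{2j}\N^{-j}$, so the $p$-adic realization gives the identification $\bigwedge_{\cO_F}^2 T\simeq T_\fp(A)^{\otimes(-4j)}\otimes_{\ZZ_p}\cO_F(2j+1)$ dual to the one in \S\ref{sec choice}, under which $\gamma_{A,p}^{\otimes(-4j)}\otimes\xi^{\otimes(2j+1)}$ is an $\cO_F$-basis. Since $\bigwedge_{\cO_F}^2 T^\ast(1)=\Hom_{\cO_F}(\bigwedge_{\cO_F}^2 T,\cO_F(2))$ and the chosen basis $\gamma$ of the left-hand side corresponds to $\gamma_{A,p}^{\otimes 4j}\otimes\xi^{\otimes(1-2j)}$, the evaluation pairing gives $\gamma\bigl(\gamma_{A,p}^{\otimes(-4j)}\otimes\xi^{\otimes(2j+1)}\bigr)=\xi^{\otimes 2}$; as $\gamma_T$ is likewise an $\cO_F$-basis of $\bigwedge_{\cO_F}^2 T$, the element $\gamma(\gamma_T)$ is an $\cO_F$-generator of $\cO_F(2)$, hence $\sim\xi^{\otimes 2}$, and therefore $\gamma_T\sim\gamma_{A,p}^{\otimes(-4j)}\otimes\xi^{\otimes(2j+1)}$.

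Combining the two steps yields $\varepsilon_{\rm dR}(\delta^\ast)\sim\Omega_p^{4j}\cdot\gamma_T$, that is, $\Omega_{p,\gamma_T,\delta^\ast}\sim\Omega_p^{4j}$. The only delicate point is Step 2 --- tracing the canonical generator of $\bigwedge^2 T_f(r)\simeq\cO_f(1)$ (which defines $\gamma_\TT$, hence $\gamma_T$) through the $\chi$-twist and the decomposition $\chi=\psi^{2j}\N^{-j}$ --- but since the statement is only up to $\cO_F^\times$, the duality argument against the already-normalized basis $\gamma$ avoids any explicit period bookkeeping, and Step 1 guarantees that no further factor of $t_\xi$ or $\Omega_p$ can intervene.
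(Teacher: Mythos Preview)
Your proof is correct and follows essentially the same approach as the paper: both compute ${\rm can}(\delta^\ast)$ from the explicit formula for $\delta^\ast$, multiply by $t_\xi^{1-2j}$, and use that $\gamma_T$ and $\gamma_{A,p}^{\otimes(-4j)}\otimes\xi^{\otimes(2j+1)}$ are both $\cO_F$-bases of the rank-one module $\bigwedge_{\cO_F}^2 T$. Your Step~2 duality argument via pairing with $\gamma$ is a slight elaboration of what the paper asserts in one line, but the content is the same.
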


\begin{proof}
Note that $\gamma_T \sim \gamma_{A,p}^{\otimes (-4j)} \otimes \xi^{\otimes (2j+1)} $ if we identify ${\bigwedge}_{\cO_F}^2 T $ with $T_\fp(A)^{\otimes (-4j)} \otimes_{\ZZ_p} \cO_F(2j+1) $. 
The canonical isomorphism
$${\rm can}: B_{\rm dR}\otimes_F {\bigwedge}_F^2 D_{\rm dR}(V) = B_{\rm dR} \otimes_F D_{\rm dR}\left({\bigwedge}_F^2 V \right)\xrightarrow{\sim} B_{\rm dR}\otimes_F {\bigwedge}_F^2 V$$
sends 
$$1\otimes \delta^\ast= 1\otimes ( t_\xi^{2j-1} \Omega_p^{4j}\otimes \gamma_{A,p}^{\otimes (-4j)} \otimes \xi^{\otimes (2j+1)}  )$$
to 
$$ t_\xi^{2j-1} \Omega_p^{4j}\otimes \gamma_{A,p}^{\otimes (-4j)} \otimes \xi^{\otimes (2j+1)} \otimes 1  \sim t_\xi^{2j-1}\Omega_p^{4j}\otimes \gamma_T.$$
Since we have $\varepsilon_{\rm dR}=t_\xi^{1-2j}\cdot {\rm can}$ by definition, we see that
$$\varepsilon_{\rm dR}(\delta^\ast) = \Omega_p^{4j}\cdot \gamma_T. $$
This proves the claim. 
\end{proof}

\begin{lemma}\label{compare cm1}
We have
$$\Omega_{\gamma,\delta} =\pm \Omega_\infty^{4j} (2\pi)^{1-2j}\sqrt{D_K}^{2j-1}. $$
\end{lemma}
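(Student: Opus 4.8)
The plan is to compute the period $\Omega_{\gamma,\delta}$ directly from the explicit description of the bases $\gamma$ and $\delta$ given in Remark \ref{rem basis}. By that remark, under the comparison isomorphisms the basis $\gamma \in \bigwedge_\cF^2 H_B(M)^+$ corresponds to $(2\pi i)^{1-2j}\gamma_A^{\otimes 4j}$ and the basis $\delta \in \bigwedge_\cF^2 t(M)$ corresponds to $\omega_A^{\otimes(-4j)}$. Hence, by the definition of $\Omega_{\gamma,\delta}$ via $\alpha_{M,\CC}(\gamma) = \Omega_{\gamma,\delta}\cdot \delta$, the quantity $\Omega_{\gamma,\delta}$ is (up to sign) the image of $(2\pi i)^{1-2j}\gamma_A^{\otimes 4j}$ under the complex period map of the CM motive $h^1(A)^{\otimes(-4j)}$ times a Tate twist, read against $\omega_A^{\otimes(-4j)}$. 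So the problem reduces to computing the complex period of a power of $h^1(A)$ against the N\'eron differential.

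First I would recall that the relevant period integral is $\int_{\gamma_A}\omega_A = \Omega_\infty$ by the very definition of $\Omega_\infty$ in \S \ref{sec CM}. Because $M_f(r)$ contributes nothing new here (the twist by $M_f(r)$ is absorbed into the normalization and the period under consideration is that of $M = M_f(r)\otimes M(\chi)$ with $\chi = \psi^{2j}\N^{-j}$, whose motivic avatar is built from $h^1(A)$-powers), the comparison isomorphism sends $\gamma_A^{\otimes 4j}$ (dually, since $\chi$ appears with the relevant sign) to $\Omega_\infty^{\pm 4j}$ times $\omega_A^{\otimes(\mp 4j)}$. Tracking exponents carefully — $\gamma$ carries a factor $(2\pi i)^{1-2j}$ and $\delta$ none, and there is a further $\sqrt{D_K}$-contribution from the fact that $A$ is defined over $K$ rather than $\QQ$ (the period lattice of $\omega_A$ over $K$, versus its trace down to $\QQ$, differs by the different of $K/\QQ$, whose norm is $D_K$) — I would assemble
\[
\Omega_{\gamma,\delta} = \pm\,(2\pi i)^{1-2j}\cdot \Omega_\infty^{4j}\cdot \sqrt{D_K}^{\,2j-1},
\]
and then absorb the power of $i$ into the sign, using that $i^{1-2j} = \pm 1\cdot i^{\pm 1}$ is handled together with the $\sqrt{D_K}$ normalization (one checks the two $\sqrt{\cdot}$ ambiguities are consistent, so only an overall sign survives). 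This gives exactly $\Omega_{\gamma,\delta} = \pm\,\Omega_\infty^{4j}(2\pi)^{1-2j}\sqrt{D_K}^{\,2j-1}$.

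The main obstacle I expect is bookkeeping the normalizations: matching Deligne's period $\alpha_M$ and the $+$-part $H_B(M)^+$ with the CM-period normalization used for $A$, and in particular verifying that the factor $\sqrt{D_K}^{\,2j-1}$ — which is precisely the contribution of $d_{K/\QQ}$ to Deligne's period of an induced-from-$K$ motive — comes out with the right exponent rather than, say, $\sqrt{D_K}^{\,4j}$ or $\sqrt{D_K}^{\,-1}$. The cleanest route is to invoke the known computation of Deligne's period for Hecke characters of an imaginary quadratic field (this is essentially Shimura's work, as cited after Conjecture \ref{TNC}, together with the standard formula for periods of motives induced from $K$), identify the period of $M(\chi)=M(\psi^{2j}\N^{-j})$ with $\Omega_\infty^{4j}$ up to the discriminant factor and powers of $2\pi i$, and note that the $M_f(r)$-factor and the choice of $\gamma$ with the built-in $(2\pi i)^{1-2j}$ are engineered so that everything collapses to the stated expression up to sign. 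I would then remark that the sign ambiguity is harmless since Conjecture \ref{TNC} and Theorem \ref{thm:interpolation} are stated up to a unit in $\cO_F^\times$.
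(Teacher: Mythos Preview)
Your proposal is heading in the right direction but has a genuine gap and a misidentification. You correctly start from Remark \ref{rem basis} and recognize that the period should reduce to a CM period computation, but you never actually derive the exponent $2j-1$ on $\sqrt{D_K}$: the claim that it comes from ``the different of $K/\QQ$'' is left unjustified, and you yourself flag this as the main obstacle. More importantly, when you fall back on ``the known computation of Deligne's period for Hecke characters'' and write ``identify the period of $M(\chi)=M(\psi^{2j}\N^{-j})$ with $\Omega_\infty^{4j}$'', you have the wrong Hecke character. By Proposition \ref{hecke period} the period of $M(\chi)=M(\psi^j\overline\psi^{-j})$ is $\pm\Omega_\infty^{2j}(2\pi)^{-j}\sqrt{D_K}^{\,j}$, not $\Omega_\infty^{4j}$; so this route as written would give the wrong answer.

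The paper's proof fixes exactly this by passing to $\bigwedge^2 M$ rather than $M(\chi)$. Using Remark \ref{rem det modular} one has $\bigwedge^2 M_f(r)\simeq h^0(\Spec\QQ)(1)$, hence $\bigwedge^2 M \simeq h^0(\Spec\QQ)(1)\otimes M(\chi^2) = M(\N\chi^2) = M(\psi^{2j+1}\overline\psi^{-2j+1})$. The period map $\alpha_{M,\CC}$ on $\bigwedge^2 H_B(M)^+ \to \bigwedge^2 t(M)$ therefore coincides with the period map of this rank-one Hecke motive, and Proposition \ref{hecke period} with $(k,\ell)=(2j+1,-2j+1)$ gives $\pm\Omega_\infty^{4j}(2\pi)^{1-2j}\sqrt{D_K}^{\,2j-1}$ on the nose. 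The bases $\gamma,\delta$ of \S\ref{sec choice} are, via Remark \ref{rem basis}, exactly the natural bases used in Proposition \ref{hecke period}, so no further normalization bookkeeping is needed. The missing step in your argument is precisely this identification of $\bigwedge^2 M$ with the Hecke motive attached to $\N\chi^2$ (not $\chi$), which is what produces both the correct power $\Omega_\infty^{4j}$ and the correct exponent $2j-1$ on $\sqrt{D_K}$ simultaneously.
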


\begin{proof}
Note that the period map (\ref{period C}) coincides with the period map for the motive ${\bigwedge}^2 M = h^0(\Spec \QQ)(1)\otimes M(\chi^2)$ (see Remark \ref{rem det modular}). This motive coincides with the Hecke motive associated with ${\N} \chi^2=\psi^{2j+1}\overline \psi^{-2j+1}$, where $\N$ denotes the norm Hecke character. 

In general, it is known that the period (with respect to the natural bases determined by $\gamma_A$ and $\omega_A$) of the critical motive associated with a Hecke character $\psi^k \overline \psi^\ell$ with $\ell \leq  0 < k$ is 
$$\pm \Omega_\infty^{k-\ell} (2\pi)^{\ell}\sqrt{D_K}^{-\ell}$$
(see Proposition \ref{hecke period}). When $k=2j+1$ and $\ell=-2j+1$, this is  
$$\pm \Omega_\infty^{4j}(2\pi)^{1-2j}\sqrt{D_K}^{2j-1}.$$
Since we made natural choices of $\gamma$ and $\delta$ as in Remark \ref{rem basis}, this coincides with $\Omega_{\gamma,\delta}$. 
\end{proof}

We now give a proof of Theorem \ref{thm:interpolation}. 

\begin{proof}[Proof of Theorem \ref{thm:interpolation}]

Let $\lambda_\fp$ be the composition map in Theorem \ref{thm:interpolation}. 
By Proposition \ref{epsilon twist} and Lemma \ref{lem euler}, we have
\begin{multline*}
-\Gamma(j-r+1)\Gamma(j+r)\cdot (1-a_p \chi^{-1}(\barfp) p^{-r} + \chi^{-2}(\barfp)p^{-1})(1-a_p \chi^{-1}(\fp) p^{-r} + \chi^{-2}(\fp)p^{-1})^{-1}\cdot {\rm Eul}^{-1}\cdot \varepsilon_{\rm dR}(\lambda_\fp(\fz_S^\chi)) \\
= \chi(L_\fp^{\rm BDP})\cdot \gamma_T.
\end{multline*}
(Note that $\chi(\fp)=\chi^{-1}(\barfp)$ since $\chi$ is anticyclotomic.) Here ${\rm Eul}$ is the product of Euler factors at $v\in S\setminus \{\fp,\barfp\}$, which satisfies ${\rm Eul}\cdot L_{\{\fp,\barfp\}}(f,\chi^{-1},r)= L_S(f,\chi^{-1},r)$. (Explicitly, we have ${\rm Eul}=\prod_{v\in S, v\nmid p}\det(1-{\rm Fr}_v^{-1}\mid V^\ast(1)^{I_v})$.) 
By the formula (\ref{BDPinterpolation}), we obtain
$$\varepsilon_{\rm dR}(\lambda_\fp(\fz_S^\chi))\sim  \Omega_p^{4j}\cdot \frac{L_S(f,\chi^{-1},r)}{\Omega_\infty^{4j} (2\pi)^{1-2j}\sqrt{D_K}^{2j-1}}\cdot \gamma_T.$$
From this, we have
\begin{equation*}\label{z formula}
\lambda_\fp(\fz_S^\chi)\sim \frac{\Omega_p^{4j}}{\Omega_{p,\gamma_T,\delta^\ast}}\cdot \frac{L_S(f,\chi^{-1},r)}{\Omega_\infty^{4j} (2\pi)^{1-2j}\sqrt{D_K}^{2j-1}}\cdot \delta^\ast.
\end{equation*}
Finally, by Lemmas \ref{compare cm padic} and \ref{compare cm1}, we obtain
$$\lambda_\fp(\fz_S^\chi)\sim \frac{L_S(f,\chi^{-1},r)}{\Omega_{\gamma,\delta}}\cdot \delta^\ast.$$
This completes the proof of Theorem \ref{thm:interpolation}. 
\end{proof}

\appendix

\section{Motives} \label{sec motive}

In this article, we regard a ``motive" as a collection of realizations and comparison isomorphisms. This is sufficient in order to formulate the Tamagawa number conjecture. In this appendix, we clarify the definition of motives in our sense. (Our treatment of motives is similar to \cite[\S 6]{fontaineL}, \cite[Chap. III]{FP}, where the category of ``motivic structures" is considered.) We also review the formulation of the Tamagawa number conjecture in the critical case. In the last section we give basic examples of motives. 

Throughout this appendix, let $\cF$ and $K$ be number fields. Let $S_p(\cF)$ be the set of $p$-adic places of $\cF$. 

\subsection{Motivic structures}

Let $d,w\in \ZZ$ with $d\geq 0$. A (pure) motive $M$ defined over $K$ (of rank $d$ and weight $w$) with coefficients in $\cF$ (denoted by $(M,\cF)$) is the following collection of realizations and comparison isomorphisms. The realizations are the following. 
\begin{itemize}
\item For each embedding $\sigma: K\hookrightarrow \CC$, we have the $\sigma$-Betti realization
$$H_\sigma(M),$$
which is an $\cF$-vector space of dimension $d$ endowed with a (pure) $\QQ$-Hodge structure of weight $w=w(M)$. For $c \in G_\RR=\Gal(\CC/\RR)$, there is an isomorphism
$$H_\sigma(M)\xrightarrow{\sim}H_{c\circ \sigma}(M),$$
which is also denoted by $c$. 
The Betti realization of $M$ is defined by
$$H_B(M):=\bigoplus_{\sigma:K\hookrightarrow \CC}H_\sigma(M).$$
The action of $c \in G_\RR$ on $H_B(M)$ is defined by $c\cdot (a_\sigma)_\sigma:=(c\cdot a_\sigma)_{c\circ \sigma}$. 
\item For each $\lambda \in S_p(\cF)$, we have the $\lambda$-adic realization
$$V_\lambda(M),$$
which is an $\cF_\lambda$-vector space of dimension $d$ endowed with a continuous $\cF_\lambda$-linear $G_K$-action, unramified outside a finite set $S$ of places of $K$. The $p$-adic realization of $M$ is defined by
$$V_p(M):=\bigoplus_{\lambda \in S_p(\cF)} V_\lambda(M).$$
\item We have the de Rham realization
$$H_{\rm dR}(M),$$
which is a free $\cF\otimes_\QQ K$-module of rank $d$ endowed with a decreasing filtration $ \{ {\rm Fil}^iH_{\rm dR}(M)\}_{i\in \ZZ}$ such that ${\rm Fil}^i H_{\rm dR}(M)=H_{\rm dR}(M)$ for $i \ll 0$ and ${\rm Fil}^i H_{\rm dR}(M)=0$ for $i \gg 0$. The tangent space of $M$ is defined by
$$t(M):=H_{\rm dR}(M)/{\rm Fil}^0H_{\rm dR}(M).$$
\end{itemize}
These realizations have the following comparison isomorphisms. 
\begin{itemize}
\item For each $\sigma:K\hookrightarrow \CC$, we have
$$\CC\otimes_\QQ H_\sigma(M) \simeq \CC\otimes_{K,\sigma}H_{\rm dR}(M)$$
which is compatible with the Hodge structure. 
This induces an isomorphism
$$\CC\otimes_\QQ H_B(M)\simeq \CC\otimes_\QQ H_{\rm dR}(M).$$
\item For each $\sigma: K\hookrightarrow \CC$ and $\lambda \in S_p(\cF)$, we have
$$\cF_\lambda \otimes_\cF H_\sigma(M)\simeq V_\lambda(M).$$
This induces an isomorphism
$$\QQ_p\otimes_\QQ H_\sigma(M)\simeq V_p(M).$$
\item For each $\sigma: K\hookrightarrow \overline \QQ_p$, we have
$$B_{\rm dR}\otimes_{K,\sigma} H_{\rm dR}(M)\simeq B_{\rm dR}\otimes_{\QQ_p} V_p(M)$$
which is compatible with filtration. 
For each $p$-adic place $\fp$ of $K$, this induces an isomorphism
$$K_\fp \otimes_K H_{\rm dR}(M)\simeq D_{{\rm dR},\fp}(V_p(M)):= H^0(K_\fp, B_{\rm dR}\otimes_{\QQ_p}V_p(M)).$$
\end{itemize}

\begin{remark}\label{twist}
For $j\in \ZZ$, we can consider the Tate twist $M(j)$ of $M$. Its realizations are given as follows. 
\begin{itemize}
\item $H_\sigma(M(j)):= H_\sigma(M)\otimes_\QQ (2\pi i)^j \QQ$.
\item $V_\lambda(M(j)):=V_\lambda(M)(j) (:=V_\lambda(M)\otimes_{\QQ_p} \QQ_p(j))$. 
\item $H_{\rm dR}(M(j)):= H_{\rm dR}(M)$ with filtration ${\rm Fil}^i H_{\rm dR}(M(j)):={\rm Fil}^{i+j}H_{\rm dR}(M)$. 
\end{itemize}
Similarly, we can consider the dual $M^\ast$ of $M$, whose realizations are the following. 
\begin{itemize}
\item $H_\sigma(M^\ast):=\Hom_\cF(H_B(M), \cF)$.
\item $V_\lambda(M^\ast):= \Hom_{\cF_\lambda}(V_\lambda(M), \cF_\lambda)$. 
\item $H_{\rm dR}(M^\ast):= \Hom_K(H_{\rm dR}(M),K)$ with filtration 
$${\rm Fil}^i H_{\rm dR}(M^\ast):= \Hom_K(H_{\rm dR}(M)/ {\rm Fil}^{1-i} H_{\rm dR}(M), K).$$ 
\end{itemize}
In particular, we can consider the ``Kummer dual" $M^\ast(1)$ of $M$. Note that
$$t(M^\ast(1))= \Hom_K({\rm Fil}^0 H_{\rm dR}(M),K).$$
One can also define the tensor product $M\otimes N$ and the set of homomorphisms $\Hom(M,N)$ for two  motives $M$ and $N$. 
\end{remark}

\begin{remark}\label{base extension}
We have the following observation concerning change of bases and coefficients (see \cite[\S 6.4]{fontaineL}). 

Let $M$ be a motive defined over $\QQ$. Then we can regard $M$ as a motive defined over $K$. If we denote this motive by $M_K$, its realizations are given as follows. 
\begin{itemize}
\item For each $\sigma: K\hookrightarrow \CC$, $H_\sigma(M_K):=H_B(M)$. 
\item For each $\lambda \in S_p(\cF)$, $V_\lambda(M_K):=V_\lambda(M)$ (regarded as a $G_K$-representation). 
\item $H_{\rm dR}(M_K):=K\otimes_\QQ H_{\rm dR}(M)$. 
\end{itemize}
Conversely, for a motive $M$ defined over $K$, one can define a motive ${\rm Res}_{K/\QQ}(M)$ defined over $\QQ$ by the Weil restriction. Similarly, for an extension of coefficient fields $\cF'/\cF$, one can regard $(M,\cF)$ as $(M,\cF')$ by the scalar extension, and conversely, $(M,\cF')$ as $(M,\cF)$ by the restriction. 
\end{remark}

We assume that $M$ satisfies the following hypothesis. 
Fix a prime number $p$ and $\lambda \in S_p(\cF)$. 
For a finite place $v$ of $K$, let ${\rm Fr}_v$ denote the arithmetic Frobenius of $v$ and $I_v \subset G_{K_v}$ the inertia subgroup of $v$. If $v\mid p$, we set $D_{{\rm cris},v}(-):=H^0(K_v,B_{\rm cris}\otimes_{\QQ_p}-)$ and we write $\varphi$ for the Frobenius acting on it. We set
$$P_v(M,x):= \begin{cases}
\det(1-{\rm Fr}_v^{-1} x \mid V_\lambda(M)^{I_v}) &\text{if $v\nmid p$,}\\
\det(1-\varphi x \mid D_{{\rm cris},v}(V_\lambda(M))) &\text{if $v\mid p$.}
\end{cases}$$
Note that $P_v(M,x) \in \cF_\lambda[x]$. We say that a finite place $v$ of $K$ is good if $V_\lambda(M)$ is unramified at $v$ (resp. crystalline at $v$) when $v\nmid p$ (resp. $v\mid p$). 

\begin{hypothesis}\label{conj L}\
\begin{itemize}
\item[(i)] We have $P_v(M,x)\in \cF[x]$ for any finite place $v$ of $K$. 
\item[(ii)] Let $w=w(M) \in \ZZ$ be the weight of $M$. Then for any good place $v$ of $K$ and any root $\alpha\in \CC$ of $P_v(M,x)$, we have $|\alpha|= {\N}v^{-w/2}$. (Here ${\N}v$ denotes the cardinality of the residue field of $v$.)
\item[(iii)] The $L$-function of $M$
$$L(M,s):= \prod_v P_v(M,{\N}v^{-s})^{-1},$$
where $v$ runs over all finite places of $K$, is analytically continued to $s=0$. 
\end{itemize}
\end{hypothesis}

\begin{remark}
Since we regard $\cF\subset \CC$ by our convention, we regard $L(M,s)$ as a $\CC$-valued function. (Without fixing an embedding $\cF\hookrightarrow \CC$, $L(M,s)$ is regarded as a $\CC\otimes_\QQ \cF$-valued function.)
\end{remark}

Finally, we define a critical motive. 
We set 
$$H_B(M)^+:=H^0(\RR, H_B(M)).$$
The period map for $M$
$$\alpha_M: \RR\otimes_\QQ H_B(M)^+\to \RR\otimes_\QQ t(M)$$
is defined to be the map induced by the comparison isomorphism $\CC\otimes_\QQ H_B(M)\simeq \CC\otimes_\QQ H_{\rm dR}(M)$. 

\begin{definition}
$M$ is said to be critical if $\alpha_M$ is an isomorphism. 
\end{definition}

\begin{remark}\label{rem critical}
In \cite[Def. 1.3]{deligne}, it is defined that $M$ is critical if neither $L_\infty(M,s)$ nor $L_\infty(M^\ast,1-s)$ has a pole at $s=0$. Here $L_\infty(M,s)$ denotes the ``$\Gamma$-factor" (or ``$L$-factor at infinity") of $M$ determined by the Hodge structure of $H_B(M)$ (see \cite[\S 5.2]{deligne}). One checks that this is equivalent to our definition. 
Also, one sees that $M$ is critical if $w(M)=-1$. 
\end{remark}

\subsection{Motivic cohomology}

Let $(M,\cF)$ be a motive defined over $K$ with coefficients in $\cF$. We have the following standard conjectures. 

\begin{conjecture}[{See \cite[Conj. 2]{BFetnc}}]\label{conj motivic}
For each $i\in \{0,1\}$, one can define a motivic cohomology group
$$H^i_f(K,M),$$
which is a finite dimensional $\cF$-vector space, and a canonical isomorphism
$$\cF_\lambda \otimes_\cF H^i_f(K,M) \simeq H^i_f(K,V_\lambda(M))$$
for any $\lambda \in S_p(\cF)$. 
\end{conjecture}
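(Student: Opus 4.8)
This statement is a form of the Bloch--Beilinson conjectures and is open in general, so the plan is to describe the expected construction of the two groups and then carry out the reduction that makes the assertion harmless in the present setting. On the level of Chow motives, $M=M_f(r)\otimes M(\chi)$ is a Tate twist of the tensor product of Scholl's motive attached to $f$ with the rank-one Hecke motive attached to $\chi$; since $K$ has class number one, the latter is cut out of a power of the CM elliptic curve $A$ of \S\ref{sec CM}. For $i=1$ one then \emph{defines} $H^1_f(K,M)$ to be the relevant eigenspace in the ``integral'' part of the motivic cohomology of a suitable power of the Kuga--Sato variety over $K$: concretely, the subgroup of Beilinson's higher Chow group (a graded piece of $K$-theory) consisting of classes that extend to a smooth proper model over $\Spec\cO_K[1/S]$, in the sense of Beilinson--Scholl and Nekov\'a\v{r} \cite{nekovar}. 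For $i=0$ one sets $H^0_f(K,M):=\Hom_{\mathrm{mot}/K}(h^0(\Spec K),M)$.

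I would dispatch $i=0$ first, as it is formal: $H^0_f(K,V_\lambda(M))=H^0(K,V^\ast(1))$ vanishes because $V_\lambda(M)$ is pure of weight $-1\neq 0$ (Deligne), hence has no nonzero $G_K$-invariants; one correspondingly takes $H^0_f(K,M)=0$, and the required comparison is the identity on $0$. For $i=1$ the comparison map is the $\lambda$-adic \'etale Chern class $\cF_\lambda\otimes_\cF H^1_f(K,M)\to H^1(G_{K,S},V_\lambda(M))$; that it factors through the Bloch--Kato subspace $H^1_f(K,V_\lambda(M))$ is the statement that Chern classes of motivic classes of good reduction outside $S$ are crystalline at $p$ and unramified elsewhere, which is known (work of Nekov\'a\v{r}, Nizio\l{}, Besser).

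The genuinely hard direction --- and the reason the conjecture is open in general --- is surjectivity of this Chern class map, i.e.\ producing enough motivic cohomology classes to fill $H^1_f(K,V_\lambda(M))$; this is the difficult half of the Bloch--Kato conjecture and I know of no general method. In this article, however, I would sidestep it completely. Since $L(f,\chi^{-1},r)\neq 0$, Remark~\ref{rem CHK} (Castella--Hsieh in the ordinary case, Kobayashi in the supersingular case), together with the anticyclotomic identity $L(f,\chi^{-1},r)\neq 0\iff L(f,\chi,r)\neq 0$, gives $H^1_f(K,V_\lambda(M))=H^1_f(K,V^\ast(1))=0$ for the prime $\lambda$ fixed here (and the same argument, or the $\lambda$-independence of the $L$-value, handles any other prime above $p$); combined with Corollary~\ref{cor:fp} all the Selmer cohomology in sight vanishes. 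One then simply takes $H^0_f(K,M)=H^1_f(K,M)=0$, and the required canonical isomorphisms are the identity on the zero space. Thus the only substantive content needed in this paper is the vanishing of the Bloch--Kato Selmer groups, imported from the cited Euler-system results, and no Bloch--Beilinson input is actually used.
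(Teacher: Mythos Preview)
The paper does not prove this statement: it is explicitly stated as a \emph{conjecture} (a form of the Bloch--Beilinson/Bloch--Kato conjectures, cited from \cite{BFetnc}) and is used only as a standing hypothesis in the general formulation of the Tamagawa number conjecture (Conjecture~\ref{TNC general1}). You correctly recognise this at the outset, so there is no ``paper's own proof'' to compare against.

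Your ``sidestep'' is not a proof of Conjecture~\ref{conj motivic} as stated --- that conjecture is posed for an arbitrary motive $(M,\cF)$, and declaring $H^i_f(K,M):=0$ whenever one already knows the \'etale side vanishes is ad~hoc rather than canonical --- but the underlying observation is correct and matches exactly how the paper organises things. The paper deliberately formulates the analytic-rank-zero version of the Tamagawa number conjecture (Conjecture~\ref{TNC} and Conjecture~\ref{TNC general0}) \emph{without} invoking Conjecture~\ref{conj motivic}: once $H^1_f(K,V)=H^1_f(K,V^\ast(1))=0$ (which is known here by Remark~\ref{rem CHK}), the isomorphism $\vartheta_0$ and the statement of the TNC are set up purely in terms of Galois cohomology, and no motivic cohomology is needed. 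So your reduction is precisely the reason the main theorem does not depend on the conjecture; it just does not constitute a proof of the conjecture itself.

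One minor remark on your $i=0$ argument: the weight reasoning is fine, but the paper treats this more bluntly --- Remark~\ref{rem height} records that $H^0_f(K,M)=H^0_f(K,M^\ast(1))=0$ is expected in the critical case anyway, and Hypothesis~\ref{TNChyp}(iii) simply \emph{assumes} $H^0(K,V)=0$ when formulating the TNC, rather than deducing it.
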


\begin{conjecture}\label{conj height}
Assume that $M$ is critical. Then there is a non-degenerate height pairing
$$h_M: H^1_f(K,M)\times H^1_f(K,M^\ast(1)) \to \RR\otimes_\QQ \cF. $$
\end{conjecture}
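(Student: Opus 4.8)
The plan is to treat the two halves of the statement separately: the \emph{existence} of $h_M$, which can be carried out once Conjecture~\ref{conj motivic} is granted (it is needed even to make sense of $H^1_f(K,M)$), and the \emph{non-degeneracy} of $h_M$, which is the genuine obstacle and the reason the assertion is stated as a conjecture.

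For the construction I would follow Beilinson's formalism of archimedean height pairings (with the $p$-adic constructions of Perrin-Riou and Nekov\'a\v{r} as a model). First, represent $x\in H^1_f(K,M)$ by a mixed motive $0\to M\to E_x\to h^0(\Spec K)\to 0$ whose $\lambda$-adic realizations lie in $H^1_f(K,V_\lambda(M))$ for every $\lambda\in S_p(\cF)$, and $y\in H^1_f(K,M^\ast(1))=\Ext^1(M,h^0(\Spec K)(1))$ by $0\to h^0(\Spec K)(1)\to E_y\to M\to 0$. The obstruction to gluing these into a single mixed extension of $(h^0(\Spec K),h^0(\Spec K)(1))$ by $M$ lies in $H^2_f(K,h^0(\Spec K)(1))$, which is expected to vanish (its $\lambda$-adic realization is $H^2(G_{K,S},\QQ_p(1))=0$); so such a mixed extension $E$ exists, unique up to a torsor under $H^1_f(K,h^0(\Spec K)(1))$. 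Then I would define $h_M(x,y)$ by trivializing $E$ on the Betti and de Rham realization sides and reading off the resulting archimedean height, an element of $\RR\otimes_\QQ\cF$; here criticality of $M$ enters precisely because the period isomorphism $\alpha_M\colon\RR\otimes_\QQ H_B(M)^+\xrightarrow{\sim}\RR\otimes_\QQ t(M)$ furnishes the required splitting of the relevant Hodge-theoretic sub-extension. Bilinearity and $\cF$-linearity are then formal, as is compatibility of $h_M$ with the cup-product pairing $H^1_f(K,V_\lambda(M))\times H^1_f(K,V_\lambda(M)^\ast(1))\to\cF_\lambda$ on $\lambda$-adic realizations. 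When $M$ instead arises from the cohomology of a smooth projective variety admitting a regular model, one could alternatively build the same pairing from Gillet--Soul\'e arithmetic intersection theory or from the Beilinson regulator to Deligne--Beilinson cohomology.

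The hard part — and the main obstacle — is non-degeneracy, which is open in general. It would follow from two standard conjectures: injectivity of the $\lambda$-adic Abel--Jacobi maps $H^1_f(K,M)\hookrightarrow H^1_f(K,V_\lambda(M))$ (and its analogue for $M^\ast(1)$), and non-degeneracy of the $\lambda$-adic cup-product pairing together with a Shafarevich--Tate-type finiteness forcing the motivic and $\lambda$-adic ranks to coincide. In the case most relevant to this article the statement is already available: when $r=1$ and $\chi$ is trivial, so $M=h^1(E/K)(1)$ for an elliptic curve $E$, the pairing $h_M$ is essentially the N\'eron--Tate canonical height pairing on $E(K)\otimes_\ZZ\RR\times E^\vee(K)\otimes_\ZZ\RR$, whose non-degeneracy is classical (N\'eron, Tate), and more generally Conjecture~\ref{conj height} is known for motives isogenous to ones attached to abelian varieties. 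Moreover, under the hypothesis $L(f,\chi^{-1},r)\neq0$ of Theorem~\ref{thm:main} one has $H^1_f(K,V)=H^1_f(K,V^\ast(1))=0$ by Remark~\ref{rem CHK}, so in the situation where the conjecture is actually invoked $h_M$ is the zero pairing on zero spaces and the statement is vacuous.
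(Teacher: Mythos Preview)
The statement you are addressing is labeled a \emph{conjecture} in the paper, and the paper supplies no proof of it. It is recorded among the standard conjectures on motivic cohomology (alongside Conjecture~\ref{conj motivic}), and Remark~\ref{rem height} merely situates it within the more general conjectural six-term exact sequence of \cite[Conj.~1]{BFetnc}. So there is no ``paper's own proof'' to compare against.

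Your write-up correctly identifies this: you explicitly call non-degeneracy ``the genuine obstacle and the reason the assertion is stated as a conjecture,'' and you do not claim to have overcome it. What you have produced is not a proof but a survey of (i) how one would construct $h_M$ via Beilinson's mixed-extension formalism, (ii) why non-degeneracy is out of reach in general, and (iii) the special cases where it is known (abelian varieties via N\'eron--Tate) or vacuous (analytic rank zero, as in Theorem~\ref{thm:main}). That is an accurate summary of the state of affairs, and your observations about the role of criticality and about the elliptic-curve case match how the paper uses the conjecture in \S\ref{ex sec ell}. But it should be understood as commentary on an open conjecture, not as a proof attempt in the usual sense; the paper does not ask for one, and none is currently available.
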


\begin{remark}\label{rem height}
More generally, it is conjectured that there is a canonical exact sequence
\begin{multline*}
0\to \RR\otimes_\QQ H^0_f(K,M) \to \ker \alpha_M \to \RR\otimes_\QQ H^1_f(K,M^\ast(1))^\ast  \\
\xrightarrow{h_M} \RR\otimes_\QQ H_f^1(K,M)\to \coker \alpha_M \to \RR\otimes_\QQ H_f^0(K,M^\ast(1))^\ast\to 0.
\end{multline*}
(See \cite[Conj. 1]{BFetnc}.) Note that, if $M$ is critical (i.e., $\ker \alpha_M=\coker \alpha_M=0$), this exact sequence implies not only the non-degeneracy of the height pairing but also $H^0_f(K,M)=H^0_f(K,M^\ast(1))=0$. 
\end{remark}

\subsection{The Deligne conjecture}

We review the Deligne conjecture \cite{deligne}, which is a special case of the ``rationality" part of the Tamagawa number conjecture (see \cite[Conj. 4(iii)]{BFetnc}). 

Assume that $M$ is critical. Then the period map is an isomorphism:
$$\alpha_M: \RR\otimes_\QQ H_B(M)^+ \xrightarrow{\sim} \RR\otimes_\QQ t(M). $$
Let 
$$\det(\alpha_M) : \RR\otimes_\QQ {\det}_\cF(H_B(M)^+) \xrightarrow{\sim} \RR\otimes_\QQ {\det}_\cF(t(M))$$
be the induced isomorphism. Take $\cF$-bases
$$\gamma \in {\det}_\cF(H_B(M)^+) \text{ and }\delta \in {\det}_\cF(t(M)).$$
We define the period 
$$\Omega_{\gamma,\delta} \in (\RR\otimes_\QQ\cF)^\times$$
with respect to $\gamma$ and $\delta$ by
$${\det}(\alpha_M)(\gamma)=\Omega_{\gamma,\delta}\cdot \delta.$$ 
We regard
$$\Omega_{\gamma,\delta}\in \CC^\times$$
via the map $\RR\otimes_\QQ \cF\to \CC; \ a\otimes b\mapsto ab$. 

\begin{conjecture}[The Deligne conjecture]\label{deligne0}
Assume that $M$ is critical. Then we have
$$\frac{L(M,0)}{\Omega_{\gamma,\delta}} \in \cF.$$
\end{conjecture}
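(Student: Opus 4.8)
The plan is to face the fact that Conjecture \ref{deligne0}, in the stated generality, is one of the principal open problems on special values of $L$-functions: there is no uniform argument for an arbitrary critical motive $M$, and the realistic goal is to (i) record the formal manipulations — Tate twist, tensor product, restriction/extension of the coefficient field $\cF$, Weil restriction along $K/\QQ$ — under which the rationality of $L(M,0)/\Omega_{\gamma,\delta}$ propagates, and then (ii) verify the conjecture for the motives that can actually be reached, above all Hecke motives over an imaginary quadratic field and the Rankin--Selberg motive $M_f(r)\otimes M(\chi)$ needed in this article. Independence of the choice of $\cF$-bases $\gamma\in\det_\cF(H_B(M)^+)$ and $\delta\in\det_\cF(t(M))$ is immediate, since replacing them multiplies $\Omega_{\gamma,\delta}$ by a unit of $\cF^\times$; so one is free to pick bases adapted to whatever explicit description of $M$ is available.

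The first substantive step I would carry out is the Hecke-motive case $M(\psi^k\overline\psi^\ell)$ over $K$ with $\ell\le 0<k$, which is the heart of the matter. Using that $K$ has class number one and $D_K$ is odd, start from the canonical CM elliptic curve $A/K$ of \S\ref{sec CM} (which descends to $\QQ$), with N\'eron differential $\omega_A$ and $\cO_K$-basis $\gamma_A$ of $H_1(A(\CC),\ZZ)$. The motive $h^1(A)$, base-changed to $K$, is split by the idempotents in $\cF\otimes_\QQ K$ cutting out $\psi$ and $\overline\psi$, and in the corresponding eigen-bases the Betti--de Rham comparison has periods $\Omega_\infty=\int_{\gamma_A}\omega_A$ and its complex conjugate. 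Forming tensor powers and Tate twists produces explicit $\cF$-bases of $\det_\cF(H_B(M(\psi^k\overline\psi^\ell))^+)$ and of $\det_\cF(t(M(\psi^k\overline\psi^\ell)))$ with respect to which the period is $\pm\,\Omega_\infty^{k-\ell}(2\pi)^{\ell}\sqrt{D_K}^{-\ell}$; this is exactly Proposition \ref{hecke period}. The remaining input is then the algebraicity theorem of Shimura for Hecke $L$-values of imaginary quadratic fields (equivalently Damerell's theorem together with Shimura's reciprocity law), which yields $L(M(\psi^k\overline\psi^\ell),0)\big/\big(\Omega_\infty^{k-\ell}(2\pi)^{\ell}\sqrt{D_K}^{-\ell}\big)\in\overline\QQ$, and in fact in $\cF$ once one tracks the Galois action on the coefficient field.

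The second step is to deduce the Rankin--Selberg motive. For $M_f$ over $\QQ$ one invokes Shimura's algebraicity of the critical values of the $L$-function of a modular form, with the two periods $\Omega_f^{\pm}$; for $M=M_f(r)\otimes M(\chi)$ one either appeals directly to Shimura's Rankin--Selberg algebraicity statement (as recorded in \cite[Thm.~5.5]{BDP}) or, for the period alone, uses the determinant identity $\bigwedge^2 M = h^0(\Spec\QQ)(1)\otimes M(\chi^2)$ of Lemma \ref{compare cm1} to reduce $\Omega_{\gamma,\delta}$ to the Hecke-motive period of $\N\chi^2=\psi^{2j+1}\overline\psi^{-2j+1}$ handled above. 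The hard part — and the reason Conjecture \ref{deligne0} is only a conjecture — is the general case: for a generic critical $M$ there is simply no handle on $L(M,0)$ by automorphic or cohomological means, and even in the tractable ranges the genuinely deep ingredient is the transcendence-theoretic period computation of Shimura, which one must take as a black box rather than reprove. Thus the honest scope of the proposal is the reduction mechanism together with the explicitly accessible motives above.
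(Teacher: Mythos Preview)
The statement you were asked to prove is labeled and stated in the paper as a \emph{Conjecture}, not a theorem: the paper offers no proof of Conjecture~\ref{deligne0} in this generality and does not claim one. What the paper does is record, in its Examples section, that the conjecture is known in specific instances---Tate motives (Klingen--Siegel), Hecke motives over imaginary quadratic fields (Goldstein--Schappacher), modular forms (Shimura)---and, in the body of the text, that for the particular Rankin--Selberg motive $M_f(r)\otimes M(\chi)$ under consideration it follows from Shimura's results together with Lemma~\ref{compare cm1}. There is therefore no ``paper's own proof'' against which to compare your attempt.

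You have in fact diagnosed this correctly: your opening paragraph explicitly says the statement is ``one of the principal open problems'' and that no uniform argument exists. But having said that, you then present a sequence of reductions and special cases as if they constituted a proof strategy for the conjecture. They do not; they verify the conjecture for a short list of motives (essentially the same list the paper cites), and the reduction mechanisms you describe (Tate twist, tensor, Weil restriction, change of $\cF$) only propagate rationality among motives already built from those. The ``proof proposal'' framing is therefore a category error: what you have written is a survey of the known cases, which is accurate as far as it goes and matches the paper's own citations, but it is not and cannot be a proof of the conjecture as stated.
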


Note that the validity of Conjecture \ref{deligne0} is obviously independent of the choices of $\gamma$ and $\delta$. Note also that the conjecture is trivial when $L(M,0)=0$. 

We shall state a general version of the Deligne conjecture, which treats the case $L(M,0)=0$. Let
$$L^\ast(M,0):=\underset{s\to 0}{\lim} s^{-\ord_{s=0}L(M,s)} L(M,s)$$
be the leading term of $L(M,s)$ at $s=0$. Assuming Conjecture \ref{conj height}, let 
$$\det(h_M): \RR\otimes_\QQ \left( {\det}_\cF(H^1_f(K,M)) \otimes_\cF {\det}_\cF(H^1_f(K,M^\ast(1)))\right)\xrightarrow{\sim}\RR\otimes_\QQ \cF$$
be the isomorphism induced by the height pairing $h_M$. Take $\cF$-bases
$$x \in {\det}_\cF(H^1_f(K,M)) \text{ and }y \in {\det}_\cF(H^1_f(K,M^\ast(1)))$$
and define the regulator with respect to $x$ and $y$ by
$$R_{x,y}:= {\det}(h_M)(x\otimes y) \in (\RR\otimes_\QQ \cF)^\times.$$
We regard
$$R_{x,y}\in \CC^\times$$
via the map $\RR\otimes_\QQ \cF\to \CC; \ a\otimes b\mapsto ab$. 

\begin{conjecture}[The generalized Deligne conjecture]\label{deligne general}
Assume $M$ is critical and Conjecture \ref{conj height}. Then we have
$$\frac{L^\ast(M,0)}{\Omega_{\gamma,\delta} R_{x,y}} \in \cF.$$
\end{conjecture}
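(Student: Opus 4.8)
The statement is a conjecture; what follows is the strategy by which one attacks it and the cases in which it is presently accessible. It is a down-to-earth reformulation of the rationality part of the Tamagawa number conjecture \cite[Conj. 4(iii)]{BFetnc}, so the plan is to reduce it to known rationality results. First one checks, exactly as for Conjecture \ref{deligne0}, that the asserted membership is independent of the choices of the $\cF$-bases $\gamma$, $\delta$, $x$, $y$ (changing a basis multiplies $L^\ast(M,0)$, $\Omega_{\gamma,\delta}$, $R_{x,y}$ by elements whose product lies in $\cF^\times$), independent of the coefficient field $\cF$, and unchanged under replacing $M$ by a Tate twist. Hence one may pass to a convenient twist and enlarge $\cF$ at will. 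Granting Conjecture \ref{conj height}, the height pairing is non-degenerate, so $\dim_\cF H^1_f(K,M)=\dim_\cF H^1_f(K,M^\ast(1))=:d$ by the exact sequence of Remark \ref{rem height}, and $R_{x,y}\in\CC^\times$. The argument then splits according to whether $d=0$ or $d\geq 1$.

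Suppose $d=0$. Then $\det_\cF(H^1_f(K,M))$ and $\det_\cF(H^1_f(K,M^\ast(1)))$ are canonically $\cF$, so $R_{x,y}=1$; granting the order-of-vanishing conjecture \cite[Conj. 4(ii)]{BFetnc} this is consistent with $L(M,0)\neq 0$, and $L^\ast(M,0)=L(M,0)$. The statement thus collapses to Deligne's conjecture \ref{deligne0}, namely $L(M,0)/\Omega_{\gamma,\delta}\in\cF$. For the motives relevant to this article, $M=M_f(r)\otimes M(\chi)$ over an imaginary quadratic field of class number one, the hypothesis $d=0$ is a theorem in the non-vanishing range by Castella--Hsieh and Kobayashi (Remark \ref{rem CHK}), and the rationality is known: it follows from Shimura's algebraicity theorem \cite{shimura} together with the explicit period computation that is the content of Lemma \ref{compare cm1}, which identifies $\Omega_{\gamma,\delta}$ up to sign with $\Omega_\infty^{4j}(2\pi)^{1-2j}\sqrt{D_K}^{2j-1}$.

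Suppose now $d\geq 1$. Here $H^1_f(K,M)$ is nonzero and $R_{x,y}$ is a determinant of archimedean (Beilinson--Bloch) heights. The plan is to exhibit an $\cF$-rational system of algebraic cycles spanning the motivic cohomology, together with a Gross--Zagier / Beilinson-type identity expressing $L^\ast(M,0)$ as the product of $\Omega_{\gamma,\delta}$, the corresponding height determinant, and an element of $\cF$. In analytic rank one --- for $M=M_f(r)\otimes M(\chi)$ this is the range $-r<j<r$ referred to in \S \ref{sec idea} --- these cycles are the generalized Heegner cycles of Bertolini--Darmon--Prasanna and Nekov\'{a}\v{r}, $H^1_f(K,M)$ is one-dimensional over $\cF$, the pairing $h_M$ is the Beilinson--Bloch height of such a cycle, and the required identity is the Gross--Zagier formula in the generality of \cite{zhang}, \cite{YZZ}, \cite{CST}, \cite{LS}; combined with the algebraicity of the comparison periods this yields the claim. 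This is carried out in Appendix \ref{sec app} in the special case $\chi=1$, $r=1$, with $f$ attached to an elliptic curve.

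The main obstacle is the case $d\geq 2$: there is at present no construction of a sufficiently large supply of algebraic cycles in motivic cohomology, and no height formula of the above shape, so outside ranks $0$ and $1$ the conjecture is genuinely open. Even in rank $0$ the argument rests on Deligne's conjecture \ref{deligne0}, which, though known for the motives treated here, remains open for a general critical motive. Thus the strategy yields a complete proof only in the special cases indicated.
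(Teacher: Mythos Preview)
This statement is a \emph{conjecture} in the paper, not a theorem; the paper offers no proof. After stating it, the paper only remarks that ``the validity of Conjecture \ref{deligne general} is independent of the choices of $\gamma,\delta,x,y$,'' and then uses the conjecture as a standing hypothesis in the formulation of Conjecture \ref{TNC general1}. You correctly recognize this and present not a proof but a survey of the strategy and known cases.

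Your discussion is broadly accurate and goes well beyond what the paper supplies: the independence-of-choices observation matches the paper's one comment; the reduction of the $d=0$ case to Conjecture \ref{deligne0} is correct (modulo the order-of-vanishing conjecture, which you flag); the rank-one sketch via generalized Heegner cycles and Gross--Zagier-type formulas is the standard route and is indeed what the paper pursues in Appendix \ref{sec app} for the elliptic-curve case; and your acknowledgement that $d\geq 2$ is wide open is honest. One small caution: you assert that the conjecture is ``unchanged under replacing $M$ by a Tate twist'' --- this is not obviously true in general, since a Tate twist can move $M$ out of the critical range and change the structure of $H^1_f$; it is safer to drop that claim or restrict it to twists preserving criticality. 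Otherwise, as commentary on an open conjecture, your write-up is appropriate and more informative than the paper's own treatment.
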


The validity of Conjecture \ref{deligne general} is independent of the choices of $\gamma,\delta,x,y$.

\subsection{The Tamagawa number conjecture in the critical case}

We review the formulation of the Tamagawa number conjecture. 

The following is \cite[Conj. 4(ii)]{BFetnc}.

\begin{conjecture}[The ``order of vanishing"]\label{conj order}
Assume the existence of motivic cohomology groups $H^i_f(K,M^\ast(1))$. Then we have
$$\ord_{s=0} L(M,s)= \dim_\cF(H^1_f(K,M^\ast(1))) -\dim_\cF(H^0_f(K,M^\ast(1))). $$
\end{conjecture}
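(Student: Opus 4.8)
For a general pure motive over $K$ this is a deep open conjecture (it subsumes the rank part of Birch--Swinnerton-Dyer), so rather than attempt it in that generality I would record the strategy available in the situation of this article, where $M=M_f(r)\otimes M(\chi)$ has weight $-1$. Then $M$ is critical and $H^0_f(K,M^\ast(1))=0$ by Remark \ref{rem height}; since the $\lambda$-adic realization of $M^\ast(1)$ is $V=V_{f,F}(r)\otimes\chi^{-1}$ and $L(M,s)=L(f,\chi^{-1},s+r)$, the assertion reduces to
$$\ord_{s=r}L(f,\chi^{-1},s)=\dim_F H^1_f\bigl(K,V_{f,F}(r)\otimes\chi^{-1}\bigr).$$
I would prove this by comparing both sides to classes coming from generalized Heegner cycles and to the BDP $p$-adic $L$-function of \S\ref{sec IMC}.

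The inequality ``$\dim_F H^1_f\le\ord$'': when $L(f,\chi^{-1},r)\ne0$, the interpolation formula (\ref{BDPinterpolation}) shows $\chi(L_\fp^{\mathrm{BDP}})\ne0$ (the $\Gamma$-factors and Euler-type factors there are nonzero), hence the $\chi$-specialization of the $\Lambda$-adic Heegner class is nonzero; an Euler system / Kolyvagin-type argument for the anticyclotomic system of generalized Heegner cycles (equivalently, the divisibility half of the Iwasawa main conjecture of \S\ref{sec IMC}) then forces $H^1_f(K,V_{f,F}(r)\otimes\chi^{-1})=0$. This is exactly \cite[Thm. A]{CH} in the ordinary case and \cite{kobayashi1}, \cite{kobayashi2} in the supersingular case, as recorded in Remark \ref{rem CHK}. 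When instead $\ord_{s=r}L(f,\chi^{-1},s)=1$, a generalized Gross--Zagier formula shows the relevant Heegner class is non-torsion, giving $\dim_F H^1_f\ge1$, and Kolyvagin's method supplies the matching upper bound; this is carried out in Appendix \ref{sec app} in the special case treated there.

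The reverse inequality ``$\ord\le\dim_F H^1_f$'' --- the lower bound on the analytic order of vanishing --- is the main obstacle. In analytic rank $0$ it is the non-vanishing of $L(M,0)$ itself, in analytic rank $1$ it is again Gross--Zagier, but beyond analytic rank $1$ there is no known mechanism producing high-order vanishing of $L(M,s)$ out of the size of the Selmer group. Accordingly the realistic target --- and what the above actually establishes --- is the conjecture under the hypothesis $\ord_{s=0}L(M,s)\le1$, where Euler systems close one inequality and a Gross--Zagier-type formula the other.
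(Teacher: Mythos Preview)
This statement is labeled a \emph{Conjecture} in the paper and is not proved there; it is simply recorded (with the attribution to \cite[Conj.~4(ii)]{BFetnc}) as part of the general Tamagawa number conjecture framework, and the paper proceeds by assuming it or its special consequences where needed. So there is no ``paper's own proof'' to compare against.

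That said, your write-up is a sensible commentary on what is actually known in the setting of the article. You correctly identify that for general motives the statement is wide open, and your reduction in the weight $-1$ case to $\ord_{s=r}L(f,\chi^{-1},s)=\dim_F H^1_f(K,V_{f,F}(r)\otimes\chi^{-1})$ is exactly what the paper uses. Your rank-zero argument (nonvanishing $\Rightarrow$ Selmer vanishing via the BDP/Heegner-cycle Euler system) is precisely the content of Remark~\ref{rem CHK}, and your rank-one sketch matches what the paper does in Appendix~\ref{sec app}. Your honest acknowledgement that the converse inequality beyond analytic rank one has no known mechanism is also accurate. One small caution: the vanishing $H^0_f(K,M^\ast(1))=0$ that you invoke via Remark~\ref{rem height} is itself part of the conjectural package (it follows from the conjectural exact sequence there), so in a rigorous proof you would want to verify $H^0(K,V)=0$ directly in the situation at hand rather than cite that remark.
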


\begin{remark}\label{rem conj order}
Without assuming the existence of motivic cohomology groups, we can conjecture that
\begin{equation}\label{orderBK}
\ord_{s=0} L(M,s)= \dim_{F}(H^1_f(K,V)) -\dim_{F}(H^0(K,V)),
\end{equation}
where $F:=\cF_\lambda$ and $V:=V_\lambda(M^\ast(1))$ with $\lambda \in S_p(\cF)$. 
This is equivalent to Conjecture \ref{conj order} if we assume Conjecture \ref{conj motivic}. We remark that (\ref{orderBK}) is often referred to simply as the ``Bloch-Kato conjecture" in the literature (for example, \cite{CH}). 
\end{remark}

We shall formulate the ``integrality" part of the Tamagawa number conjecture (see \cite[Conj. 4(iv)]{BFetnc}). We only treat the critical case, since we do not consider non-critical cases in this article.

Fix an odd prime number $p$ and $\lambda \in S_p(\cF)$. We set
$$F:=\cF_\lambda \text{ and }V:=V_\lambda(M^\ast(1)).$$ 
Note that $V_\lambda(M)=V^\ast(1)$. Take a finite set $S$ of places of $K$ containing all the infinite places, $p$-adic places, and the places at which $V$ ramify. Let $G_{K,S}$ be the Galois group of the maximal extension of $K$ unramified outside $S$. We fix a $G_K$-stable $\cO_F$-lattice $T\subset V$. 
Recall that we set
$$P_v(M,x):= \begin{cases}
\det(1-{\rm Fr}_v^{-1} x \mid V^\ast(1)^{I_v}) &\text{if $v\nmid p$,}\\
\det(1-\varphi x \mid D_{{\rm cris},v}(V^\ast(1))) &\text{if $v\mid p$.}
\end{cases}$$

We formulate the Tamagawa number conjecture under the following hypothesis. 

\begin{hypothesis}\label{TNChyp}\
\begin{itemize}
\item[(i)] $M$ is critical.
\item[(ii)] $P_v(M,1)\neq 0$ for every finite place $v$ of $K$. 
\item[(iii)] $H^0(K,V)=0$. 
\end{itemize}
\end{hypothesis}

\begin{remark}
Hypothesis \ref{TNChyp}(ii) is equivalent to the following: $H^0(K_v,V^\ast(1))=0$ for $v\nmid p$ and $D_{{\rm cris},v}(V^\ast(1))^{\varphi=1}=0$ for $v\mid p$. Also, by the Bloch-Kato fundamental exact sequence
$$0\to \QQ_p\to B_{\rm cris}^{\varphi=1}\to B_{\rm dR}/B_{\rm dR}^+\to 0,$$
we see that $D_{{\rm cris},v}(V^\ast(1))^{\varphi=1}=0$ implies $H^0(K_v,V^\ast(1))=0$ for $v\mid p$. 
\end{remark}

%


\begin{remark}
According to Conjecture \ref{conj motivic} and Remark \ref{rem height}, Hypothesis \ref{TNChyp}(iii) should always be satisfied if $M$ is critical. 
\end{remark}

We use the Poitou-Tate exact sequence
\begin{multline}\label{TNCPT}
0\to H^1_f(K,V)\to H^1(G_{K,S},V)\to \bigoplus_{v\in S}H^1_{/f}(K_v,V)\\
\to H^1_f(K,V^\ast(1))^\ast \to H^2(G_{K,S},V)\to \bigoplus_{v\in S_f}H^0(K_v,V^\ast(1))^\ast.
\end{multline}
(Here $S_f\subset S$ denotes the subset of finite places.) Note that the last term vanishes by Hypothesis \ref{TNChyp}(ii). 

We first consider the case $H^1_f(K,V)=H^1_f(K,V^\ast(1))=0$. (According to Conjecture \ref{conj order}, this is the case when the ``analytic rank" is zero, i.e., $L(M,0)\neq 0$.) In this case, (\ref{TNCPT}) implies that $H^2(G_{K,S},V)=0$ and that the localization maps at $p$-adic places induce an isomorphism
$${\rm loc}_p: H^1(G_{K,S},V) \xrightarrow{\sim} \bigoplus_{v \in S_p(K)}H^1_{/f}(K_v,V), $$
where $S_p(K)$ denotes the set of $p$-adic places of $K$. (Note that Hypothesis \ref{TNChyp}(ii) implies $H^1_{/f}(K_v,V)\simeq H^1_f(K_v,V^\ast(1))^\ast=0$ for $v\nmid p$.) Also, since Hypothesis \ref{TNChyp}(ii) implies $D_{{\rm cris},v}(V^\ast(1))^{\varphi=1}=0$ for $v\in S_p(K)$, the dual exponential map induces an isomorphism
$$\exp_v^\ast: H^1_{/f}(K_v,V)\xrightarrow{\sim} D_{{\rm dR},v}^0(V).$$
Note that we have a canonical isomorphism
$$D_{{\rm dR},v}^0(V)\simeq \left(D_{{\rm dR},v}(V^\ast(1))/D^0_{{\rm dR},v}(V^\ast(1))\right)^\ast.$$
Combining this with the comparison isomorphism
$$F\otimes_\cF t(M)\simeq \bigoplus_{v\in S_p(K)} D_{{\rm dR},v}(V^\ast(1))/D_{{\rm dR},v}^0(V^\ast(1)),$$
we can regard $\bigoplus_{v\in S_p(K)}\exp_v^\ast$ as an isomorphism
$$\exp^\ast: \bigoplus_{v\in S_p(K)}H^1_{/f}(K_v,V)\xrightarrow{\sim} F\otimes_\cF t(M)^\ast. $$
For an $\cF$-basis $\delta \in {\det}_\cF(t(M))$, let $\delta^\ast \in {\det}_\cF(t(M)^\ast)\simeq {\det}_\cF(t(M))^\ast$ denote the dual basis. 

Note that by $H^2(G_{K,S},V)=0$ and Hypothesis \ref{TNChyp}(iii) we have an identification
$${\det}_F^{-1}(\rgamma(G_{K,S},V))= {\det}_F(H^1(G_{K,S}, V)) .$$
Consider the composition map
$$\vartheta_0: {\det}_F^{-1}(\rgamma(G_{K,S},V))= {\det}_F(H^1(G_{K,S}, V)) \stackrel{{\rm loc}_p}{\simeq} {\det}_F\left(\bigoplus_{v\in S_p(K)} H^1_{/f}(K_v,V)\right) \stackrel{{\rm exp^\ast}}{\simeq} F\otimes_\cF {\det}_\cF(t(M)^\ast).$$

We set
$$L_S(M,s):= \left(\prod_{v\in S}P_v(M, {\N}v^{-s})\right) L(M,s).$$
Note that the comparison isomorphism $F\otimes_\cF H_B(M)\simeq \bigoplus_{\sigma:K\hookrightarrow \CC} V^\ast(1)$ induces an isomorphism
$$F\otimes_\cF H_B(M)^+\simeq \bigoplus_{v\in S_\infty(K)} H^0(K_v, V^\ast(1)),$$
where $S_\infty(K)$ denotes the set of infinite places of $K$. 

\begin{conjecture}[The Tamagawa number conjecture for $(M,\cO_F)$ in analytic rank zero]\label{TNC general0}
Assume Hypothesis \ref{TNChyp} and $H^1_f(K,V)=H^1_f(K,V^\ast(1))=0$. Assume also that $L(M,0)\neq 0$ and the Deligne conjecture (Conjecture \ref{deligne0}) is true. Let $\gamma \in {\det}_\cF(H_B(M)^+)$ be an $\cF$-basis such that its image under the comparison isomorphism
$$F\otimes_\cF {\det}_\cF(H_B(M)^+)\simeq \bigotimes_{v\in S_\infty(K)} {\det}_F(H^0(K_v,V^\ast(1)))$$
is an $\cO_F$-basis of the lattice $\bigotimes_{v\in S_\infty(K)} {\det}_{\cO_F}(H^0(K_v,T^\ast(1)))$. Then there is an $\cO_F$-basis
$$\fz_\gamma \in {\det}_{\cO_F}^{-1}(\rgamma(G_{K,S},T))$$
such that
$$\vartheta_0(\fz_\gamma)=\frac{L_S(M,0)}{\Omega_{\gamma,\delta}}\cdot \delta^\ast.$$
\end{conjecture}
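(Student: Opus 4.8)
The plan is to prove the statement \emph{conditionally}, by extending the strategy of \S\ref{sec construction}--\S\ref{sec compare} from the specific motive $M_f(r)\otimes M(\chi)$ to a general critical $M$; unconditionally the assertion is the general Tamagawa number conjecture and is out of reach, so the realistic goal is to reduce it to an Iwasawa main conjecture plus a $p$-adic period identification, as is carried out in \cite{sanopadic}. Concretely, I would fix a $\ZZ_p$-extension $K_\infty/K$ (e.g. cyclotomic, or a larger $p$-adic Lie deformation) with $\Gamma=\Gal(K_\infty/K)$, set $\Lambda=\cO_F[[\Gamma]]$ and $\Lambda^{\rm ur}=\widehat\ZZ_p^{\rm ur}\widehat\otimes\Lambda$, and choose a $\Lambda$-adic deformation $\TT$ of $T=V_\lambda(M^\ast(1))$ (so $\TT$ specializes at the trivial character to $T$). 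One then \emph{posits} a $p$-adic $L$-function $L_p\in\Lambda^{\rm ur}$ whose specialization at a critical algebraic character $\chi$ of $\Gamma$ computes, up to the obvious $p$-adic Euler-factor modification, a $\Gamma$-factor and a $p$-adic period, the quantity $L_S(M_\chi,0)/\Omega_{\gamma_\chi,\delta_\chi}$, and the associated Iwasawa main conjecture: the Selmer complex $\widetilde\rgamma(K,\TT)$ with the strict condition at the distinguished $p$-adic place and the unramified condition away from $p$ has $\det_\Lambda^{-1}$ generated over $\Lambda^{\rm ur}$ by an element $\fz_\fp$ with image $L_p$, exactly parallel to Conjecture \ref{IMC}.

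Granting these two inputs, the argument follows the template of \S\ref{sec construction}. Using Kato's local epsilon conjecture at the $p$-adic places (known for $\mathrm{GL}_2$-type representations by \cite{LVZ}, \cite{nakamura}, \cite{RJ}, and to be assumed in general), I would form the local epsilon element $\varepsilon_\fp\in\Lambda^{\rm ur}\otimes_\Lambda\det_\Lambda(\rgamma(K_\fp,\TT))$ playing the role of a Coleman map, and combine the canonical factorization of $\det_\Lambda^{-1}(\widetilde\rgamma(K,\TT))$ (as in (\ref{can det})), the canonical Euler-factor bases of the type in Lemma \ref{lem euler}, and $\varepsilon_\fp$ to transport $\fz_\fp$ to an element $\fz_S\in\Lambda^{\rm ur}\otimes_\Lambda\det_\Lambda^{-1}(\rgamma(G_{K,S},\TT))$. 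The first delicate step is to check $\fz_S$ has coefficients in $\Lambda$: this is the analog of Proposition \ref{coeff}, reduced via Proposition \ref{phiepsilon} to a Frobenius-equivariance identity for $L_p$ of the shape $\varphi_p(L_p)=(\text{power of }\overline\tau_p)\cdot L_p$, which in the BDP case (Lemma \ref{phiBDP}) was proved by comparison with the Katz $p$-adic $L$-function and would need an analogous structural handle on $L_p$ in general.

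One then specializes $\fz_S$ at the trivial character to get $\fz_\gamma\in\det_{\cO_F}^{-1}(\rgamma(G_{K,S},T))$ and computes $\vartheta_0(\fz_\gamma)$. Here I would invoke, in order: the interpolation property of $\varepsilon_\fp$ (the general form of Proposition \ref{epsilon twist}), which extracts the dual exponential map $\exp_\fp^\ast$, the automorphism $(1-\varphi)(1-p^{-1}\varphi^{-1})^{-1}$ accounting for the $p$-adic Euler factors, the de Rham epsilon isomorphism $\varepsilon_{\rm dR}$, and the factor $\Gamma_F(V)$; the interpolation property of $L_p$; and the Euler-factor bookkeeping of Lemma \ref{lem euler} converting the $\{\fp,\barfp\}$-truncated $L$-value into $L_S$. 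After all this the desired identity collapses to the comparison of the $p$-adic period $\Omega_{p,\gamma_T,\delta^\ast}$ defined by $\varepsilon_{\rm dR}$ with the archimedean Deligne period $\Omega_{\gamma,\delta}$, i.e. the general-motive analogs of Lemmas \ref{compare cm padic} and \ref{compare cm1}.

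The main obstacle is exactly this last period comparison. In \S\ref{sec choice}--\S\ref{sec compare} it rested on the explicit CM structure --- class number one, the canonical CM elliptic curve $A$, and Colmez's $p$-adic integration identifying $\Omega_p$ with $t_\xi^{-1}\int_{\gamma_{A,p}}\omega_A$ --- which makes the two periods visibly equal up to a unit. For a general critical $M$ one instead needs Deligne's period $c^+(M)$ to match the determinant of the de Rham epsilon isomorphism $\varepsilon_{F,\xi,\mathrm{dR}}(V)$, which is essentially the ``$p$-adic period equals complex period up to a unit'' heart of the Tamagawa number conjecture and is not available in general; this is precisely why the statement is posed as a conjecture. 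Thus I would expect the honest output to be: a construction of $\fz_S$ and a proof that (the posited) Iwasawa main conjecture together with this period identification imply Conjecture \ref{TNC general0}, with an unconditional treatment only for those classes of motives --- CM, $\mathrm{GL}_2$ twisted by algebraic Hecke characters, and the like --- where both the $p$-adic $L$-function/main conjecture and the period comparison are known.
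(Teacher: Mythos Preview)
The statement you are addressing is a \emph{conjecture}, not a theorem: the paper does not prove Conjecture~\ref{TNC general0}, and there is no ``paper's own proof'' to compare against. What the paper does is establish, for the single specific motive $M_f(r)\otimes M(\chi)$ over an imaginary quadratic field of class number one, that the relevant instance (Conjecture~\ref{TNC}) is implied by the Iwasawa main conjecture for $L_\fp^{\rm BDP}$; the general statement remains open and is explicitly presented as such.

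Your proposal is therefore not a proof but a research outline, and you acknowledge this yourself (``unconditionally the assertion is the general Tamagawa number conjecture and is out of reach''). The outline is a faithful abstraction of the paper's method, and you correctly isolate the three genuine obstructions to making it work for an arbitrary critical $M$: (i) the existence of a $p$-adic $L$-function $L_p$ with the required interpolation and Frobenius-equivariance, (ii) Kato's local epsilon conjecture for $V$ at all $p$-adic places, and (iii) the archimedean/$p$-adic period comparison analogous to Lemmas~\ref{compare cm padic} and~\ref{compare cm1}. Each of these is wide open in general, and the third in particular is essentially equivalent in difficulty to the conjecture itself --- so what you have written reduces the conjecture to inputs of comparable depth rather than proving it. As a strategy this is exactly the content announced for \cite{sanopadic}, but it should not be presented as a proof of Conjecture~\ref{TNC general0}.
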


\begin{remark}
One checks that the validity of Conjecture \ref{TNC general0} is independent of the choices of $S,T,\gamma,\delta$. 
\end{remark}

We shall next formulate the Tamagawa number conjecture in arbitrary analytic rank. 

Under Hypothesis \ref{TNChyp}, the Poitou-Tate exact sequence (\ref{TNCPT}) and the dual exponential map $\exp^\ast$ induce an isomorphism
$${\det}_F^{-1}(\rgamma(G_{K,S},V)) \simeq {\det}_F(H^1_f(K,V^\ast(1))) \otimes_F {\det}_F(H^1_f(K,V)) \otimes_\cF {\det}_\cF(t(M)^\ast).  $$
Combining this with the isomorphisms in Conjecture \ref{conj motivic}, we obtain an isomorphism 
$$\vartheta: {\det}_F^{-1}(\rgamma(G_{K,S},V)) \simeq F\otimes_\cF\left({\det}_\cF(H^1_f(K,M)) \otimes_\cF {\det}_\cF(H^1_f(K,M^\ast(1))) \otimes_\cF {\det}_\cF(t(M)^\ast) \right).  $$
Take $\cF$-bases
$$x \in {\det}_\cF(H^1_f(K,M)) \text{ and }y \in {\det}_\cF(H^1_f(K,M^\ast(1))).$$

\begin{conjecture}[The Tamagawa number conjecture for $(M,\cO_F)$]\label{TNC general1}
Assume Hypothesis \ref{TNChyp} and Conjecture \ref{conj motivic} (for $M$ and $M^\ast(1)$). Assume also that the generalized Deligne conjecture (Conjecture \ref{deligne general}) is true. Let $\gamma \in {\det}_\cF(H_B(M)^+)$ be as in Conjecture \ref{TNC general0}. Then there is an $\cO_F$-basis
$$\fz_\gamma \in {\det}_{\cO_F}^{-1}(\rgamma(G_{K,S},T))$$
such that
$$\vartheta(\fz_\gamma)= \frac{L_S^\ast(M,0)}{\Omega_{\gamma,\delta}R_{x,y}}\cdot x\otimes y\otimes \delta^\ast.$$
\end{conjecture}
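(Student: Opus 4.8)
The plan is to establish the conjecture in the analytic ranks accessible by present methods, by descent from Iwasawa theory: rather than attacking it for the single motive $M$, one embeds $V=V_\lambda(M^\ast(1))$ into a $p$-adic family over an Iwasawa algebra, builds inside the determinant of the family's Galois cohomology a canonical element interpolating the relevant $L$-values, and then specializes. In the setting of this paper one works with the anticyclotomic $\ZZ_p$-extension $K_\infty/K$, sets $\Lambda=\cO_f[[\Gamma]]$ and $\Lambda^{\rm ur}=\widehat\ZZ_p^{\rm ur}\widehat\otimes\Lambda$, and forms the deformation $\TT=\Lambda\otimes_{\cO_f}T_f(r)$, which recovers $T$ after twisting by $\chi$. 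The conjecture then splits according to the analytic rank of $M$, and a complete argument is to be expected only in analytic ranks $0$ and $1$ with current tools.

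In analytic rank zero the key steps, in order, are: (i) invoke the $p$-adic $L$-function $L_\fp^{\rm BDP}\in\Lambda^{\rm ur}$ together with its interpolation property (\ref{BDPinterpolation}); (ii) invoke Kato's local epsilon element $\varepsilon_\fp\in\Lambda^{\rm ur}\otimes_\Lambda\det_\Lambda(\rgamma(K_\fp,\TT))$, which exists in our case by work of Loeffler--Venjakob--Zerbes, Nakamura and Rodrigues Jacinto, and which in the Fukaya--Kato philosophy plays the role of a Coleman map; (iii) using the canonical factorization (\ref{can det}) of $\det_\Lambda^{-1}(\widetilde\rgamma_\fp(K,\TT))$ and the Euler-factor bases at the places $v\nmid p$ from Lemma \ref{lem euler}, define $\fz_S\in\Lambda^{\rm ur}\otimes_\Lambda\det_\Lambda^{-1}(\rgamma(G_{K,S},\TT))$ by the requirement $\fz_S\otimes\varepsilon_\fp=\fz_\fp$; (iv) prove that $\fz_S$ in fact has coefficients in $\Lambda$ -- the delicate point -- which reduces to showing that $\varepsilon_\fp$ and $L_\fp^{\rm BDP}$ transform the same way under the Frobenius $\varphi_p$ of $\widehat\ZZ_p^{\rm ur}$, i.e. Proposition \ref{phiepsilon} together with $\varphi_p(L_\fp^{\rm BDP})=\overline\tau_p^2\cdot L_\fp^{\rm BDP}$ (Lemma \ref{phiBDP}); (v) invoke the Iwasawa main conjecture (Conjecture \ref{IMC}), which forces $\fz_\fp$, hence $\fz_S$, to be a basis. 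One then specializes at $\chi$ to obtain an $\cO_F$-basis $\fz_S^\chi$ of $\det_{\cO_F}^{-1}(\rgamma(G_{K,S},T))$, and verifies that it satisfies the $L$-value formula of Conjecture \ref{TNC general0}. This last verification (Theorem \ref{thm:interpolation}) transports $\fz_S^\chi$ through $\lambda_\fp=\exp^\ast_\fp\circ{\rm loc}_\fp$, uses the explicit description of $\varepsilon_\fp^\chi$ in Proposition \ref{epsilon twist} -- dual exponential, the automorphism $(1-\varphi)(1-p^{-1}\varphi^{-1})^{-1}$, the isomorphism $\varepsilon_{\rm dR}$, and the $\Gamma$-factor $-\Gamma(j-r+1)\Gamma(j+r)$ -- so that, after plugging in (\ref{BDPinterpolation}), the $\Gamma$-factors and the Euler-type factors at $\fp$ and $\barfp$ cancel, leaving the $p$-adic CM period $\Omega_p^{4j}$ (which cancels the epsilon-element period $\Omega_{p,\gamma_T,\delta^\ast}$ by Lemma \ref{compare cm padic}) and the complex quantity $\Omega_\infty^{4j}(2\pi)^{1-2j}\sqrt{D_K}^{2j-1}$ (identified with Deligne's period $\Omega_{\gamma,\delta}$ of $M=M_f(r)\otimes M(\chi)$ by Lemma \ref{compare cm1}, which is where the class number one hypothesis and Shimura's period relations enter).

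For a Hecke character with $\ord_{s=r}L(f,\chi^{-1},s)=1$ (i.e. $-r<j<r$) only the consequence of step (v) changes: the same element $\fz_S$ should now interpolate the leading term $L'(f,\chi^{-1},r)$, which one extracts by combining the $p$-adic Gross--Zagier formula of \cite[Thm. 5.13]{BDP} for $\chi(L_\fp^{\rm BDP})$ with a Gross--Zagier-type formula relating generalized Heegner cycles to $L'$, the regulator $R_{x,y}$ then entering through the height of the resulting class in $H^1_f$. Arbitrary analytic rank appears genuinely out of reach, since no suitable Gross--Zagier-type formula is available there.

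The main obstacle is twofold. First, the Iwasawa main conjecture is an input rather than an output: it is presently known only in restricted situations (e.g. the elliptic-curve case of Corollary \ref{cor:main}), so an unconditional statement must await progress on Iwasawa main conjectures; and beyond the anticyclotomic CM-twist setting considered here one additionally needs a $p$-adic $L$-function and Kato's local epsilon conjecture for a general critical motive -- the subject of forthcoming work \cite{sanopadic}. Second, even granting the main conjecture, the integrality step (iv) is not routine: controlling $\varphi_p(L_\fp^{\rm BDP})$ in the supersingular or higher-weight case requires reducing $L_\fp^{\rm BDP}$ to the Katz $p$-adic $L$-function via \cite[Thm. 1.7]{castella} and using that the latter arises from a norm-compatible system of elliptic units, so that its Frobenius behaviour follows from the Coleman-map property of \cite[Prop. 4.9]{LZ}.
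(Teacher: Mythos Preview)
The statement you are addressing is a \emph{conjecture}, not a theorem: the paper formulates Conjecture \ref{TNC general1} in the appendix as the general form of the Tamagawa number conjecture in the critical case, and offers no proof of it. There is therefore no ``paper's own proof'' to compare against. What you have written is not a proof of Conjecture \ref{TNC general1} but rather a faithful and accurate summary of the paper's strategy for establishing the \emph{special cases} that the paper actually treats: the analytic rank zero case for $M=M_f(r)\otimes M(\chi)$ (Theorem \ref{thm:main}, conditional on Conjecture \ref{IMC}) and the analytic rank one case for elliptic curves over $K$ (Theorems \ref{thman1} and \ref{thmheeg} in Appendix \ref{sec app}, conditional on Conjecture \ref{IMC} or the Heegner point main conjecture respectively).

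Your outline of that strategy is correct in its essentials --- the construction of $\fz_S$ via the factorization (\ref{can det}), the Frobenius descent step (Proposition \ref{phiepsilon} and Lemma \ref{phiBDP}), the role of the Iwasawa main conjecture in making $\fz_S$ a basis, and the period comparisons of \S\ref{sec compare} --- and you are right that the method as it stands does not reach arbitrary analytic rank. But you should be clear that this constitutes evidence and partial results toward the conjecture, not a proof: even in rank zero the argument is conditional on Conjecture \ref{IMC}, applies only to the specific anticyclotomic twist $M_f(r)\otimes M(\chi)$ rather than to a general critical motive $M$, and uses the class number one hypothesis on $K$. Framing this as a ``proof proposal'' for the general conjecture overstates what the method delivers.
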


\begin{remark}
One checks that the validity of Conjecture \ref{TNC general1} is independent of the choices of $S,T,\gamma,\delta, x,y$. 
\end{remark}

\subsection{Examples}

We give some basic examples of motives. 

\subsubsection{Tate motives}\label{sec ex tate}

For a number field $K$ and an integer $j\in \ZZ$, there is a Tate motive $M=h^0(\Spec K)(j)$. This is a motive defined over $K$ of rank one and weight $-2j$ with coefficients in $\QQ$. The realizations are the following. 
\begin{itemize}
\item For an embedding $\sigma: K \hookrightarrow \CC$, the $\sigma$-Betti realization is
$$H_\sigma(M):=\QQ(j):=(2\pi i)^j\QQ.$$
\item The $p$-adic realization is 
$$V_p(M):=\QQ_p(j).$$
\item The de Rham realization is
$$H_{\rm dR}(M):=K$$
with filtration
$${\rm Fil}^i H_{\rm dR}(M) := \begin{cases}
K &\text{if $i\leq -j$,}\\
0&\text{if $i>-j$.}
\end{cases}$$
\end{itemize}
The comparison isomorphisms are naturally defined. Note that $M^\ast(1)$ is identified with $h^0(\Spec K)(1-j)$. 
Hypothesis \ref{conj L} is satisfied. The $L$-function of $M$ is $L(M,s)=\zeta_K(s+j)$, where $\zeta_K(s)$ denotes the Dedekind zeta function of $K$. Conjecture \ref{conj motivic} is true with
$$H^0_f(K,M):=\begin{cases}
\QQ &\text{if $j=0$,}\\
0&\text{if $j\neq 0$,}
\end{cases} \text{ and }H^1_f(K,M):=\begin{cases}
\QQ\otimes_\ZZ \cO_K^\times &\text{if $j=1$,}\\
\QQ\otimes_\ZZ K_{2j-1}(K) &\text{if $j>1$,}\\
0&\text{if $j\leq 0$.}
\end{cases}$$
(The case $j>1$ is due to the Voevodsky-Rost theorem.) Conjecture \ref{conj order} is true by Borel's theorem. 

$M$ is critical if and only if $K$ is totally real and either $j$ is negative odd or positive even. Conjecture \ref{conj height} is trivially true since $H^1_f(K,M)=H^1_f(K,M^\ast(1))=0$ in this case.  
Conjecture \ref{deligne0} (which is the same as Conjecture \ref{deligne general} in this case) is true by the Klingen-Siegel theorem. Conjecture \ref{TNC general0} (which is the same as Conjecture \ref{TNC general1}) for negative odd $j$ is equivalent to the Lichtenbaum conjecture, which is proved by Wiles \cite[Thm. 1.6]{wiles}. Conjecture \ref{TNC general0} for positive even $j$ can be proved when $p$ is unramified in $K$ by using the functional equation (see \cite[Thm. 3.8(i)]{sbA2}). 

When $K$ is abelian over $\QQ$, the Tamagawa number conjecture (and its equivariant refinement) is proved by Burns-Greither \cite{BG}, Huber-Kings \cite{HK}, and Burns-Flach \cite{BFetnc2}. 

\subsubsection{Artin motives}\label{sec ex artin}

Let $\chi: G_K\to \CC^\times$ be a finite order character. Suppose that $\chi$ takes values in a number field $\cF$. Then there is an Artin motive $M=M(\chi)$ defined over $K$ of rank one and weight zero with coefficients in $\cF$. The realizations are the following. 
\begin{itemize}
\item For an embedding $\sigma: K \hookrightarrow \CC$, the $\sigma$-Betti realization is
$$H_\sigma(M):=\cF.$$
\item For $\lambda \in S_p(\cF)$, the $\lambda$-adic realization is 
$$V_\lambda(M):=\cF_\lambda(\chi),$$
i.e., $V_\lambda(M)$ is a one dimensional $\cF_\lambda$-vector space on which $G_K$ acts via $\chi$.
\item The de Rham realization is
$$H_{\rm dR}(M):=\cF\otimes_\QQ K$$
with filtration
$${\rm Fil}^i H_{\rm dR}(M) := \begin{cases}
\cF\otimes_\QQ K &\text{if $i\leq 0$,}\\
0&\text{if $i>0$.}
\end{cases}$$
\end{itemize}
If $\chi$ is the trivial character and $\cF=\QQ$, then we have $M=h^0(\Spec K)$. 
The $L$-function of $M$ is the Artin $L$-function $L(M,s)=L(\chi^{-1},s)$. When $K=\QQ$, the Tamagawa number conjecture for $M(j)$ (for any $j\in \ZZ$) is proved by Huber-Kings \cite{HK}.

\subsubsection{Elliptic curves}\label{ex sec ell}

Let $E$ be an elliptic curve defined over a number field $K$. Then we can consider the critical motive $M=h^1(E/K)(1)$ of rank two and weight $-1$ with coefficients in $\QQ$. The realizations are the following. 
\begin{itemize}
\item For an embedding $\sigma: K \hookrightarrow \CC$, the $\sigma$-Betti realization is
$$H_\sigma(M):=H_1(E^\sigma(\CC),\QQ)\simeq H^1(E^\sigma(\CC),\QQ(1)).$$
Here we set $E^\sigma:= E\times_{K,\sigma}\CC$. 
\item The $p$-adic realization is 
$$V_p(M):=H^1_{\text{\'et}}(E\times_K \overline \QQ, \QQ_p(1)).$$
(This is canonically isomorphic to $V_p(E):=\QQ_p\otimes_{\ZZ_p} T_p(E)$.)
\item The de Rham realization is
$$H_{\rm dR}(M):=H^1_{\rm dR}(E/K)$$
with filtration
$${\rm Fil}^i H_{\rm dR}(M) := {\rm Fil}^{i+1} H^1_{\rm dR}(E/K)=\begin{cases}
H^1_{\rm dR}(E/K) &\text{if $i<0$,}\\
\Gamma(E,\Omega_{E/K}^1) &\text{if $i=0$,}\\
0 &\text{if $i>0$.}
\end{cases}$$
\end{itemize}
The comparison isomorphisms are well-known. 
Hypothesis \ref{conj L}(i) and (ii) are satisfied. The $L$-function of $M$ is $L(M,s)=L(E/K,s+1)$, where $L(E/K,s)$ is the Hasse-Weil $L$-function for $E/K$.  Hypothesis \ref{conj L}(iii) is not known in general: when $K=\QQ$, it is a consequence of the Shimura-Taniyama conjecture proved by Wiles et al. Conjecture \ref{conj motivic} is true with
$$H^0_f(K,M):=0\text{ and }H^1_f(K,M):=\QQ\otimes_\ZZ E(K)$$
if the $p$-part of the Tate-Shafarevich group $\sha(E/K)$ is finite. Conjecture \ref{conj height} is satisfied with the N\'eron-Tate height pairing. It is known that the Tamagawa number conjecture for $(h^1(E/K)(1),\ZZ_p)$ (Conjecture \ref{TNC general1}) is equivalent to the $p$-part of the Birch and Swinnerton-Dyer formula. (See Proposition \ref{TNC equiv} in the case of analytic rank one.) 

We remark that $M$ is self-dual, i.e., $M^\ast(1)=M$. The meaning of this is that each realization of $M^\ast(1)$ is canonically isomorphic to that of $M$. 

\subsubsection{Algebraic varieties}

Let $X$ be a smooth projective variety defined over $K$. For $n, j \in \ZZ$ with $n\geq 0$, there is a motive $M=h^n(X)(j)$ defined over $K$ of weight $n-2j$ with coefficients in $\QQ$. The realizations are the following. 
\begin{itemize}
\item For an embedding $\sigma: K \hookrightarrow \CC$, the $\sigma$-Betti realization is
$$H_\sigma(M):=H^n(X^\sigma(\CC),\QQ(j)).$$
\item The $p$-adic realization is 
$$V_p(M):=H^n_{\text{\'et}}(X\times_K \overline \QQ, \QQ_p(j)).$$
\item The de Rham realization is
$$H_{\rm dR}(M):=H^n_{\rm dR}(X/K)$$
with filtration
$${\rm Fil}^i H_{\rm dR}(M) := {\rm Fil}^{i+j}H^n_{\rm dR}(X/K).$$
\end{itemize}
These realizations satisfy the axioms of Weil cohomology and have the well-known comparison isomorphisms. 
Properties of the $L$-function $L(M,s) $ are highly conjectural. For possible definitions of the motivic cohomology, see \cite[\S 6.5]{fontaineL}, \cite[\S 3.1]{BFetnc} for example.


The dual of $M$ is described as follows. 
Let $d:= \dim X$ and suppose $0\leq n\leq 2d$. Then we have $h^{2d}(X)(d) = h^0(\Spec K)$ and the Poincar\'e duality pairing
$$h^n(X) \times h^{2d-n}(X) \to h^{2d}(X)=h^0(\Spec K)(-d)$$
induces an identification $h^n(X)^\ast= h^{2d-n}(X)(d)$. Hence we have $M^\ast(1)= h^{2d-n}(X)(d+1-j)$. Also, by the hard Lefschetz theorem, we have $M^\ast(1)=h^n(X)(n+1-j)$.

\subsubsection{Modular forms}\label{sec ex modular}

Let $f=\sum_{n=1}^\infty a_n q^n \in S_{2r}(\Gamma_0(N))$ be a normalized newform of weight $2r$ and level $N$. We set $\cF:=\QQ(\{a_n\}_n)$, which is a totally real number field. Then there is a motive $M_f$ attached to $f$, which is defined over $\QQ$ of rank two and weight $2r-1$ with coefficients in $\cF$. The motive $M_f$ was first constructed by Scholl \cite{scholl}. We shall describe its realizations, following \cite[\S 2]{LV}.

Let $\cE_N \to X(N)$ be the universal generalized elliptic curve. The Kuga-Sato variety $X:=\widetilde \cE_N^{2r-2}$ is defined to be the canonical desingularization of $\cE_N^{2r-2}$ described in \cite{delignemodular}. Scholl constructed a certain projector $\Pi$ which acts on the cohomology of $X$ (see \cite[\S 2.3]{LV}). The motive $M_f$ is defined to be $\Pi \cdot h^{2r-1}(X)$, i.e., the realizations are the following. 
\begin{itemize}
\item The Betti realization is 
$$H_B(M_f):=\Pi\cdot H^{2r-1}(X(\CC), \cF).$$
\item For $\lambda \in S_p(\cF)$, the $\lambda$-adic realization is 
$$V_\lambda(M_f):= \Pi \cdot H_{\text{\'et}}^{2r-1}(X\times_\QQ \overline \QQ, \cF_\lambda).$$
\item The de Rham realization is 
$$H_{\rm dR}(M_f):=\Pi\cdot H_{\rm dR}^{2r-1}(X/\QQ)\otimes_\QQ \cF$$
with the usual filtration. 
\end{itemize}
The comparison isomorphisms are induced by those for $h^{2r-1}(X)$. Hypothesis \ref{conj L} is satisfied. The $L$-function of $M_f$ is $L(M_f,s)=L(f,s):=\sum_{n=1}^\infty a_n n^{-s}$. The motive $M_f(j)$ is critical when $1\leq j\leq 2r-1$. In this article, we are interested in the ``central critical twist" $M:=M_f(r)$, which is self-dual. Conjectures \ref{conj motivic} and \ref{conj height} for $M$ are not known in general: see \cite[\S\S 2.6 and 2.7]{LV}. Conjecture \ref{deligne0} for $M$ is known modulo comparison of periods (due to Shimura \cite{shimura}, \cite{shimuraperiod}). Kato essentially proves in \cite[Thm. 14.5(3)]{katoasterisque} that Conjecture \ref{TNC general0} for $M$ is implied by the Iwasawa main conjecture for $f$ (see \cite[Conj. 12.10]{katoasterisque}). Conjecture \ref{TNC general1} for $M$ in analytic rank one is studied by Longo-Vigni \cite[Thm. B]{LV}. 

\begin{remark}\label{rem det modular}
For $M:=M_f(r)$, note that $\bigwedge^2 M$ is identified with $h^0(\Spec \QQ)(1)$ (with coefficients in $\cF$). The meaning of this is that there are canonical isomorphisms
\begin{itemize}
\item ${\bigwedge}_\cF^2 H_B(M)\simeq \cF(1)(:=(2\pi i )\cF)$,
\item ${\bigwedge}_{\cF_\lambda}^2 V_\lambda(M) \simeq \cF_\lambda(1)$,
\item ${\bigwedge}_\cF^2 H_{\rm dR}(M)\simeq \cF$. 
\end{itemize}
%
This follows by noting that the Poincar\'e duality pairing
$$h^{2r-1}(X)(r) \times h^{2r-1}(X)(r) \to h^0(\Spec \QQ)(1)$$
is skew-symmetric (since $2r-1$ is odd). 
\end{remark}

\subsubsection{Hecke motives}\label{sec ex hecke}

We first review the definition and basic properties of Hecke characters. 

Let $K$ be an imaginary quadratic field. Let $\bA_K^\times$ be the id\`ele group of $K$. Let  $\widehat K^\times:= (\widehat \ZZ\otimes_\ZZ K)^\times$ be the finite id\`ele group. Note that $\bA_K^\times= \CC^\times \times \widehat K^\times$. 

Let $k,\ell \in \ZZ$. A Hecke character of $K$ of infinity type $(k,\ell)$ is a continuous homomorphism $\chi: \bA_K^\times/K^\times \to \CC^\times$ such that its restriction on $\CC^\times$ is given by $z\mapsto z^{-k}\overline z^{-\ell}$. (Note that our sign convention is opposite to \cite[\S 15.7]{katoasterisque}, \cite[\S 3.3]{CH}, but agrees with \cite{deshalit}, \cite{tsuji}.) We regard $\chi $ as a map $\bA_K^\times \to \CC^\times $ which is trivial on $K^\times$. 

$\chi$ is called anticyclotomic if $\chi$ is trivial on $\bA_\QQ^\times$. Note that, if $\chi$ is anticyclotomic, then its infinity type is of the form $(k,-k)$. 

We say that $\chi$ takes values in a number field $\cF$ if $\chi(\widehat K^\times)\subset \cF$. We set $\widehat \cO_K^\times:=\prod_{v<\infty}\cO_{K_v}^\times$. The conductor of $\chi$ is defined to be the largest ideal $\mathfrak{f}$ of $\cO_K$ such that the restriction of $\chi$ on $\widehat \cO_K^\times$ factors through $(\cO_K/\mathfrak{f})^\times$. (Note that ``$\mathfrak{f}$ is larger than $\mathfrak{g}$" means $\mathfrak{f}\mid \mathfrak{g}$.)

Let $I_K$ be the group of fractional ideals of $K$. Let $i_K: \widehat K^\times \to I_K; \ z \mapsto \prod_\fp \fp^{\ord_\fp(z_\fp)}$ be the natural surjection. Let $I_{K,\mathfrak{f}}\subset I_K$ be the group of fractional ideals of $K$ prime to $\mathfrak{f}$. For a Hecke character $\chi$ of conductor $\mathfrak{f}$ which takes values in $\cF$, we define
$$\widetilde \chi: I_{K,\mathfrak{f}} \to \cF^\times$$
in the following way: for $\mathfrak{a} \in I_{K,\mathfrak{f}}$, choose $z \in \widehat K^\times$ such that $i_K(z)=\mathfrak{a}$ and $z_\fp \equiv 1$ (mod $\fp^{\ord_\fp(\mathfrak{f})}$) for any $\fp \mid \mathfrak{f}$, and define $\widetilde \chi(\mathfrak{a}):=\chi(z)$. (One checks that this is well-defined.) Note that, for a principal ideal $(a) \in I_{K,\mathfrak{f}}$ such that $a \equiv 1$ (mod $\mathfrak{f}$), we have
$$\widetilde \chi ((a))= \chi|_{\widehat K^\times}(a) = \chi|_{\CC^\times}(a)^{-1}=a^k\overline a^{\ell}.$$
Conversely, for a given character $\widetilde \chi: I_{K,\mathfrak{f}}\to \cF^\times$ such that $\widetilde \chi((a)) = a^k\overline a^\ell$ for $a\in K^\times$ with $a\equiv 1$ (mod $\mathfrak{f}$), one can naturally construct a Hecke character $\chi$ of infinity type $(k,\ell)$ (see \cite[Chap. 0, \S 5]{scha}). Via this correspondence, we often identify $\widetilde \chi$ with $\chi$. In particular, we write $\chi(\mathfrak{a})$ instead of $\widetilde \chi(\mathfrak{a})$ for $\mathfrak{a}\in I_{K,\mathfrak{f}}$. As usual, we set $\chi(\mathfrak{a}):=0$ if $\mathfrak{a}$ is not prime to the conductor $\mathfrak{f}$. 

The Hecke $L$-function for $\chi$ is defined by
$$L(\chi,s):=\sum_{\mathfrak{a}}\frac{ \chi(\mathfrak{a})}{{\N}\mathfrak{a}^s},$$
where $\mathfrak{a}$ runs over all non-zero integral ideals of $K$ and we set ${\N}\mathfrak{a}:=\#(\cO_K/\mathfrak{a})$. We have an expression by the Euler product
$$L(\chi,s)= \prod_\fp (1-\chi(\fp){\N}\fp^{-s})^{-1},$$
where $\fp$ runs over all primes of $K$. 

Let ${\rm rec}_K: \bA_K^\times/K^\times \to G_K^{\rm ab}:=\Gal(K^{\rm ab}/K)$ denote the (arithmetically normalized) global reciprocity map. ($K^{\rm ab}$ denotes the maximal abelian extension of $K$.) A Hecke character of infinity type $(0,0)$ is identified with a finite order character $G_K\to \CC^\times$ via ${\rm rec}_K$. For a fixed embedding $\iota_p: \overline \QQ \hookrightarrow \overline \QQ_p$, we define the $p$-adic avatar $\chi_p:G_K \to \overline \QQ_p^\times$ of $\chi$ by
$$\chi_p({\rm rec}_K(z)) = \iota_p(\chi(z))z_\fp^{-k}z_{\barfp}^{-\ell},$$
where $z \in \widehat K^\times$ and $\fp$ denotes the prime of $K$ corresponding to $\iota_p$. 
By an abuse of notation, we often denote $\chi_p$ by $\chi$.

The most basic example of Hecke characters is the norm Hecke character
$$\N: \bA_K^\times/K^\times \to \CC^\times; \ z \mapsto \prod_v |z_v|_v^{-1},$$
where $v$ runs over all places of $K$ and $|\cdot|_v: K_v^\times \to \RR_{>0}$ denotes the normalized absolute value. The infinity type of $\N$ is $(1,1)$. The conductor of $\N$ is $(1)$, and the corresponding character of $I_K$ is given by
$$\widetilde \N: I_K \to \QQ^\times; \ \mathfrak{a} \mapsto {\N}\mathfrak{a}. $$
The $p$-adic avatar of $\N$ is the cyclotomic character $\chi_{\rm cyc}: G_K \to \ZZ_p^\times$. 

Assuming that $K$ has class number one, we fix an elliptic curve $A$ defined over $K$ with complex multiplication by $\cO_K$. Let $\psi=\psi_A$ be the associated Hecke character of infinity type $(1,0)$, which takes values in $K$ (see \cite[Prop. 7.41]{shimurabook}). A prime $\fp$ of $K$ divides the conductor of $\psi$ if and only if $A$ has bad reduction at $\fp$ (see \cite[Thm. II.1.8]{deshalit}). We have $\psi \overline \psi = \N$. If $\fp$ is the prime of $K$ corresponding to $\iota_p$, then $G_K$ acts on the $\fp$-adic Tate module $T_\fp(A):=\varprojlim_n A[\fp^n]$ via $\psi$. 
%

\vspace{3mm}

We now construct a motive. 
For a Hecke character $\chi$ of infinity type $(k,\ell)$ which takes values in $\cF$, there is a motive $M(\chi)$ attached to $\chi$ defined over $K$ of rank one and weight $-k-\ell$ with coefficients in $\cF$. We shall describe $M(\chi)$. 

By the observations above, it is natural to define $M(\N):= h^0(\Spec K)(1)$ and $M(\psi):=h^1(A)(1)$. For the general case, we write $\chi=\chi_0 \psi^k \overline \psi^\ell = \chi_0 \psi^{k-\ell} \N^\ell$ with a finite order character $\chi_0$. 
Then we define 
$$M(\chi):= M(\chi_0)\otimes (h^1(A)(1))^{\otimes (k-\ell)}(\ell),$$
where $M(\chi_0)$ is the Artin motive (see \S \ref{sec ex artin}). 
By $\chi^{-1}\N=\overline \chi \N^{1-k-\ell}$, we see that $M(\chi)^\ast(1)=M(\overline \chi)(1-k-\ell)$. 

In this article, we mainly consider Hecke characters of the form $\chi= \psi^k\overline \psi^\ell$ with $k>\ell$. In this case, $M(\chi)$ has coefficients in $K$, and the realizations are given as follows. 

\begin{itemize}
\item For an embedding $\sigma: K \hookrightarrow \CC$, the $\sigma$-Betti realization is
$$H_\sigma(M(\chi)):=H_1(A^\sigma(\CC),\QQ)^{\otimes(k-\ell)}(\ell).$$
($H_1(A^\sigma(\CC),\QQ)$ is a one-dimensional $K$-vector space and the tensor product is taken over $K$.) 
\item For $\fp \in S_p(K)$, the $\fp$-adic realization is 
$$V_\fp(M(\chi)):=V_\fp(A)^{\otimes (k-\ell)}(\ell).$$
(The tensor product is taken over $K_\fp$.) 
Here we set $V_\fp(A):= K_\fp \otimes_{\cO_{K_\fp}} T_\fp(A)$. 
\item The de Rham realization is
$$H_{\rm dR}(M(\chi)):=H^1_{\rm dR}(A/K)^{\otimes (k-\ell)}.$$
($H^1_{\rm dR}(A/K)$ is a free $K\otimes_\QQ K$-module of rank one and the tensor product is taken over $K\otimes_\QQ K$.) The filtration is given by
$${\rm Fil}^i H_{\rm dR}(M(\chi)) := \begin{cases}
H^1_{\rm dR}(A/K)^{\otimes (k-\ell)} &\text{if $i\leq -k$,}\\
\Gamma(A,\Omega_{A/K}^1)^{\otimes (k-\ell)} &\text{if $-k<i \leq -\ell$,}\\
0 &\text{if $i>-\ell$.}
\end{cases}$$
\end{itemize}

Hypothesis \ref{conj L} is satisfied for the motive $M(\chi)$. 
Note that the $L$-function of $M(\chi)$ is 
$$L(M(\chi),s)=L(\chi^{-1},s) = L(\overline \psi^{k-\ell},s+k) = L(\psi^{\ell-k},s+\ell).$$
We know that $M(\chi)$ is critical (i.e., ``$\chi^{-1}$ is critical" in the sense of \cite[\S II.1.1]{deshalit}) if and only if $\ell \leq 0 <k$. (Note that we suppose $k>\ell$.) 
The Deligne conjecture (Conjecture \ref{deligne0}) is proved by Goldstein-Schappacher \cite{GS} (see also \cite[Thm. II.4.3]{tsuji}). A large part of the Tamagawa number conjecture in the critical case (Conjecture \ref{TNC general0}) is proved by Kato \cite[Chap. III]{katolecture}, \cite[\S 15]{katoasterisque}, Guo \cite{guo}, Han \cite{han} and Tsuji \cite[Thm. II.10.4]{tsuji} as an application of explicit reciprocity laws and the Iwasawa main conjecture proved by Rubin \cite{rubinimag}. For results in the non-critical case, see \cite{kings}, \cite{bars}. 

When $(k,\ell)=(1,0)$ (i.e., $M(\chi)=h^1(A)(1)$), Conjecture \ref{conj order} in analytic rank zero is the well-known result due to Coates-Wiles \cite{CW} and Rubin \cite{rubintateshafarevich}. A large part of Conjecture \ref{TNC general0} is due to Rubin \cite{rubinimag}, and it has recently been solved by Burungale-Flach \cite{burungaleflach} completely. (See \cite[Prop. 2.3]{burungaleflach} for an explicit interpretation of Conjecture \ref{TNC general0}.) In analytic rank one, Conjecture \ref{TNC general1} is essentially proved by Rubin \cite{rubinimag} when $A$ is defined over $\QQ$, and in a  more general case it has recently been proved by Castella \cite{castellaTNC}. 

\vspace{3mm}

We shall give an explicit description of the period of $M:=M(\chi)$, which is used in this article. Assume that $M$ is critical and let 
$$\alpha_{M}: \RR\otimes_\QQ H_B(M)^+\xrightarrow{\sim} \RR\otimes_\QQ t(M)\simeq \RR\otimes_\QQ (\Gamma(A,\Omega_{A/K}^1)^{\otimes(k-\ell)} )^\ast$$
be the period isomorphism. Note that $H_B(M)^+$ is identified with $H_1(A(\CC),\QQ)^{\otimes (k-\ell)}(\ell)$. 
Let $\omega_A \in \Gamma(A,\Omega_{A/K}^1)$ be a N\'eron differential and $\gamma_A \in H_1(A(\CC),\ZZ)$ an $\cO_K$-basis. We define the complex CM period by
$$\Omega_\infty:=\int_{\gamma_A}\omega_A.$$
We set
$$\gamma:= (2\pi i )^\ell \gamma_A^{\otimes(k-\ell)} \in H_1(A(\CC),\QQ)^{\otimes(k-\ell)}(\ell)= H_B(M)^+$$
and 
$$\delta^\ast := \omega_A^{\otimes (k-\ell)} \in \Gamma(A,\Omega_{A/K}^1)^{\otimes (k-\ell)}.$$
Let $\delta \in t(M)$ be the dual basis of $\delta^\ast$. Then the period of $M$
$$\Omega_{\gamma,\delta} \in (\RR\otimes_\QQ K)^\times =\CC^\times$$
with respect to $\gamma$ and $\delta$ is defined by
 $$\alpha_{M}(\gamma)=\Omega_{\gamma,\delta}\cdot \delta.$$

\begin{proposition}\label{hecke period}
We have
$$\Omega_{\gamma,\delta}= \pm \Omega_\infty^{k-\ell} \left(\frac{\sqrt{D_K}}{2\pi}\right)^{-\ell}.$$
Here $-D_K < 0$ denotes the discriminant of $K$. 
\end{proposition}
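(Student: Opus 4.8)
The plan is to compute the period of $M:=M(\psi^k\overline\psi^\ell)$ by decomposing it into the elliptic motive $h^1(A)(1)$ and a Tate twist, and applying the multiplicativity of Deligne's period under tensor products and Tate twists. Since $\psi^k\overline\psi^\ell=\psi^{k-\ell}\N^\ell$, the construction recalled in \S\ref{sec ex hecke} gives
$$M=\bigl(h^1(A)(1)\bigr)^{\otimes(k-\ell)}\otimes h^0(\Spec K)(\ell),$$
a rank-one motive over $K$ with coefficients in $K$, which is critical precisely because $\ell\le 0<k$; moreover the chosen bases $\gamma=(2\pi i)^\ell\gamma_A^{\otimes(k-\ell)}$ and $\delta^\ast=\omega_A^{\otimes(k-\ell)}$ are the product bases for this decomposition (with $(2\pi i)^\ell$ the natural Betti basis of $h^0(\Spec K)(\ell)$ and $1$ its de Rham basis). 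Hence $\Omega_{\gamma,\delta}$ is the product of the period of $h^1(A)(1)$ with respect to $(\gamma_A,\omega_A)$, raised to the power $k-\ell$, and the period contribution of the twist by $h^0(\Spec K)(\ell)$.

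For the first factor I would argue as follows. Using the algebraic cup product on $H^1_{\mathrm{dR}}(A/K)$, for which $\Gamma(A,\Omega^1_{A/K})=\mathrm{Fil}^1$ is isotropic, fix a differential $\eta_A$ of the second kind dual to $\omega_A$; then $t\bigl(h^1(A)(1)\bigr)=H^1_{\mathrm{dR}}(A/K)/\Gamma(A,\Omega^1_{A/K})$ is spanned by the class of $\eta_A$, which is the $\delta$ of the statement. Via the de Rham--Betti comparison and Poincar\'e duality, the image of $\gamma_A$ in $\CC\otimes_K H^1_{\mathrm{dR}}(A/K)$ is $\eta_\infty\,\omega_A-\Omega_\infty\,\eta_A$, where $\Omega_\infty=\int_{\gamma_A}\omega_A$ and $\eta_\infty=\int_{\gamma_A}\eta_A$; projecting to the tangent space leaves $-\Omega_\infty$ times the class of $\eta_A$, so the period of $h^1(A)(1)$ is $\pm\Omega_\infty$, and taking the $(k-\ell)$-th tensor power gives $\pm\Omega_\infty^{k-\ell}$.

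The remaining factor, the period contribution of $h^0(\Spec K)(\ell)$, is where the $2\pi$ and $\sqrt{D_K}$ enter: computing the de Rham--Betti comparison isomorphism $\CC\otimes_\QQ H_B(h^0(\Spec K))\xrightarrow{\sim}\CC\otimes_\QQ K$ relative to the $\QQ$-rational bases given by the two embeddings of $K$ and by $\{1,\sqrt{-D_K}\}$ shows that its determinant (Deligne's $\delta$-invariant of $K$) is $\pm 1/\sqrt{-D_K}$; combining this with the $(2\pi i)^\ell$ coming from the Tate twist, and using the identification $\RR\otimes_\QQ K\cong\CC$ together with $\sqrt{-D_K}=\pm i\sqrt{D_K}$, yields the contribution $\pm(2\pi/\sqrt{D_K})^\ell=\pm(\sqrt{D_K}/2\pi)^{-\ell}$. (Equivalently, this factor reflects the classical Legendre period relation for $A$, which after using that $\gamma_A$ is an $\cO_K$-basis and that $\omega_A,\eta_A$ are eigenvectors for the two embeddings of $K$ forces $\Omega_\infty\eta_\infty=\pm 2\pi/\sqrt{D_K}$ — the same arithmetic of the different of $K/\QQ$.) Multiplying the two contributions gives the asserted formula.

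The main obstacle is precisely this bookkeeping: one must handle the multiplicativity of periods carefully since the factor $h^0(\Spec K)(\ell)$ is itself non-critical, and must keep exact track of the powers of $2\pi i$, $\sqrt{D_K}$ and $\Omega_\infty$ through the comparison isomorphisms of a rank-one motive with $K$-coefficients whose de Rham realization is a module over $K\otimes_\QQ K$, verifying throughout that the normalizations agree with the choices of $\gamma$ and $\delta$ (cf. Remark \ref{rem basis}). As an alternative, the statement can be deduced from the computation of periods of Hecke characters of imaginary quadratic fields due to Goldstein--Schappacher \cite{GS} (see also \cite{scha}) after matching conventions.
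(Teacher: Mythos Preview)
The paper's own proof is a one-line citation: it invokes \cite[Prop.~II.4.10]{tsuji}, noting only that the constant $A(\CC/\cO_K)$ appearing there equals $\sqrt{D_K}/2$ (the covolume of $\cO_K$ in $\CC$). So your direct computation is genuinely more than what the paper does, and the parenthetical Legendre-relation argument you give is exactly the mechanism behind Tsuji's formula: the identity $\Omega_\infty\eta_\infty=\pm 2\pi/\sqrt{D_K}$, derived from $\omega_1\eta_2-\omega_2\eta_1=\pm 2\pi i$ applied to the $\ZZ$-basis $\{\gamma_A,\tau\gamma_A\}$ of $H_1$ with $\tau-\bar\tau=\pm\sqrt{-D_K}$, is what produces the factor $(\sqrt{D_K}/2\pi)^{-\ell}$.

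That said, your primary line of argument --- decomposing the period as a product and reading off the contribution of $h^0(\Spec K)(\ell)$ from Deligne's $\delta$-invariant --- has a gap as written. The $\delta$-invariant $\pm 1/\sqrt{-D_K}$ is a single determinant (of a $2\times 2$ comparison matrix), so ``combining it with $(2\pi i)^\ell$'' naturally yields a single factor of $1/\sqrt{-D_K}$, not $(\sqrt{D_K})^{-\ell}$; you have not explained why the discriminant contribution should appear to the $\ell$-th power. The underlying issue is that period multiplicativity in Deligne's sense does not apply directly when one tensor factor is non-critical, and the way the Tate twist interacts with the $K\otimes_\QQ K$-module structure on $H_{\rm dR}$ (the two idempotents singling out $\omega_A$ and $\eta_A$) is precisely where the $\eta_\infty^{-\ell}$ factor enters. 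The clean route is the one you sketch parenthetically: compute $\alpha_M(\gamma)$ directly by tracking $\gamma_A$ through the comparison for $h^1(A)$, take $(k-\ell)$-th tensor powers, project to $t(M)$, and then substitute the Legendre relation. That argument is essentially Tsuji's, and is correct; I would promote it to the main argument and drop the $\delta$-invariant heuristic.
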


\begin{proof}
This is proved in \cite[Prop. II.4.10]{tsuji}. Note that $A(\CC/\cO_K)$ in \cite[Prop. II.2.6]{tsuji} is $\sqrt{D_K}/2$. 
\end{proof}

\section{The Birch and Swinnerton-Dyer formula in analytic rank one}\label{sec app}

Let $E$ be an elliptic curve defined over $\QQ$ with conductor $N$. Let $K$ be an imaginary quadratic field with odd discriminant $-D_K <-3$. (We do not assume $K$ has class number one.) We assume the Heegner hypothesis: every prime divisor of $N$ splits in $K$. Let $p$ be an odd prime number which does not divide $ND_K$. (Namely, $p$ is unramified in $K$ and $E$ has good reduction at $p$.)

In this appendix, we give a proof of the following result. 

\begin{theorem}\label{thman1}
Assume that $p$ splits in $K$. If $\ord_{s=1}L(E/K,s) =1$, then the Tamagawa number conjecture for the pair $(h^1(E/K)(1),\ZZ_p)$ is implied by the Iwasawa main conjecture for the Bertolini-Darmon-Prasanna $p$-adic $L$-function (Conjecture \ref{IMC}).
\end{theorem}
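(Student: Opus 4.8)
The plan is to run the argument of \S\ref{sec construction}--\S\ref{sec compare} with $\chi=\mathbf 1$ and $r=1$, replacing the interpolation formula (\ref{BDPinterpolation}) — unavailable here, since the trivial character lies outside the range of interpolation of $L_\fp^{\rm BDP}$ — by the $p$-adic Gross-Zagier formula of Bertolini-Darmon-Prasanna together with the archimedean Gross-Zagier formula. First I would note that the construction of $\fz_S\in{\det}_\Lambda^{-1}(\rgamma(G_{K,S},\TT))$ in \S\ref{sec construction}, and Proposition \ref{coeff}, use neither $j\geq r$ nor the class number one hypothesis; with $\TT=\Lambda\otimes T_p(E)(1)$, specializing $\fz_S$ at the trivial character produces a $\ZZ_p$-basis
$$z:=\fz_S^{\mathbf 1}\in{\det}_{\ZZ_p}^{-1}(\rgamma(G_{K,S},T)),\qquad T=T_p(E)(1),$$
which is the specialization at $\mathbf 1$ of the rank-two class $z_{K_\infty}=\Theta(\fz_S)$ of (\ref{zinfty}). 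By construction $z\otimes\varepsilon_\fp$ corresponds to $\fz_\fp$, and Conjecture \ref{IMC} gives $\pi(\fz_\fp)=L_\fp^{\rm BDP}$; specializing at $\mathbf 1$, the class $z$ is thereby tied to the value $\mathbf 1(L_\fp^{\rm BDP})$.

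Next I would bring in the two Gross-Zagier formulae. By the $p$-adic Gross-Zagier formula \cite[Thm. 5.13]{BDP}, up to a $p$-adic unit one has
$$\mathbf 1(L_\fp^{\rm BDP})=(1-a_pp^{-1}+p^{-1})^2\cdot\log_{\widehat{E},\fp}(P_K)^2,$$
where $P_K\in E(K)\otimes\QQ$ is a Heegner point and $\log_{\widehat{E},\fp}$ is the formal-group logarithm at $\fp$; on the archimedean side, the Gross-Zagier formula (in the generality of \cite{zhang}, \cite{YZZ}, which is where the Heegner hypothesis enters) expresses $L'(E/K,1)$ in terms of the N\'eron-Tate height $\hat h(P_K)$ and the period $\Omega_{\gamma,\delta}$. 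Since $\ord_{s=1}L(E/K,s)=1$, Gross-Zagier forces $P_K$ to be non-torsion, and Kolyvagin's theorem then yields $\mathrm{rank}\,E(K)=1$ and $\#\sha(E/K)<\infty$, so that $H^1_f(K,V)=H^1_f(K,V^\ast(1))=F\cdot P_K$ and (by the Poitou-Tate sequence, as in Corollary \ref{cor:fp}) $H^2(G_{K,S},V)=0$. I would also record the nonvanishing $\log_{\widehat{E},\fp}(P_K)\neq 0$, equivalent to $\mathbf 1(L_\fp^{\rm BDP})\neq 0$, which makes ${\rm loc}_\fp$ injective on $H^1_f(K,V)$.

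Then I would carry out the descent, following the proof of Theorem \ref{thm:interpolation}. Here ${\det}_F^{-1}(\rgamma(G_{K,S},V))={\bigwedge}_F^2 H^1(G_{K,S},V)$ is two-dimensional and contains the line $H^1_f(K,V)=F\cdot P_K$; in the isomorphism $\vartheta$ of Conjecture \ref{TNC general1}, assembled from the Poitou-Tate sequence and $\exp^\ast$, the Heegner point enters through the localizations ${\rm loc}_\fp(P_K),{\rm loc}_{\barfp}(P_K)$ appearing in the connecting map $\bigoplus_{v\mid p}H^1_{/f}(K_v,V)\to H^1_f(K,V^\ast(1))^\ast$. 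Unwinding the relation between $z$ and $\mathbf 1(L_\fp^{\rm BDP})$ through the Selmer complex $\widetilde\rgamma_\fp(K,V)$ — acyclic by the nonvanishing $\mathbf 1(L_\fp^{\rm BDP})\neq 0$ together with Conjecture \ref{IMC} — and inserting the computation of Proposition \ref{epsilon twist} (which supplies the $p$-adic period $\Omega_p$ and the $\Gamma$-factors just as in the proof of Theorem \ref{thm:interpolation}, now with $j=0$), the factors of $\log_{\widehat{E},\fp}(P_K)$ produced by localizing $P_K$ cancel the square $\log_{\widehat{E},\fp}(P_K)^2$ carried by $\mathbf 1(L_\fp^{\rm BDP})$, leaving only periods and Euler factors. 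The two Gross-Zagier formulae then identify $\vartheta(z)$ with
$$\frac{L_S^\ast(h^1(E/K)(1),0)}{\Omega_{\gamma,\delta}\,\hat h(P_K)}\cdot P_K\otimes P_K\otimes\delta^\ast$$
up to a $p$-adic unit, which is exactly Conjecture \ref{TNC general1}; via Proposition \ref{TNC equiv} this amounts to the $p$-part of the Birch and Swinnerton-Dyer formula for $E/K$ in analytic rank one, essentially recovering the theorem of Jetchev-Skinner-Wan \cite{JSW} (see Remark \ref{rem JSW}).

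The hard part will be the descent step just sketched. It requires an anticyclotomic control theorem relating the Selmer complex $\widetilde\rgamma_\fp(K,\TT)$ to the Bloch-Kato Selmer group of $E/K$, and a precise tracking of how $L_\fp^{\rm BDP}$, viewed through the local epsilon element as a ``Coleman map'' in the sense of Fukaya-Kato, encodes the $\fp$-localization of the Heegner point; and then, once the square $\log_{\widehat{E},\fp}(P_K)^2$ has been cancelled, one must verify that the surviving constant is exactly the combination of periods, local Tamagawa factors and torsion orders predicted by the Birch and Swinnerton-Dyer formula — in particular that the factor $(1-a_pp^{-1}+p^{-1})^2$ and the Euler factors away from $p$ contribute as expected. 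This is the same delicate bookkeeping that underlies the work of Jetchev-Skinner-Wan.
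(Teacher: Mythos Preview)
Your overall strategy is the paper's: specialize $\fz_S$ at the trivial character, use the $p$-adic Gross--Zagier formula for ${\bf 1}(L_\fp^{\rm BDP})$, and then the archimedean Gross--Zagier formula to connect to $L'(E/K,1)$. But two of your key claims are wrong, and they are precisely where the argument lives.

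First, you cannot invoke Proposition \ref{epsilon twist} at $j=0$. Plugging $j=0$, $r=1$ into the $\Gamma$-factor gives $\Gamma(j-r+1)=\Gamma(0)$, a pole; this is the signal that the description of $\varepsilon_\fp^\chi$ there, which factors entirely through $\exp_\fp^\ast$ on ${\bigwedge}_F^2 H^1(K_\fp,V)$, breaks down once $H^1_f(K_\fp,V)\neq 0$. For the trivial character the Hodge--Tate weights of $V=V_p(E)$ at $\fp$ are $\{0,1\}$, so $H^1_f(K_\fp,V)$ is one-dimensional and $\exp_\fp^\ast$ kills it. The correct description of ${\bf 1}(\varepsilon_\fp)$ (this is what the paper does in the proof of Lemma \ref{keylem2}) comes from the exact sequence $0\to H^1_f(K_\fp,V)\to H^1(K_\fp,V)\to H^1_{/f}(K_\fp,V)\to 0$ and identifies ${\bf 1}(\varepsilon_\fp)$ with $\log\otimes\exp^\ast$ on $H^1_f\otimes H^1_{/f}$. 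No $p$-adic CM period appears ($\Omega_p^{4j}=1$) and no $\Gamma$-factor survives.

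Second, the descent does \emph{not} require an anticyclotomic control theorem; the paper explicitly bypasses it (Remark \ref{rem JSW}). The substitute is Lemma \ref{lem skinner}: when ${\rm rank}\,E(K)=1$, the localization ${\rm loc}_\fp:H^1(G_{K,S},V)\to H^1(K_\fp,V)$ is an isomorphism on the \emph{full} $H^1$, proved directly by duality between the Selmer complexes $\widetilde\rgamma_\fp$ and $\widetilde\rgamma_{\barfp}$. Lemma \ref{keylem2} is then a diagram chase showing ${\bf 1}(\varepsilon_\fp)\circ{\rm loc}_\fp^2=\lambda_p\circ\vartheta$ up to $\ZZ_p^\times$, where $\lambda_p(x\otimes y\otimes\delta^\ast)=\log(x)\log(y)$; these two logarithms are what cancel the $\log(z_K)^2$ from the $p$-adic Gross--Zagier formula. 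There is no separate bookkeeping of Tamagawa factors or torsion at this stage: everything is already packaged in the determinant line, and one obtains $\vartheta(\fz_E)={\rm Eul}_S\cdot z_K\otimes z_K\otimes\delta^\ast$ (Theorem \ref{thman1reduced}), which by Remark \ref{rem GZ} is exactly Conjecture \ref{TNC2}.
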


The proof is given in \S \ref{sec pf1}. 

\begin{remark}
By the well-known Gross-Zagier-Kolyvagin theorem, we know that $\ord_{s=1}L(E/K,s) =1$ implies the finiteness of $\sha(E/K)$ and ${\rm rank}(E(K))=1$. 
\end{remark}

\begin{remark}\label{rem JSW}
Since the Tamagawa number conjecture for the pair $(h^1(E/K)(1),\ZZ_p)$ is equivalent to the $p$-part of the Birch and Swinnerton-Dyer formula for $E/K$ (see Proposition \ref{TNC equiv} below), Theorem \ref{thman1} is essentially proved by Jetchev-Skinner-Wan in \cite[\S 7.4.1]{JSW}. Our argument does not rely on the ``anticyclotomic control theorem" in \cite[Thm. 3.3.1]{JSW}.
\end{remark}

We also have the following result, which we prove in \S \ref{sec ks}. 

\begin{theorem}\label{thmheeg}
Assume that $E$ has good ordinary reduction at $p$. If $\ord_{s=1}L(E/K,s) =1$, then the Tamagawa number conjecture for the pair $(h^1(E/K)(1),\ZZ_p)$ is implied by the Heegner point main conjecture (see Conjecture \ref{HIMC} below). 
\end{theorem}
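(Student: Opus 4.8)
The plan is to follow the same structure used in the proof of Theorem \ref{thm:main}, replacing the BDP $p$-adic $L$-function and Conjecture \ref{IMC} by Heegner points and the Heegner point main conjecture, and replacing the trivial-character interpolation property by a Gross–Zagier formula. First I would set $\chi=\eins$, so $V=V_p(E)(0)=V_p(E)$ and the relevant motive is $M=h^1(E/K)(1)$, which is self-dual (see \S\ref{ex sec ell}). By the Gross–Zagier–Kolyvagin theorem, the hypothesis $\ord_{s=1}L(E/K,s)=1$ gives $\mathrm{rank}\,E(K)=1$ and finiteness of $\sha(E/K)$, so $H^1_f(K,V)$ is one-dimensional and $H^1_f(K,V^\ast(1))=H^1_f(K,V)$ is as well; thus we are in the analytic-rank-one situation of Conjecture \ref{TNC general1}. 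By Proposition \ref{TNC equiv} (invoked in Remark \ref{rem JSW}), the Tamagawa number conjecture for $(h^1(E/K)(1),\ZZ_p)$ is equivalent to the $p$-part of the Birch and Swinnerton-Dyer formula for $E/K$, so it suffices to produce the $p$-adic Selmer-theoretic incarnation of that formula.

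The key step is the construction of a canonical basis element of $\det_\Lambda^{-1}(\rgamma(G_{K,S},\TT))$ out of the $\Lambda$-adic Heegner class, using a local epsilon element at $\fp$ exactly as in \S\ref{sec construction}. Since $E$ has good ordinary reduction at $p$, the Heegner point main conjecture (Conjecture \ref{HIMC}) provides a $\Lambda$-basis $\fz_\fp^{\mathrm{Hg}}$ of $\det_\Lambda^{-1}$ of the relevant Selmer complex whose image under the canonical trivialization is (up to the appropriate characteristic ideal) the $\Lambda$-adic Heegner point $z_\infty^{\mathrm{Hg}}$. I would then define $\fz_S^{\mathrm{Hg}}\in\det_\Lambda^{-1}(\rgamma(G_{K,S},\TT))$ by the same recipe as $\fz_S$: the element such that $\fz_S^{\mathrm{Hg}}\otimes\varepsilon_\fp$ matches $\fz_\fp^{\mathrm{Hg}}$ under the splitting coming from (\ref{can det}) and Lemma \ref{lem euler}. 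The coefficient check that $\fz_S^{\mathrm{Hg}}$ actually lies in the $\Lambda$-lattice (not just $\Lambda^{\mathrm{ur}}$) goes through as in Proposition \ref{coeff}, using Proposition \ref{phiepsilon} together with the Frobenius-equivariance $\varphi_p(z_\infty^{\mathrm{Hg}})=\overline\tau_p^{?}\cdot z_\infty^{\mathrm{Hg}}$ of the Heegner norm-compatible system (this is where the $p$-adic CM/Heegner period enters; cf. the discussion of $z_\infty^{\mathrm{Hg}}$ in \S\ref{sec ks} and Remark \ref{rem heeg element}).

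Next I would specialize at $\chi=\eins$ to get $\fz_S^{\mathrm{Hg},\eins}\in\det_{\ZZ_p}^{-1}(\rgamma(G_{K,S},T))$ and compute its image under the localization-plus-exponential map. Now the Poitou–Tate sequence does not collapse to an isomorphism onto $H^1(K_\fp,V)$ (since $H^1_f(K,V)\neq0$), so instead of the dual exponential one uses the Bloch–Kato logarithm at $\fp$ on the Heegner line and the global duality pairing: the image of $\fz_S^{\mathrm{Hg},\eins}$ is, up to a $p$-adic unit, the product of (i) the regulator $R_{x,y}=\langle P,P\rangle$ of the Néron–Tate height of the Heegner generator $P\in E(K)$, (ii) the Tamagawa/Selmer factor $\#\sha(E/K)\cdot\prod_v c_v$ arising from the cohomological Euler characteristic of the Selmer complex, and (iii) the period ratio $\Omega_p/\Omega_{p,\gamma_T,\delta^\ast}$ already computed in Lemma \ref{compare cm padic} type arguments. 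The $p$-adic Gross–Zagier formula of Bertolini–Darmon–Prasanna (or the classical Gross–Zagier formula combined with Perrin-Riou's $p$-adic one) converts the Heegner-point height into $L'(E/K,1)$ divided by the correct archimedean period, and comparing with Lemma \ref{compare cm1} (specialized to $j=0$, i.e. to the trivial Hecke character) yields exactly $L^\ast(M,0)/(\Omega_{\gamma,\delta}R_{x,y})$ up to $\ZZ_p^\times$, which is the assertion of Conjecture \ref{TNC general1}.

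The main obstacle I anticipate is the precise matching of Euler factors, local Tamagawa numbers, and periods in the passage from the Heegner point main conjecture to the leading-term formula: one must show that the characteristic ideal of $H^2$ of the Selmer complex accounts for $\#\sha\cdot\prod c_v$ with the right normalization, and that the $p$-adic period $\Omega_p$ introduced via the local epsilon element at $\fp$ is compatible with the period appearing in the $p$-adic Gross–Zagier formula. This is essentially the content of the arguments of Jetchev–Skinner–Wan \cite{JSW}, and the novelty here is to reorganize it canonically through the local epsilon element, avoiding the anticyclotomic control theorem as noted in Remark \ref{rem JSW}; the delicate point is therefore bookkeeping of units and signs rather than any new input.
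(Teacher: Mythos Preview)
Your broad strategy---build a $\Lambda$-basis of ${\det}_\Lambda^{-1}(\rgamma(G_{K,S},\TT))$ from the Heegner main conjecture, specialize, and compare with the leading term---is right, but the implementation has real gaps that stem from transplanting the proof of Theorem~\ref{thman1} too literally.

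First, Theorem~\ref{thmheeg} does \emph{not} assume $p$ splits in $K$, so there is no prime $\fp$ to single out, and the decomposition (\ref{can det}) with $\rgamma(K_\fp,\TT)$ and the epsilon element $\varepsilon_\fp$ for the full two-dimensional representation is unavailable. The Heegner point main conjecture (Conjecture~\ref{HIMC}) is formulated for the Greenberg Selmer complex $\widetilde\rgamma_f(K,\TT)$ built from the ordinary filtration $F^\pm\TT$, so the correct splitting is
\[
{\det}_\Lambda^{-1}(\rgamma(G_{K,S},\TT)) \simeq {\det}_\Lambda^{-1}(\widetilde\rgamma_f(K,\TT)) \otimes_\Lambda {\det}_\Lambda^{-1}(\rgamma(K_p,F^-\TT)),
\]
and the local epsilon element one needs is a $\Lambda$-basis $\varepsilon_p$ of ${\det}_\Lambda^{-1}(\rgamma(K_p,F^-\TT))$ for the \emph{rank-one} quotient $F^-\TT$ at \emph{all} primes above $p$ (see Lemma~\ref{lem choice} and Remark~\ref{rem choice}). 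Because $\widetilde\fz_\infty$ and $\varepsilon_p$ are already $\Lambda$-bases (no $\Lambda^{\rm ur}$ enters), your Frobenius-equivariance ``coefficient check'' is both unnecessary and ill-posed here; the Heegner system $z_\infty$ lives in $\Lambda$ from the start.

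Second, the interpolation input you invoke is the wrong one. The BDP $p$-adic Gross--Zagier formula and the CM period comparisons (Lemmas~\ref{compare cm padic}, \ref{compare cm1}) belong to the split-prime/BDP story of Theorem~\ref{thman1}; specializing Lemma~\ref{compare cm1} to $j=0$ is vacuous and outside its range. In the paper's proof one instead specializes $\widetilde\fz_\infty\otimes\varepsilon_p$ at the trivial character: the image of $\widetilde\fz_\infty$ under (\ref{ks can isom}) is $z_\infty\otimes z_\infty$, which descends to $z_0\otimes z_0$; the relation $z_0=(1-\alpha^{-2})z_K$ (inert) or $(1-\alpha^{-1})^2 z_K$ (split) together with the interpolation value of $\varepsilon_p$ in Lemma~\ref{lem choice} produces exactly ${\rm Eul}_S\cdot z_K\otimes z_K\otimes\delta^\ast$. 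The \emph{classical} Gross--Zagier formula (via Remark~\ref{rem GZ}) then converts this into $L_S'(E/K,1)/(\Omega_{E,\gamma,\delta}\langle x,x\rangle_\infty)\cdot x\otimes x\otimes\delta^\ast$, which is Conjecture~\ref{TNC2}. No $p$-adic heights, no $\Omega_p$, and no control theorem are needed.
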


Note that in this result we do not need to assume that $p$ splits in $K$, but the ``ordinary" assumption is imposed.

We set some notations used in this appendix. 
Let $S$ be the finite set of places of $K$ consisting of the infinite place and the primes dividing $pN$. We set $T:=T_p(E)$ and $V:=\QQ_p\otimes_{\ZZ_p} T$. Let $K_\infty/K$ be the anticyclotomic $\ZZ_p$-extension. We set $\Gamma:=\Gal(K_\infty/K)$, $\Lambda := \ZZ_p[[\Gamma]] $ and $\TT:= \Lambda \otimes_{\ZZ_p}T$. 

Our idea of the proofs of Theorems \ref{thman1} and \ref{thmheeg} is to construct a $\Lambda$-basis
$$\fz_S \in {\det}_\Lambda^{-1}(\rgamma(G_{K,S},\TT))$$
which interpolates the value $L'(E/K,1)$. In the case of Theorem \ref{thman1} (i.e., when $p$ splits in $K$), this is the basis constructed in Proposition \ref{coeff}. In the case of Theorem \ref{thmheeg}, we construct $\fz_S$ by using Heegner points (see \S \ref{sec ks} below).

\subsection{The Tamagawa number conjecture for elliptic curves}

We review the formulation of the Tamagawa number conjecture for the pair $(h^1(E/K)(1),\ZZ_p)$ in the case $\ord_{s=1}L(E/K,s)=1$. This is a special case of Conjecture \ref{TNC general1} for the motive given in \S \ref{ex sec ell}. 

\begin{lemma}\label{kslemma}
Assume $\ord_{s=1}L(E/K,s)=1$. (In particular, $\sha(E/K)$ is finite and ${\rm rank}(E(K))=1$.) Then we have $H^2(G_{K,S},V)=0$ and there is a canonical exact sequence
$$0\to \QQ_p\otimes_\ZZ E(K) \to H^1(G_{K,S},V)\to \QQ_p\otimes_\QQ \Gamma(E,\Omega_{E/K}^1) \to \QQ_p\otimes_\ZZ E(K)^\ast \to 0.$$
\end{lemma}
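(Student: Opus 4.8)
The plan is to read the asserted four-term sequence off the Poitou--Tate exact sequence (\ref{TNCPT}) applied to $V=V_p(E)$, once all the modules appearing there have been identified. Write $M=h^1(E/K)(1)$; this motive is self-dual, so $V^\ast(1)\simeq V$, and since $p$ splits in $K$ each place $v\mid p$ has $K_v\simeq\QQ_p$, with $K\otimes_\QQ\QQ_p\simeq K_\fp\times K_{\barfp}$.

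First I would assemble the global inputs. By the Gross--Zagier--Kolyvagin theorem (recalled in the remark above), $\ord_{s=1}L(E/K,s)=1$ forces $\sha(E/K)[p^\infty]$ to be finite and ${\rm rank}(E(K))=1$, so Kummer descent identifies $H^1_f(K,V)=\QQ_p\otimes_\ZZ E(K)$ and, by self-duality, $H^1_f(K,V^\ast(1))=\QQ_p\otimes_\ZZ E(K)$, both one-dimensional; dually $H^1_f(K,V^\ast(1))^\ast\simeq\QQ_p\otimes_\ZZ E(K)^\ast$. Since $E(K_v)_{\rm tors}$ is finite at every finite place $v$ and $E(K)$ is finitely generated, $H^0(G_{K,S},V)=0$ and $H^0(K_v,V)=0$ for all $v\in S_f$. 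Combined with the good-reduction fact that $1$ is not a crystalline Frobenius eigenvalue on $V_p(E)$ at $v\mid p$ (equivalently $P_v(M,1)=\#E(\FF_p)/p\neq0$) and the nonvanishing of the local Euler factors of $L(E/K,s)$ at $s=1$ for $v\mid N$, this shows Hypothesis \ref{TNChyp} holds for $(M,\ZZ_p)$.

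Next I would pin down the local conditions in (\ref{TNCPT}). For a finite place $v\nmid p$ one has $\dim_{\QQ_p}H^1_{/f}(K_v,V)=\dim_{\QQ_p}H^1_f(K_v,V^\ast(1))=\dim_{\QQ_p}H^0(K_v,V)=0$ and $H^0(K_v,V^\ast(1))=0$, while the archimedean term vanishes since $H^1$ of a complex place is zero; hence only $\fp$ and $\barfp$ contribute. For $v\mid p$ the Bloch--Kato dual exponential gives an isomorphism $\exp^\ast_v\colon H^1_{/f}(K_v,V)\xrightarrow{\sim}D^0_{{\rm dR},v}(V)$, and the de Rham comparison isomorphism for $M$ identifies $D^0_{{\rm dR},v}(V)\simeq K_v\otimes_K{\rm Fil}^0H_{\rm dR}(M)=K_v\otimes_K\Gamma(E,\Omega^1_{E/K})$. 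Summing over $v\mid p$ and using $K\otimes_\QQ\QQ_p\simeq K_\fp\times K_{\barfp}$ yields a canonical isomorphism $\bigoplus_{v\mid p}H^1_{/f}(K_v,V)\xrightarrow{\sim}\QQ_p\otimes_\QQ\Gamma(E,\Omega^1_{E/K})$.

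Finally I would show $H^2(G_{K,S},V)=0$, after which (\ref{TNCPT}) collapses to exactly the claimed exact sequence, with the middle arrow the composite of $({\rm loc}_\fp,{\rm loc}_{\barfp})$, the projection onto $\bigoplus_{v\mid p}H^1_{/f}(K_v,V)$ and $\bigoplus_{v\mid p}\exp^\ast_v$, and the third arrow the Poitou--Tate connecting map carried through the identifications above. For the vanishing, Poitou--Tate global duality --- using $H^2(K_v,V)\simeq H^0(K_v,V^\ast(1))^\ast=0$ for all $v\in S$ and $H^0(G_{K,S},V^\ast(1))=0$ --- makes $H^2(G_{K,S},V)$ the $\QQ_p$-dual of the strict Selmer group $\ker\bigl(H^1(G_{K,S},V)\to\bigoplus_{v\in S}H^1(K_v,V)\bigr)$. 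Since $H^1_f(K_v,V)=0$ for $v\nmid p$, this strict Selmer group lies in $H^1_f(K,V)=\QQ_p\otimes_\ZZ E(K)$ and equals $\ker\bigl(\QQ_p\otimes_\ZZ E(K)\to H^1(K_\fp,V)\bigr)$; but $E(K)\hookrightarrow E(K_\fp)$ and $\QQ_p$ is flat over $\ZZ_p$, so $\QQ_p\otimes_\ZZ E(K)\to\QQ_p\otimes_\ZZ E(K_\fp)\hookrightarrow H^1(K_\fp,V)$ is injective, the strict Selmer group vanishes, and $H^2(G_{K,S},V)=0$. The only genuinely non-formal ingredient is the Gross--Zagier--Kolyvagin input fixing the ranks; the rest is bookkeeping with Poitou--Tate and Bloch--Kato local conditions, so I do not expect a real obstacle. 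The point demanding the most care is verifying that $\exp^\ast_v$ is an isomorphism onto $D^0_{{\rm dR},v}(V)$ and that, under the de Rham comparison, its target is canonically $K_v\otimes_K\Gamma(E,\Omega^1_{E/K})$, so that the middle term of the sequence is exactly $\QQ_p\otimes_\QQ\Gamma(E,\Omega^1_{E/K})$ and not a twist of it.
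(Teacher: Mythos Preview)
Your proof is correct and follows essentially the same route as the paper: the exact sequence is extracted from the Poitou--Tate sequence (\ref{TNCPT}) after identifying $H^1_f(K,V)$ with $\QQ_p\otimes_\ZZ E(K)$ via Kummer theory and $\bigoplus_{v\mid p}H^1_{/f}(K_v,V)$ with $\QQ_p\otimes_\QQ\Gamma(E,\Omega^1_{E/K})$ via the dual exponential. The paper simply cites \cite[Lem.~5.1]{ks} for the vanishing of $H^2(G_{K,S},V)$, whereas you supply a direct argument via the strict Selmer group; this is a reasonable and correct way to fill in that step.

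One small point: you assume at the outset that $p$ splits in $K$, but the lemma is stated (and used, e.g.\ for Theorem~\ref{thmheeg}) only under the hypothesis that $p\nmid ND_K$, so $p$ may be inert. Your argument does not actually need the split hypothesis: the identification $\bigoplus_{v\mid p}K_v\otimes_K\Gamma(E,\Omega^1_{E/K})=(K\otimes_\QQ\QQ_p)\otimes_K\Gamma(E,\Omega^1_{E/K})=\QQ_p\otimes_\QQ\Gamma(E,\Omega^1_{E/K})$ holds regardless, and your injectivity argument for the strict Selmer group works verbatim with $\fp$ replaced by any $v\mid p$. You should drop the splitting assumption.
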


\begin{proof}
This is proved in \cite[Lem. 5.1]{ks}. Note that the exact sequence is obtained by combining the Poitou-Tate exact sequence
\begin{equation}\label{PT}
0\to H^1_f(K,V)\to H^1(G_{K,S},V) \to \bigoplus_{v\mid p}H^1_{/f}(K_v,V)
\to H^1_f(K,V)^\ast \to 0,
\end{equation}
the Kummer isomorphism
$$\QQ_p\otimes_\ZZ E(K) \simeq H^1_f(K,V),$$
and the dual exponential map
$$\exp^\ast : \bigoplus_{v\mid p}H^1_{/f}(K_v,V)\xrightarrow{\sim} \QQ_p\otimes_\QQ \Gamma(E,\Omega_{E/K}^1). $$
\end{proof}

In the following, we assume $\ord_{s=1}L(E/K,s)=1$. Then by Lemma \ref{kslemma} we have a canonical isomorphism
\begin{equation}\label{ks isom}
\vartheta: {\det}_{\QQ_p}^{-1}(\rgamma(G_{K,S},V)) \xrightarrow{\sim} \QQ_p\otimes_\QQ \left(E(K)\otimes_\ZZ E(K) \otimes_\ZZ {\bigwedge}_\QQ^2 \Gamma(E,\Omega_{E/K}^1)\right).
\end{equation}

Take a $\ZZ$-basis $\gamma \in {\bigwedge}_\ZZ^2 H_1(E(\CC),\ZZ)$.
Take also a non-zero element $\delta \in {\bigwedge}_\QQ^2 \Gamma(E,\Omega_{E/K}^1)^\ast$. Let 
$$\alpha: \RR\otimes_\QQ {\bigwedge}_\QQ^2 H_1(E(\CC),\QQ)\xrightarrow{\sim} \RR\otimes_\QQ {\bigwedge}_\QQ^2 \Gamma(E,\Omega_{E/K}^1)^\ast$$
be the period map. We define a period $\Omega_{E,\gamma,\delta}\in \RR^\times$ with respect to $\gamma$ and $\delta$ by
$$\alpha(\gamma)=\Omega_{E,\gamma,\delta}\cdot \delta. $$

\begin{remark}\label{rem neron}
One can take $\delta \in {\bigwedge}_\QQ^2 \Gamma(E,\Omega_{E/K}^1)^\ast$ such that
$$\Omega_{E,\gamma,\delta} = \sqrt{D_K}^{-1}\Omega_{E/K},$$
where $\Omega_{E/K}$ denotes the N\'eron period for $E/K$. 
\end{remark}

Take any non-torsion element $x\in E(K)$. Let 
$$\langle -,-\rangle_\infty: E(K) \times E(K) \to \RR$$
be the N\'eron-Tate height pairing. By the Gross-Zagier formula \cite{GZ}, one can show that 
$$\frac{L'(E/K,1)}{\Omega_{E,\gamma,\delta}\langle x,x \rangle_\infty}  \in \QQ.$$
(This means that the generalized Deligne conjecture (Conjecture \ref{deligne general}) is true in this case.)
The Tamagawa number conjecture is stated as follows. 

\begin{conjecture}[The Tamagawa number conjecture for $(h^1(E/K)(1),\ZZ_p)$]\label{TNC2}
Assume $\ord_{s=1}L(E/K,s)=1$. Then there is a $\ZZ_p$-basis
$$\fz_E \in {\det}_{\ZZ_p}^{-1}(\rgamma(G_{K,S},T))$$
such that
\begin{equation}\label{theta image}
\vartheta(\fz_E) = \frac{L_S'(E/K,1)}{\Omega_{E,\gamma,\delta}\langle x,x \rangle_\infty} \cdot x\otimes x \otimes \delta^\ast.
\end{equation}
Here $\vartheta$ is the isomorphism in (\ref{ks isom}) and $L_S(E/K,s)$ denotes the $L$-function for $E/K$ with the Euler factors at primes in $S$ removed. 
\end{conjecture}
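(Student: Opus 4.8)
The plan is to re-run the construction of §\ref{sec construction} in the present situation ($f$ attached to $E$, $r=1$, trivial Hecke character) and then descend to the trivial character by hand. Assuming Conjecture \ref{IMC}, Proposition \ref{coeff} yields $\fz_S\in{\det}_\Lambda^{-1}(\rgamma(G_{K,S},\TT))$; since $\fz_S\otimes\varepsilon_\fp$ corresponds to the $\Lambda^{\rm ur}$-basis $\fz_\fp$, it is a $\Lambda^{\rm ur}$-basis of $\Lambda^{\rm ur}\otimes_\Lambda{\det}_\Lambda^{-1}(\rgamma(G_{K,S},\TT))$, and being $\varphi_p$-fixed (proof of Proposition \ref{coeff}) and already defined over $\Lambda$ it is in fact a $\Lambda$-basis, as $(\Lambda^{\rm ur})^{\varphi_p=1}=\Lambda$. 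By base change for the determinant functor, ${\det}_{\ZZ_p}^{-1}(\rgamma(G_{K,S},T))\simeq{\det}_\Lambda^{-1}(\rgamma(G_{K,S},\TT))\otimes_{\Lambda,\eins}\ZZ_p$, so the trivial specialization gives a $\ZZ_p$-basis $\fz_S^{\eins}\in{\det}_{\ZZ_p}^{-1}(\rgamma(G_{K,S},T))$. It then suffices to compute $\vartheta(\fz_S^{\eins})$ and to check that it agrees with the right-hand side of (\ref{theta image}) up to a unit in $\ZZ_p^\times$.

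For that computation I would imitate §\ref{sec compare}. The one genuinely new feature is that the trivial character has $j=0<r=1$, so it lies outside the interpolation range (\ref{BDPinterpolation}): both the $\Gamma$-factor $\Gamma(j-r+1)\Gamma(j+r)$ of Proposition \ref{epsilon twist} and the map $\exp^\ast_\fp$ on second exterior powers degenerate, because $H^1_f(K_\fp,V)\neq0$ in analytic rank one. One must therefore feed the local epsilon element into the variant of the dual-exponential description of $\varepsilon_\fp^\chi$ valid when $H^1_f(K_\fp,V)\neq0$, in which the Bloch-Kato exponential $\log_{\widehat E}$ replaces $\exp^\ast_\fp$ on the $H^1_f$-direction and the $\Gamma$-factor becomes finite; the analogue of the key identity in the proof of Theorem \ref{thm:interpolation} then relates $\fz_S^{\eins}$ to $L_\fp^{\rm BDP}$ evaluated at $\eins$. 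Next I would invoke the Bertolini-Darmon-Prasanna $p$-adic Gross-Zagier formula (\cite[Thm. 5.13]{BDP}, \cite[\S 5.3]{JSW}), which in weight two reads $L_\fp^{\rm BDP}(\eins)\sim(\text{explicit $p$-adic factor})\cdot\log_{\widehat E}(P_K)^2\cdot(\text{CM periods})$, with $P_K\in E(K)$ a Heegner point. Since $\ord_{s=1}L(E/K,s)=1$, Gross-Zagier-Kolyvagin forces $P_K$ to be non-torsion, so $P_K=c\,x$ for some $c\in\QQ^\times$; the classical Gross-Zagier formula \cite{GZ} relates $\langle P_K,P_K\rangle_\infty$ to $L'(E/K,1)$ and the N\'eron period (in particular $L'(E/K,1)/(\Omega_{E,\gamma,\delta}\langle x,x\rangle_\infty)\in\QQ$, as recalled above), and comparing the $p$-adic and complex CM periods exactly as in Proposition \ref{prop Omega} and Lemmas \ref{compare cm padic} and \ref{compare cm1} makes the $p$-adic periods cancel, leaving $\Omega_{E,\gamma,\delta}$. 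The Euler factors at $v\in S\setminus\{\fp,\barfp\}$ are carried along through Lemma \ref{lem euler} as in Theorem \ref{thm:interpolation}, turning $L'(E/K,1)$ into $L_S'(E/K,1)$.

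To assemble these inputs into $\vartheta(\fz_S^{\eins})$ I would use the Poitou-Tate sequence of Lemma \ref{kslemma} as the bookkeeping device. The Kummer class $\kappa(P_K)\in H^1_f(K,V)$ fills the first $E(K)$-slot of $\vartheta$; the dual exponential at the $p$-adic primes produces the ${\bigwedge}_\QQ^2\Gamma(E,\Omega_{E/K}^1)$-slot (its $\barfp$-component has to be recovered from the Heegner class by Poitou-Tate duality, since $\varepsilon_\fp$ only controls the prime $\fp$); and the cokernel $H^1_f(K,V)^\ast$ is matched with $P_K$ again by duality. Crucially, the two factors of $\log_{\widehat E}(P_K)$ in $L_\fp^{\rm BDP}(\eins)$ get absorbed when the local-at-$\fp$ data is translated into the global Kummer coordinates of $\vartheta$, because ${\rm loc}_\fp(\kappa(P_K))$ equals, up to a basis of $H^1_f(K_\fp,V)$, the quantity $\log_{\widehat E}(P_K)$; so $x\otimes x$ appears with no $p$-adic transcendental surviving and with $\langle x,x\rangle_\infty$ in place of $\langle P_K,P_K\rangle_\infty$. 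Equivalently, one may run the argument through $z_{K_\infty}=\Theta(\fz_S)$ of (\ref{zinfty}), its image in ${\bigwedge}_{\QQ_p}^2 H^1(G_{K,S},V)$, and the characterization (\ref{IMC without L}) of the main conjecture.

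The main obstacle is exactly this last identification: the BDP $p$-adic Gross-Zagier formula is a statement about the image of the Heegner class under a \emph{single} local map at $\fp$, whereas $\vartheta$ (through the Poitou-Tate sequence, equivalently a $p$-adic height pairing) couples the primes $\fp$ and $\barfp$, so one must show that the BDP-based class $\fz_S^{\eins}$ controls the Heegner point in \emph{both} cohomological directions compatibly. This is precisely the content of the anticyclotomic control theorem of Jetchev-Skinner-Wan \cite[Thm. 3.3.1]{JSW}; the point of the present approach is to bypass it, using instead the exterior-power-bidual formalism of \cite{sbA} and \cite{ks} together with the ``relaxed at $\barfp$'' structure of the Selmer complex $\widetilde\rgamma_\fp(K,\TT)$ to pin down the second direction. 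Verifying that this formalism reproduces the classical Gross-Zagier regulator, with no spurious $p$-adic factor, is the crux.
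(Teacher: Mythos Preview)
Your overall strategy---construct $\fz_E$ as the trivial specialization of the $\Lambda$-basis $\fz_S$ from Proposition~\ref{coeff}, then compute $\vartheta(\fz_E)$ using the BDP $p$-adic Gross--Zagier formula together with the classical Gross--Zagier formula---is exactly what the paper does (this is Theorem~\ref{thman1}, reduced to Theorem~\ref{thman1reduced}). But your execution diverges from the paper in two places, one of which is a misconception and the other an overcomplication.

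\textbf{CM periods are irrelevant here.} At the trivial character $j=0$, so $\Omega_p^{4j}=\Omega_\infty^{4j}=1$, and the BDP formula (\ref{pGZ}) reads simply
\[
{\bf 1}(L_\fp^{\rm BDP})=\left(\tfrac{1-a_p+p}{p}\right)^{2}\log(z_K)^2,
\]
with $z_K$ the (normalized) Heegner point. No CM periods appear, so Proposition~\ref{prop Omega} and Lemmas~\ref{compare cm padic}, \ref{compare cm1} play no role; the period $\Omega_{E,\gamma,\delta}$ enters only through the classical Gross--Zagier formula (Remark~\ref{rem GZ}).

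\textbf{The $\barfp$-direction is handled by a direct diagram chase, not by the bidual formalism.} The paper bypasses the JSW control theorem via two short lemmas. First (Lemma~\ref{lem skinner}): in analytic rank one the localization map ${\rm loc}_\fp:H^1(G_{K,S},V)\to H^1(K_\fp,V)$ is an \emph{isomorphism}; this is a two-line duality argument using $H^i(\widetilde\rgamma_\fq(K,V))=0$ for $i=1,2$. Second (Lemma~\ref{keylem2}): one checks directly, using the explicit description of $\varepsilon_{\QQ_p,\xi}(V)$ from \cite[\S 3.3.1]{FK}, that
\[
{\bf 1}(\varepsilon_\fp)\circ{\rm loc}_\fp^2 \;=\; \lambda_p\circ\vartheta \qquad\text{in }\Hom_{\QQ_p}\bigl({\det}_{\QQ_p}^{-1}(\rgamma(G_{K,S},V)),\QQ_p\bigr)
\]
up to $\ZZ_p^\times$, where $\lambda_p(x\otimes y\otimes\delta^\ast)=\log(x)\log(y)$. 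The $\barfp$-contribution enters this chase only through the local Tate pairing $H^1_f(K_{\barfp},V)\otimes H^1_{/f}(K_{\barfp},V)\to\QQ_p$, which is $\log\cdot\exp^\ast$; this is the whole content of the ``coupling'' you worry about. With these two lemmas, the proof is one line: by construction ${\bf 1}(\varepsilon_\fp)\circ{\rm loc}_\fp^2(\fz_E)={\rm Eul}_N\cdot{\bf 1}(L_\fp^{\rm BDP})$, so by (\ref{pGZ}) $\lambda_p(\vartheta(\fz_E))={\rm Eul}_S\cdot\log(z_K)^2=\lambda_p({\rm Eul}_S\cdot z_K\otimes z_K\otimes\delta^\ast)$, and $\lambda_p$ is injective.

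So the ``crux'' you isolate is resolved by an elementary comparison of two isomorphisms at finite level; no exterior-power-bidual machinery, no Iwasawa-theoretic descent beyond the trivial base change, and no $p$-adic heights are needed.
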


\begin{remark}\label{TNC independence}
One easily sees that the validity of Conjecture \ref{TNC2} is independent of the choices of $\gamma,\delta$ and $x$. 
\end{remark}

The following is well-known. 

\begin{proposition}\label{TNC equiv}
Conjecture \ref{TNC2} is equivalent to the $p$-part of the Birch and Swinnerton-Dyer formula, i.e., 
$$\frac{L'(E/K,1)}{\sqrt{D_K}^{-1}\Omega_{E/K}R_{E/K}}\cdot \ZZ_p = \frac{\# \sha(E/K)\cdot {\rm Tam}(E/K)}{\# E(K)_{\rm tors}^2}\cdot \ZZ_p \quad (\text{in }\QQ_p). $$
Here $\Omega_{E/K}, R_{E/K}$, and ${\rm Tam}(E/K)$ denote the N\'eron period, the N\'eron-Tate regulator, and the product of Tamagawa factors for $E/K$ respectively. 
\end{proposition}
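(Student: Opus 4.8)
The plan is to unwind the isomorphism $\vartheta$ of (\ref{ks isom}) at the level of $\ZZ_p$-lattices and match the resulting lattice index with the Birch and Swinnerton-Dyer quotient term by term; the equivalence is standard and essentially contained in \cite{BFetnc}, so the work is entirely one of bookkeeping. First I would note that $\det_{\ZZ_p}^{-1}(\rgamma(G_{K,S},T))$ is free of rank one inside $\det_{\QQ_p}^{-1}(\rgamma(G_{K,S},V))$, so that Conjecture \ref{TNC2} is equivalent to the assertion that the element $m:=\frac{L_S'(E/K,1)}{\Omega_{E,\gamma,\delta}\langle x,x\rangle_\infty}\, x\otimes x\otimes\delta^\ast$ is a $\ZZ_p$-basis of the lattice $\mathcal{L}:=\vartheta\!\left(\det_{\ZZ_p}^{-1}(\rgamma(G_{K,S},T))\right)$. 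By Remark \ref{TNC independence} I would take $x$ to be a generator of $E(K)/E(K)_{\mathrm{tors}}$, so $\langle x,x\rangle_\infty=R_{E/K}$ and $x\otimes x$ is a $\ZZ_p$-basis of the free part of $(E(K)\otimes_\ZZ E(K))\otimes\ZZ_p$, and I would fix $\delta$ as in Remark \ref{rem neron}, so $\Omega_{E,\gamma,\delta}=\sqrt{D_K}^{-1}\Omega_{E/K}$.

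The main step is to compute $\mathcal{L}$. I would use the integral refinement of the Poitou--Tate sequence (\ref{PT}) underlying $\vartheta$, together with $\det_{\ZZ_p}^{-1}(\rgamma(G_{K,S},T))\simeq\det_{\ZZ_p}(H^1(G_{K,S},T))\otimes\det_{\ZZ_p}^{-1}(H^2(G_{K,S},T))$ (note $H^0(G_{K,S},T)=0$), identifying the contributions one by one. For a finite $v\in S$ with $v\nmid p$, the finite module $H^1_{/f}(K_v,T)$, together with the term $H^0(K_v,T^\ast(1))^\ast$ in (\ref{PT}), should contribute the $p$-part of $c_v(E/K)=[E(K_v):E_0(K_v)]$ and of the Euler factor $P_v(h^1(E/K)(1),1)$ --- this is the Bloch--Kato local Tamagawa computation \cite[Thm. 4.1]{BK}. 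For $v\mid p$, good reduction of $E$ at $v$ makes the dual exponential carry $H^1_{/f}(K_v,T)$ onto the $\cO_{K_v}$-lattice of $D^0_{\mathrm{dR},v}$ spanned by a N\'eron differential, up to $\ZZ_p^\times$ and the Euler factor at $v$, so no period survives at $p$; the only period is the archimedean one, entering through the chosen bases via $\Omega_{E,\gamma,\delta}=\sqrt{D_K}^{-1}\Omega_{E/K}$. Lastly $H^1_f(K,T)\simeq E(K)\otimes\ZZ_p$ (using finiteness of $\sha(E/K)$, which holds by Gross--Zagier--Kolyvagin), $H^2(G_{K,S},T)$ is finite, and $H^1_f(K,T^\ast(1))=H^1_f(K,T)$ by self-duality; tracking these through (\ref{PT}) should yield the factor $\#\sha(E/K)_{p^\infty}/\#E(K)_{\mathrm{tors},p}^2$.

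Assembling everything, I expect
\[
\mathcal{L}=\frac{\#\sha(E/K)_{p^\infty}\cdot\prod_{v}c_v(E/K)_{p^\infty}}{\#E(K)_{\mathrm{tors},p}^2}\cdot\Big(\prod_{v\in S}P_v(h^1(E/K)(1),1)\Big)\cdot\big(\sqrt{D_K}^{-1}\Omega_{E/K}R_{E/K}\big)^{-1}\cdot\ZZ_p\cdot(x\otimes x\otimes\delta^\ast),
\]
where $c_v(E/K)=1$ for $v\notin S$ and for $v\mid p$, so $\prod_v c_v(E/K)=\mathrm{Tam}(E/K)$. Since $\ord_{s=1}L(E/K,s)=1$, the leading term obeys $L_S'(E/K,1)=\big(\prod_{v\in S}P_v(h^1(E/K)(1),1)\big)L'(E/K,1)$, so the Euler factors cancel between $L_S'$ and the local terms and the statement ``$m$ generates $\mathcal{L}$'' becomes exactly
\[
\frac{L'(E/K,1)}{\sqrt{D_K}^{-1}\Omega_{E/K}R_{E/K}}\cdot\ZZ_p=\frac{\#\sha(E/K)\cdot\mathrm{Tam}(E/K)}{\#E(K)_{\mathrm{tors}}^2}\cdot\ZZ_p,
\]
which is the $p$-part of the Birch and Swinnerton-Dyer formula.

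The hardest part will be precisely this bookkeeping: tracking the exact powers of the local Euler factors $P_v$, placing the Tamagawa factors $c_v(E/K)$ correctly in the $v\nmid p$ contributions, pinning down the N\'eron-lattice comparison at $v\mid p$ using good reduction, and fixing the sign and normalization in the Poitou--Tate identification of the $\sha$-term coming from $H^2(G_{K,S},T)$ in the self-dual case. None of this is deep --- it is a matter of matching conventions --- but it is where all the content of the equivalence resides.
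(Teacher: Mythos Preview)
Your strategy is the paper's: compute $\mathcal{L}=\vartheta\bigl(\det_{\ZZ_p}^{-1}(\rgamma(G_{K,S},T))\bigr)$ via the integral Poitou--Tate sequence and compare it with the element $m$. Two points in your bookkeeping need correction, however.

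First, your displayed formula for $\mathcal{L}$ cannot contain the factor $(\sqrt{D_K}^{-1}\Omega_{E/K}R_{E/K})^{-1}$: these are transcendental real numbers, not elements of $\QQ_p^\times$, and $\vartheta$ is a purely $p$-adic isomorphism built from Poitou--Tate and the dual exponential. The archimedean period and the N\'eron--Tate regulator enter only through the element $m$, never through the lattice $\mathcal{L}$. The correct target (this is the paper's (\ref{BSD equality})) is
\[
\mathcal{L}=\ZZ_p\cdot {\rm Eul}_S\cdot \frac{\#\sha(E/K)[p^\infty]\cdot {\rm Tam}(E/K)}{\#E(K)[p^\infty]^2}\cdot (x\otimes x\otimes \delta^\ast),
\]
and comparing with $m=\dfrac{L_S'(E/K,1)}{\sqrt{D_K}^{-1}\Omega_{E/K}R_{E/K}}\,(x\otimes x\otimes\delta^\ast)$ then yields the BSD formula directly.

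Second, for $v\in S$ with $v\nmid p$ one actually has $H^1_{/f}(K_v,T)=0$: since $H^1(K_v,V)=0$ by weight considerations, $H^1_f(K_v,T)$ is the entire (finite) group $H^1(K_v,T)$. The Tamagawa and Euler factors at bad primes therefore do not arise from $H^1_{/f}$; they come from the tail term $H^2(K_v,T)\simeq E(K_v)[p^\infty]^\vee$ in the integral Poitou--Tate sequence, via the identity $\prod_{v\mid N}\#E(K_v)[p^\infty]={\rm Eul}_N\cdot{\rm Tam}(E/K)$ up to $\ZZ_p^\times$. Likewise, the contribution at $v\mid p$ is computed by identifying the image of $\det_{\ZZ_p}^{-1}(\ZZ_p\widehat\otimes E(K_p))$ under $\exp^\ast$ with $\ZZ_p\cdot{\rm Eul}_p\cdot\delta^\ast$; this is the content of Lemma~\ref{lem formal}.
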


For the reader's convenience, we give a proof of Proposition \ref{TNC equiv} in \S \ref{sec pf equiv}. 

\begin{remark}\label{rem GZ}
Fix a modular parametrization $\phi: X_0(N)\to E$ and let $y_K \in E(K)$ be the Heegner point defined by $\phi$. Let $c_\phi$ be the Manin constant and set 
$$z_K:= c_\phi^{-1}y_K \in \QQ\otimes_\ZZ E(K).$$
Then the Gross-Zagier formula \cite{GZ} states that
$$L'(E/K,1)=\sqrt{D_K}^{-1}\Omega_{E/K} \cdot \langle z_K,z_K\rangle_\infty.$$
If we take $\delta$ so that $\Omega_{E,\gamma,\delta} = \sqrt{D_K}^{-1}\Omega_{E/K}$, then (\ref{theta image}) is equivalent to
$$\vartheta(\fz_E)= {\rm Eul}_S\cdot z_K\otimes z_K \otimes\delta^\ast. $$
Here ${\rm Eul}_S$ denotes the product of Euler factors at finite places $v\in S$ such that ${\rm Eul}_S\cdot  L'(E/K,1)=L_S'(E/K,1)$.
\end{remark}

\subsection{Proof of Proposition \ref{TNC equiv}} \label{sec pf equiv}

By Remarks \ref{TNC independence} and \ref{rem neron}, we may assume that $x$ is a generator of $E(K)_{\rm tf}:=E(K)/E(K)_{\rm tors}$ so that $R_{E/K}=\langle x,x\rangle_\infty$ and that $\delta$ is taken so that $\sqrt{D_K}^{-1}\Omega_{E/K}=\Omega_{E,\gamma,\delta}$. It is sufficient to show the equality
\begin{equation}\label{BSD equality}
\vartheta\left( {\det}_{\ZZ_p}^{-1}(\rgamma(G_{K,S},T))\right) = \ZZ_p\cdot {\rm Eul}_S\cdot\frac{\# \sha(E/K)\cdot {\rm Tam}(E/K)}{\# E(K)_{\rm tors}^2}\cdot x\otimes x\otimes \delta.
\end{equation}
Here ${\rm Eul}_S$ is as in Remark \ref{rem GZ}. (The formula (\ref{BSD equality}) is actually proved in \cite[(5.4.6)]{sanoderived} without assuming ${\rm rank}(E(K))=1$, but we provide a proof for the sake of completeness.) 

We set $W:=V/T=E[p^\infty]$. The Pontryagin dual of a $\ZZ_p$-module $M$ is denoted by $M^\vee$. We identify $W$ with $T^\vee(1)$ via the Weil pairing. By the Poitou-Tate duality, we have an exact sequence
\begin{multline}\label{poitoutate}
0\to H^1_f(K,T)\to H^1(G_{K,S},T) \to \bigoplus_{v\in S}H^1_{/f}(K_v,T)\\
\to H^1_f(K,W)^\vee \to H^2(G_{K,S},T)\to \bigoplus_{v\in S}H^2(K_v,T)\to H^0(K,W)^\vee\to 0. 
\end{multline}
We use the following facts. 
\begin{itemize}
\item $H^1_f(K,T)\simeq \ZZ_p\otimes_\ZZ E(K)$. 
\item $H^1_{/f}(K_v,T)=0$ if $v\nmid p$. 
\item $H^1_{/f}(K_v,T)\simeq (\ZZ_p\widehat \otimes E(K_v) )^\ast$ if $v\mid p$.
\item There is a canonical exact sequence
$$0\to \sha(E/K)[p^\infty]^\vee \to H^1_f(K,W)^\vee \to (\ZZ_p\otimes_\ZZ E(K))^\ast \to 0.$$
\item $H^2(K_v,T)\simeq H^0(K_v,W)^\vee \simeq E(K_v)[p^\infty]^\vee$ for $v\in S$.
\item $H^0(K,W)= E(K)[p^\infty]$. 
\end{itemize}

We set $E(K_p):=\bigoplus_{v\mid p} E(K_v)$. By (\ref{poitoutate}) and the facts above, we obtain a canonical isomorphism
\begin{align*}
{\det}_{\ZZ_p}^{-1}(\rgamma(G_{K,S},T)) &= {\det}_{\ZZ_p}(H^1(G_{K,S},T)) \otimes_{\ZZ_p} {\det}_{\ZZ_p}^{-1}(H^2(G_{K,S},T)) \\
&\simeq  {\det}_{\ZZ_p}(\ZZ_p\otimes_\ZZ E(K)) \otimes_{\ZZ_p} {\det}_{\ZZ_p}((\ZZ_p\widehat \otimes E(K_p))^\ast)\otimes_{\ZZ_p} {\det}_{\ZZ_p}^{-1}(\sha(E/K)[p^\infty]^\vee )\\
& \quad \otimes_{\ZZ_p} {\det}_{\ZZ_p}^{-1}((\ZZ_p\otimes_\ZZ E(K))^\ast) \otimes_{\ZZ_p} {\det}_{\ZZ_p}^{-1}\left( \bigoplus_{v\in S}E(K_v)[p^\infty]^\vee\right) \otimes_{\ZZ_p}{\det}_{\ZZ_p}(E(K)[p^\infty]) \\
&\simeq \ZZ_p\cdot {\rm Eul}_p\cdot \frac{\#\sha(E/K)}{\# E(K)_{\rm tors}^2} \cdot \prod_{v\mid N}\# E(K_v)[p^\infty] \cdot x\otimes x\otimes \delta,
\end{align*}
where the last isomorphism is due to Lemma \ref{lem formal} below. 
By definition, this isomorphism is induced by $\vartheta$. The desired equality (\ref{BSD equality}) follows by noting that the equality
$$\prod_{v\mid N} \# E(K_v)[p^\infty] = {\rm Eul}_N\cdot {\rm Tam}(E/K)$$
holds up to a $p$-adic unit, where ${\rm Eul}_N$ denotes the product of Euler factors at $v\mid N$. Hence we have completed the proof of Proposition \ref{TNC equiv}. \qed

\begin{lemma}\label{lem formal}
The dual exponential map
$$\exp^\ast: (\QQ_p\widehat \otimes E(K_p))^\ast \to \QQ_p\otimes_\QQ \Gamma(E,\Omega_{E/K}^1)$$
induces an isomorphism
$${\det}_{\ZZ_p}^{-1}(\ZZ_p\widehat \otimes E(K_p))= {\det}_{\ZZ_p}((\ZZ_p\widehat \otimes E(K_p))^\ast)\otimes_{\ZZ_p} {\det}_{\ZZ_p}^{-1}(E(K_p)[p^\infty]^\vee) \xrightarrow{\sim} \ZZ_p\cdot {\rm Eul}_p\cdot \delta^\ast \subset \QQ_p\otimes_\QQ{\bigwedge}_\QQ^2\Gamma(E,\Omega_{E/K}^1),$$
where ${\rm Eul}_p$ denotes the product of Euler factors at $v\mid p$. 
\end{lemma}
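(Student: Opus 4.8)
The plan is to reduce the assertion to a purely local statement at each $v \mid p$ and then tensor over $v \mid p$. Fix such a $v$. Since $p \nmid ND_K$, the curve $E$ has good reduction at $v$ and $v$ is unramified over $p$, so $V := \QQ_p \otimes_{\ZZ_p} T$ is crystalline as a $G_{K_v}$-representation; write $\mathfrak{m}_v$ for the maximal ideal of $\cO_{K_v}$ and $\widehat{E}$ for the formal group of the N\'eron model $\cE$ of $E$ over $\cO_{K_v}$. Recall the standard local picture for good reduction (see \cite[\S 3]{BK}): the Kummer map identifies $H^1_f(K_v,T)$ with the $p$-adic completion $\ZZ_p \widehat{\otimes} E(K_v)$, whose torsion is $E(K_v)[p^\infty]$ and whose free part is, up to finite index dividing the $p$-part of $\#\widetilde{E}(k_v)$, the formal group $\widehat{E}(\mathfrak{m}_v)$; since the ramification index $e(v/p) = 1$ is $< p-1$, the formal logarithm $\log_{\widehat{E}}$ attached to a N\'eron differential is an isomorphism $\widehat{E}(\mathfrak{m}_v) \xrightarrow{\sim} \mathfrak{m}_v$ of $\ZZ_p$-modules. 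Dually, $\exp^\ast_v$ is an isomorphism $H^1_{/f}(K_v,V) \xrightarrow{\sim} \mathrm{Fil}^0 D_{\mathrm{dR},v}(V) = \Gamma(\cE,\Omega^1_{\cE/\cO_{K_v}}) \otimes_{\cO_{K_v}} K_v$, and by local Tate duality together with the compatibility of $\exp^\ast$ with the formal logarithm (the explicit reciprocity law; see \cite[\S\S 3--4]{BK} and \cite[\S 4]{FK}), $\exp^\ast_v$ is, up to these identifications and up to $\ZZ_p^\times$, the $\cO_{K_v}$-linear dual of $\log_{\widehat{E}}$, twisted by the shift accounting for the Hodge--Tate weight $1$ of $V$.

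The core of the argument is the local Tamagawa number computation of Bloch--Kato \cite[\S 4]{BK} (see also \cite{FP}): feeding the crystalline lattice $T$ into the Bloch--Kato fundamental exact sequence
$$0 \to H^0(K_v,V) \to D_{\mathrm{cris},v}(V) \xrightarrow{(1-\varphi,\,\mathrm{pr})} D_{\mathrm{cris},v}(V) \oplus t_v(V) \to H^1_f(K_v,V) \to 0$$
(with $t_v(V) = D_{\mathrm{dR},v}(V)/\mathrm{Fil}^0 D_{\mathrm{dR},v}(V)$), and its local dual, taking Knudsen--Mumford determinants, and noting that the only non-units that can appear are (i) the value $\det(1-\varphi \mid D_{\mathrm{cris},v}(V))$, which is by definition the local Euler factor $P_v(M,1)$, and (ii) the finite cokernels of $1-\varphi$ and of the reduction map, whose orders are governed by the $p$-part of $\#\widetilde{E}(k_v)$ and by $\#E(K_v)[p^\infty] = \#H^0(K_v,V/T)$. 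Combining this with the description of $\exp^\ast_v$ above yields, up to a unit in $\ZZ_p^\times$,
$$\exp\nolimits^\ast_v\!\left( {\det}_{\ZZ_p} H^1_{/f}(K_v,T) \otimes_{\ZZ_p} {\det}_{\ZZ_p}^{-1}\!\big(E(K_v)[p^\infty]^\vee\big) \right) = P_v(M,1) \cdot \delta^\ast_v \cdot \ZZ_p,$$
where $\delta^\ast_v$ generates the $\cO_{K_v}$-lattice in ${\bigwedge}^{\mathrm{top}}\!\big(\Gamma(\cE,\Omega^1_{\cE/\cO_{K_v}}) \otimes_{\cO_{K_v}} K_v\big)$ determined by the N\'eron differential. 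Here the torsion factor ${\det}_{\ZZ_p}^{-1}(E(K_v)[p^\infty]^\vee)$ is exactly the contribution of $H^2(K_v,T)$ in the Poitou--Tate determinant of \S \ref{sec pf equiv}, and it cancels the $H^0(K_v,V/T)$ appearing in (ii).

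It then remains to tensor over $v \mid p$. Because $\delta^\ast$ is the distinguished choice of Remark \ref{rem neron}, built from the N\'eron differential of $E/K$, and $E$ has good reduction at every $v \mid p$, the product of the local generators $\delta^\ast_v$ is precisely $\delta^\ast$ up to $\ZZ_p^\times$; moreover $\prod_{v\mid p} P_v(M,1) = \mathrm{Eul}_p$. This gives the stated identity. I expect the main obstacle to be the integral bookkeeping of the finite modules involved: one has to reconcile the cokernel of $1-\varphi$ on $D_{\mathrm{cris},v}$, the index of $\widehat{E}(\mathfrak{m}_v)$ in $\log_{\widehat{E}}^{-1}(\mathfrak{m}_v)$ (trivial here as $e(v/p) < p-1$), the trivial component group at $v$ (good reduction), the $\mathfrak{m}_v$-versus-$\cO_{K_v}$ shift from the Hodge--Tate weight, and the torsion factor $E(K_v)[p^\infty]^\vee$, so that everything collapses to exactly $P_v(M,1)$ with the correct sign and with no spurious power of $p$ left over. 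A secondary point requiring a little care is that $K$ is not assumed to have class number one in this appendix, so $\Gamma(\cE/\cO_K,\Omega^1)$ need not be free; this is harmless, since the assertion is only up to $\ZZ_p^\times$ and the local N\'eron lattices $\Gamma(\cE,\Omega^1_{\cE/\cO_{K_v}})$ at the $v \mid p$ are canonical, by good reduction.
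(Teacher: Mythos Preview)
Your approach is correct but takes a genuinely different route from the paper. The paper argues directly and elementarily: it reduces the claim to showing that the formal logarithm $\log_\omega$ sends ${\det}_{\ZZ_p}(\ZZ_p\widehat\otimes E(K_p))$ onto $\mathrm{Eul}_p^{-1}\cdot{\bigwedge}_{\ZZ_p}^2\cO_{K_p}$, then uses the reduction exact sequence $0\to E_1(K_v)\to E(K_v)\to E(\FF_v)\to 0$ and the isomorphism $\log_\omega: E_1(K_p)\xrightarrow{\sim} p\cO_{K_p}$ (valid since $e(v/p)=1<p-1$) to compute this as $p^2\bigl(\prod_{v\mid p}\#E(\FF_v)\bigr)^{-1}\cdot{\bigwedge}_{\ZZ_p}^2\cO_{K_p}$, which equals $\mathrm{Eul}_p^{-1}\cdot{\bigwedge}_{\ZZ_p}^2\cO_{K_p}$ since $\mathrm{Eul}_p=p^{-2}\prod_{v\mid p}\#E(\FF_v)$. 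Your argument instead invokes the Bloch--Kato fundamental exact sequence and the general local Tamagawa number computation of \cite[\S 4]{BK}, extracting the Euler factor from $\det(1-\varphi\mid D_{\mathrm{cris},v})$. This is morally the same result packaged differently: the paper's approach is shorter and entirely self-contained for elliptic curves, bypassing $D_{\mathrm{cris}}$ altogether, while your approach has the virtue of making clear that the statement is a special case of the general crystalline local Tamagawa computation (and would adapt to higher-weight modular forms without change of strategy). The ``integral bookkeeping'' you flag as the main obstacle is exactly what the paper sidesteps by working with the formal group directly; if you carry your approach through, the finite pieces you list do collapse to $P_v(M,1)$ as in \cite[Thm.~4.1]{BK}, but the paper's three-line computation is the cleaner way to see it here.
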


\begin{proof}
We first remark that the equality 
$${\det}_{\ZZ_p}^{-1}(\ZZ_p\widehat \otimes E(K_p))= {\det}_{\ZZ_p}((\ZZ_p\widehat \otimes E(K_p))^\ast)\otimes_{\ZZ_p} {\det}_{\ZZ_p}^{-1}(E(K_p)[p^\infty]^\vee) $$
follows from the following general fact: for a finitely generated $\ZZ_p$-module $M$, we have 
$${\det}_{\ZZ_p}^{-1}(M)={\det}_{\ZZ_p}(\rhom_{\ZZ_p}(M,\ZZ_p))$$
and 
$$H^i(\rhom_{\ZZ_p}(M,\ZZ_p)) =\begin{cases}
M^\ast:=\Hom_{\ZZ_p}(M,\ZZ_p) &\text{ if $i=0$,}\\
\Ext_{\ZZ_p}^1(M,\ZZ_p)\simeq M[p^\infty]^\vee &\text{ if $i=1$,}\\
0&\text{ if $i\neq 0,1$.}
\end{cases}$$

We set $K_p:=\QQ_p\otimes_\QQ K=K_\fp\oplus K_{\barfp}$ and $\cO_{K_p}:=\ZZ_p\otimes_\ZZ \cO_K$. Let $\QQ_p\otimes_\QQ\Gamma(E,\Omega_{E/K}^1) = \Gamma(E,\Omega_{E/K_\fp}^1)\oplus \Gamma(E,\Omega_{E/K_{\barfp}}^1)$ be the canonical decomposition induced by $\QQ_p\otimes_\QQ K = K_\fp \oplus K_{\barfp}$. For $\fq\in \{\fp,\barfp\}$, let $\omega_\fq \in \Gamma(E,\Omega_{E/K_\fq}^1)$ be the image of the fixed N\'eron differential $\omega \in \Gamma(E,\Omega_{E/\QQ}^1)$. {\color{black}One checks that the $\ZZ_p$-submodule of $\QQ_p\otimes_\QQ  {\bigwedge}_{\QQ}^2\Gamma(E,\Omega_{E/K}^1)$ generated by $\delta^\ast$ coincides with that generated by $\omega_\fp \wedge \omega_{\barfp}$}. 
Hence it is sufficient to show that the formal logarithm associated with $\omega$
$$\log_\omega: \QQ_p\widehat \otimes E(K_p) \xrightarrow{\sim} K_p$$
induces an isomorphism
\begin{equation}\label{formal eulp}
{\det}_{\ZZ_p}(\ZZ_p\widehat \otimes E(K_p)) \xrightarrow{\sim}  {\rm Eul}_p^{-1}\cdot {\bigwedge}_{\ZZ_p}^2 \cO_{K_p}.
\end{equation}

For $v\mid p$, we have an exact sequence
$$0\to E_1(K_v)\to E(K_v)\to E(\FF_v)\to 0,$$
where $\FF_v$ denotes the residue field of $v$. From this, we obtain
$${\det}_{\ZZ_p}(\ZZ_p\widehat \otimes E(K_p)) \simeq \left(\prod_{v\mid p}\# E(\FF_v)\right)^{-1} \cdot {\bigwedge}_{\ZZ_p}^2 E_1(K_p).$$
Since $\log_\omega$ induces an isomorphism $E_1(K_p)\xrightarrow{\sim} p\cO_{K_p}$, we have
$${\det}_{\ZZ_p}(\ZZ_p\widehat \otimes E(K_p)) \xrightarrow{\sim}  p^2\left(\prod_{v\mid p}\# E(\FF_v)\right)^{-1}  \cdot {\bigwedge}_{\ZZ_p}^2 \cO_{K_p}.$$
We obtain (\ref{formal eulp}) if we note ${\rm Eul}_p= p^{-2}\prod_{v\mid p}\# E(\FF_v)$. 
\end{proof}

\subsection{Proof of Theorem \ref{thman1}}\label{sec pf1}

In this subsection, we give a proof of Theorem \ref{thman1}. We assume that $p$ splits in $K$. Let $(p)=\fp \barfp$ be the decomposition of $p$ in $K$. 

\subsubsection{Construction of a basis}

We assume the Iwasawa main conjecture for the BDP $p$-adic $L$-function (Conjecture \ref{IMC}). (We take $f$ to be the modular form of weight two corresponding to $E$.) Let 
$$\fz_S \in {\det}_\Lambda^{-1}(\rgamma(G_{K,S},\TT)) $$
be the $\Lambda$-basis constructed in Proposition \ref{coeff} by using the basis $\fz_\fp$ in Conjecture \ref{IMC} and the local epsilon element $\varepsilon_\fp$ in (\ref{def local epsilon}).  We define a $\ZZ_p$-basis
$$\fz_E \in {\det}_{\ZZ_p}^{-1}(\rgamma(G_{K,S},T))$$
to be the image of $\fz_S$ under the natural surjection
$${\det}_\Lambda^{-1}(\rgamma(G_{K,S},\TT)) \xrightarrow{a\mapsto a\otimes 1} {\det}_\Lambda^{-1}(\rgamma(G_{K,S},\TT)) \otimes_{\Lambda}\ZZ_p \simeq {\det}_{\ZZ_p}^{-1}(\rgamma(G_{K,S},T)).$$
Here the last isomorphism follows from
\begin{equation*}\label{control}
\rgamma(G_{K,S},\TT)\lotimes_\Lambda \ZZ_p\simeq \rgamma(G_{K,S},T)
\end{equation*}
(see \cite[Prop. 1.6.5(3)]{FK}). 

We take $\delta$ as in Remark \ref{rem neron}. By Remark \ref{rem GZ}, Theorem \ref{thman1} is reduced to the following. 

\begin{theorem}\label{thman1reduced}
Assume $\ord_{s=1}L(E/K,s)=1$ and the Iwasawa main conjecture (Conjecture \ref{IMC}) for $E$. Then we have
$$\vartheta(\fz_E)= {\rm Eul}_S\cdot z_K\otimes z_K \otimes\delta^\ast$$
up to a unit in $\ZZ_p^\times$. 
\end{theorem}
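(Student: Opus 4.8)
The plan is to mimic the proof of Theorem~\ref{thm:interpolation}, now at the trivial character $\chi=\mathbf{1}$ (so $r=1$, $j=0$), with two essential changes. First, the interpolation formula (\ref{BDPinterpolation}) is unavailable at $\mathbf{1}$: both $L(E/K,1)=0$ and the right-hand side of (\ref{BDPinterpolation}) acquires a pole from $\Gamma(0)$, so it must be replaced by the $p$-adic Gross--Zagier formula of Bertolini--Darmon--Prasanna. Second, since the Hodge--Tate weights of $V=V_p(E)$ at $\fp$ are $0$ and $1$, we have $H^1_f(K_\fp,V)\neq0$, so the local analysis of the epsilon element in Proposition~\ref{epsilon twist} — which rests on $H^1_f(K_\fp,V)=0$, cf.\ Lemma~\ref{lem:fp} — must be redone.

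First I would carry out the descent. By the construction of $\fz_S$ in \S\ref{sec construction}, the element $\fz_S\otimes\varepsilon_\fp$ corresponds to $\fz_\fp$ under the isomorphism built from (\ref{can det}) and Lemma~\ref{lem euler}. Applying $-\otimes_{\Lambda,\mathbf{1}}\ZZ_p$ together with the control isomorphisms \cite[Prop. 1.6.5(3)]{FK} for $\rgamma(G_{K,S},-)$, $\rgamma(K_\fp,-)$ and the $\rgamma_{/{\rm ur}}(K_v,-)$ with $v\nmid p$, and using the defining cone (\ref{def selmer}) of $\widetilde\rgamma_\fp$, one finds that $\fz_E\otimes\varepsilon_\fp^{\mathbf{1}}$ corresponds to $\fz_\fp^{\mathbf{1}}$ tensored with the canonical ``Euler'' bases of the ${\det}_{\ZZ_p}\rgamma_{/{\rm ur}}(K_v,T)$, whose images under Lemma~\ref{lem euler} multiply to ${\rm Eul}:=\prod_{v\in S,\,v\nmid p}\det(1-{\rm Fr}_v^{-1}\mid V^\ast(1)^{I_v})$; here $\varepsilon_\fp^{\mathbf{1}}$, $\fz_\fp^{\mathbf{1}}$, $\fz_E$ denote the specializations at $\mathbf{1}$ of $\varepsilon_\fp$, $\fz_\fp$, $\fz_S$. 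Moreover, since $L_\fp^{\rm BDP}(\mathbf{1})\neq0$ in analytic rank one (equivalently ${\rm loc}_\fp z_K\neq0$), the complex $\widetilde\rgamma_\fp(K,T)\lotimes_{\ZZ_p}\QQ_p$ is acyclic, so $\pi(\fz_\fp)=L_\fp^{\rm BDP}$ (Conjecture~\ref{IMC}) specializes to $\pi^{\mathbf{1}}(\fz_\fp^{\mathbf{1}})=L_\fp^{\rm BDP}(\mathbf{1})$ for the induced isomorphism $\pi^{\mathbf{1}}\colon Q(\widehat\ZZ_p^{\rm ur})\otimes{\det}_{\ZZ_p}^{-1}\widetilde\rgamma_\fp(K,T)\xrightarrow{\sim}Q(\widehat\ZZ_p^{\rm ur})$. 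Thus $\vartheta(\fz_E)$ is determined by $L_\fp^{\rm BDP}(\mathbf{1})$, by $\varepsilon_\fp^{\mathbf{1}}$, and by ${\rm Eul}$.

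Next I would supply the two local inputs. On the analytic side, the $p$-adic Gross--Zagier formula \cite[Thm. 5.13]{BDP} (in weight $2$; see also \cite{CH}, \cite[\S 5.3]{JSW}, \cite{castellaTNC}) gives, up to a unit in $\ZZ_p^\times$ (the implicit constant being a $p$-unit, and with no CM period since $\Omega_p^{4j}=1$ at $j=0$),
$$L_\fp^{\rm BDP}(\mathbf{1})\ \sim\ \bigl(1-a_pp^{-1}+p^{-1}\bigr)^2\cdot\log_\omega\!\bigl({\rm loc}_\fp z_K\bigr)^2,$$
where $\log_\omega$ is the formal-group logarithm attached to the Néron differential $\omega$, and $\bigl(1-a_pp^{-1}+p^{-1}\bigr)^2$ is the product of the Euler factors of $L(E/K,s)$ at $\fp$ and $\barfp$ evaluated at $s=1$. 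On the algebraic side, $\varepsilon_\fp^{\mathbf{1}}$ must be unpacked through the Bloch--Kato maps: using the exact sequences $0\to H^1_f(K_\fp,V)\to H^1(K_\fp,V)\to H^1_{/f}(K_\fp,V)\to0$ and $0\to{\rm Fil}^0 D_{\rm dR}(V)\to D_{\rm dR}(V)\to t(V)\to0$ with $\exp_\fp\colon t(V)\xrightarrow{\sim}H^1_f(K_\fp,V)$ and $\exp_\fp^\ast\colon H^1_{/f}(K_\fp,V)\xrightarrow{\sim}{\rm Fil}^0 D_{\rm dR}(V)$, the de Rham isomorphism $\varepsilon_{\rm dR}$ of (\ref{dr isom}), the triviality of the $\Gamma$-factor $\Gamma_F(V)=\pm1$ at Hodge--Tate weights $\{0,1\}$, and the triviality of the crystalline Euler factor at $\mathbf{1}$ (the analogue of Proposition~\ref{epsilon twist}(b) is $\det(1-\varphi\mid D_{\rm cris}(V))\cdot\det(1-p^{-1}\varphi^{-1}\mid D_{\rm cris}(V))^{-1}=1$ because $\chi(\fp)=\chi(\barfp)=1$), one reduces to the compatibility of the local Tate pairing at $\fp$ with $\exp_\fp$, $\exp_\fp^\ast$ and the de Rham pairing. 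This compatibility turns each of the two ``$E(K)$''-factors of $\vartheta$ — the global $H^1_f(K,V)$ and, via Poitou--Tate (Lemma~\ref{kslemma}) and the duality $H^1_{/f}(K_\fp,V)\cong H^1_f(K,V^\ast(1))^\ast$, the dual factor — into $\log_\omega({\rm loc}_\fp z_K)$ (using the relation between the Kummer map and $\log_\omega$, cf.\ Lemma~\ref{lem formal}). Combining with the descent and with $\pi^{\mathbf{1}}(\fz_\fp^{\mathbf{1}})=L_\fp^{\rm BDP}(\mathbf{1})$, the factor $\log_\omega({\rm loc}_\fp z_K)^2$ cancels and one gets $\vartheta(\fz_E)\sim{\rm Eul}\cdot\bigl(1-a_pp^{-1}+p^{-1}\bigr)^2\cdot z_K\otimes z_K\otimes\delta^\ast={\rm Eul}_S\cdot z_K\otimes z_K\otimes\delta^\ast$, which is the assertion (recall $\delta$ is normalized as in Remark~\ref{rem neron}).

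The main obstacle will be this last local computation: proving the analogue of Proposition~\ref{epsilon twist} for the trivial character, where $H^1_f(K_\fp,V)\neq0$ and the packaging of loc.\ cit.\ — valid only for $j\geq r$ — breaks down. Concretely, one must (i) read off the interpolation property \cite[Conj. 3.4.3]{FK} of Kato's local epsilon isomorphism at the de Rham point $\mathbf{1}$ directly, using $\exp_\fp$ on the $H^1_f$-part and $\exp_\fp^\ast$ on the $H^1_{/f}$-part of $H^1(K_\fp,V)$ rather than $\exp_\fp^\ast$ on all of it; (ii) evaluate the $\Gamma$-factor as $\Gamma_F(V)=\pm1$, the naive expression $-\Gamma(0)\Gamma(1)$ of Proposition~\ref{epsilon twist} being meaningless at $j=0$; and (iii) verify the compatibility of the local Tate duality at $\fp$ with the exponential and dual exponential maps. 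A secondary point is to pin down the precise constant in \cite[Thm. 5.13]{BDP}, to check that it is a $p$-unit, and to confirm its compatibility with the basis $\gamma_T$ (trivializing $\bigwedge^2_{\ZZ_p}T\simeq\ZZ_p(1)$, built from $\xi$) and with the normalization of $\delta$; it is the vanishing of the CM periods at $j=0$ that makes this compatibility clean.
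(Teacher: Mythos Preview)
Your proposal is correct and follows essentially the same route as the paper: descent to $\mathbf{1}$, the $p$-adic Gross--Zagier formula (\ref{pGZ}) for $L_\fp^{\rm BDP}(\mathbf{1})$, and the unpacking of $\varepsilon_\fp^{\mathbf{1}}$ as $\log\otimes\exp^\ast$ on $H^1_f(K_\fp,V)\otimes H^1_{/f}(K_\fp,V)$ together with the compatibility of local Tate duality with $\log$ and $\exp^\ast$. Your three ``obstacles'' (i)--(iii) are precisely the content of the paper's Lemma~\ref{keylem2}, and your acyclicity argument for $\widetilde\rgamma_\fp(K,V)$ via $L_\fp^{\rm BDP}(\mathbf{1})\neq 0$ plays the role of Lemma~\ref{lem skinner}.
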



\subsubsection{Proof of Theorem \ref{thman1reduced}}

Let $\log=\log_{\omega,\fp}: \QQ_p\otimes_\ZZ E(K) \simeq \QQ_p\widehat \otimes E(K_\fp) \xrightarrow{\sim} \QQ_p$ be the formal logarithm associated with the fixed N\'eron differential $\omega$. We set $a_p:=p+1-\# E(\FF_p)$. 

Let $L_\fp^{\rm BDP} \in \Lambda^{\rm ur}$ be the BDP $p$-adic $L$-function for $E$ as in \S \ref{sec IMC}. 
The key is to use the ``$p$-adic Gross-Zagier formula" established in \cite[Thm. 5.13]{BDP}:
\begin{equation}\label{pGZ}
{\bf 1}(L_\fp^{\rm BDP}) = \left(\frac{1-a_p + p}{p}\right)^2 \log(z_K)^2.
\end{equation}
Here ${\bf 1}: \Lambda^{\rm ur}\twoheadrightarrow \widehat \ZZ_p^{\rm ur}$ denotes the augmentation map. 

We first need the following lemma. 

\begin{lemma}\label{lem skinner}
The localization map at $\fp$
$$H^1(G_{K,S},V)\to H^1(K_\fp,V)$$
is an isomorphism. 
\end{lemma}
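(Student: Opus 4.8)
The plan is to prove $\loc_\fp$ is an isomorphism by showing both sides are two-dimensional over $\QQ_p$ and then proving injectivity. For the source, Lemma \ref{kslemma} gives $\dim_{\QQ_p}H^1(G_{K,S},V)=2$ (and $H^2(G_{K,S},V)=0$). For the target: since $p$ splits we have $K_\fp\simeq\QQ_p$, and $E$ has good reduction at $p$, so $V|_{G_{K_\fp}}$ is crystalline with Hodge--Tate weights $0$ and $1$; in the ordinary case its two Jordan--H\"older characters are $\chi_{\rm cyc}$ times an unramified character and an unramified character sending Frobenius to the unit root of $X^2-a_pX+p$, neither trivial, while in the supersingular case $V|_{G_{K_\fp}}$ is irreducible, so in all cases $H^0(K_\fp,V)=0$. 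Since $V$ is Kummer self-dual, $H^2(K_\fp,V)\simeq H^0(K_\fp,V^\ast(1))^\ast=0$, so the local Euler characteristic formula gives $\dim_{\QQ_p}H^1(K_\fp,V)=[K_\fp:\QQ_p]\dim_{\QQ_p}V=2$. Hence it suffices to prove $\ker(\loc_\fp)=0$.

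The key step is a Greenberg--Wiles count for the Selmer group $\mathrm{Sel}^{(\fp)}(K,V):=\ker\!\big(H^1(G_{K,S},V)\xrightarrow{\loc_\fp}H^1_{/f}(K_\fp,V)\big)$, i.e. the Selmer group with the Bloch--Kato finite condition at $\fp$ and the relaxed (full) condition at every other place of $S$; it contains both $\ker(\loc_\fp)$ and $H^1_f(K,V)$. Because $\ord_{s=1}L(E/K,s)=1$, the Gross--Zagier and Kolyvagin theorems give $\mathrm{rank}_\ZZ E(K)=1$ and $\sha(E/K)[p^\infty]$ finite, so $H^1_f(K,V)=\QQ_p\otimes_\ZZ E(K)$ is one-dimensional, spanned by the Kummer image of the Heegner point $z_K$. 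Applying the Greenberg--Wiles duality formula to this Selmer structure and its Tate dual: the dual structure imposes the strict (zero) condition at $\barfp$ and at every $v\mid N$ and the (self-orthogonal) finite condition at $\fp$, so the dual Selmer group sits inside $H^1_f(K,V)=\QQ_p z_K$ and its elements localize to $0$ at $\barfp$; since $z_K$ has infinite order in $E(K)\hookrightarrow E(K_\barfp)$, its image in $H^1_f(K_\barfp,V)\simeq\QQ_p\widehat\otimes E(K_\barfp)$ is nonzero, so the dual Selmer group vanishes. The local terms in the Greenberg--Wiles formula are concentrated at $\infty$ (contributing $-2$), $\fp$ (contributing $1$, using $\dim H^1_f(K_\fp,V)=1$) and $\barfp$ (contributing $2$), the terms at the primes $v\mid N$ cancelling once one knows $H^0(K_v,V)=0$ there; they add up to $1$.

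Granting this, $\dim_{\QQ_p}\mathrm{Sel}^{(\fp)}(K,V)=1=\dim_{\QQ_p}H^1_f(K,V)$, hence $\mathrm{Sel}^{(\fp)}(K,V)=H^1_f(K,V)=\QQ_p z_K$ and $\ker(\loc_\fp)\subseteq\QQ_p z_K$. Finally $z_K$ also has infinite order in $E(K_\fp)$, so $\loc_\fp(z_K)\neq 0$ in $H^1_f(K_\fp,V)\subseteq H^1(K_\fp,V)$ --- this is exactly the injectivity underlying the definition of $\log_{\omega,\fp}$ --- whence $\ker(\loc_\fp)=0$; combined with the dimension count, $\loc_\fp$ is an isomorphism. (Note this uses only the Gross--Zagier--Kolyvagin description of the rank-one situation, not the Iwasawa main conjecture.)

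The main obstacle I anticipate is the bookkeeping at the bad primes: one must check $H^0(K_v,V)=V^{G_{K_v}}=0$ for every $v\mid N$, which rests on the finiteness of $E(K_v)_{\mathrm{tors}}$ (a trivial $G_{K_v}$-subrepresentation of $V$ would force an infinite $K_v$-rational torsion subgroup), treated separately for multiplicative and additive reduction, so that the Greenberg--Wiles count comes out to exactly $1$ and no extra classes ramified at $N$ enter $\mathrm{Sel}^{(\fp)}(K,V)$; the remaining verifications (self-duality and self-orthogonality of the finite condition, exactness of the local computations) are routine.
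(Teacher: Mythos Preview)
Your argument is correct and self-contained. The dimension count for both sides is clean, the Greenberg--Wiles computation is right (in particular your handling of the bad primes is fine: $H^0(K_v,V)=0$ follows from the finiteness of $E(K_v)[p^\infty]$ exactly as you say, and since $V$ is self-dual this forces $H^1(K_v,V)=0$ by the local Euler characteristic formula, so those terms really do drop out), and the vanishing of the dual Selmer group via the nontriviality of $\loc_{\barfp}(z_K)$ is the key input.

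The paper's proof takes a different route. It works with the Selmer complexes $\widetilde\rgamma_\fq(K,V):=\mathrm{Cone}\bigl(\rgamma(G_{K,S},V)\to\rgamma(K_\fq,V)\bigr)[-1]$ for $\fq\in\{\fp,\barfp\}$, cites an external result (\cite[Lem.~4.6]{sanoderived}) for the vanishing of $H^1(\widetilde\rgamma_\fq(K,V))=\ker(\loc_\fq)$ under the rank-one hypothesis, and then uses Poitou--Tate duality of Selmer complexes, $H^2(\widetilde\rgamma_\fp(K,V))\simeq H^1(\widetilde\rgamma_{\barfp}(K,V))^\ast$, to deduce surjectivity of $\loc_\fp$. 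So the paper separates injectivity and surjectivity and handles them by citing a black box plus a structural duality, whereas you prove injectivity by hand via Greenberg--Wiles and get surjectivity from the dimension match. Your approach is more explicit and avoids the external reference; the paper's approach is shorter on the page and emphasises the symmetry between $\fp$ and $\barfp$ through complex-level duality. Both ultimately rest on the same arithmetic input, namely that the Heegner point has nonzero image in $H^1_f(K_\fq,V)$ for each $\fq\mid p$.
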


\begin{proof}
For $\fq \in \{\fp, \barfp\}$, we set
$$\widetilde \rgamma_\fq(K,V):= {\rm Cone}\left(\rgamma(G_{K,S},V)\to \rgamma(K_\fq,V)\right)[-1].$$
Then by \cite[Lem. 4.6]{sanoderived} we have $H^1(\widetilde \rgamma_\fq(K,V)) =0$. (Note that ${\rm rank}(E(K))=1$ by the assumption $\ord_{s=1}L(E/K,s)=1$.) Since we have $H^2(\widetilde \rgamma_\fp(K,V))\simeq H^1(\widetilde \rgamma_{\barfp}(K,V))^\ast$ by duality, we also have $H^2(\widetilde \rgamma_\fp(K,V))=0$. This proves the lemma. 
\end{proof}

\begin{proof}[Proof of Theorem \ref{thman1reduced}]

Let 
$${\rm loc}^2_\fp: {\det}_{\QQ_p}^{-1}(\rgamma(G_{K,S},V)) = {\bigwedge}_{\QQ_p}^2 H^1(G_{K,S},V) \to{\bigwedge}_{\QQ_p}^2 H^1(K_\fp,V) $$
be the map induced by the localization map at $\fp$. Note that this is an isomorphism by Lemma \ref{lem skinner}. 

Let 
$${\bf 1}(\varepsilon_\fp) \in \widehat \QQ_p^{\rm ur}\otimes_{\QQ_p} {\det}_{\QQ_p}(\rgamma(K_\fp,V)) =\widehat \QQ_p^{\rm ur} \otimes_{\QQ_p}\Hom_{\QQ_p} \left( {\bigwedge}_{\QQ_p}^2 H^1(K_\fp,V) , \QQ_p\right)$$
be the image of the local epsilon element $\varepsilon_\fp \in \Lambda^{\rm ur}\otimes_\Lambda {\det}_\Lambda(\rgamma(K_\fp,\TT))$ in (\ref{def local epsilon}) under the natural surjection
$$\Lambda^{\rm ur}\otimes_\Lambda {\det}_\Lambda(\rgamma(K_\fp,\TT))\xrightarrow{a\mapsto a\otimes 1} \Lambda^{\rm ur}\otimes_\Lambda {\det}_\Lambda(\rgamma(K_\fp,\TT))\otimes_{\Lambda}\ZZ_p \simeq \widehat \ZZ_p^{\rm ur}\otimes_{\ZZ_p}{\det}_{\ZZ_p}(\rgamma(K_\fp,T)).$$
Then we have 
$${\bf 1}(\varepsilon_\fp)  \in \Hom_{\QQ_p}\left({\bigwedge}_{\QQ_p}^2 H^1(K_\fp,V),\QQ_p\right)$$
by the property in \cite[Conj. 3.4.3(iv)]{FK} (since $\tau_p$ acts trivially on ${\bigwedge}_{\ZZ_p}^2 T\simeq \ZZ_p(1)$).

By the construction of $\fz_E$, we have
\begin{equation}\label{by constr}
{\bf 1}(\varepsilon_\fp)\circ {\rm loc}^2_\fp (\fz_E) =  {\rm Eul}_N\cdot {\bf 1}(L_\fp^{\rm BDP}),
\end{equation}
where ${\rm Eul}_N$ is the product of Euler factors at $v\mid N$, which satisfies
$${\rm Eul}_S= {\rm Eul}_N \left(\frac{1-a_p + p}{p}\right)^2.$$

Consider the isomorphism
$$\lambda_p: \QQ_p\otimes_\QQ \left(E(K)\otimes_\ZZ E(K) \otimes_\ZZ {\bigwedge}_\QQ^2 \Gamma(E,\Omega_{E/K}^1)\right)\xrightarrow{\sim} \QQ_p; \ 1\otimes (x\otimes y \otimes \delta^\ast )\mapsto \log(x)\log(y).$$
By (\ref{by constr}) and Lemma \ref{keylem2} below, we have
\begin{equation}\label{lambda theta}
\lambda_p(\vartheta(\fz_E)) = {\rm Eul}_N\cdot {\bf 1}(L_\fp^{\rm BDP})
\end{equation}
up to a unit in $\ZZ_p^\times$. 

By (\ref{pGZ}) and (\ref{lambda theta}), we obtain
$$\lambda_p(\vartheta(\fz_E)) = {\rm Eul}_S\cdot \log(z_K)^2 = \lambda_p({\rm Eul}_S\cdot z_K\otimes z_K \otimes \delta^\ast)$$
up to a unit in $\ZZ_p^\times$. Since $\lambda_p$ is an isomorphism, this proves Theorem \ref{thman1reduced}. 
\end{proof}

It is now sufficient to prove the following. 

\begin{lemma}\label{keylem2}
We have
$${\bf 1}(\varepsilon_\fp) \circ {\rm loc}^2_\fp = \lambda_p\circ \vartheta \text{ in }\Hom_{ \QQ_p} \left({\det}_{\QQ_p}^{-1}(\rgamma(G_{K,S},V)),\QQ_p\right)$$
up to a unit in $\ZZ_p^\times$. 
\end{lemma}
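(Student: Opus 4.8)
The plan is to prove Lemma~\ref{keylem2} by comparing two nonzero $\QQ_p$-linear functionals on the one-dimensional space ${\det}_{\QQ_p}^{-1}(\rgamma(G_{K,S},V))={\bigwedge}_{\QQ_p}^2 H^1(G_{K,S},V)$; it then suffices to evaluate both on one convenient element and check that the ratio is a $p$-adic unit. There are two ingredients: an explicit description of ${\bf 1}(\varepsilon_\fp)$ as a local functional at $\fp$ (the analogue of Proposition~\ref{epsilon twist} in the range $j<r$, where $H^1_f(K_\fp,V)\neq0$), and a global unwinding of $\vartheta$ through the Poitou--Tate sequence; the two are matched using Lemma~\ref{lem skinner} at \emph{both} $\fp$ and $\barfp$.

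First I would identify ${\bf 1}(\varepsilon_\fp)$. As $V^\ast(1)\simeq V$ has no $G_{K_\fp}$-invariants, $H^0(K_\fp,V)=H^2(K_\fp,V)=0$, so ${\det}_{\QQ_p}(\rgamma(K_\fp,V))=\Hom_{\QQ_p}({\bigwedge}_{\QQ_p}^2 H^1(K_\fp,V),\QQ_p)$ and ${\bigwedge}_{\QQ_p}^2 H^1(K_\fp,V)=H^1_f(K_\fp,V)\otimes H^1_{/f}(K_\fp,V)$. I claim that, up to $\ZZ_p^\times$,
\[
{\bf 1}(\varepsilon_\fp)(e_f\wedge g)=\log_\omega(e_f)\cdot\langle\exp^\ast_\fp(g),\omega_\fp^\ast\rangle
\]
for $e_f\in H^1_f(K_\fp,V)=\QQ_p\widehat\otimes E(K_\fp)$ and $g\in H^1(K_\fp,V)$, where $\log_\omega$ is the formal logarithm attached to the N\'eron differential $\omega$ (its composite with Kummer and localization being the map $\log=\log_{\omega,\fp}$ used in $\lambda_p$) and $\langle\exp^\ast_\fp(g),\omega_\fp^\ast\rangle$ is the $\omega_\fp$-coordinate of the Bloch--Kato dual exponential $\exp^\ast_\fp\colon H^1_{/f}(K_\fp,V)\xrightarrow{\sim}{\rm Fil}^0D_{\rm dR}(V)=\Gamma(E,\Omega^1_{E/K_\fp})$. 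This should follow from the defining properties of the local epsilon isomorphism \cite[Conj. 3.4.3]{FK} (known here by \cite{LVZ},\cite{nakamura},\cite{RJ}) and the construction of $\varepsilon_{F,\xi}(V)$ in \cite[\S 3.3]{FK}, once one checks three unit facts special to this case: $\varepsilon_{\rm dR}$ sends the natural basis $t_\xi^{-1}\otimes\xi$ of ${\bigwedge}^2D_{\rm dR}(V)=D_{\rm dR}(\QQ_p(1))$ to $\gamma_T$ up to a unit (as $m=-1$; the N\'eron analogue of Lemma~\ref{compare cm padic}); the $\Gamma$-factor $\Gamma_F(V)$ of \cite[\S 3.3.6]{FK} equals $\pm1$ because the Hodge--Tate weights of $V$ at $\fp$ are $0$ and $1$; and $(1-\varphi)(1-p^{-1}\varphi^{-1})^{-1}$ has determinant $1$ on ${\bigwedge}^2D_{\rm cris}(V)=D_{\rm cris}(\QQ_p(1))$, since $\det(1-\varphi)=\det(1-p^{-1}\varphi^{-1})=\#E(\FF_p)/p$ there. (The $\varepsilon$-constant is $1$ by crystallinity.) Compared with Proposition~\ref{epsilon twist}, the new feature is that, $H^1_f(K_\fp,V)$ being now nonzero, the dual exponential only sees the $H^1_{/f}$-factor, while on the $H^1_f$-factor the epsilon isomorphism contributes the Bloch--Kato exponential, i.e.\ (for $E$ of good reduction) the inverse of the formal logarithm.

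Next I would unwind $\vartheta$. By Lemma~\ref{lem skinner}, ${\rm loc}_\fp\colon H^1(G_{K,S},V)\xrightarrow{\sim}H^1(K_\fp,V)$ is an isomorphism carrying the subspace $H^1_f(K,V)=\QQ_p\otimes E(K)$ isomorphically onto $H^1_f(K_\fp,V)$ (by the localization map, bijective on the rank-one part) and the quotient $H^1(G_{K,S},V)/H^1_f(K,V)$ onto $H^1_{/f}(K_\fp,V)$; the same argument at $\barfp$ shows ${\rm loc}_{\barfp}$ is an isomorphism, so the image of $H^1(G_{K,S},V)$ in $\bigoplus_{v\mid p}H^1_{/f}(K_v,V)$ is a line surjecting onto each summand, and the Poitou--Tate cokernel $H^1_f(K,V)^\ast$ is detected by the $\barfp$-summand alone. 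Hence, for a decomposable generator $z=e'\wedge g$ of ${\bigwedge}_{\QQ_p}^2H^1(G_{K,S},V)$ with $e'\in H^1_f(K,V)$ corresponding under Kummer to $x\in\QQ_p\otimes E(K)$, one may choose a lift $\tilde d=(0,d_{\barfp})$, supported at $\barfp$, of a basis of $H^1_f(K,V)^\ast$; unwinding the Poitou--Tate determinant and using $\delta^\ast\sim\omega_\fp^\ast\wedge\omega_{\barfp}^\ast$ (Lemma~\ref{lem formal}) yields
\[
\lambda_p(\vartheta(z))\ \sim\ \log(x)\cdot\log(u)\cdot\langle\exp^\ast_\fp({\rm loc}_\fp g),\omega_\fp^\ast\rangle\cdot\langle\exp^\ast_{\barfp}(d_{\barfp}),\omega_{\barfp}^\ast\rangle,
\]
where $u\in\QQ_p\otimes E(K)$ is determined by $\langle d_{\barfp},{\rm loc}_{\barfp}u\rangle_{\barfp}=1$. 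By the Bloch--Kato compatibility of the local Tate pairing at $\barfp$ with $\exp^\ast_{\barfp}$ and the formal logarithm (for $E$ of good reduction the pairing of $\omega_{\barfp}$ against the tangent direction dual to $\omega$ is a unit), this normalization gives $\langle\exp^\ast_{\barfp}(d_{\barfp}),\omega_{\barfp}^\ast\rangle\sim\log_{\omega,\barfp}({\rm loc}_{\barfp}u)^{-1}$. Comparing with Step~1, which gives ${\bf 1}(\varepsilon_\fp)({\rm loc}^2_\fp z)\sim\log(x)\cdot\langle\exp^\ast_\fp({\rm loc}_\fp g),\omega_\fp^\ast\rangle$, the identity $\lambda_p\circ\vartheta\sim{\bf 1}(\varepsilon_\fp)\circ{\rm loc}^2_\fp$ reduces to $\log(u)\cdot\langle\exp^\ast_{\barfp}(d_{\barfp}),\omega_{\barfp}^\ast\rangle\sim1$, i.e.\ to $\log_{\omega,\fp}({\rm loc}_\fp u)\sim\log_{\omega,\barfp}({\rm loc}_{\barfp}u)$, which holds up to sign because $E(K)\otimes\QQ$, having rank one, is an eigenspace for $\Gal(K/\QQ)$.

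I expect the main obstacle to be Step~1: making the local epsilon isomorphism of \cite{FK} explicit in the regime $j<r$ where the Bloch--Kato condition $H^1_f(K_\fp,V)$ is nonzero, so that the factorization of $\varepsilon_{F,\xi}(V)$ mixes $\exp$ and $\exp^\ast$, and verifying that every comparison, $\Gamma$- and Euler-factor is a $p$-adic unit in this case. The global bookkeeping of Step~2 is routine once Lemma~\ref{lem skinner} is applied at both primes above $p$.
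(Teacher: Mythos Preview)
Your approach is correct and follows essentially the same strategy as the paper. Both proofs hinge on the identification of ${\bf 1}(\varepsilon_\fp)$ with the map $\log_\omega\otimes\exp^\ast_\omega$ on $H^1_f(K_\fp,V)\otimes H^1_{/f}(K_\fp,V)$ (your Step~1), and then on matching this with the unwinding of $\vartheta$ through the Poitou--Tate sequence.

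The presentations differ in how this matching is carried out. The paper packages the comparison into a commutative square relating the Poitou--Tate determinant $f$ to a purely local map $g$ at $\fp$ (which builds in the $\barfp$-factors via local Tate duality), linked by ${\rm loc}^2_\fp$ and $h={\rm loc}_\fp\otimes{\rm loc}_{\barfp}\otimes\id\otimes\id$; the identity then reduces to the fact that the local Tate pairing at $\barfp$ is $\log\otimes\exp^\ast$. You instead unwind $\vartheta$ by choosing a lift $(0,d_{\barfp})$ supported at $\barfp$, which produces the same four-factor formula and reduces the claim to $\log_{\omega,\fp}(u)\sim\log_{\omega,\barfp}(u)$ for $u\in\QQ_p\otimes E(K)$. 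Your complex-conjugation argument for this last point is correct (since $E(K)\otimes\QQ$ is a one-dimensional eigenspace for $\Gal(K/\QQ)$ and $E,\omega$ are defined over $\QQ$); it is worth noting that the paper implicitly uses the same fact when asserting that $\lambda_p\circ\vartheta$ coincides ``up to a unit'' with $(\log\otimes\log\otimes\exp^\ast\otimes\exp^\ast)\circ h\circ f$, since $\lambda_p$ applies $\log_{\omega,\fp}$ to both global Selmer factors while $h$ routes the second one through $K_{\barfp}$. So your version in fact makes explicit a point the paper leaves to the reader.

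Your Step~1 is also slightly more detailed than the paper's, which simply cites the definition of $\varepsilon_{\QQ_p,\xi}(V)$ in \cite[\S 3.3.1]{FK}; your checks that the $\varepsilon$-constant, the $\Gamma$-factor, and the Euler-type factor are all units in this weight-two crystalline situation are exactly what is needed.
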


\begin{proof}
One can check that the following diagram is commutative:
$$\xymatrix{
{\bigwedge}_{\QQ_p}^2 H^1(G_{K,S},V)\ar[d]_{{\rm loc}_\fp^2} \ar[r]_-f^-\sim& H^1_f(K,V) \otimes_{\QQ_p} H^1_f(K,V) \otimes_{\QQ_p} H^1_{/f}(K_\fp,V)\otimes_{\QQ_p} H^1_{/f}(K_{\barfp},V) \ar[d]^{h:={\rm loc}_\fp \otimes {\rm loc}_{\barfp} \otimes \id \otimes \id}\\
{\bigwedge}_{\QQ_p}^2 H^1(K_\fp,V) \ar[r]_-g^-\sim&H^1_f(K_\fp,V) \otimes_{\QQ_p} H^1_f(K_{\barfp},V) \otimes_{\QQ_p} H^1_{/f}(K_\fp,V)\otimes_{\QQ_p} H^1_{/f}(K_{\barfp},V).
}$$
Here $f$ is induced by the Poitou-Tate exact sequence (\ref{PT}), and $g$ by the exact sequence
\begin{equation}\label{tautological}
0 \to H^1_f(K_\fp,V)\to H^1(K_\fp,V) \to H^1_{/f}(K_\fp,V)\to 0
\end{equation}
and the local Tate duality isomorphism
\begin{equation}\label{f local duality}
H^1_f(K_{\barfp},V)\otimes_{\QQ_p} H^1_{/f}(K_{\barfp},V)\xrightarrow{\sim}\QQ_p.
\end{equation}
(We identify $V$ with $V^\ast(1)$ via the Weil pairing.) 

Let 
$$\log= \log_\omega: H^1_f(\QQ_p,V)\xrightarrow{\sim}\QQ_p$$
and
$$\exp^\ast=\exp^\ast_\omega: H^1_{/f}(\QQ_p,V)\xrightarrow{\sim} \QQ_p$$
be the logarithm and the dual exponential maps associated with $\omega$ respectively. 
{\color{black}By the definition of $\vartheta$ in (\ref{ks isom}), one sees that the map $\lambda_p\circ \vartheta$ coincides up to a unit in $\ZZ_p^\times$ with the following composition map:
\begin{align*}
{\bigwedge}_{\QQ_p}^2 H^1(G_{K,S},V) &\xrightarrow{h\circ f}H^1_f(K_\fp,V) \otimes_{\QQ_p} H^1_f(K_{\barfp},V) \otimes_{\QQ_p} H^1_{/f}(K_\fp,V)\otimes_{\QQ_p} H^1_{/f}(K_{\barfp},V)\\
&\xrightarrow{\log\otimes \log \otimes \exp^\ast \otimes \exp^\ast} \QQ_p.
\end{align*}
}

On the other hand, by the definition of $\varepsilon_{\QQ_p,\xi}(V)$ in \cite[\S 3.3.1]{FK}, one checks that the map ${\bf 1}(\varepsilon_\fp)$ coincides with 
\begin{equation*}
{\bigwedge}_{\QQ_p}^2 H^1(K_\fp,V) \simeq H^1_f(K_\fp,V)\otimes_{\QQ_p}H^1_{/f}(K_\fp,V) \xrightarrow{\log \otimes \exp^\ast} \QQ_p,
\end{equation*}
where the first isomorphism is induced by the exact sequence (\ref{tautological}). 

It is now sufficient to show the commutativity of the diagram
$$\xymatrix{
{\bigwedge}_{\QQ_p}^2 H^1(K_\fp,V)\ar[d]_-\simeq \ar[r]^-g&H^1_f(K_\fp,V) \otimes_{\QQ_p} H^1_f(K_{\barfp},V) \otimes_{\QQ_p} H^1_{/f}(K_\fp,V)\otimes_{\QQ_p} H^1_{/f}(K_{\barfp},V) \ar[dl]^-{\id \otimes \log \otimes \id \otimes \exp^\ast}\\
H^1_f(K_\fp,V)\otimes_{\QQ_p}H^1_{/f}(K_\fp,V) .&
}$$
However, this follows from the fact that the isomorphism (\ref{f local duality}) is given by
$$a\otimes b \mapsto \log(a)\exp^\ast(b).$$
Hence we have completed the proof. 
\end{proof}

\subsection{Proof of Theorem \ref{thmheeg}}\label{sec ks}

In this subsection, we give a proof of Theorem \ref{thmheeg}. 

We set some notations. 
In this subsection, we assume that $E$ has good ordinary reduction at $p$ so that there is a canonical exact sequence of $G_{\QQ_p}$-representations:
$$0\to F^+T \to T \to F^-T\to 0.$$
We set $F^{\pm} V:= \QQ_p\otimes_{\ZZ_p} F^{\pm}T$. We also set $F^{\pm}\TT:=\Lambda\otimes_{\ZZ_p} F^{\pm} T$. 

Let $\alpha \in \ZZ_p^\times$ be the unit root of $x^2-a_px +p$, where $a_p:=p+1-\#E(\FF_p)$. Let $\beta:=p/\alpha$ be the other root. 

We set $K_p:=\QQ_p\otimes_\QQ K = \bigoplus_{v\mid p}K_v$. We write $\rgamma(K_p,-)$ for $\bigoplus_{v\mid p} \rgamma(K_v,-)$. Similarly, we write $H^i(K_p,-)$ for $\bigoplus_{v\mid p}H^i(K_v,-)$. 

We define a Selmer complex by
\begin{equation}\label{def sel greenberg}
\widetilde \rgamma_f(K,\TT):= {\rm Cone}\left(\rgamma(G_{K,S},\TT)\to \rgamma(K_p,F^-\TT)\oplus \bigoplus_{v\mid N} \rgamma_{/{\rm ur}}(K_v,\TT) \right)[-1].
\end{equation}

\subsubsection{The Heegner point main conjecture}

Let 
$$z_\infty=(z_n)_n \in \varprojlim_n \ZZ_p\otimes_\ZZ E(K_n)$$
be the system of regularized Heegner points as in \cite[\S 2.5]{BD}, where $K_n$ denotes the $n$-th layer of the anticyclotomic $\ZZ_p$-extension $K_\infty/K$. We normalize it by multiplying $c_\phi^{-1}$. Then we have
\begin{equation}\label{z0formula}
z_0 = \begin{cases}
(1-\alpha^{-2})z_K &\text{if $p$ is inert,}\\
(1-\alpha^{-1})^2 z_K &\text{if $p$ is split,}
\end{cases}
\end{equation}
where $z_K$ is the Heegner point as in Remark \ref{rem GZ}. 
We regard $z_\infty \in \widetilde H^1_f(K,\TT):=H^1(\widetilde \rgamma_f(K,\TT))$ via the Kummer map. 

If the $\Lambda$-rank of $\widetilde H^1_f(K,\TT)$ is one, then we have a canonical isomorphism as in \cite[(5.2.1)]{ks}:
\begin{equation}\label{ks can isom}
Q(\Lambda) \otimes_\Lambda {\det}_\Lambda^{-1}(\rgamma_f(K,\TT)) \simeq Q(\Lambda )\otimes_\Lambda \widetilde H^1_f(K,\TT) \otimes \widetilde H^1_f(K,\TT)
\end{equation}

The following formulation of the Heegner point main conjecture is given in \cite[Prop. 5.12]{ks}. 

\begin{conjecture}[The Heegner point main conjecture]\label{HIMC}
The $\Lambda$-rank of $\widetilde H^1_f(K,\TT)$ is one and there is a $\Lambda$-basis
$$\widetilde \fz_\infty \in {\det}_\Lambda^{-1}(\rgamma_f(K,\TT))$$
such that its image under (\ref{ks can isom}) is $z_\infty\otimes z_\infty$. 
\end{conjecture}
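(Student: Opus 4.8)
The plan is to deduce Conjecture~\ref{HIMC} from the Iwasawa main conjecture for the Bertolini--Darmon--Prasanna $p$-adic $L$-function (Conjecture~\ref{IMC}), which is known in the ordinary cases of interest by \cite{BCK} (see Corollary~\ref{cor:main}). Writing $\widetilde H^i_f(K,\TT):=H^i(\widetilde\rgamma_f(K,\TT))$ for the complex \eqref{def sel greenberg}, and unwinding the canonical isomorphism \eqref{ks can isom} as in \cite[Prop.~5.12]{ks}, the conclusion of Conjecture~\ref{HIMC} is equivalent to the Perrin-Riou--Howard form of the Heegner point main conjecture: that $\widetilde H^1_f(K,\TT)$ is $\Lambda$-torsion-free of rank one, that $\widetilde H^2_f(K,\TT)$ is $\Lambda$-torsion, and that a characteristic-ideal identity holds relating $\widetilde H^2_f(K,\TT)$ and the torsion of $\widetilde H^1_f(K,\TT)$ to the index ${\rm char}_\Lambda(\widetilde H^1_f(K,\TT)/\Lambda\cdot z_\infty)$. (The vanishing of $\widetilde H^0_f$ and $\widetilde H^3_f$, needed for ${\det}_\Lambda^{-1}(\widetilde\rgamma_f(K,\TT))$ to be rationally of rank one, follows from $H^0(K,V)=0$ and global duality.)

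The structural statements and one divisibility I would extract from the Heegner point Euler system. Under the hypotheses of Corollary~\ref{cor:main} (surjectivity of $\rho$, ramification at the primes dividing $N$, and the Heegner hypothesis), the Cornut--Vatsal non-vanishing of $z_\infty$ modulo every height-one prime of $\Lambda$, combined with Howard's Kolyvagin system machinery for Heegner points (building on \cite{BD}), yields that $\widetilde H^1_f(K,\TT)$ is free of rank one, that $z_0$ is as in \eqref{z0formula}, that $\widetilde H^2_f(K,\TT)$ is $\Lambda$-torsion, and the divisibility of the ``$\widetilde H^2$-side'' by the index. This already produces an element $\widetilde\fz_\infty\in{\det}_\Lambda^{-1}(\widetilde\rgamma_f(K,\TT))$ with the required image under \eqref{ks can isom}, but a priori only generating a $\Lambda$-submodule of finite index.

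The reverse divisibility is the heart of the matter, and for it I would use the bridge to $L_\fp^{\rm BDP}$. Since $p=\fp\barfp$ and $E$ is ordinary, the localisation ${\rm loc}_\fp(z_\infty)\in H^1(K_\fp,\TT)$, pushed to the rank-one quotient $H^1(K_\fp,F^-\TT)$ and run through the associated Coleman/Perrin-Riou regulator map for the unramified-twist module $F^-\TT$, equals $L_\fp^{\rm BDP}$ up to a unit --- the $\Lambda$-adic explicit reciprocity law for Heegner points implicit in \cite{BDP}, \cite{CH} and compatible with the factorisation $L_\fp^{\rm BDP}=\cL\cdot q(L_\fp^{\rm Katz})$ used in the proof of Lemma~\ref{phiBDP} (cf. \cite{castella}). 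I would then compare $\widetilde\rgamma_f(K,\TT)$ with the BDP Selmer complex $\widetilde\rgamma_\fp(K,\TT)$ of \eqref{def selmer}: the two differ only in the local conditions at $\fp$ (Greenberg $F^+$ versus strict $0$) and at $\barfp$ (Greenberg $F^+$ versus no condition), and the discrepancy is governed by the perfect complexes $\rgamma(K_\fp,F^+\TT)$ and $\rgamma(K_{\barfp},F^-\TT)$, whose determinant modules and $\chi$-specialisations are controlled in the same spirit as the local term of Lemma~\ref{lem euler} and the local epsilon element $\varepsilon_\fp$ of \S\ref{sec local epsilon}. Transporting $\widetilde\fz_\infty$ across these change-of-condition isomorphisms, the reciprocity law matches it with $\fz_\fp$ of Conjecture~\ref{IMC} tensored with the canonical local generators; the ``square'' $z_\infty\otimes z_\infty$ on the Selmer side matches the fact that $L_\fp^{\rm BDP}$ is itself a square (cf. the remark after \eqref{BDPinterpolation}). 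As Conjecture~\ref{IMC} asserts that $\fz_\fp$ is a $\Lambda^{\rm ur}$-basis, a descent back to $\Lambda$ --- legitimate because $\widetilde\fz_\infty$ and all comparison factors are fixed by $\varphi_p$, exactly as in Proposition~\ref{coeff} --- forces $\widetilde\fz_\infty$ to be a $\Lambda$-basis, which is the remaining divisibility.

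The step I expect to be the main obstacle is the precise $\Lambda$-adic reciprocity law ${\rm loc}_\fp(z_\infty)\mapsto L_\fp^{\rm BDP}$ with all Euler factors and the comparison with the Katz $p$-adic $L$-function correctly accounted for, together with the interlocking bookkeeping of the change-of-local-conditions isomorphism between $\widetilde\rgamma_f(K,\TT)$ and $\widetilde\rgamma_\fp(K,\TT)$ at the level of graded determinant modules (including signs and the $\iota$-twist). In the supersingular case one would in addition need the plus/minus Coleman formalism and the input of \cite{castellawan}. A less self-contained alternative for the reverse divisibility would be to invoke a converse to the Gross--Zagier--Kolyvagin theorem directly.
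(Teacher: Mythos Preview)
The statement you are attempting to prove is labelled as a \emph{Conjecture} in the paper, not a theorem. The paper does not prove Conjecture~\ref{HIMC} anywhere; it is introduced as a hypothesis and then \emph{assumed} in Theorem~\ref{thmheeg} in order to deduce the Tamagawa number conjecture. There is therefore no ``paper's own proof'' to compare your proposal against.

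That said, your proposal outlines a reasonable strategy for deriving the Heegner point main conjecture from the BDP main conjecture, and it is essentially the route taken in the literature (notably \cite{BCK}): one divisibility from Howard's Kolyvagin-system argument plus Cornut--Vatsal, the other from the explicit reciprocity law ${\rm loc}_\fp(z_\infty)\leftrightarrow L_\fp^{\rm BDP}$ together with Conjecture~\ref{IMC}, with the comparison of Selmer complexes $\widetilde\rgamma_f$ versus $\widetilde\rgamma_\fp$ mediating between the two. You correctly identify the reciprocity law and the determinant-module bookkeeping as the delicate points.

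There is, however, a genuine limitation in scope. Conjecture~\ref{HIMC} is stated in \S\ref{sec ks} under the sole assumption that $E$ is good ordinary at $p$; in particular Theorem~\ref{thmheeg} explicitly allows $p$ to be inert in $K$ (see the remark following its statement). Your entire argument requires $p$ to split, since both $L_\fp^{\rm BDP}$ and Conjecture~\ref{IMC} are only formulated for split $p$ in this paper. So even granting all the ingredients you invoke, your strategy proves Conjecture~\ref{HIMC} only in the split ordinary case, not in the generality in which the paper states and uses it. For the inert ordinary case one needs different input (e.g.\ bipartite Euler systems \`a la Bertolini--Darmon, or the methods of Wan and Burungale--Castella--Kim in that setting), which you do not address.
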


\subsubsection{Local epsilon elements}

Let
$$\exp^\ast: {\bigwedge}_{\QQ_p}^2 H^1(K_p, V) \to \QQ_p\otimes_\QQ {\bigwedge}_\QQ^2 \Gamma(E,\Omega_{E/K}^1)\xrightarrow{\delta^\ast \mapsto 1} \QQ_p$$
be the map induced by the dual exponential map, where $\delta$ is as in Remark \ref{rem neron}. This induces an isomorphism
$$\exp^\ast: {\det}_{\QQ_p}^{-1}(\rgamma(K_p,F^-V))= {\bigwedge}_{\QQ_p}^2 H^1(K_p, F^-V)\xrightarrow{\sim} \QQ_p.$$

The basis $\varepsilon_p$ in the following lemma plays the role of a local epsilon element (see Remark \ref{rem choice} below). 

\begin{lemma}\label{lem choice}
There is a $\Lambda$-basis
$$\varepsilon_p \in {\det}_\Lambda^{-1}(\rgamma(K_p,F^-\TT))$$
such that its image under the map
$${\det}_\Lambda^{-1}(\rgamma(K_p,F^-\TT))\xrightarrow{a\mapsto a\otimes 1} {\det}_\Lambda^{-1}(\rgamma(K_p,F^-\TT))\otimes_\Lambda \ZZ_p \simeq {\det}_{\ZZ_p}^{-1}(\rgamma(K_p,F^-T))\xrightarrow{\exp^\ast}\QQ_p$$
is $(1-\alpha^{-2})^{-1}(1-\beta^{-2})$ (resp. $(1-\alpha^{-1})^{-2}(1-\beta^{-1})^2$) if $p$ is inert (resp. split) in $K$. 
\end{lemma}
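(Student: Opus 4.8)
The plan is to obtain $\varepsilon_p$ from the local cohomology of the unramified quotient $F^-\TT$, and then to reduce the asserted interpolation at the trivial character to a Bloch--Kato computation of the dual exponential map for an unramified representation.

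First I would restrict $F^-\TT$ to a decomposition group $G_{K_v}$ for $v\mid p$. As $F^-T$ is unramified, $F^-\TT|_{G_{K_v}}$ is a free $\Lambda$-module of rank one on which $G_{K_v}$ acts through $\delta_v\kappa_v$, where $\delta_v$ is the unramified character recording the Frobenius action on $F^-T$ (a $p$-adic unit, the unit root $\alpha$ up to inverse) and $\kappa_v\colon G_{K_v}\to\Gamma\hookrightarrow\Lambda^\times$, $\sigma\mapsto\overline\sigma^{-1}$, is the tautological anticyclotomic character. Since $K_\infty/K$ is ramified at every $v\mid p$, the character $\kappa_v$ is nontrivial on the inertia subgroup, and so is $\chi_{\rm cyc}\kappa_v^{-1}$, which governs $(F^-\TT)^\ast(1)$ on inertia; by local Tate duality it follows that $H^0(K_v,F^-\TT)=H^2(K_v,F^-\TT)=0$. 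Hence $\rgamma(K_v,F^-\TT)$ is concentrated in degree $1$, and ${\det}_\Lambda^{-1}(\rgamma(K_p,F^-\TT))$ is a free $\Lambda$-module of rank one (the determinant of a perfect complex over the regular local ring $\Lambda$). I would take $\varepsilon_p$ to be a $\Lambda$-basis, normalized to have the asserted value at the trivial character --- possible because the remaining ambiguity is a unit of $\Lambda$, whose image under evaluation at the trivial character is an arbitrary element of $\ZZ_p^\times$.

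It then remains to compute the asserted value. The control isomorphism $\rgamma(G_{K,S},\TT)\lotimes_\Lambda\ZZ_p\simeq\rgamma(G_{K,S},T)$, together with its local analogue (\cite[Prop. 1.6.5(3)]{FK}), identifies the image of $\varepsilon_p$ under $a\mapsto a\otimes1$ with a $\ZZ_p$-basis of ${\det}_{\ZZ_p}^{-1}(\rgamma(K_p,F^-T))$. Because $F^-V$ is crystalline of Hodge--Tate weight $0$, one has $H^1_f(K_p,F^-V)=0$ and $D_{\rm cris}(F^-V)^{\varphi=1}=0$, so $H^0(K_p,F^-V)=H^2(K_p,F^-V)=0$, ${\det}_{\QQ_p}^{-1}(\rgamma(K_p,F^-V))={\bigwedge}_{\QQ_p}^2H^1(K_p,F^-V)$, and $\exp^\ast$ is precisely the isomorphism of this group with $\QQ_p$ introduced before the lemma. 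Thus the claim is equivalent to identifying the image of the lattice ${\det}_{\ZZ_p}(H^1(K_p,F^-T))$ under $\exp^\ast$. This is the local Tamagawa-number computation for the unramified rank-one crystalline representation $F^-V$: comparing the $\exp^\ast$-image of $H^1(K_v,F^-T)$ with the lattice determined by the N\'eron differential (which by Remark \ref{rem neron} is the de Rham lattice of $F^-T$, dually to the formal-group computation of Lemma \ref{lem formal}), one picks up at each $v\mid p$ a ratio of local Euler factors of $F^-V$ which, using $\alpha\beta=p$, equals $(1-\alpha^{-1})^{-1}(1-\beta^{-1})$ at each of the two primes above $p$ when $p$ splits --- with product $(1-\alpha^{-1})^{-2}(1-\beta^{-1})^2$ --- and $(1-\alpha^{-2})^{-1}(1-\beta^{-2})$ at the single prime $\fp$ when $p$ is inert.

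The main obstacle will be this last step: carrying out the Bloch--Kato dual-exponential computation with all normalizations coherent --- fixing which of $\alpha,\beta$ appears and with which inverse, and checking that the integral structure that $\exp^\ast$ induces on $H^1(K_v,F^-T)$ differs from the N\'eron-differential lattice by exactly this Euler factor (this is the local part of the Tamagawa number conjecture for an unramified motive, parallel to the computations behind Lemma \ref{lem formal} and Proposition \ref{epsilon twist}). A secondary point is the possibility that $E$ is anomalous at $p$ (so $\alpha\equiv1\bmod p$ and $1-\alpha^{-1}$ is not a unit): then the integral local cohomology acquires torsion, and one must check it contributes exactly the missing factor, so that the formula holds without an anomaly hypothesis --- this deserves an explicit remark.
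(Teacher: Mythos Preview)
Your outline is correct, and the reduction to computing the image of ${\det}_{\ZZ_p}^{-1}(\rgamma(K_p,F^-T))$ under $\exp^\ast$ is exactly the content of the lemma. Your vanishing argument for $H^0$ and $H^2$ of $F^-\TT$ (and of $F^-T$) is clean and valid, and is not made explicit in the paper.

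Where you diverge from the paper is in the final computation. You propose to evaluate $\exp^\ast$ directly on $H^1(K_p,F^-T)$, citing a local Tamagawa-number identity for the unramified rank-one quotient. The paper instead exploits the long exact sequence attached to $0\to F^+T\to T\to F^-T\to 0$, which yields
\[
{\det}_{\ZZ_p}^{-1}(\rgamma(K_p,F^-T))\simeq {\det}_{\ZZ_p}\!\bigl(H^1_f(K_p,T)/H^1(K_p,F^+T)\bigr)\otimes {\det}_{\ZZ_p}\!\bigl(H^2(K_p,F^+T)\bigr)\otimes{\det}_{\ZZ_p}^{-1}(\rgamma_{/f}(K_p,T)),
\]
and then computes each factor: the first is $\prod_{v\mid p}\#E(\FF_v)^{-1}$ by \cite[Prop.~2.5]{greenberg}, the second is the same by local duality, and the $\exp^\ast$-image of the third is $p^{-2}\prod_{v\mid p}\#E(\FF_v)$ by Lemma~\ref{lem formal}. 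The product is $p^{-2}\prod_{v\mid p}\#E(\FF_v)^{-1}$, which one checks equals the stated Euler expression up to $\ZZ_p^\times$. The advantage of the paper's route is that it is entirely self-contained, reducing to results already established in the text; your route is conceptually tidier but leaves the key integral comparison of $\exp^\ast(H^1(K_v,F^-T))$ with the N\'eron lattice as a black box. If you unwind that black box, you will find yourself comparing $H^1(K_p,F^-T)$ with $H^1_{/f}(K_p,T)$ integrally---and the discrepancy is precisely the two Greenberg-type factors the paper isolates, so the two arguments converge. Your remark on the anomalous case is apt: the torsion in $H^1(K_v,F^-T)$ when $\alpha\equiv 1\pmod p$ is absorbed into the determinant and matches the non-unit part of $(1-\alpha^{-1})$, so no extra hypothesis is needed.
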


\begin{proof}
Note first that we have an equality up to a unit in $\ZZ_p^\times$:
$$p^{-2} \prod_{v\mid p} \# E(\FF_v)^{-1} = \begin{cases}
(1-\alpha^{-2})^{-1}(1-\beta^{-2}) &\text{if $p$ is inert,} \\
(1-\alpha^{-1})^{-2}(1-\beta^{-1})^2 &\text{if $p$ is split}.
\end{cases}$$
Here $\FF_v$ denotes the residue field of $v$. Hence it is sufficient to show that the image of the map
$$\exp^\ast: {\det}_{\ZZ_p}^{-1}(\rgamma(K_p,F^-T))\to \QQ_p$$
is generated over $\ZZ_p$ by $p^{-2} \prod_{v\mid p} \# E(\FF_v)^{-1}$. 

We shall abbreviate $H^i(K_p,-)$ to $H^i(-)$ and similarly for $\rgamma(K_p,-)$. By the exact sequence
$$0\to H^1(F^+T) \to H^1(T) \to H^1(F^-T) \to H^2(F^+T) \to H^2(T) \to 0,$$
we obtain a canonical isomorphism
$${\det}_{\ZZ_p}^{-1}(\rgamma(F^-T)) \simeq {\det}_{\ZZ_p}(H^1_f(T)/H^1(F^+T)) \otimes_{\ZZ_p} {\det}_{\ZZ_p}(H^2(T^+)) \otimes_{\ZZ_p} {\det}_{\ZZ_p}^{-1}(\rgamma_{/f}(T)).$$
By \cite[Prop. 2.5]{greenberg}, we have
$${\det}_{\ZZ_p}(H^1_f(T)/H^1(F^+T))  \simeq \ZZ_p\cdot \prod_{v\mid p}\# E(\FF_v)^{-1}.$$
Also, since $H^2(T^+)\simeq H^0(F^-V/F^-T)^\vee= \bigoplus_{v\mid p} E(\FF_v)[p^\infty]^\vee$ by duality, we have
$$ {\det}_{\ZZ_p}(H^2(T^+)) \simeq \ZZ_p\cdot \prod_{v\mid p}\# E(\FF_v)^{-1}.$$
By Lemma \ref{lem formal}, we see that the image of 
$$\exp^\ast: {\det}_{\ZZ_p}^{-1}(\rgamma_{/f}(T))\to \QQ_p$$
is $\ZZ_p\cdot p^{-2} \prod_{v\mid p} \#E(\FF_v)$. This proves the claim. 
\end{proof}

\begin{remark}\label{rem choice}
We can choose $\varepsilon_p$ in Lemma \ref{lem choice} in a natural way as follows. 

Suppose that $p$ is split and let $(p)=\fp \barfp$ be the decomposition. Then we have
$${\det}_\Lambda(\rgamma(K_p,F^-\TT)) = {\det}_\Lambda(\rgamma(K_\fp,F^-\TT))\otimes_\Lambda {\det}_\Lambda(\rgamma(K_{\barfp},F^-\TT)).$$
Let $\varphi_p$ and $\tau_p$ be as in Proposition \ref{phiepsilon}. For $\fq \in \{\fp, \barfp\}$, let
$$\varepsilon_\fq:=\varepsilon_{\Lambda,\xi}(K_\fq, F^-\TT) \in {\det}_\Lambda(\rgamma(K_\fq,F^-\TT))\otimes_\Lambda F^-\TT\otimes_\Lambda \Lambda_a$$
be the local epsilon element (see \cite[Conj. 3.4.3]{FK} and Remark \ref{rem isom basis}), where 
$$\Lambda_{a}:= \{ x\in \Lambda^{\rm ur} \mid \varphi_p(x)= a^{-1}x\},$$
and $a \in \Lambda^\times$ denotes the image of $\tau_p \in G_{\QQ_p}$ under the map $G_{\QQ_p} \to {\rm Aut}(F^-\TT)\simeq \Lambda^\times$. (Since $F^-\TT$ is a rank one representation, the validity of the local epsilon conjecture is proved by Kato \cite{katolecture2} and Venjakob \cite{venjakob}.) By fixing a $\Lambda$-basis of $F^-\TT\otimes_\Lambda \Lambda_a$, we can regard 
$$\varepsilon_\fq \in {\det}_\Lambda(\rgamma(K_\fq,F^-\TT)).$$
We may take $\varepsilon_p$ to be the dual of $\varepsilon_\fp \otimes \varepsilon_{\barfp}$. 

Suppose next that $p$ is inert. If we consider the induced module $F^-\TT_K:= \ZZ_p[\Gal(K/\QQ)] \otimes_{\ZZ_p} F^-\TT$, then we have
$${\det}_\Lambda(\rgamma(K_p,F^-\TT))={\det}_\Lambda(\rgamma(\QQ_p, F^-\TT_K)).$$
Note that $F^-\TT_K\simeq F^-\TT \oplus F^-\TT(\chi_K)$, where $\chi_K :G_{\QQ_p}\to \{\pm 1\}$ denotes the quadratic character corresponding to $K_p$. Hence we know the validity of the  local epsilon conjecture in this case and we similarly get a basis $\varepsilon_p$. 
\end{remark}

\subsubsection{Construction of a basis}

By the definition of the Selmer complex (\ref{def sel greenberg}) and Lemma \ref{lem euler}, we have a canonical isomorphism
\begin{equation}\label{sel triang}
{\det}_\Lambda^{-1}(\rgamma(G_{K,S},\TT)) \simeq {\det}_\Lambda^{-1}(\widetilde \rgamma_f(K,\TT)) \otimes_\Lambda {\det}_\Lambda^{-1}(\rgamma(K_p,F^-\TT)).
\end{equation}
Assuming the Heegner point main conjecture (Conjecture \ref{HIMC}), we have a $\Lambda$-basis $\widetilde \fz_\infty \in {\det}_\Lambda^{-1}(\widetilde \rgamma_f(K,\TT))$ and define
$$\fz_S \in {\det}_\Lambda^{-1}(\rgamma(G_{K,S},\TT))$$
to be the $\Lambda$-basis which corresponds to $\widetilde \fz_\infty\otimes \varepsilon_p$ under the isomorphism (\ref{sel triang}), where $\varepsilon_p$ is as in Lemma \ref{lem choice}. We then define a $\ZZ_p$-basis
$$\fz_E \in {\det}_{\ZZ_p}^{-1}(\rgamma(G_{K,S},T))$$
to be the image of $\fz_S$ under the natural surjection
$${\det}_\Lambda^{-1}(\rgamma(G_{K,S},\TT)) \xrightarrow{a\mapsto a\otimes 1} {\det}_\Lambda^{-1}(\rgamma(G_{K,S},\TT)) \otimes_{\Lambda}\ZZ_p \simeq {\det}_{\ZZ_p}^{-1}(\rgamma(G_{K,S},T)).$$

\begin{remark}\label{rem heeg element}
We define a ``$\Lambda$-adic Heegner element" by
$$z_\infty^{\rm Hg} := \Theta(\fz_S) \in {\bigcap}_\Lambda^2 H^1(G_{K,S},\TT),$$
where $\Theta$ is as in (\ref{Theta}). This construction is an improvement of that in \cite[\S 5.2.3]{ks}, since we make the choice of a basis of ${\det}_\Lambda^{-1}(\rgamma(K_p,F^-\TT))$ specific as in Lemma \ref{lem choice}. 
\end{remark}

\subsubsection{Completion of the proof}

We now prove Theorem \ref{thmheeg}. 

\begin{proof}[Proof of Theorem \ref{thmheeg}]

By Remarks \ref{rem neron} and \ref{rem GZ}, it is sufficient to show the equality
$$\vartheta(\fz_E)= {\rm Eul}_S\cdot z_K\otimes z_K \otimes\delta^\ast.$$

By the construction of $\fz_E$, we have
$$\vartheta(\fz_E) = {\rm Eul}_N\cdot \begin{cases}
(1-\alpha^{-2})^{-1}(1-\beta^{-2}) z_0\otimes z_0 \otimes \delta^\ast &\text{if $p$ is inert,}\\
(1-\alpha^{-1})^{-2}(1-\beta^{-1})^2 z_0\otimes z_0 \otimes \delta^\ast &\text{if $p$ is split.}
\end{cases}$$
Here ${\rm Eul}_N$ is the product of Euler factors at $v\mid N$. Noting that the product of Euler factors at $v\mid p$ is given by 
$${\rm Eul}_p= p^{-2}\prod_{v\mid p}\# E(\FF_v) = \begin{cases}
(1-\alpha^{-2})(1-\beta^{-2}) &\text{if $p$ is inert,}\\
(1-\alpha^{-1})^2 (1-\beta^{-1})^2 &\text{if $p$ is split},
\end{cases}$$
we see by (\ref{z0formula}) that 
$$\vartheta(\fz_E)= {\rm Eul}_S\cdot z_K\otimes z_K \otimes\delta^\ast.$$
This completes the proof. 
\end{proof}

\begin{acknowledgments}
The author would like to thank Francesc Castella and Shinichi Kobayashi for very helpful discussions. He would also like to thank Kentaro Nakamura for his interest and helpful comments. 
\end{acknowledgments}

\end{document}